\newcommand{\hypcadre}{{\bf Drift}}
\newcommand{\hypV}{{\bf Pot}}
\newcommand{\hyppoinc}[1]{{\bf Poinc(#1)}}
\newcommand{\hypspec}{{\bf min Spec}}
\newcommand{\hypspecbeta}[1]{{\bf Spec(#1)}}
\newcommand{\hypconv}{{\bf Conv}}
\newcommand{\hypdelta}{{\bf V}}
\newtheorem{defi}{Definition}
\newtheorem{example}[defi]{Example}
\newtheorem{prop}[defi]{Proposition}
\newtheorem{cor}[defi]{Corollary}
\newtheorem{lem}[defi]{Lemma}
\newtheorem{theo}[defi]{Theorem}
\newtheorem{rem}[defi]{Remark}
\newtheorem{hyp}{Assumption}
\newcommand{\dlam}{\partial_\lambda^0}
\newcommand{\rmd}{\mathrm d}
\newcommand{\E}{\mathbb E}
\newcommand{\eV}{e^{-V(x)}\rmd x}
\newcommand{\HH}{\mathbb H}
\newcommand{\LL}{\mathbb L}
\newcommand{\PP}{\mathbb P}
\newcommand{\QQ}{\mathbb Q}
\newcommand{\R}{\mathbb R}
\newcommand{\Rd}{{\R^d}}
\newcommand{\Tl}{\dlam\mathcal T_\lambda}
\newcommand{\W}{\mathbb{W}}
\newcommand{\essinf}{{\rm ess\, inf}}
\newcommand{\esssup}{{\rm ess\, sup}}
\renewcommand{\epsilon}{\varepsilon}
\renewcommand{\phi}{\varphi}
\author{Roland Assaraf\footnote{Laboratoire de Chimie Th\'eorique,
    CNRS-UMR 7616 et Universit\'e Pierre et Marie Curie, 75252 Paris
    Cedex, France, \texttt{assaraf@lct.jussieu.fr}},
Benjamin Jourdain\footnote{Universit\'e Paris-Est, CERMICS (ENPC),
  INRIA, F-77455 Marne-la-Vall\'ee, France, \texttt{jourdain@cermics.enpc.fr}},
Tony Leli\`evre\footnote{Universit\'e Paris-Est, CERMICS (ENPC), INRIA,
  F-77455 Marne-la-Vall\'ee, France, \texttt{lelievre@cermics.enpc.fr}},
Rapha\"el Roux\footnote{Laboratoire de Probabilit\'es et Mod\`eles Al\'eatoires, UMR
7599, UPMC, Case 188, 4 pl. Jussieu, F-75252 Paris Cedex 5, France,
\texttt{raphael.roux@upmc.fr}}
}
\title{Computation of sensitivities for the invariant
  measure of a parameter dependent diffusion}
\begin{document}

\maketitle

\abstract{We consider the solution to a stochastic differential equation
  with a drift function which depends smoothly on some real parameter~$\lambda$,
  and admitting a unique
  invariant measure for any value of~$\lambda$ around $\lambda=0$. 
  Our aim is to compute the derivative with
  respect to~$\lambda$ of averages with respect to the invariant
  measure, at $\lambda=0$. We analyze a numerical method which consists in simulating
  the process at $\lambda=0$ together with its derivative
  with respect to $\lambda$ on long time horizon.
  We give sufficient conditions implying uniform-in-time square integrability
  of this derivative. This allows in particular to compute
  efficiently the derivative with respect to~$\lambda$ of the mean of an
  observable through Monte Carlo simulations.

\bigskip
\noindent{\bf Keywords:}
Stochastic differential equations, invariant measure, variance reduction, Feynman-Kac
formulae.
}

\section{Introduction}

We are interested in methods to compute the response of a Brownian
dynamics to an infinitesimal change of a parameter $\lambda \in \R$. More precisely, we
consider the dynamics in $\R^d$:
\begin{equation}\label{eq:EDS_lam}
\left\{
\begin{aligned}
\rmd X_t^\lambda&=F_\lambda(X_t^\lambda)\rmd t+\sqrt2\rmd W_t,\\
X_0^\lambda&=X_0,
\end{aligned}
\right.
\end{equation}
for~$\lambda \in \R$ close to~$0$, where~$(W_t)_{t\geq0}$ is a standard
$d$-dimensional Brownian motion independent of $X_0 \in \R^d$. Note that neither the initial
condition~$X_0$ nor the Brownian motion depend on~$\lambda$. The
family of vector fields~$F_\lambda:\R^d\to\R^d$ is indexed by a real
parameter~$\lambda$. We assume that when~$\lambda=0$, the vector field
derives from some potential energy~$V:\R^d\to\R$, namely
$$F_0=-\nabla V,$$
where~$\nabla$ denotes the gradient operator with respect to the space
variables. For $\lambda$ close to zero, one can think of~$(X_t^\lambda)_{t\geq0}$ as a physical system undergoing
a potential energy~$V$ to which one
applies an external force~$\lambda\dlam F_\lambda$. Here and in all the
following, the notation~$\dlam$ denotes the derivative with
respect to~$\lambda$ computed at~$\lambda=0$.

Concerning the potential $V$, we assume that the following assumption holds.
\begin{hyp}[\hypV]The function $V$ satisfies the following assumptions:
\begin{enumerate}[(i)]
\item $V:\R^d\to\R$ is a $\mathcal C^2$ function such that $x \mapsto
  \nabla^2 V(x)$ is locally Lipschitz.
\item $\displaystyle{\int_{\R^d} \eV = 1}$ and $\displaystyle{\int_{\R^d} |\nabla V|^2(x) \eV < \infty}$.
\item Pathwise existence and uniqueness hold for the process~$(X_t^0)_{t\geq0}$.
\end{enumerate}
\end{hyp}
Since $\nabla V$ is assumed to be locally Lipschitz, pathwise uniqueness is
automatically ensured. Pathwise existence is ensured for instance as soon as
there exists a finite constant $C$ such that
for all $x \in \R^d,$ $\nabla V(x) \cdot x \le C (1 + |x|^2)$.

At~$\lambda=0$, the dynamics~\eqref{eq:EDS_lam} is of the following
gradient form
\begin{equation}\label{eq:EDS}
\rmd X_t^0=-\nabla V(X_t^0)\rmd t+\sqrt2\rmd W_t.
\end{equation}
Under the above assumptions, it can be checked that~$\eV$ is the
unique invariant probability measure (see Lemma~\ref{lem:erggen} below) denoted in the following:
$$\rmd \pi_0 = \eV.$$
{Then, from} classical results in ergodic theory, for any~$f$
in~$\LL^1(\pi_0)$, almost surely,
\begin{equation}\label{eq:erg1}
\lim_{t \to \infty} \frac1t\int_0^tf(X^0_s)\rmd s = \int_\Rd f \rmd\pi_0.
\end{equation}

Let us now introduce the assumptions we need on the drift~$(F_\lambda)_{\lambda\in\R}$.
\begin{hyp}[\hypcadre]
There exists $\lambda_0 >0$ such that,
for all $\lambda \in [0,\lambda_0]$,
\begin{enumerate}[(i)]
\item The function $F_\lambda-F_0$ is bounded by~$C\lambda$
  for some constant~$C$ not depending on~$x$. Moreover,  as $\lambda
  \to 0^+$, $\frac{F_\lambda(x)-F_0(x)}{\lambda}$ converges locally
  uniformly for $x\in\R^d$ to some limit $\dlam F_\lambda$. Note
  that~$\dlam F_\lambda$ is bounded by~$C$.
\item The function~$x\mapsto F_\lambda(x)$ is locally Lipschitz.
  The function~$x\mapsto\dlam F_\lambda(x)$ is differentiable on~$\Rd$
  and~$\nabla \cdot\dlam F_\lambda$ is
  in~$\LL^2(\pi_0)$.
\end{enumerate}
\end{hyp}
Under these assumptions, we will show (see Lemma~\ref{lem:erggen} below) that the
dynamics~\eqref{eq:EDS_lam} is ergodic with respect to a probability
measure $\pi_\lambda$: for any $f \in \LL^1(\pi_\lambda)$, almost surely,
\begin{equation}\label{eq:erg2}
\lim_{t \to \infty} \frac1t\int_0^tf(X^\lambda_s)\rmd s = \int_\Rd f \rmd\pi_\lambda.
\end{equation}

The aim of this paper is to study the quantity: for a given observable $f$,
\begin{equation}\label{eq:but}
\lim_{\lambda\rightarrow0}
\frac{\int_\Rd f\rmd\pi_\lambda-\int_\Rd f\rmd\pi_0}\lambda
=\dlam\left(\int_\Rd f\rmd\pi_\lambda\right).
\end{equation}
In particular, we will exhibit sufficient conditions for the existence
of this derivative, derive various explicit formulae for this
quantity, and discuss numerical techniques in order to
approximate it.

The estimation of derivatives of the form~\eqref{eq:but} is useful
in various applications, in particular in molecular simulations (see
for example the recent work~\cite{warren-allen-12}): optimization procedure to fit a force field to
some observations, study of phase transitions,
estimate of forces in Variational Monte Carlo methods (see~\cite{ACK-11}), or computation of
transport coefficients. Transport coefficients are
computed as the ratio of
the magnitude of the response of the system submitted to a
perturbation in its steady-state to the magnitude of the
perturbation. These coefficients are
related to macroscopic properties of the system through fluctuation
dissipation theorems~\cite{chandler-87,evans-morriss-08}. Examples include the mobility or the thermal conductivity.

It is well-known that it is possible to approximate
$\dlam\left(\int_\Rd f\rmd\pi_\lambda\right)$ by considering a
simulation at $\lambda=0$. For example, the
celebrated Green-Kubo formula~\cite{chandler-87,evans-morriss-08}
writes (see Theorem~\ref{theo:GK} in Section~\ref{sect:GK}  for a
proof in our specific context):
$$    \dlam\int_\Rd f\rmd\pi_\lambda
    =\int_0^\infty\E_{\pi_0}\left[
      f(X_0)\left(\nabla V\cdot\dlam F_\lambda-\nabla\cdot \dlam F_\lambda\right)(X_s^0)
    \right]\rmd s$$
where the subscript $\pi_0$ indicates that the initial condition $X_0$
is distributed according to $\pi_0$.
The derivative $\dlam\left(\int_\Rd f\rmd\pi_\lambda\right)$ can thus
be approximated by considering infinite-time integrals of
auto-correlation functions for the stationary process at $\lambda=0$. This formula can be used to approximate
$\dlam\left(\int_\Rd f\rmd\pi_\lambda\right)$ numerically, which
requires at least in some cases to be careful when choosing the
truncation time in the integral, see for example~\cite{chen-zhang-li-10}. Let us also
mention another technique discussed in ~\cite{warren-allen-12}, based
on the use of Malliavin weights and the Bismut Elworthy Li formula
(see~\cite{bally-bavouzet-messaoud-07,bismut-84,elworthy-li-94}).

In this work, we are interested in so-called non-equilibrium molecular
dynamics (NEMD) methods
which consists in simulating two trajectories
with $\lambda=0$ and $\lambda=\varepsilon$ small, and then considering
the finite difference when $\varepsilon \to 0$ (see for
example~\cite{ciccotti-jacucci-75,ciccotti-kapral-sergi-05}):
$$\dlam\left(\int_\Rd f\rmd\pi_\lambda\right) \simeq 
\frac{\frac{1}{t} \int_0^t f(X^\varepsilon_s) \, ds - \frac{1}{t}
  \int_0^t f(X^0_s) \, ds}{\varepsilon} \text{ when $\varepsilon \to
  0$ and $t \to \infty$.}
$$
Note that the consistency of this estimate is based on the ergodic
properties~\eqref{eq:erg1} and~\eqref{eq:erg2}.
To reduce the variance of the computation, it is natural to use the
same driving Brownian motion for the two processes $(X^\varepsilon_s)_{s \ge
  0}$ and $(X^0_s)_{s \ge 0}$ (see~\cite{ciccotti-jacucci-75}) and we
therefore end up with the natural following estimate:
\begin{equation}\label{eq:estim}
\dlam\left(\int_\Rd f\rmd\pi_\lambda\right) \simeq
\frac1t\int_0^t\dlam (f(X_s^\lambda))\rmd s
\text{ when $t \to \infty$.}
\end{equation}
As will be shown below (see Proposition~\ref{proptt}), it is easy to simulate $\dlam
(f(X_t^\lambda))$ by using the formula 
$$\dlam
(f(X_t^\lambda))=T_t \cdot \nabla f(X^0_t)$$
where the so-called tangent vector $T_t \in \Rd$ is defined by
$$T_t=\dlam X^\lambda_t.$$
Indeed, the couple $(X^0_t, T_t)$ is a Markov process which satisfies the
following extended version of the stochastic differential
equation~\eqref{eq:EDS}:
\begin{equation}\label{eq:EDS_extended}
\left\{
\begin{aligned}
\rmd X_t^0&=-\nabla V(X_t^0)\rmd t+\sqrt2\rmd W_t \,,\\
\rmd T_t &= \left( \dlam F_\lambda(X_t^0)-\nabla^2V(X_t^0) T_t\right)
\rmd t\, ,
\end{aligned}
\right.
\end{equation}
with initial conditions $X^0_0=X_0$ and $T_0=0$ (since, we recall, $X_0$ does
not depend on $\lambda$). The estimate~\eqref{eq:estim} thus leads to
a practical numerical method to evaluate the derivative $\dlam \left(\int_\Rd f\rmd\pi_\lambda\right) $.

The main theoretical result of this paper consists in exhibiting
sufficient conditions such that the following equalities hold true
(see Theorem~\ref{theo:interversion}):
\begin{equation}\label{eq:CV_estim_1}
\dlam\left(\int_\Rd f\rmd\pi_\lambda\right) 
=
\lim_{t\rightarrow\infty}\frac1t\int_0^t\dlam (f(X_s^\lambda))\rmd s
~~~\mbox{ a.s.}
\end{equation}
and
\begin{equation}\label{eq:CV_estim_2}
\dlam\left(\int_\Rd
  f\rmd\pi_\lambda\right)=
\lim_{t\rightarrow\infty}\E\left[\dlam (f(X_t^\lambda))\right].
\end{equation}
Therefore, two natural estimators of $\dlam\left(\int_\Rd
  f\rmd\pi_\lambda\right)$ are 
\begin{equation}\label{eq:estim_1}
\frac1t\int_0^t\dlam (f(X_s^\lambda))\rmd s
\end{equation}
and
\begin{equation}\label{eq:estim_2}
\E\left[\dlam (f(X_t^\lambda))\right].
\end{equation}
The second estimator is derived from the expected ergodic property on
the time marginals: $\lim_{t \to \infty} \E(f(X^\lambda_t))=\int_\Rd f
\rmd \pi_\lambda$. For both estimators, 
Theorem~\ref{theo:interversion} can be seen as a rigorous
justification of the
interversion of the derivative $\dlam$ with the limit $\lim_{t \to
  \infty}$ and an average in time for the first estimator and over the
underlying probability space for the second one, since $\dlam\left(\int_\Rd
  f\rmd\pi_\lambda\right)=\dlam\left(\lim_{t\rightarrow\infty}\frac1t\int_0^tf(X_s^\lambda)\rmd
  s\right)$ and $\dlam\left(\int_\Rd
  f\rmd\pi_\lambda\right)=\dlam\left(\lim_{t\to\infty}\E\left[f(X_t^\lambda)\right]\right)$. In
addition, we also study the variance of the random variable
$\dlam(f(X^0_t))=T_t\cdot \nabla f(X^0_t)$ which influences the
statistical errors associated with the two
estimators~\eqref{eq:estim_1} and~\eqref{eq:estim_2}.

The proof of~\eqref{eq:CV_estim_1} and~\eqref{eq:CV_estim_2} is based
on two main ideas. First, the long-time limit (in law) of the couple
$(X^0_t,T_t)$ is identified using a time-reversal argument (see
Lemma~\ref{lem:conv_loi}) in
the spirit of the argument used in~\cite{fontbona-jourdain-14} to study the long-time behavior of two
interacting stochastic vortices. We are then able to identify the
long-time limit of the estimators using the Green-Kubo formula which we prove in our setting in
Section~\ref{sect:GK}. Second, the justification of the interversion of the
derivative with the long-time limit and the integrals requires some integrability results, which are based on the study
of the long-time behaviour of
$\E\left[e^{-\int_0^t\phi(Y_s^x)\rmd s}\right]$ for $\phi=\min
{\rm Spec}(\nabla^2 V)$, where $(Y^x_s)_{ s \ge 0}$
satisfies~\eqref{eq:EDS} with $x$ as an initial condition:
\begin{equation}\label{eq:Ytx}
\left\{
\begin{aligned}
\rmd Y_t^x&= - \nabla V(Y_t^x) \, \rmd t  + \sqrt{2} \rmd W_t,\\
Y_0^x&=x.\\
\end{aligned}
\right.
\end{equation}
Let us emphasize that we prove all these results in a rather general
setting: the state space is non compact (namely $\Rd$), the
coefficients are only assumed to be locally
Lipschitz and the potential $V$ is not necessarily strictly convex.
The study of the long-time behaviour of the couple $(X^0_t,T_t)$ is
very much related to the study of the long-time behaviour of the
couple $(Y^x_t,DY^x_t)$, (see Lemma~\ref{lemderci}) which may be also
useful to analyze other related
numerical methods, see~\cite{tailleur-kurchan-07}.

The paper is organized as follows. In Section~\ref{sect:poincare}, we
give preliminary results on the stochastic differential
equations~\eqref{eq:EDS_lam} and~\eqref{eq:EDS}, in particular on their
ergodic properties and the long-time behaviour of the associated
Kolmogorov equations. In Section~\ref{sect:vecteur_tangent}, we then
introduce the tangent vector~$T_t$ and study its integrability. In
Section~\ref{sect:GK}, we derive and prove finite-time and
infinite-time Green-Kubo formulae. We are then in position to prove
the long-time convergence of the estimators~\eqref{eq:estim_1}
and~\eqref{eq:estim_2} in Section~\ref{sec:main_result}. Finally, the theoretical results are illustrated
through various numerical experiments in Section~\ref{sect:numeric}.

In all the following, we assume that Assumptions~{\bf (\hypV)}
and~{\bf (\hypcadre)} hold, and we do not mention them
explicitly in the statements of the mathematical results.

\section{Preliminary results on~\eqref{eq:EDS_lam} and~\eqref{eq:EDS} and the associated Kolmogorov equations}\label{sect:poincare}

In this section, we introduce partial differential equations related
to the stochastic differential equations~\eqref{eq:EDS_lam} and~\eqref{eq:EDS}, and study their
long-time behaviors. These preliminary results will be crucial to analyze
the numerical methods aimed at evaluating $\dlam\left(\int_\Rd
  f\rmd\pi_\lambda\right)$ that we study.

Let us introduce a few notations. For any positive Borel measure~$\mu$ on~$\R^d$, we denote by~$\LL^2(\mu)$ the space of
real valued measurable functions on~$\R^d$ which are square integrable with respect to~$\mu$.
We denote by~$\LL_0^2(\eV)$ the space of zero-mean square
integrable functions:
\begin{equation}\label{eq:L20}
\LL^2_0\left(\eV\right)
=\left\{f\in\LL^2\left(\eV\right),~\int_\Rd f(x)\eV=0\right\},
\end{equation}
and by~$\HH^1(\eV)$ the first order Sobolev space associated with the
measure~$\eV$:
\[
\HH^1\left(\eV\right)
=\left\{f\in\LL^2\left(\eV\right),~\nabla f\in\LL^2\left(\eV\right)\right\},
\]
where~$\nabla f$ is to be understood in the distributional sense.

\subsection{About the solution to~\eqref{eq:EDS_lam} and the regularity of the law of $X^\lambda_t$}
 
Let us first start by an existence and uniqueness result for the
process $(X^\lambda_t)_{t \ge 0}$ solution to~\eqref{eq:EDS_lam}.
\begin{lem}\label{lem:EDS_lam}
There exists a unique strong solution to~\eqref{eq:EDS_lam}.
\end{lem}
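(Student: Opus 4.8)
The plan is to establish pathwise uniqueness and strong existence separately, then invoke the Yamada--Watanabe theorem (or simply note that pathwise uniqueness plus weak existence gives strong existence). Pathwise uniqueness is the easy half: by Assumption~\hypcadre(ii), $x \mapsto F_\lambda(x)$ is locally Lipschitz, so for two solutions $X^\lambda, \tilde X^\lambda$ driven by the same Brownian motion with the same initial condition, a standard localization argument (stopping at the first exit time $\tau_n$ from the ball $B(0,n)$, applying Gr\"onwall to $\E[\sup_{s\le t\wedge\tau_n}|X^\lambda_s-\tilde X^\lambda_s|^2]$ using the local Lipschitz constant on $B(0,n)$, and letting $n\to\infty$) shows the two solutions coincide up to the explosion time. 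So the only real content is ruling out explosion and getting a global solution.

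For that, I would write $F_\lambda = F_0 + (F_\lambda - F_0) = -\nabla V + (F_\lambda-F_0)$ and exploit the two ingredients already at hand: first, Assumption~\hypV guarantees pathwise existence and uniqueness for $(X^0_t)_{t\ge 0}$, and the remark following Assumption~\hypV notes this holds in particular under the one-sided bound $\nabla V(x)\cdot x \le C(1+|x|^2)$; second, Assumption~\hypcadre(i) gives $\|F_\lambda - F_0\|_\infty \le C\lambda$, i.e. the perturbation is \emph{globally bounded}. Consequently $F_\lambda(x)\cdot x = -\nabla V(x)\cdot x + (F_\lambda(x)-F_0(x))\cdot x \le C(1+|x|^2) + C\lambda |x| \le C'(1+|x|^2)$, so $F_\lambda$ satisfies the same one-sided linear growth condition as $F_0$. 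The standard non-explosion criterion (Lyapunov function $|x|^2$, or Khasminskii's test): applying It\^o to $1+|X^\lambda_{t\wedge\tau_n}|^2$ and using this one-sided bound yields $\E[1+|X^\lambda_{t\wedge\tau_n}|^2] \le (1+\E|X_0|^2)e^{C't}$, hence $\PP(\tau_n \le t)\to 0$ and $\tau_\infty = \lim_n \tau_n = \infty$ a.s. This gives a global weak (indeed, strong, since the coefficients are non-anticipative and the equation is driven by the given $W$) solution; combined with pathwise uniqueness it is the unique strong solution.

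An alternative, perhaps cleaner, route bypassing an independent non-explosion argument: since $F_\lambda - F_0$ is bounded and (locally Lipschitz, hence) continuous, one can use Girsanov's theorem. Under the measure $\QQ$ with density $\exp\bigl(\frac{1}{\sqrt 2}\int_0^t (F_\lambda-F_0)(X^0_s)\cdot dW_s - \frac14\int_0^t|F_\lambda-F_0|^2(X^0_s)\,ds\bigr)$ --- which is a genuine probability measure on $\mathcal C([0,t],\Rd)$ by Novikov's criterion, the exponent's quadratic variation being bounded by $\frac{C^2\lambda^2 t}{2}$ --- the process $X^0$ solves \eqref{eq:EDS_lam} with a $\QQ$-Brownian motion. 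Pulling this back gives a strong solution up to any finite horizon $t$, and since $t$ is arbitrary and solutions are consistent by uniqueness, one patches them into a global strong solution on $[0,\infty)$. I expect the main (mild) obstacle to be purely bookkeeping: carrying the localization/stopping-time argument carefully enough that the local Lipschitz property of $F_\lambda$ --- which is all Assumption~\hypcadre(ii) provides, with no global bound on the Lipschitz constant --- genuinely suffices for pathwise uniqueness up to explosion, and then verifying that the non-explosion estimate is uniform enough (in $n$) to conclude. Neither step is deep; the lemma is essentially a packaging of the classical SDE well-posedness theory adapted to the one-sided-growth-plus-bounded-perturbation structure built into the assumptions.
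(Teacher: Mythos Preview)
Your second route---Girsanov to transfer weak existence from $X^0$ to $X^\lambda$, Novikov being satisfied because $F_\lambda-F_0$ is bounded by Assumption~{\bf(\hypcadre)}-(i), then pathwise uniqueness from local Lipschitzness of $F_\lambda$, then Yamada--Watanabe---is exactly the paper's proof.

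Your first route, however, does not go through under the stated hypotheses. Assumption~{\bf(\hypV)}-(iii) merely \emph{asserts} pathwise existence and uniqueness for $(X^0_t)_{t\ge 0}$ as a black-box hypothesis; the one-sided condition on $\nabla V(x)\cdot x$ is offered in the remark following {\bf(\hypV)} only as one \emph{example} of a sufficient condition, not as part of the assumption itself. You therefore cannot invoke it to derive a one-sided growth bound on $F_\lambda(x)\cdot x$ and run a Lyapunov non-explosion argument. (There is also a sign to watch: non-explosion via the Lyapunov function $1+|x|^2$ requires an upper bound on $F_\lambda(x)\cdot x=-\nabla V(x)\cdot x+\ldots$, i.e.\ a \emph{lower} bound on $\nabla V(x)\cdot x$, so be careful how you read the paper's remark.) The Girsanov route is precisely what sidesteps this: it treats {\bf(\hypV)}-(iii) as an oracle for global existence of $X^0$ and needs nothing more than the boundedness of the perturbation $F_\lambda-F_0$.
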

\begin{proof}
The proof is rather standard. The existence of a weak solution
to~\eqref{eq:EDS_lam} is obtained thanks to the Girsanov theorem and
the existence assumption for the process $(X^0_t)_{t \ge
  0}$ (see Assumption~{\bf (\hypV)}-(iii)). Indeed,
\begin{align*}
\rmd X^0_t
&=-\nabla V(X^0_t) \, \rmd t + \sqrt{2} \, \rmd W_t\\
&=F_\lambda(X^0_t) \, \rmd t + \sqrt{2} \,\rmd \left( \int_0^t \frac{1}{\sqrt{2}}(F_0-F_\lambda)(X^0_s) \,
  \rmd s + W_t \right).
\end{align*}
Indeed, under the probability $\QQ$ such that, for all $t \ge 0$,
($({\mathcal F}_t)_{t \ge 0}$ being the natural filtration for
$(W_t)_{t \ge 0}$),
$$\frac{\rmd \QQ}{\rmd \PP}\Big|_{{\mathcal F}_t}=\exp\left(-\int_0^t
  \frac{1}{\sqrt{2}}(F_0-F_\lambda)(X^0_s) \rmd W_s - \frac{1}{4} \int_0^t
|F_0-F_\lambda|^2(X^0_s) \, \rmd s\right)
$$
the process $\widetilde{W}_t=\int_0^t \frac{1}{\sqrt{2}}(F_0-F_\lambda)(X^0_s) \,
  \rmd s + W_t$ is a Brownian motion and therefore the triple
  $(X^0_t,\widetilde{W}_t,\QQ)$ is a weak solution
  to~\eqref{eq:EDS_lam}. Note that thanks to the Assumption~{\bf (\hypcadre)}-(i), the Novikov conditions are
  satisfied which justifies the use of the Girsanov
  theorem~\cite[Section 3.5-D]{karatzas-shreve-88}.

Moreover, it is standard to check that trajectorial uniqueness holds
for the stochastic differential equation~\eqref{eq:EDS_lam}, since
from Assumption~{\bf (\hypcadre)}-(ii),
$x \mapsto F_\lambda(x)$ is locally Lipschitz (see~\cite[Section 5.2-B, Theorem 2.5]{karatzas-shreve-88}).
 As a consequence, by the Yamada-Watanabe theorem (see for
example~\cite[Section 5.3-D]{karatzas-shreve-88}), the  stochastic differential
equation~\eqref{eq:EDS_lam}  admits a unique strong solution.
\end{proof}

In the sequel, we will need some results about the Radon-Nikodym
density of the distribution of the process with respect to the equilibrium
measure. These properties are given in the two following Lemmas.

\begin{lem}\label{lem:p}
Whatever the choice of $X_0$, for each $\lambda\in[0,\lambda_0]$ and
$t>0$, $X^\lambda_t$ admits a positive density with respect to the
Lebesgue measure on $\R^d$.

Let us now consider $\lambda=0$ and $(Y_t^{x})_{t \ge 0}$ solution to~\eqref{eq:Ytx}.
For all $t > 0$, the law of $Y_t^{x}$ admits a density $y \mapsto p(t,x,y)$
with respect to the Lebesgue measure which satisfies the
reversibility property:
\begin{equation}\label{eq:rev}
\forall t > 0, e^{-V(x)}
p(t,x,y)=e^{-V(y)} p(t,y,x), \text{$\rmd x \otimes \rmd y$-a.e.}.
\end{equation}
\end{lem}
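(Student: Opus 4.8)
The plan is to establish existence of a density in two stages: first a qualitative existence-and-positivity statement valid for every $\lambda\in[0,\lambda_0]$ and every starting distribution $X_0$, then the sharper statement for $\lambda=0$ with the explicit reversibility identity. For the first part, I would invoke the Girsanov construction already used in Lemma~\ref{lem:EDS_lam}: under the equivalent measure $\QQ$ the process $X^\lambda_t$ solves an SDE driven by a $\QQ$-Brownian motion with drift $F_0=-\nabla V$, so it suffices to treat the case $\lambda=0$, and the general $\lambda$ case follows because $\PP$ and $\QQ$ are mutually absolutely continuous (hence the law of $X^\lambda_t$ under $\PP$ is equivalent to its law under $\QQ$, which has a density). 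For $\lambda=0$, one can either quote a standard hypoellipticity/parametrix result (the generator $\Delta-\nabla V\cdot\nabla$ is uniformly elliptic with locally Lipschitz, hence locally bounded, coefficients, so the transition kernel has a density by the classical results on fundamental solutions of parabolic equations), or use the local Gaussian lower bounds of Aronson-type. Positivity of the density then follows from the strong Feller property together with irreducibility, or directly from the Gaussian lower bound; conditioning on $X_0$ via $\int p(t,x,y)\,\mathcal L(X_0)(\rmd x)$ transfers positivity from the fixed-start case to arbitrary $X_0$.

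For the reversibility identity~\eqref{eq:rev}, the key point is that at $\lambda=0$ the diffusion is reversible with respect to $\pi_0=\eV$, which is a standard consequence of the generator $\mathcal L = \Delta - \nabla V\cdot\nabla$ being self-adjoint on $\LL^2(\eV)$: for all smooth compactly supported $g,h$ one has $\int (\mathcal L g)\, h\,\eV = \int g\,(\mathcal L h)\,\eV = -\int \nabla g\cdot\nabla h\,\eV$. I would make this rigorous by showing that the semigroup $P_t$ generated by $\mathcal L$ is self-adjoint on $\LL^2(\eV)$, so that $\int (P_t g)(x)\, h(x)\,\eV = \int g(x)\,(P_t h)(x)\,\eV$ for all $g,h\in\LL^2(\eV)$. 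Writing $P_t g(x)=\int p(t,x,y) g(y)\,\rmd y$ and expanding both sides gives
\[
\int\!\!\int p(t,x,y) g(y) h(x)\, e^{-V(x)}\,\rmd x\,\rmd y
= \int\!\!\int p(t,y,x) g(y) h(x)\, e^{-V(y)}\,\rmd x\,\rmd y,
\]
and since $g,h$ are arbitrary (say ranging over a dense countable family of continuous compactly supported functions) this forces $e^{-V(x)} p(t,x,y) = e^{-V(y)} p(t,y,x)$ for $\rmd x\otimes\rmd y$-almost every $(x,y)$, for each fixed $t>0$.

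The main obstacle I anticipate is not the formal computation but justifying the self-adjointness of $P_t$ on $\LL^2(\eV)$ and, more delicately, that $\pi_0$ is genuinely invariant and the semigroup is well-defined on all of $\LL^2(\eV)$ in a non-compact setting with only locally Lipschitz $\nabla V$ and no convexity. One must rule out explosion/loss of mass (so that $\int p(t,x,y)\,\rmd y = 1$), which uses Assumption~(\hypV)(iii) and the sublinear bound mentioned after it; and one needs enough regularity to identify the minimal/Friedrichs extension of $\mathcal L$ with the generator of the transition semigroup. I would handle this by first proving $\eV$ is invariant via the Fokker-Planck equation (integrating $\partial_t p = \mathcal L^* p$ against test functions and using that $\mathcal L^*(\eV) = 0$ in the distributional sense because $F_0=-\nabla V$ is a gradient), then deducing that $P_t$ extends to a contraction semigroup on $\LL^2(\eV)$, and finally getting self-adjointness from the symmetry of the bilinear form $(g,h)\mapsto \int\nabla g\cdot\nabla h\,\eV$ on $\HH^1(\eV)$ — this is presumably packaged in the forthcoming Lemma~\ref{lem:erggen}, which I would cite. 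Everything else (parabolic regularity, positivity) is classical and I would only sketch it with appropriate references.
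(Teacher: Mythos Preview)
Your strategy is sound but takes a genuinely different route from the paper's, and is in fact heavier. The paper does not invoke parabolic fundamental solutions or Dirichlet-form theory at all. For the first assertion it applies Girsanov to remove the \emph{entire} drift (not just $F_\lambda-F_0$): writing
\[
\E[\psi(X_t^\lambda)]
  =\E\!\left[
    \psi(X_0+\sqrt2W_t)\,
    e^{\frac1{\sqrt2}\int_0^tF_\lambda(X_0+\sqrt2W_s)\rmd W_s
      -\frac14\int_0^t|F_\lambda(X_0+\sqrt2W_s)|^2\rmd s}
  \right],
\]
existence of a positive density is immediate since $X_0+\sqrt2 W_t$ has one. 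For $\lambda=0$ it then rewrites the exponential martingale via It\^o's formula applied to $V$, conditions on the endpoint $W_t$ using the Brownian bridge $B^{x,y}$, and obtains the explicit formula
\[
p(t,x,y)=e^{-\frac12 V(y)}e^{\frac12 V(x)}\,g(x,y)\,\frac{e^{-|x-y|^2/(4t)}}{(4\pi t)^{d/2}},\qquad
g(x,y)=\E\!\left[e^{\frac14\int_0^t(2\Delta V-|\nabla V|^2)(B^{x,y}_s)\rmd s}\right].
\]
Reversibility~\eqref{eq:rev} then follows in one line from $g(x,y)=g(y,x)$, which is just the time-symmetry of the bridge: $(B^{x,y}_s)_{s\in[0,t]}\stackrel{\mathcal D}{=}(B^{y,x}_{t-s})_{s\in[0,t]}$.

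Your semigroup/self-adjointness argument is perfectly valid in principle, and is the standard ``abstract'' proof of detailed balance; what it buys is generality (it does not need the explicit bridge representation). What the paper's approach buys is that it is entirely elementary, gives positivity and an explicit density formula for free, and avoids the delicate identification of the Friedrichs extension with the probabilistic generator that you correctly flag as the main obstacle. Two small cautions on your plan: Lemma~\ref{lem:erggen} is about ergodicity under a Poincar\'e assumption and does not supply the semigroup identification you need; and be careful not to lean on Proposition~\ref{prop:feynman-kac} for this identification, since its proof uses Lemma~\ref{lem:p}.
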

\begin{rem}
   A well-known corollary of~\eqref{eq:rev} is that if $X_0$ is
distributed according to $\pi_0$, then the process $(X^0_t)_{t \ge 0}$
solution to~\eqref{eq:EDS} is reversible: for any $t>0$,
$$ (X^0_s)_{s \in [0,t]} \text{ has the same law as } (X^0_{t-s})_{s \in [0,t]}.$$
\end{rem}
\begin{proof}
Let $\psi:\R^d\to\R$ be a bounded measurable function.
  By the Girsanov theorem, $$\E[\psi(X_t^\lambda)]
  =\E\left[
    \psi(X_0+\sqrt2W_t)
    e^{
      \frac1{\sqrt2}\int_0^tF_\lambda(X_0+\sqrt2W_s)\rmd W_s
      -\frac14\int_0^t|F_\lambda(X_0+\sqrt2W_s)|^2\rmd s
    }
  \right].$$Here, the assumptions of the Girsanov theorem are satisfied. Indeed,
according to~\cite[Theorem~2.1]{rydberg-97} these assumptions are
satisfied if global-in-time existence and uniqueness in law hold for
both Equation~\eqref{eq:EDS_lam} (see Lemma~\ref{lem:EDS_lam}) and its driftless counterpart
\[
\rmd Y_t=\sqrt2\rmd W_t,
\]
which is a mere Brownian motion. The first assertion of Lemma~\ref{lem:p}
is thus proved. For $\lambda=0$, we follow~\cite[page 91]{gihman-skorohod-72} to deduce that
  \begin{align}
  \E[\psi(X_t^0)]
  &=\E\left[
    \psi(X_0+\sqrt2W_t)
    e^{
      -\frac12V(X_0+\sqrt2W_t)
    }
    e^{ \frac12V(X_0)}
    e^{
        \frac14\int_0^t\left(2\Delta V-|\nabla V|^2\right)(X_0+\sqrt2W_s)\rmd s
      }
  \right]. \label{eq:girsanov}
\end{align}
Now, if one considers the
Brownian bridge: $$\forall s \in [0,t], \, B_s^{x,y}=x+ \sqrt{2}W_s + \frac{s}{t} \left( y - x -
  \sqrt{2}W_t\right)$$ one obtains by conditioning with respect to
$W_t$:
$$\E(\psi(Y_t^{x}))= \E \left( \psi(x+\sqrt{2} W_t)
  e^{- \frac12 V(x+\sqrt{2}W_t)} e^{\frac12 V(x)}   g(x,x+\sqrt{2}
  W_t)  \right)$$
where $g(x,y)=\E \left( e^{
        \frac14\int_0^t\left(2\Delta V-|\nabla
          V|^2\right)(B^{x,y}_s)\rmd s
      }\right)$.
This shows that $Y_t^{x}$ admits a density with respect to the
Lebesgue measure:
\begin{equation}\label{eq:pn}
p(t,x,y)=e^{- \frac12 V(y)} e^{\frac12 V(x)}   g (x,y)
\frac{e^{-\frac{|x-y|^2}{4t}}}{(4 \pi t)^{d/2}}.
\end{equation}
{F}rom the formula~\eqref{eq:pn}, it is straightforward to check that
\begin{equation}\label{eq:rev_approx}
e^{-  V(x)} p(t,x,y) =  e^{-  V(y)} p(t,y,x)
\end{equation}
by using the fact that $g(x,y)=g(y,x)$ which is a direct consequence
of the fact that $(B^{x,y}_s)_{s \in [0,t]}$ has the same law as
$(B^{y,x}_{t-s})_{s \in [0,t]}$. This concludes the proof of Lemma~\ref{lem:p}.
\end{proof}

Let us now state a few additional results on
the dynamics when $\lambda=0$ and when $(X^0_t)_{t \geq 0}$ starts from a general random variable instead of a
deterministic point.
\begin{lem}\label{lem:densite_X_t}
Let~$X_0$ be distributed according to some probability measure~$\mu_0$, and
let~$(X_t^0)_{t\geq0}$ evolve according to
Equation~\eqref{eq:EDS}.  Denote by~$\mu_t$ the
distribution of the random variable~$X_t^0$.

For all $t>0$, $\mu_t$ has a density $r(t,\cdot )$  with
respect to $d\pi_0=e^{-V(x)} \, \rmd x$:
$$\mu_t(\rmd x) = r(t,x) e^{-V(x)} \, \rmd x.$$

Moreover, for $0 < s \le t$, for $\rmd x$-a.e. $x \in \R^d$,
\begin{equation}\label{eq:rtx}
r(t,x) = \E(r(s,Y_{t-s}^x))
\end{equation}
where $Y_t^x$ satisfies~\eqref{eq:Ytx}. Equation~\eqref{eq:rtx} holds
for $s=0$ if $\mu_0$ has a density $r(0,\cdot)$ with
respect to $d\pi_0=e^{-V(x)} \, \rmd x$.

If there exists $s \ge 0$ such that $\|r(s,\cdot)\|_{\LL^\infty} <
\infty$, then, for all $t \ge s$,
  \begin{equation}\label{eq:max_princ}
  \essinf_{x\in\Rd}r(s,x)\
  \leq\essinf_{x\in\Rd}r(t,x)
  \leq\esssup_{x\in\Rd}r(t,x)
  \leq\esssup_{x\in\Rd}r(s,x).
  \end{equation}
Finally, for any $q \in [1,\infty)$,  if there exists $s \ge 0$ such that $r(s,\cdot) \in \LL^q(\eV)$, then
for all $t \ge s$, $r(t,\cdot) \in \LL^q(\eV)$ and
\begin{equation}\label{eq:decroit_L2}
\forall t \ge s, \, \|r(t,\cdot)\|_{\LL^q(\eV)} \le \|r(s,\cdot)\|_{\LL^q(\eV)}.
\end{equation}
\end{lem}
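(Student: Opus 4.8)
The plan is to deduce every assertion from Lemma~\ref{lem:p}, and more precisely from the reversibility relation~\eqref{eq:rev} for the transition density $p$ together with the Markov property of $(X^0_t)_{t\geq0}$: once the representation formula~\eqref{eq:rtx} is established, the maximum principle~\eqref{eq:max_princ} and the $\LL^q$ bound~\eqref{eq:decroit_L2} will follow by feeding into it, respectively, pointwise bounds and Jensen's inequality.

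First I would note that, for $t>0$, Lemma~\ref{lem:p} provides a density $p(t,y,\cdot)$ of the law of $Y^y_t$ with respect to Lebesgue measure, so that by Tonelli's theorem $\mu_t(\rmd x)=\left(\int_\Rd p(t,y,x)\,\mu_0(\rmd y)\right)\rmd x$; since $e^{-V}>0$ everywhere, $\mu_t$ then has a density $r(t,\cdot)=\rmd\mu_t/\rmd\pi_0$ with respect to $\rmd\pi_0=e^{-V(x)}\,\rmd x$, satisfying $\int_\Rd r(t,x)\,e^{-V(x)}\,\rmd x=1$. To obtain~\eqref{eq:rtx} for $0<s\leq t$ (and for $s=0$ when $\mu_0$ already has a density $r(0,\cdot)$, using $\mu_0(\rmd x)=r(0,x)\,e^{-V(x)}\,\rmd x$), I would apply the Markov property together with the previous identity for $\mu_s$ to write, for any bounded measurable $\psi:\Rd\to\R$,
\[
\int_\Rd\psi\,\rmd\mu_t=\int_\Rd\E\left[\psi(Y^x_{t-s})\right]r(s,x)\,e^{-V(x)}\,\rmd x=\int_\Rd\int_\Rd\psi(y)\,p(t-s,x,y)\,e^{-V(x)}\,r(s,x)\,\rmd y\,\rmd x,
\]
Fubini being licit since the double integral of $|\psi|$ against the positive kernel is bounded by $\|\psi\|_{\LL^\infty}$ (and this also shows $\int_\Rd p(t-s,y,x)\,r(s,x)\,\rmd x<\infty$ for a.e.\ $y$). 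Using~\eqref{eq:rev} with $t$ replaced by $t-s$ to turn $p(t-s,x,y)\,e^{-V(x)}$ into $p(t-s,y,x)\,e^{-V(y)}$ and integrating in $x$ first, the right-hand side becomes $\int_\Rd\psi(y)\left(\int_\Rd p(t-s,y,x)\,r(s,x)\,\rmd x\right)e^{-V(y)}\,\rmd y=\int_\Rd\psi(y)\,\E\left[r(s,Y^y_{t-s})\right]e^{-V(y)}\,\rmd y$, which identifies $r(t,y)=\E[r(s,Y^y_{t-s})]$ for $\rmd y$-a.e.\ $y$, i.e.~\eqref{eq:rtx}.

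Assuming next that $\|r(s,\cdot)\|_{\LL^\infty}<\infty$, I would use that $Y^y_{t-s}$ has a density with respect to Lebesgue measure, so that $r(s,Y^y_{t-s})$ lies $\PP$-a.s.\ between $\essinf_{x\in\Rd}r(s,x)$ and $\esssup_{x\in\Rd}r(s,x)$; taking expectations in~\eqref{eq:rtx} gives the same bounds for $r(t,y)$ at $\rmd y$-a.e.\ $y$, hence $\essinf_{x\in\Rd}r(s,x)\leq\essinf_{x\in\Rd}r(t,x)$ and $\esssup_{x\in\Rd}r(t,x)\leq\esssup_{x\in\Rd}r(s,x)$, which with the trivial middle inequality is~\eqref{eq:max_princ}. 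For $q\in[1,\infty)$, Jensen's inequality applied to the convex function $u\mapsto|u|^q$ and the probability density $p(t-s,y,\cdot)$ gives $|r(t,y)|^q\leq\int_\Rd|r(s,x)|^q\,p(t-s,y,x)\,\rmd x$; integrating in $y$ against $e^{-V(y)}\,\rmd y$, exchanging the integrals by Tonelli (all integrands are nonnegative) and using~\eqref{eq:rev} together with $\int_\Rd p(t-s,x,y)\,\rmd y=1$ to get $\int_\Rd p(t-s,y,x)\,e^{-V(y)}\,\rmd y=e^{-V(x)}$, one obtains $\|r(t,\cdot)\|_{\LL^q(\eV)}^q\leq\|r(s,\cdot)\|_{\LL^q(\eV)}^q$, which is~\eqref{eq:decroit_L2} and in particular yields $r(t,\cdot)\in\LL^q(\eV)$ as soon as the right-hand side is finite.

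I do not anticipate a deep obstacle: the whole argument is driven by the reversibility identity~\eqref{eq:rev}. The points needing care are the measure-theoretic bookkeeping to apply Fubini--Tonelli at the level of the transition density and to legitimately read $r(t,\cdot)$ off as a genuine density rather than merely identifying the measures $\mu_t$; the fact that~\eqref{eq:rtx}, hence the subsequent bounds, hold only almost everywhere, so that the $\essinf$/$\esssup$ assertions must be interpreted accordingly; and verifying that the extension to $s=0$ uses exactly the stated hypothesis that $\mu_0$ has a density, which only affects the initial line $\int_\Rd\E[\psi(Y^x_t)]\,\mu_0(\rmd x)=\int_\Rd\E[\psi(Y^x_t)]\,r(0,x)\,e^{-V(x)}\,\rmd x$ of the computation above.
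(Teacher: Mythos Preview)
Your proposal is correct and follows essentially the same approach as the paper's proof: both derive the representation formula~\eqref{eq:rtx} by combining the Markov property with the reversibility identity~\eqref{eq:rev} from Lemma~\ref{lem:p}, then read off the maximum principle directly from this representation and obtain the $\LL^q$ contraction via Jensen's inequality together with another use of~\eqref{eq:rev}. Your writeup is slightly more explicit about the Fubini--Tonelli justifications and the almost-everywhere caveats, but the structure and key ideas are the same.
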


\begin{proof}
Let $\psi:\R^d\to\R$ be a bounded measurable function. By
conditioning with respect to~$X_0$, and using the function $p(t,x,y)$
introduced in Lemma~\ref{lem:p}
\begin{align}
\E(\psi(X^0_t))
&= \int \int \psi(y) p(t,x,y) \, \rmd y \, {\rmd}\mu_0(x) \label{eq:densite_X_t}\\
&= \int \psi(y)  e^{-  V(y)} e^{  V(y)} \int p(t,x,y) {\rmd}\mu_0(x) \,
\rmd y. \nonumber
\end{align}
This shows that the law $\mu_t$ of $X^0_t$ is 
$r(t,y) e^{-V(y)} \, \rmd y$ with
$$r(t,y)= e^{  V(y)} \int p(t,x,y) {\rmd}\mu_0(x).$$

Likewise, for any $s \in [0,t]$, by
conditioning with respect to $X^0_s$, it is easy to check that
$\E(\psi(X^0_t))=\int \psi(y)  e^{-  V(y)} e^{  V(y)} \int p(t-s,x,y)
{\rmd}\mu_s(x) \, \rmd y$. Now by taking $0 < s \le t$ and using the
reversibility property~\eqref{eq:rev}, we get
\begin{align*}
\E(\psi(X^0_t))
&=\int \psi(y)  e^{-  V(y)} \int e^{  V(y)} p(t-s,x,y)
{\rmd}\mu_s(x) \, \rmd y\\
&=\int \psi(y)  e^{-  V(y)} \int e^{  V(x)}  p(t-s,y,x)
r(s,x) \eV \, \rmd y.
\end{align*}
Since the law of $X^0_t$ is 
$r(t,y) e^{-V(y)} \, \rmd y$, this shows that, 
\begin{equation}\label{eq:rty}
\rmd y\text{-a.e.}, \, r(t,y)= \int p(t-s,y,x)   r(s,x) \, \rmd x = \E(r(s,Y_{t-s}^y)).
\end{equation}
This integral is well defined since $x \mapsto p(t-s,y,x)$ and $x
\mapsto r(s,x)$ are non negative measurable functions. This shows formula~\eqref{eq:rtx}.

The maximum principle~\eqref{eq:max_princ} is then a direct consequence from this
representation formula~\eqref{eq:rty}.

Finally, if $r(s,\cdot) \in \LL^q(\eV)$, then, for $t > s$, $r(t,\cdot) \in \LL^q(\eV)$ since (using the fact
that $x \mapsto p(t-s,y,x)$ is a probability density function and the
reversibility property~\eqref{eq:rev})
\begin{align*}
\int |r(t,y)|^q e^{-V(y)} \, \rmd y
&=
\int \left|\int p(t-s,y,x)   r(s,x) \, \rmd x\right|^q e^{-V(y)} \, \rmd y\\
&\le
\int \int p(t-s,y,x)  | r(s,x) |^q \, \rmd x \, e^{-V(y)} \, \rmd y\\
&=\int \int p(t-s,x,y) e^{-V(x)} | r(s,x) |^q \, \rmd x \, \rmd y\\
&=\int e^{-V(x)} | r(s,x) |^q \, \rmd x  < \infty.
\end{align*}

\end{proof}

\begin{rem}
In Appendix~\ref{sec:hypdelta}, we discuss a stronger 
assumption on $V$ under which we are able to get more precise bounds
on $p(t,x,y)$.
\end{rem}

\subsection{A Feynman-Kac formula and the Fokker-Planck equation}

For two measurable functions~$f:\R^d\to\R$ and~$\phi:\R^d\to\R$,
with~$\phi$ locally integrable with respect to the Lebesgue measure, consider the
Kolmogorov equation associated with the infinitesimal generator of the
stochastic differential equation~\eqref{eq:EDS_lam}:
\begin{equation}\label{eq:EDP_phi}
\left\{
\begin{aligned}
  \partial_tu(t,x)
  &=\Delta u(t,x)
  +F_\lambda(x)\cdot\nabla u(t,x)
  -\phi(x)u(t,x),\quad
  t>0, \, x\in\Rd,\\
  u(0,x)&=f(x), \quad
 x\in\Rd.
\end{aligned}
\right.
\end{equation}
In all this section, $\lambda$ is a fixed parameter in the interval $[0,\lambda_0]$.

In the following, we will consider solutions to
Equation~\eqref{eq:EDP_phi} in the following weak sense:
\begin{defi}\label{defi:solution_EDP}
Let~$u$ be a function
in the space
\[
\LL^\infty\left([0,T],\LL^2(\eV)\right)
\cap\LL^2\left([0,T],\LL^2(|\phi(x)|\eV)\right)
\cap\LL^2\left([0,T],\HH^1(\eV)\right)
\]
for any~$T>0$.
For~$f\in\LL^2(\eV)$, we
say that~$u$ is a weak solution to~\eqref{eq:EDP_phi} 
if~$u(0,\cdot)=f$  and for any function~$v$
in~$\HH^1\left(\eV\right)\cap\LL^2\left(|\phi(x)|\eV\right)$,
\begin{equation}\label{eq:solution_EDP}
\begin{aligned}
\frac{\rmd}{\rmd t}\int_\Rd u(t,x)v(x)\eV
&=-\int_\Rd\nabla u(t,x)\cdot\nabla v(x)\eV
-\int_\Rd\phi(x)u(t,x)v(x)\eV\\
&\quad + \int_\Rd (F_\lambda(x) -F_0(x))\cdot \nabla u(t,x) v(x)\eV
\end{aligned}
\end{equation}
in distributional sense.
\end{defi}
Note that the last term in~\eqref{eq:solution_EDP} is well defined
since for $\lambda \in [0,\lambda_0]$, from
Assumption~{\bf(\hypcadre)}-$(i)$, $\|F_\lambda-F_0\|_{\LL^\infty(\R^d)}\le C\lambda$.
Moreover, note that the condition~$u(0,\cdot)=f$ makes sense, since a
function~$u$ satisfying
\[
u\in\LL^2([0,T],\HH^1(\eV))
\mbox{ and }
\partial_tu\in\LL^2([0,T],\HH^{-1}(\eV))
\]
actually lies
in~$\mathcal C([0,T],\LL^2(\eV))$ (see for
example~\cite[Lemma~$1.2$ p.~$261$]{temam-79}).

\begin{prop}\label{prop:EDP_bien_posee}
Assume~$f\in\LL^2(\eV)$ and that the
function~$\phi$ is locally integrable with respect to the Lebesgue
measure and bounded from below. Then,
Equation~\eqref{eq:EDP_phi} admits a unique solution in
the sense of Definition~\ref{defi:solution_EDP}. Moreover, this
solution is in ${\mathcal C}([0,+\infty),\LL^2(\eV))$.

In addition, if $ f \in
\HH^1\left(\eV\right)\cap\LL^2\left(|\phi(x)|\eV\right)$, the solution
$u$ is more regular: for any~$T>0$,
\[u \in
\LL^\infty\left([0,T],\HH^1(\eV)
\cap\LL^2(|\phi(x)|\eV)\right)
\cap\HH^1\left([0,T],\LL^2(\eV)\right).
\]
\end{prop}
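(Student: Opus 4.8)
The plan is to recast \eqref{eq:EDP_phi} as an abstract parabolic Cauchy problem in a Gelfand triple and to invoke the classical variational theory of Lions. Set $\mathcal H:=\LL^2(\eV)$ with inner product $\langle u,v\rangle:=\int_\Rd uv\,\eV$, and $\mathcal V:=\HH^1(\eV)\cap\LL^2(|\phi|\eV)$, normed by $\|u\|_{\mathcal V}^2:=\|u\|_{\mathcal H}^2+\|\nabla u\|_{\mathcal H}^2+\int_\Rd|\phi|\,u^2\,\eV$. Since $\eV$ is locally bounded ($V$ being continuous) and $\phi$ is locally integrable, $\mathcal C_c^\infty(\Rd)\subset\mathcal V$, so $\mathcal V$ is a separable Hilbert space densely and continuously embedded in $\mathcal H$; identifying $\mathcal H$ with its dual gives $\mathcal V\hookrightarrow\mathcal H\hookrightarrow\mathcal V'$. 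On $\mathcal V$ define
\[
a(u,v):=\int_\Rd\nabla u\cdot\nabla v\,\eV+\int_\Rd\phi\,uv\,\eV-\int_\Rd(F_\lambda-F_0)\cdot\nabla u\,v\,\eV,
\]
so that \eqref{eq:solution_EDP} reads $\tfrac{\rmd}{\rmd t}\langle u(t),v\rangle+a(u(t),v)=0$ together with $u(0)=f$.

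First I would check the two structural properties of $a$. Continuity on $\mathcal V\times\mathcal V$ is immediate from Cauchy--Schwarz, $|\int\phi\,uv\,\eV|\le(\int|\phi|u^2\eV)^{1/2}(\int|\phi|v^2\eV)^{1/2}$, and $\|F_\lambda-F_0\|_{\LL^\infty(\Rd)}\le C\lambda$ (Assumption \hypcadre-(i)). For the G{\aa}rding inequality, pick $M\ge0$ with $\phi\ge-M$; then $|\phi|\le\phi+2M$ pointwise, and Young's inequality gives $|\int(F_\lambda-F_0)\cdot\nabla u\,u\,\eV|\le\tfrac12\|\nabla u\|_{\mathcal H}^2+\tfrac{(C\lambda)^2}{2}\|u\|_{\mathcal H}^2$, whence $a(u,u)+\beta\|u\|_{\mathcal H}^2\ge\tfrac12\|u\|_{\mathcal V}^2$ with $\beta:=2M+\tfrac12+\tfrac{(C\lambda)^2}{2}$ (no smallness of $\lambda$ is required, since half of the Dirichlet term always absorbs the drift perturbation). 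By the standard existence/uniqueness theorem for variational parabolic equations (see e.g.\ \cite{temam-79}), for every $f\in\mathcal H$ there is a unique $u\in\LL^2([0,T],\mathcal V)$ with $\partial_tu\in\LL^2([0,T],\mathcal V')$ solving this problem, and $u\in\mathcal C([0,T],\mathcal H)$; gluing these pieces gives $u\in\mathcal C([0,+\infty),\mathcal H)$, and $u$ in particular solves \eqref{eq:EDP_phi} in the sense of Definition~\ref{defi:solution_EDP}. For uniqueness \emph{within that class}, note that any such solution satisfies $|\langle\partial_tu(t),v\rangle|=|a(u(t),v)|\le C\|u(t)\|_{\mathcal V}\|v\|_{\mathcal V}$ for a.e.\ $t$, so $\partial_tu\in\LL^2([0,T],\mathcal V')$; thus the difference $w$ of two solutions with the same datum lies in $\LL^2([0,T],\mathcal V)$ with $\partial_tw\in\LL^2([0,T],\mathcal V')$, the classical integration-by-parts lemma (as in \cite{temam-79}) applies, and $\tfrac{\rmd}{\rmd t}\|w(t)\|_{\mathcal H}^2=-2a(w,w)\le2\beta\|w\|_{\mathcal H}^2$; Gr\"onwall and $w(0)=0$ force $w\equiv0$.

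For the improved regularity when $f\in\mathcal V$, the idea is the energy estimate obtained by testing the equation against $\partial_tu$. Splitting $a=a_s+a_a$ with symmetric part $a_s(u,v)=\int\nabla u\cdot\nabla v\,\eV+\int\phi\,uv\,\eV$, this test gives (at least formally)
\[
\|\partial_tu\|_{\mathcal H}^2+\tfrac12\tfrac{\rmd}{\rmd t}a_s(u,u)=\int_\Rd(F_\lambda-F_0)\cdot\nabla u\,\partial_tu\,\eV\le\tfrac12\|\partial_tu\|_{\mathcal H}^2+\tfrac{(C\lambda)^2}{2}\|\nabla u\|_{\mathcal H}^2 .
\]
Since $\|\nabla u\|_{\mathcal H}^2\le a_s(u,u)+M\|u\|_{\mathcal H}^2$ and $\sup_{[0,T]}\|u\|_{\mathcal H}$ is already controlled by the basic energy estimate, Gr\"onwall starting from the finite quantity $a_s(f,f)$ (finite precisely because $f\in\mathcal V$) bounds $\sup_{[0,T]}a_s(u,u)$ and $\int_0^T\|\partial_tu\|_{\mathcal H}^2$, hence $\sup_{[0,T]}\|u\|_{\mathcal V}$ and $\int_0^T\|\partial_tu\|_{\mathcal H}^2$. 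This is made rigorous by performing the computation on a Galerkin approximation built from a basis adapted to $a_s$ (or, alternatively, through analytic-semigroup/maximal-regularity estimates for the sectorial operator associated with $a$), and passing to the limit, yielding $u\in\LL^\infty([0,T],\HH^1(\eV)\cap\LL^2(|\phi|\eV))\cap\HH^1([0,T],\LL^2(\eV))$.

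I expect the main obstacle to be the function-space bookkeeping forced by $\phi$ being only locally integrable and unbounded above: one must work on $\mathcal V=\HH^1(\eV)\cap\LL^2(|\phi|\eV)$ rather than on $\HH^1(\eV)$ alone (for which no Poincar\'e inequality is available at this stage), check that every term in \eqref{eq:solution_EDP} is meaningful there, and — for the higher-regularity part — handle the slightly delicate choice of a Galerkin basis that is simultaneously $\mathcal H$-orthonormal and $\mathcal V$-stable, which is what legitimizes testing against $\partial_tu$ and controlling the initial energy $a_s(f,f)$ by $\|f\|_{\mathcal V}^2$.
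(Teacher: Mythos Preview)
Your approach is correct and essentially the same as the paper's: both establish an a priori (G{\aa}rding) estimate on $\mathcal V=\HH^1(\eV)\cap\LL^2(|\phi|\eV)$, appeal to a Galerkin argument for existence, deduce uniqueness from the energy identity via Gr\"onwall, and obtain the higher regularity by testing against $\partial_t u$. The only difference is packaging --- you invoke the abstract Lions theorem, while the paper writes out the key energy estimate explicitly (with the test function $e^{-Ct}u$) before citing Galerkin.
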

\begin{proof}
By Assumption~{\bf(\hypcadre)}-$(i)$, there exists $C_0>0$ such that
$\|F_\lambda-F_0\|_{\LL^\infty(\R^d)}\le C_0$. Let~$C$ be a positive constant such that~$\phi+C-\frac{C_0^2}{2}$ is nonnegative.
From~\cite[Lemma~$1.2$ p.~$261$]{temam-79}, one can take~$e^{-Ct}u(t,x)$
as a test function in~\eqref{eq:solution_EDP} and obtain the
following estimate
\begin{equation*}
\begin{aligned}
&\frac{\rmd}{\rmd t}\left( \frac{e^{-Ct}}2\int_\Rd|u(t,x)|^2\eV\right)\\
&= -e^{-Ct} \int_\Rd |\nabla u(t,x)|^2\eV
- e^{-Ct}\int_\Rd
|u(t,x)|^2(\phi(x)+C)\eV\\
&\quad + e^{-Ct} \int_{\Rd}  (F_\lambda-F_0) \cdot \nabla u(t,x) u(t,x)
\eV\\
&\le  -e^{-Ct} \int_\Rd |\nabla u(t,x)|^2\eV
- e^{-Ct}\int_\Rd
|u(t,x)|^2(\phi(x)+C)\eV\\
&\quad + \frac{e^{-Ct}}{2} \int_{\Rd} |\nabla u(t,x)|^2
\eV + \frac{e^{-Ct}}{2} C_0^2 \int_{\Rd}  |u(t,x)|^2
\eV\\
&\le -\frac{e^{-Ct}}{2} \int_\Rd |\nabla u(t,x)|^2\eV
- \int_\Rd
e^{-Ct}|u(t,x)|^2\left(\phi(x)+C-\frac{C_0^2 }{2}\right)\eV.
\end{aligned}
\end{equation*}
Therefore, by integrating in time, one obtains the following estimate:
\begin{equation}\label{eq:estimation_a_priori}
\begin{aligned}
&\frac{e^{-Ct}}2\int_\Rd|u(t,x)|^2\eV
+\frac12\int_0^t\int_\Rd e^{-Cs}|\nabla u(s,x)|^2\eV\, \rmd s
\\
&+ \int_0^t\int_\Rd e^{-Cs}|u(s,x)|^2\left(\phi(x)+C-\frac{C_0^2
  }{2}\right)\eV\, \rmd s=\frac12\int_\Rd|f(x)|^2\eV.
\end{aligned}
\end{equation}
{From} this estimate, the uniqueness result follows from linearity by taking~$f=0$
in~\eqref{eq:estimation_a_priori}. And thanks to this a priori estimate,
existence can be proved by using a Galerkin method on a
countable family of smooth functions dense
in~$\HH^1(\eV)\cap\LL^2(|\phi(x)|\eV)$, which exists since the
measure~$|\phi(x)|\eV$ is finite on compact sets (see for
example~\cite[chapter II, Theorem 3.5]{malliavin-95}). 
As explained above, the fact that  $u \in {\mathcal
  C}([0,T],\LL^2(\eV))$ is then a standard result, see for
example~\cite[Lemma~$1.2$ p.~$261$]{temam-79}.

In order to obtain the additional regularity, let us take~$\partial_t u(t,x)$
as a test function in~\eqref{eq:solution_EDP}:
\begin{equation*}
\begin{aligned}
 \int_\Rd|\partial_t u(t,x)|^2\eV
&= -\frac{1}{2} \frac{\rmd}{\rmd t} \int_\Rd |\nabla u(t,x)|^2\eV
- \frac{1}{2} \frac{\rmd}{\rmd t} \int_\Rd
|u(t,x)|^2\phi(x)\eV\\
&\quad + \int_{\Rd}  (F_\lambda-F_0) \cdot \nabla
u(t,x) \partial_t u(t,x)
\eV\\
&\le -\frac{1}{2} \frac{\rmd}{\rmd t} \int_\Rd |\nabla u(t,x)|^2\eV
- \frac{1}{2} \frac{\rmd}{\rmd t} \int_\Rd
|u(t,x)|^2\phi(x)\eV\\
&\quad + C_0^2 \int_{\Rd} |\nabla u(t,x)|^2
\eV + \frac{1}{4} \int_{\Rd}  |\partial_t u(t,x)|^2
\eV.
\end{aligned}
\end{equation*}
Therefore, for a constant $C_1>0$ such that $\varphi + C_1$ is nonnegative,
\begin{align*}
&\frac{1}{2} \frac{\rmd}{\rmd t} \int_\Rd |\nabla u(t,x)|^2\eV
+ \frac{1}{2} \frac{\rmd}{\rmd t} \int_\Rd
|u(t,x)|^2 (\phi(x) + C_1)\eV + \frac{3}{4} \int_\Rd|\partial_t u(t,x)|^2\eV\\
&\le C_0^2 \int_{\Rd} |\nabla u(t,x)|^2 + C_1  \int_\Rd
u(t,x) \partial_t u(t,x) \eV \\
& \le C_0^2 \int_{\Rd} |\nabla u(t,x)|^2 + C_1^2  \int_\Rd
|u(t,x)|^2 \eV  + \frac{1}{4}   \int_\Rd
|\partial_t u(t,x)|^2 \eV.
\end{align*}
Using Gr\"onwall's Lemma, one
obtains the estimate after integration in time:
\begin{align*}
& e^{-2C_0^2 t} \int_\Rd |\nabla u(t,x)|^2\eV
+ e^{-2C_0^2 t} \int_\Rd
|u(t,x)|^2 (\phi(x) + C_1)\eV \\
&+  \int_0^t \int_\Rd e^{-2C_0^2 s} |\partial_t
  u(s,x)|^2\eV \, \rmd s \le \int_\Rd |\nabla f(x)|^2\eV \\
&+\int_\Rd
|f(x)|^2 (\phi(x) + C_1)\eV+ 2C_1^2 \int_0^t \int_\Rd e^{-2C_0^2 s} 
|u(s,x)|^2 \eV  \, \rmd s.
\end{align*}
The last term is bounded from above over finite time intervals by a constant times $\int_\Rd
|f(x)|^2 \eV$ thanks to~\eqref{eq:estimation_a_priori}.
Again, this a priori estimate can be made rigorous through a Galerkin
procedure, and yields the additional regularity stated in the
Proposition (see for example~\cite[Proposition
11.1.1]{quarteroni-valli-97} for a similar reasoning).
\end{proof}

\begin{prop}\label{prop:regularite_EDP_phi}
Let~$u$ be a solution to the partial differential
equation~\eqref{eq:EDP_phi} in the sense of
Definition~\ref{defi:solution_EDP}, and assume that the initial
condition~$f$ is of class~$\mathcal C^2$ with locally Lipschitz second order derivatives.
If~$\phi$ is locally Lipschitz,
then~$u$,~$\partial_tu$,~$\nabla u$ and~$\nabla^2u$ are continuous
functions and $u$ is a classical solution to~\eqref{eq:EDP_phi}.
\end{prop}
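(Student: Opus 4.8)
The plan is to obtain the stated regularity by purely local parabolic arguments, since continuity of $u$, $\partial_t u$, $\nabla u$ and $\nabla^2 u$ is a local property and all the coefficients of the operator in~\eqref{eq:EDP_phi} are locally bounded: the diffusion matrix is the identity, $F_\lambda=F_0+(F_\lambda-F_0)$ with $F_0=-\nabla V$ locally Lipschitz by Assumption~{\bf (\hypV)}-(i), $F_\lambda-F_0$ bounded and $x\mapsto F_\lambda(x)$ locally Lipschitz by Assumption~{\bf (\hypcadre)}-(ii), while $\phi$ is locally Lipschitz by hypothesis. Fix $T>0$. By Definition~\ref{defi:solution_EDP}, $u\in\LL^2([0,T],\HH^1(\eV))$, hence $u$ and $\nabla u$ belong to $\LL^2_{\mathrm{loc}}([0,T]\times\Rd)$ (the weight $\eV$ being bounded above and below on compact sets), and choosing in~\eqref{eq:solution_EDP} test functions $v$ supported in a ball shows that $u$ is a weak solution, in the usual interior sense, of the uniformly parabolic equation $\partial_t u=\Delta u+F_\lambda\cdot\nabla u-\phi u$ on $(0,T)\times\Rd$.

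I would then bootstrap regularity on parabolic cylinders compactly contained in $(0,T)\times\Rd$. First, the De Giorgi--Nash--Moser theory (or interior parabolic $\LL^p$-estimates) shows that $u$ is locally bounded and locally H\"older continuous on $(0,T)\times\Rd$. Writing the equation as $\partial_t u-\Delta u=F_\lambda\cdot\nabla u-\phi u$ with right-hand side in $\LL^2_{\mathrm{loc}}$, interior parabolic $\LL^p$-estimates (iterated, together with Sobolev embeddings) give that $\nabla u$ is locally H\"older continuous; the right-hand side $F_\lambda\cdot\nabla u-\phi u$ is then locally H\"older as well, being a product of locally H\"older functions (recall $F_\lambda$ and $\phi$ are locally Lipschitz). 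Interior Schauder estimates finally upgrade $u$ to $\mathcal C^{1,2}_{\mathrm{loc}}((0,T)\times\Rd)$, so that $u$, $\partial_t u$, $\nabla u$ and $\nabla^2 u$ are continuous on $(0,\infty)\times\Rd$ and $u$ solves~\eqref{eq:EDP_phi} pointwise there. (See e.g.\ Friedman, or Ladyzhenskaya--Solonnikov--Uraltseva, for these classical estimates.)

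It remains to prove continuity of these four functions up to $t=0$, with $u(0,\cdot)=f$, $\nabla u(0,\cdot)=\nabla f$ and $\nabla^2u(0,\cdot)=\nabla^2 f$. Here I would use that $f$ is $\mathcal C^2$ with locally Lipschitz Hessian, i.e.\ $f\in\mathcal C^{2,\alpha}_{\mathrm{loc}}(\Rd)$ for every $\alpha\in(0,1)$ --- this is exactly the regularity making the next step work. Subtracting off the initial datum, $w:=u-f$ is a weak solution (hence, by the previous step, classical for $t>0$) of $\partial_t w=\Delta w+F_\lambda\cdot\nabla w-\phi w+g$ with source $g:=\Delta f+F_\lambda\cdot\nabla f-\phi f$, which is locally Lipschitz --- hence locally H\"older --- and with vanishing initial condition. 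Since the spatial domain is all of $\Rd$ there is no spatial boundary, hence no compatibility condition to verify, and the up-to-the-initial-time version of the interior Schauder estimates applies on cylinders $[0,t_1)\times B$: it gives $w\in\mathcal C^{1,2}_{\mathrm{loc}}([0,T)\times\Rd)$ with $w$, $\nabla w$ and $\nabla^2 w$ vanishing at $t=0$, so that $u=w+f$ has the announced regularity up to $t=0$. To make this rigorous one first checks that the initial trace of $u$ used in the estimate really is $f$, which follows from the $\LL^2(\eV)$-valued continuity $u\in\mathcal C([0,T],\LL^2(\eV))$ of Proposition~\ref{prop:EDP_bien_posee} combined with the interior estimates already obtained; alternatively, continuity up to $t=0$ can be read off from a Feynman--Kac representation of $u$, obtained by applying It\^o's formula along the diffusion~\eqref{eq:EDS_lam} started from a point, localised by exit times of balls, and letting the truncation parameters tend to their limits.

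The main obstacle is that none of this regularity theory applies globally on $\Rd$: the drift $F_\lambda$ (since $\nabla V$ need not be bounded), the potential term $\phi$ and the source $g$ are only locally bounded, so the whole argument must be localised on parabolic cylinders, with some care needed at the junction between the weak formulation of Definition~\ref{defi:solution_EDP} --- which controls $u$ only through the weighted spaces $\LL^2(\eV)$ and $\HH^1(\eV)$ --- and the purely local interior estimates. A secondary, more technical point is that the assumption on $f$ is the minimal one compatible with continuity of $\nabla^2 u$ up to the initial time, so one should check carefully that ``locally Lipschitz second derivatives'' does deliver enough H\"older regularity, both for the initial datum $f$ and for the source $g$.
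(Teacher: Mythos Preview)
Your proposal is correct and follows essentially the same route as the paper: subtract off the initial datum to obtain $w=u-f$ with zero initial condition and locally Lipschitz source $g=\Delta f+F_\lambda\cdot\nabla f-\phi f$, localise with cutoff functions, bootstrap $\LL^p$-parabolic regularity via Sobolev embeddings until the right-hand side is locally H\"older, and finish with Schauder-type estimates up to $t=0$. The paper carries out the $\LL^p$ iteration explicitly (writing $\partial_t(\chi v)-\Delta(\chi v)=\Phi^\chi$ and tracking the exponent $\bar p$ through Sobolev embeddings, then differentiating once more to look at $\chi\partial_i v$), whereas you invoke De Giorgi--Nash--Moser and then ``iterated interior $\LL^p$-estimates'' more synthetically; also, the paper performs the subtraction $v=u-f$ at the outset and bootstraps everything up to $t=0$ in one go, while you first obtain interior regularity on $(0,T)\times\Rd$ and only afterwards subtract $f$ to reach $t=0$. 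These are organisational rather than mathematical differences.

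One caution: your ``alternative'' of reading off continuity up to $t=0$ from a Feynman--Kac representation is circular in this paper's logical structure, since Proposition~\ref{prop:feynman-kac} relies on Proposition~\ref{prop:regularite_EDP_phi} (the classical regularity is precisely what justifies applying It\^o's formula to $u(t-s,Y^{\lambda,x}_s)$). Your main Schauder argument does not suffer from this, so the proof stands.
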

\begin{proof}
We use a bootstrap argument based on $\LL^p_t\LL^q_x$ regularity results for parabolic partial
differential equations. In order to apply standard results which
require $0$ as an initial condition, we
consider $v=u-f$, which satisfies (in the weak sense, see Definition~\ref{defi:solution_EDP}) the partial differential
equation:
\begin{equation}\label{eq:EDP_v}
\left\{
\begin{aligned}
  \partial_t v (t,x)
  &=\Delta v (t,x)
  +F_\lambda(x)\cdot\nabla v(t,x)
  -\phi(x)v(t,x) + g(x),\quad
  t>0, \, x\in\Rd,\\
  v(0,x)&=0, \quad
 x\in\Rd,
\end{aligned}
\right.
\end{equation}
where
$$g(x) = \Delta f (x)
  + F_\lambda(x)\cdot\nabla f(x)
  -\phi(x)f(x) $$
is a locally
Lipschitz function.

In this proof, we use the following notation
\[
\LL^p_t\LL^q_x=\bigcap_{T>0}\LL^p([0,T],\LL^q(\Rd)),
\]
where~$\LL^q(\Rd)$ is the~$\LL^q$ space associated with the Lebesgue measure.
We will also use the notations~$\LL_t^p\W_x^{s,p}$ where~$\W$ stands for the usual Sobolev space. We moreover introduce the notation
\[
\LL^{\infty-}=\bigcap_{2\leq q<\infty}\LL^q.
\]
Last, we set
\[
\W^{1,p}_t\LL_x^q
=\{
u\in\LL^p_t\LL_x^q,~
\partial_tu\in\LL^p_t\LL_x^q
\}.
\]

Let~$\chi$ be some function in the space~$\mathcal C^\infty_0$ of smooth,
compactly supported functions on~$\Rd$. The function~$\chi v$ satisfies,
in the weak sense, the equation
\[
\left\{
\begin{aligned}
\partial_t(\chi v)
-\Delta(\chi v)
&=\Phi^\chi\mbox{ on }(0,+\infty)\times\R^d,\\
(\chi v)(0,x) &=0, \quad x\in\Rd,
\end{aligned}
\right.
\]
where
$$\Phi^\chi= (\chi F_\lambda
-2\nabla\chi)\cdot\nabla v
-(\Delta \chi+\chi\phi)v + \chi g.$$
{From} parabolic regularity results, see for example~\cite[Theorem III.1]{von-wahl-82},
 one has the implication:
\begin{equation}\label{eq:implic1}
(\Phi^\chi\in\LL^2_t\LL^p_x)
\Rightarrow
(\chi v\in\LL^2_t\W^{2,p}_x\cap\W^{1,2}_t\LL^p_x).
\end{equation}
In addition, {from} the definition of~$\Phi^\chi$, one has
\begin{equation}\label{eq:implic2}
(\forall\chi\in\mathcal C^\infty_0,~\chi v\in\LL^2_t\W^{2,p}_x)
\Rightarrow
(\forall\chi\in\mathcal C^\infty_0,~\Phi^\chi\in\LL^2_t\W^{1,p}_x)
\end{equation}
since $\phi$ and $F_\lambda$ are locally Lipschitz functions (see Assumption~{\bf (\hypcadre)}-(ii)).

Now, by Definition~\ref{defi:solution_EDP}, the function~$v$ lies
in~$\LL^\infty([0,T],\LL^2(\eV))
\cap\LL^2([0,T],\HH^1(\eV))$, so that~$\Phi^\chi$ lies
in~$\LL^2_t\LL^2_x$, for any function~$\chi\in\mathcal C^\infty_0$.
{First} assume~$d>1$, and let~$\bar p$ be the supremum of all those~$p$ such that~$\Phi^\chi$ lies
in~$\LL^2_t\LL^p_x$ for all~$\chi$. Assume~$\bar p<\infty$. 
Since
\[
\frac{d\bar p}{d+\bar p}<\min(\bar p,d),
\]
one can find
some~$p\in\left(\frac{d\bar p}{d+\bar p},d\right)$, such
that~$\Phi^\chi$ belongs to~$\LL^2_t\LL^p_x$ for any~$\chi$ (note
that~$\frac{d\bar p}{d+\bar p}\geq1$, since~$\frac1d+\frac1{\bar p}\leq\frac12+\frac12=1$).
{From}~\eqref{eq:implic1}~$\chi v$ lies in~$\LL^2_t\W^{2,p}_x$, and
hence from~\eqref{eq:implic2},~$\Phi^\chi$ lies
in~$\LL^2_t\W^{1,p}_x$. However, Sobolev embeddings yield
\[
\Phi^\chi\in\LL^2_t\W^{1,p}_x\subset\LL^2_t\LL^{\frac{dp}{d-p}}_x,\mbox{ where
}\frac{dp}{d-p}>\bar p\mbox{ since }p>\frac{d\bar p}{d+\bar p},
\]
which contradicts the definition of~$\bar p$.
As a conclusion,~$\Phi^\chi$ lies in~$\LL^2_t\LL^{\infty-}_x$ for any~$\chi$.
In the case~$d=1$, one can directly deduce
from~$\Phi^\chi\in\LL^2_t\LL^2_x$ that~$\chi v$ is
in~$\LL^2_t\W^{2,2}_x\subset\LL^2_t\W^{1,\infty-}_x$ for any $\chi$, and thus~$\Phi^\chi\in\LL^2_t\LL^{\infty-}_x$ for any $\chi$.
In any case,~$\chi v$ lies in~$\LL^2_t\W^{2,\infty-}_x$ for any~$\chi$.

Now consider the equation satisfied by~$\chi\partial_i v$, for any
coordinate~$i$.
One obtains
\begin{equation}\label{eq:chaleur_chi_di_u}
\left\{
\begin{aligned}
\partial_t(\chi\partial_i v)
-\Delta(\chi\partial_i v)
&=\Psi^\chi\mbox{ on }(0,+\infty)\times\R^d,\\
(\chi\partial_i v) (0,x) &= 0, \quad
  x\in\Rd,
\end{aligned}
\right.
\end{equation}
where
\begin{align*}
\Psi^\chi
&=(\chi F_\lambda - 2\nabla\chi
)\cdot\nabla(\partial_iv)+\left(\chi \partial_i F_\lambda
-(\Delta\chi+\chi\phi)e_i\right)\cdot\nabla v-(\chi\partial_i\phi)v + \chi \partial_i g .\nonumber
\end{align*}
Since~$\chi v$ lies in~$\LL^2_t\W^{2,\infty-}_x$ for
any~$\chi\in\mathcal C^\infty_0$, the function~$\Psi^\chi$ is
in~$\LL^2_t\LL^{\infty-}_x$, from the boundedness
of~$\chi\partial_i\phi$, $\chi\partial_i g$ and~$\chi\partial_i F_\lambda$. Then, parabolic
regularity~\eqref{eq:implic1} for the heat
equation~\eqref{eq:chaleur_chi_di_u} implies
\[
\forall i,~\forall\chi\in\mathcal C^\infty_0,
~\chi\partial_iv\in\LL^2_t\W^{2,\infty-}_x\cap\W^{1,2}_t\LL^{\infty-}_x.
\]
In particular, for any~$\chi\in\mathcal C^\infty_0$, $\chi v$ is
in~$\W^{1,2}_t\W_x^{1,\infty-}$.
{From} Sobolev embeddings, we
deduce that~$\chi v$ lies
in~$\mathcal C^{1/2}_t\mathcal C^{1-\epsilon}_x$, for any~$\epsilon$
in~$(0,1)$ ($\mathcal C^s$ stand for H\"older spaces).
{From} the H\"older regularity of the initial condition, H\"older regularity theory for the
heat equation now yields the desired regularity on~$v$ (and thus on $u$), see for
example~\cite[Theorem 10.3.3]{krylov-96}.
\end{proof}

\begin{prop}\label{prop:feynman-kac}
Assume $f\in\LL^2(\eV)$ and that $\phi$ is locally Lipschitz
and bounded from below. Then the solution $u(t,x)$  to the
partial differential equation~\eqref{eq:EDP_phi} given by Proposition~\ref{prop:EDP_bien_posee} admits
the following probabilistic representation formula: for all $t \ge 0$,
\begin{equation}\label{eq:feynman-kac}
\rmd x\text{-a.e.}, \, u(t,x)
=\E\left[f(Y_t^{\lambda,x})e^{-\int_0^t\phi(Y_s^{\lambda,x})\rmd s}\right]
\end{equation}
where $(Y_t^{\lambda,x})_{t \ge 0}$ is defined by (notice that
$(Y_t^{0,x})_{t \ge 0}=(Y_t^{x})_{t \ge 0}$ is defined by~\eqref{eq:Ytx})
\begin{equation}\label{eq:Ytlambdax}
\left\{
\begin{aligned}
\rmd Y_t^{\lambda,x}&= F_\lambda(Y_t^{\lambda,x}) \, \rmd t  + \sqrt{2} \rmd W_t,\\
Y_0^{\lambda,x}&=x.\\
\end{aligned}
\right.
 \end{equation}
\end{prop}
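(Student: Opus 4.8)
The plan is to establish the Feynman--Kac formula \eqref{eq:feynman-kac} by identifying the right-hand side as a weak solution to \eqref{eq:EDP_phi} in the sense of Definition~\ref{defi:solution_EDP}, and then invoking the uniqueness statement of Proposition~\ref{prop:EDP_bien_posee}. Write $\tilde u(t,x):=\E\left[f(Y_t^{\lambda,x})e^{-\int_0^t\phi(Y_s^{\lambda,x})\rmd s}\right]$. The first task is to make sense of this quantity and to show it has the required integrability. Since $\phi$ is bounded from below, say $\phi\ge -c$, we have the crude bound $|\tilde u(t,x)|\le e^{ct}\,\E[|f|(Y_t^{\lambda,x})]$; combining this with the fact, established via Girsanov and the boundedness of $F_\lambda-F_0$ (Assumption~{\bf(\hypcadre)}-(i)) together with Lemma~\ref{lem:p}, that the law of $Y_t^{\lambda,x}$ has a density comparable to that of $Y_t^x$, and the $\LL^2(\eV)$-contraction-type estimate \eqref{eq:decroit_L2} applied to the semigroup at $\lambda=0$, one controls $\|\tilde u(t,\cdot)\|_{\LL^2(\eV)}$ by $e^{Ct}\|f\|_{\LL^2(\eV)}$. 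The $\HH^1(\eV)$ and $\LL^2(|\phi|\eV)$ bounds needed to be a legitimate element of the space in Definition~\ref{defi:solution_EDP} are then recovered from the a priori estimate \eqref{eq:estimation_a_priori} once we know $\tilde u$ solves the equation, so it suffices to check the weak formulation first for a nice initial datum and then pass to the limit.

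The core step is the Markov/semigroup identity. For $f$ smooth and compactly supported (or at least $\mathcal C^2$ with locally Lipschitz second derivatives, so that Proposition~\ref{prop:regularite_EDP_phi} gives a classical solution), apply It\^o's formula to $s\mapsto u(t-s,Y_s^{\lambda,x})e^{-\int_0^s\phi(Y_r^{\lambda,x})\rmd r}$, where $u$ is the weak (hence, by Proposition~\ref{prop:regularite_EDP_phi}, classical) solution of \eqref{eq:EDP_phi}. The drift terms cancel exactly because $u$ solves $\partial_t u=\Delta u+F_\lambda\cdot\nabla u-\phi u$, leaving a local martingale; a localization argument using the non-explosion of $(Y^{\lambda,x}_t)$ (which follows from Lemma~\ref{lem:EDS_lam}, or rather its $Y$-analogue proved exactly as Lemma~\ref{lem:p}) together with the lower bound on $\phi$ and the $\LL^2(\eV)$ control of $u$ upgrades it to a true martingale. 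Taking expectations and comparing $s=0$ with $s=t$ yields $u(t,x)=\E[f(Y_t^{\lambda,x})e^{-\int_0^t\phi(Y_s^{\lambda,x})\rmd s}]=\tilde u(t,x)$ for such $f$. For general $f\in\LL^2(\eV)$, approximate by a sequence $f_n$ of smooth compactly supported functions converging to $f$ in $\LL^2(\eV)$; the left-hand sides $u_n(t,\cdot)$ converge in $\LL^2(\eV)$ by the linear a priori estimate \eqref{eq:estimation_a_priori}, and the right-hand sides $\tilde u_n(t,\cdot)$ converge in $\LL^2(\eV)$ by the density-comparison bound above, so the identity persists $\rmd x$-a.e.\ in the limit.

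The main obstacle is the localization/uniform-integrability argument that turns the local martingale into a genuine martingale, since we are on the non-compact space $\R^d$ with only locally Lipschitz, possibly superlinear drift $F_\lambda$ and a potential $V$ that need not be strictly convex: one must rule out contributions from the exit times of large balls. This is handled by exploiting that $\phi$ bounded below makes the Feynman--Kac weight bounded on $[0,t]$ by $e^{ct}$, so that the stopped process is dominated in $\LL^2$ by $e^{ct}|f(Y^{\lambda,x}_{t\wedge\tau_R})|$, whose expectation is controlled uniformly in $R$ via the density bound from Lemma~\ref{lem:p} and \eqref{eq:decroit_L2} (with $q=2$); dominated convergence as $R\to\infty$ then closes the argument. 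A secondary technical point --- that the weak solution is regular enough to apply It\^o --- is already supplied by Proposition~\ref{prop:regularite_EDP_phi}, which is precisely why we first prove the formula for $\mathcal C^2$ data with locally Lipschitz Hessian before the density argument.
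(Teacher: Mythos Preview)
Your overall strategy---apply It\^o's formula to the classical solution for smooth data (available via Proposition~\ref{prop:regularite_EDP_phi}), localize with exit times of large balls, pass to the limit, then extend to $f\in\LL^2(\eV)$ by density---is exactly the paper's. The density step can be made to work along the lines you sketch, though your references are slightly off (the estimate \eqref{eq:decroit_L2} concerns the density of $X^0_t$, not the action of the semigroup on functions; for $\lambda\neq 0$ one needs a Cauchy--Schwarz with the Girsanov weight rather than a direct appeal to Lemma~\ref{lem:p}).

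The genuine gap is in the localization. After stopping at $\tau_R$ and taking expectations, It\^o's formula gives
\[
u(t,x)=\E\left[u\big((t-\tau_R)^+,\,Y^{\lambda,x}_{t\wedge\tau_R}\big)\,e^{-\int_0^{t\wedge\tau_R}\phi(Y^{\lambda,x}_s)\,\rmd s}\right],
\]
and the integrand is \emph{not} dominated by $e^{ct}|f(Y^{\lambda,x}_{t\wedge\tau_R})|$ as you claim: on the event $\{\tau_R<t\}$ the first argument of $u$ is $t-\tau_R>0$, so you are evaluating $u$, not $f=u(0,\cdot)$, at the exit point $Y^{\lambda,x}_{\tau_R}$. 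Your appeal to the $\LL^2(\eV)$ control of $u$ does not help either: that is a space-integrated bound against the weight $e^{-V}$ and says nothing about $u$ along the trajectory of $Y^{\lambda,x}$, so it yields neither a dominating function nor uniform integrability in $R$.

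The paper closes this gap by first proving, as a separate preliminary step, a maximum principle for the weak solution: if $|f|\le M$ then $|u(s,\cdot)|\le Me^{Cs}$ for all $s\ge 0$ (obtained by testing the weak formulation against $e^{-Ct}(e^{-Ct}u-M)^+$). With this $\LL^\infty$ bound in hand, the stopped quantity is dominated by the deterministic constant $Me^{(C+c)t}$, and dominated convergence as $R\to\infty$ is immediate. This is also why the paper's approximation proceeds in two stages---first from $\mathcal C^\infty\cap\LL^\infty$ to $\LL^\infty$ (via Lebesgue's theorem and the density $p^\lambda(t,x,\cdot)$), then from $\LL^\infty$ to $\LL^2(\eV)$ by writing $f=f^+-f^-$, truncating at level $n$, and using monotone convergence on the Feynman--Kac side---rather than a single density step from smooth compactly supported data.
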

\begin{proof}
\underline{\em Step 1}:
Let us first prove a maximum principle for solutions to
Equation~\eqref{eq:EDP_phi} in the sense of Definition~\ref{defi:solution_EDP}.
Assume that the initial condition~$f$ of Equation~\eqref{eq:EDP_phi} is
bounded from above by some nonnegative constant~$M$. Let~$C$ be a 
constant such that~$\phi+C$ is nonnegative.
{From}~\cite[Lemma~$1.2$ p.~$261$]{temam-79}, and~\cite[Lemma~7.6]{gilbarg-trudinger-01}, one can 
take~$e^{-Ct}(e^{-Ct}u(t,x)-M)^+$ as the
test function in the weak formulation~\eqref{eq:solution_EDP}, and
obtain (using the fact that from Assumption~{\bf(\hypcadre)}-$(i)$, 
$\|F_\lambda-F_0\|_{\LL^\infty(\R^d)}\le C_0$ for some $C_0>0$)
\begin{align*}
&\frac12 \frac{\rmd}{\rmd t} \int_\Rd|(e^{-Ct}u(t,x)-M)^+|^2\eV\\
&=-\int_\Rd|\nabla (e^{-Ct}u(t,x)-M)^+|^2\eV\\
&\quad-\int_\Rd(\phi(x)+C)e^{-Ct}u(t,x)(e^{-Ct}u(t,x)-M)^+\eV\\
&\quad+\int_\Rd(F_\lambda(x)-F_0(x))\cdot \nabla (e^{-Ct}  u(t,x)
  -M)^+ (e^{-Ct}u(t,x)-M)^+ \eV \\
&\leq -\frac12 \int_\Rd|\nabla (e^{-Ct}u(t,x)-M)^+|^2\eV\\
&\quad-\int_\Rd(\phi(x)+C)e^{-Ct}u(t,x)(e^{-Ct}u(t,x)-M)^+\eV\\
&\quad+\frac{C_0^2}{2}\int_\Rd |(e^{-Ct}u(t,x)-M)^+|^2 \eV.
\end{align*}
By using Gr\"onwall's Lemma, one therefore obtain after integration in time:
\begin{align*}
&\frac{e^{-C_0^2t}}{2}\int_\Rd|(e^{-Ct}u(t,x)-M)^+|^2\eV\\
&\le -\frac12\int_0^t\int_\Rd e^{-C_0^2s}|\nabla
  (e^{-Cs}u(s,x)-M)^+|^2\eV\, \rmd s\\
&\quad-\int_0^t\int_\Rd e^{-C_0^2s}  (\phi(x)+C)e^{-Cs}u(s,x)(e^{-Cs}u(s,x)-M)^+\eV\,
  \rmd s\\
&\quad+\frac12 \int_\Rd|(f(x)-M)^+|^2\eV  \leq0
\end{align*}
so that the function~$u(t,\cdot)$ is bounded from above by~$Me^{Ct}$, for any
positive~$t$.
By a similar argument, if~$f$ is bounded from below by~$-M$, with~$M$ nonnegative,
then~$u(t,\cdot)$ is bounded from below by~$-e^{-Ct}M$ for any positive~$t$.

\underline{\em Step 2}:
Let us now prove the Feynman-Kac formula~\eqref{eq:feynman-kac} under the assumption~$f \in
\mathcal C^\infty \cap \LL^\infty(\R^d)$. Let~$x\in\Rd$,~$t>0$ and~$M>0$. Let~$\tau_M$
be the first exit time from $B(x,M)$ (namely the ball centered at $x$ and of radius $M$) for the
process~$(Y_s^{\lambda,x})_{s\geq0}$. Since~$s \mapsto Y_s^{\lambda,x}$ is
continuous,~$\tau_M$ goes to~$\infty$ as~$M$ goes to~$\infty$. Let us
consider the solution $(t,x) \mapsto u(t,x)$ to~\eqref{eq:EDP_phi}, which is ${\mathcal
  C}^1$ with respect to $t$ and ${\mathcal
  C}^2$ with respect to $x$ thanks to Proposition~\ref{prop:regularite_EDP_phi}.
Applying It\=o's formula to~$u(t-s,Y_s^{\lambda,x})$ in the time interval~$[0,t\wedge\tau_M]$, one obtains
\[
u((t-\tau_M)^+,Y_{t\wedge\tau_M}^{\lambda,x})e^{-\int_0^{t\wedge\tau_M}\phi(Y_s^{\lambda,x})\rmd s}
=u(t,x)
+\sqrt2\int_0^{t\wedge\tau_M}\nabla u(t-s,Y_s^{\lambda,x})e^{-\int_0^s\phi(Y_u^{\lambda,x})\rmd u}\rmd W_s.
\]
On the interval~$[0,t\wedge\tau_M]$, the integrand in the stochastic
integral remains bounded, so that this integral has zero mean. Taking the expectation, one obtains
\[
u(t,x)
=\E\left[u((t-\tau_M)^+,Y_{t\wedge\tau_M}^{\lambda,x})e^{-\int_0^{t\wedge\tau_M}\phi(Y_s^{\lambda,x})\rmd s}\right].
\]
By the above maximum principle
the function~$u$ is bounded on~$[0,t]\times\Rd$. With the lower bound
on~$\phi$, the dominated convergence theorem yields, letting~$M\to\infty$,
\[
u(t,x)
=\E\left[f(Y_t^{\lambda,x})e^{-\int_0^t\phi(Y_s^{\lambda,x})\rmd s}\right].
\]

\underline{\em Step 3}:
Let us now assume that~$f$ is in~$\LL^\infty(\R^d)$.
Let~$f_n$ be a sequence of $\mathcal C^\infty$ functions such that
$\sup_{n \ge 1} \|f_n\|_{\LL^\infty(\R^d)} \le \|f\|_{\LL^\infty(\R^d)}$, and converging
to~$f$ almost everywhere. In particular, $f_n$
converges to $f$ in $\LL^2(\eV)$ by Lebesgue's theorem. Therefore, the solution~$u_n$ to
Equation~\eqref{eq:EDP_phi} starting from~$f_n$ is such
that~$u_n(t,\cdot)$ converges to~$u(t,\cdot)$
as~$n\to\infty$ in~$\LL^2(\eV)$, from the a priori estimate~\eqref{eq:estimation_a_priori}.
Moreover, one has
\[
\forall x\in\Rd,~u_n(t,x)=\E\left[f_n(Y_t^{\lambda,x})e^{-\int_0^t\phi(Y_s^{\lambda,x})\rmd s}\right]
=\E[f_n(Y_t^{\lambda,x})\Gamma(Y_t^{\lambda,x})],
\]
where~$\Gamma$ is a bounded
function
satisfying~$\Gamma(Y_t^{\lambda,x})=\E\left[e^{-\int_0^t\phi(Y_s^{\lambda,x})\rmd
    s}\Big|Y_t^{\lambda,x}\right]$. For the remaining of the proof,
we assume that $t >0$ (the formula~\eqref{eq:feynman-kac} clearly holds for $t=0$).
{F}rom Lemma~\ref{lem:p}, the distribution of~$Y_t^{\lambda,x}$ admits a
density~$p^\lambda(t,x,y)$ with respect
to the Lebesgue measure. Therefore, by the Lebesgue theorem, $\E[f_n(Y_t^{\lambda,y})\Gamma(Y_t^{\lambda,y})]$
converges to~$\E[f(Y_t^{\lambda,y})\Gamma(Y_t^{\lambda,y})]$ as~$n\to\infty$. This shows
the equality~$u(t,x)=\E\left[f(Y_t^{\lambda,x})e^{-\int_0^t\phi(Y_s^{\lambda,x})\rmd
    s}\right]$ for $\rmd x$-a.e. $x$.

\underline{\em Step 4}:
Let us now assume that~$f$ is in~$\LL^2(\eV)$ and let us write
$f=f^+-f^-$ where $f^+=\max(f,0)$ and $f^-= \max(-f,0)$. For $n \in
\mathbb{N}$, the functions $f^+_n=\min(f^+,n)$
(resp. $f^-_n=\min(f^-,n)$) are in $\LL^\infty(\R^d)$ and converge in
$\LL^2(\eV)$ to $f^+$ (resp. $f^-$). Let us consider the solution~$u^+_n$
(resp. $u ^-_n$) to
Equation~\eqref{eq:EDP_phi} starting from~$f^+_n$
(resp. $f^-_n$). Since $f^+_n -f^-_n$ converges as~$n\to\infty$ to $f$ in
$\LL^2(\eV)$, for every $t \ge 0$, $u^+_n(t,\cdot) - u^-_n(t,\cdot)$
converges in~$\LL^2(\eV)$ to $u(t,\cdot)$, where $u$ is the solution to
Equation~\eqref{eq:EDP_phi} starting from~$f$.
Moreover, one has
\[
\rmd x \text{-a.e.},~u^\pm_n(t,x)=\E\left[f^\pm_n(Y_t^{\lambda,x})e^{-\int_0^t\phi(Y_s^{\lambda,x})\rmd s}\right]
=\E[f^\pm_n(Y_t^{\lambda,x})\Gamma(Y_t^{\lambda,x})],
\]
where~$\Gamma$ is the bounded
function defined above. By the monotone convergence theorem,
$\E[f^\pm_n(Y_t^{\lambda,x})\Gamma(Y_t^{\lambda,x})]$ converges to
$\E[f^\pm(Y_t^{\lambda,x})\Gamma(Y_t^{\lambda,x})]$. This shows
the equality~$u(t,x)=\E\left[f(Y_t^{\lambda,x})e^{-\int_0^t\phi(Y_s^{\lambda,x})\rmd
    s}\right]$ for $\rmd x$-a.e.~$x$.
\end{proof}

As a corollary of the previous result, we obtain that the law of
$X^0_t$ satisfies a partial differential equation (the Fokker-Planck equation).
\begin{cor}\label{cor:FP}
Let~$X_0$ be distributed according to some probability
measure~$\mu_0$, and let~$(X_t^0)_{t\geq0}$ evolve according to
Equation~\eqref{eq:EDS}. Let us assume that $\mu_0$ has a density
$r_0$ with respect to $d\pi_0=e^{-V(x)} \, \rmd x$ such that $r_0 \in
\LL^2(\eV)$. Denote by~$\mu_t$ the distribution of the random
variable~$X_t^0$, and by $x \mapsto r(t,x)$ the density of $\mu_t$ with
respect to $d\pi_0=e^{-V(x)} \, \rmd x$ which exists by Lemma \ref{lem:densite_X_t}.

Then, $r(t,x)$ is the unique solution to the partial differential equation
\begin{equation}\label{eq:FP}
\left\{
\begin{aligned}
  \partial_t r(t,x)
  &=\Delta r(t,x)
  -\nabla V(x)\cdot\nabla r(t,x), \quad
  t>0, x\in\Rd,\\
  r(0,x)&=r_0(x), \quad
  x\in\Rd,
\end{aligned}
\right.
\end{equation}
in the sense of Definition~\ref{defi:solution_EDP}.
\end{cor}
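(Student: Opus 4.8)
The plan is to recognize equation~\eqref{eq:FP} as the particular instance of the Kolmogorov equation~\eqref{eq:EDP_phi} obtained for $\lambda=0$, $\phi\equiv0$ and initial datum $f=r_0$ (this uses $F_0=-\nabla V$), and then to identify its weak solution with $r(t,\cdot)$ by combining the Feynman--Kac formula of Proposition~\ref{prop:feynman-kac} with the representation formula~\eqref{eq:rtx} of Lemma~\ref{lem:densite_X_t}. Since the constant function $\phi\equiv0$ is locally Lipschitz, bounded from below and locally integrable, and since $r_0\in\LL^2(\eV)$ by assumption, Proposition~\ref{prop:EDP_bien_posee} applies: equation~\eqref{eq:FP} has a unique weak solution $u$ in the sense of Definition~\ref{defi:solution_EDP}, and it belongs to ${\mathcal C}([0,+\infty),\LL^2(\eV))$. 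This already yields the uniqueness assertion, so the remaining task is to prove that $r(t,\cdot)=u(t,\cdot)$.

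To that end I would invoke Proposition~\ref{prop:feynman-kac} with $\lambda=0$, $\phi\equiv0$ and $f=r_0$, which gives, for every $t\ge0$,
$$u(t,x)=\E\left[r_0(Y_t^x)\right]\quad\rmd x\text{-a.e.}$$
On the other hand, $\mu_0$ has density $r(0,\cdot)=r_0$ with respect to $\pi_0$, so the representation formula~\eqref{eq:rtx} of Lemma~\ref{lem:densite_X_t} applies with $s=0$ and gives $r(t,x)=\E[r_0(Y_t^x)]$ for $\rmd x$-a.e. $x$ and every $t>0$ (the case $t=0$ being trivial). Comparing the two identities, $r(t,\cdot)=u(t,\cdot)$ in $\LL^2(\eV)$ for every $t\ge0$, hence $r$ is the weak solution to~\eqref{eq:FP}. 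The fact that $r$ lies in the function spaces required by Definition~\ref{defi:solution_EDP}, in particular $r\in\LL^2([0,T],\HH^1(\eV))$ for every $T>0$, is inherited from $u$; alternatively it can be obtained directly from~\eqref{eq:decroit_L2} (with $q=2$, $s=0$) together with the a priori estimate~\eqref{eq:estimation_a_priori}.

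The proof is thus essentially an assembly of results already established, and I do not expect a genuine obstacle. The one conceptual point worth stressing is that this short route works only because the dynamics at $\lambda=0$ is reversible (Lemma~\ref{lem:p}): reversibility is what makes the forward (Fokker--Planck) evolution of the density with respect to $\pi_0$ coincide with the action of the backward Kolmogorov generator $\Delta-\nabla V\cdot\nabla$, so that the Feynman--Kac representation of Proposition~\ref{prop:feynman-kac} can be applied to $r$ itself rather than to some dual object.
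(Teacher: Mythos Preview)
Your proposal is correct and follows essentially the same approach as the paper: invoke the representation $r(t,x)=\E[r_0(Y_t^x)]$ from Lemma~\ref{lem:densite_X_t} (with $s=0$) and identify it with the weak solution to~\eqref{eq:FP} via the Feynman--Kac formula of Proposition~\ref{prop:feynman-kac}. Your write-up is in fact more explicit than the paper's two-line proof, and your closing remark on reversibility correctly pinpoints why the backward Kolmogorov operator governs the density with respect to $\pi_0$.
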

\begin{proof}
{F}rom Lemma~\ref{lem:densite_X_t}, we know that
$$\forall t \ge 0, \rmd x\text{-a.e.}, \,  r(t,x) = \E(r_0(Y_{t}^x)).$$
The conclusion is then a consequence of the Feynman-Kac representation formula~\eqref{eq:feynman-kac}.
\end{proof}

\subsection{Long-time behavior of the partial differential equation~\eqref{eq:EDP_phi} when $\lambda=0$}\label{sec:longtime}

In this section, we are interested in the long-time behavior of the
partial differential equation~\eqref{eq:EDP_phi} when $\lambda=0$, which is related to
the stochastic differential equation~\eqref{eq:EDS} through the
Feynman-Kac formula~\eqref{eq:feynman-kac}.

\subsubsection{The case $\phi =0$}

To study the long-time behavior of the solution to~\eqref{eq:EDP_phi}
with $\lambda=\phi= 0$, we introduce the following hypothesis (defined for any
$\eta >0$).
\begin{hyp}[\hyppoinc{$\eta$}]
The measure~$e^{-V}$ satisfies a Poincar\'e inequality with
constant~$\eta>0$: for any function~$v$
in~$\LL^2_0(\eV)\cap\HH^1(\eV)$,
\begin{equation}\label{eq:poincare}
\eta\int_\Rd |v|^2(x)\eV
\leq\int_\Rd |\nabla v|^2(x)\eV.
\end{equation}
\end{hyp}
Recall that $\LL^2_0(\eV)$ denotes the functions in $\LL^2(\eV)$ with
zero mean with respect to $\pi_0$ (see~\eqref{eq:L20}).

\begin{prop}\label{prop:tps_long}
Let Assumption~{\bf(\hyppoinc{$\eta$})} be satisfied for some positive
$\eta$, and let~$u$ be a
solution to~\eqref{eq:EDP_phi} in the sense of
Definition~\ref{defi:solution_EDP}, in the case~$\lambda=\phi= 0$, with an
initial condition $f \in \LL^2(\eV)$.
Then~$u$ converges exponentially fast to the constant
function~$\int_{\R^d} f(x) \eV$ in the following sense:
\[
\forall t \ge 0, \, \left\|u(t,\cdot)- \int_{\R^d} f(x) \eV \right\|_{\LL^2(\eV)}
\leq e^{-\eta t} \left\|f - \int_{\R^d} f(x) \eV \right\|_{\LL^2(\eV)}.
\]
\end{prop}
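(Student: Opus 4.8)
The plan is to exploit the variational structure of the equation when $\lambda = \phi = 0$, namely that \eqref{eq:EDP_phi} becomes the gradient flow in $\LL^2(\eV)$ of the Dirichlet form $v \mapsto \frac12\int_\Rd |\nabla v|^2 \eV$, and to combine the resulting energy dissipation identity with the Poincar\'e inequality. First I would observe that the constant $c := \int_{\R^d} f(x)\eV$ is itself a (stationary) solution to the equation, and that by linearity $w(t,\cdot) := u(t,\cdot) - c$ is then the weak solution of the same equation with initial datum $f - c \in \LL^2_0(\eV)$. Next I would check that the zero-mean property is preserved in time: taking the test function $v \equiv 1$ in \eqref{eq:solution_EDP} (with $\lambda = \phi = 0$, so that both the drift-difference term and the $\phi$-term vanish, and $\nabla v = 0$) gives $\frac{\rmd}{\rmd t}\int_\Rd w(t,x)\eV = 0$, hence $\int_\Rd w(t,x)\eV = 0$ for all $t \ge 0$, since a constant is an admissible test function in $\HH^1(\eV)\cap\LL^2(|\phi|\eV)$ because $\eV$ is a probability measure.

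Then I would derive the energy estimate by taking $v = w(t,\cdot)$ as a test function in the weak formulation — this is legitimate because $w \in \LL^2([0,T],\HH^1(\eV))$ with $\partial_t w \in \LL^2([0,T],\HH^{-1}(\eV))$, so the standard chain rule of \cite[Lemma~1.2 p.~261]{temam-79} applies and one gets, in the distributional (indeed absolutely continuous) sense,
\begin{equation*}
\frac12 \frac{\rmd}{\rmd t} \int_\Rd |w(t,x)|^2 \eV = -\int_\Rd |\nabla w(t,x)|^2 \eV.
\end{equation*}
Since $w(t,\cdot) \in \LL^2_0(\eV) \cap \HH^1(\eV)$ for a.e.\ $t$, Assumption~{\bf(\hyppoinc{$\eta$})} gives $\int_\Rd |\nabla w(t,x)|^2 \eV \ge \eta \int_\Rd |w(t,x)|^2 \eV$, whence
\begin{equation*}
\frac{\rmd}{\rmd t} \int_\Rd |w(t,x)|^2 \eV \le -2\eta \int_\Rd |w(t,x)|^2 \eV.
\end{equation*}
Gr\"onwall's lemma then yields $\int_\Rd |w(t,x)|^2 \eV \le e^{-2\eta t} \int_\Rd |w(0,x)|^2 \eV$, and taking square roots gives exactly the claimed bound, using $w(0,\cdot) = f - c$ and the continuity $u \in \mathcal C([0,\infty),\LL^2(\eV))$ from Proposition~\ref{prop:EDP_bien_posee} to make sense of the inequality for every $t \ge 0$ including $t=0$.

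The only genuinely delicate point — and thus the main obstacle — is the rigorous justification of the differential identity for $\frac{\rmd}{\rmd t}\|w(t,\cdot)\|_{\LL^2(\eV)}^2$: one must know that $w$ has enough time-regularity to use $w(t,\cdot)$ itself as a test function and to apply the chain rule, rather than merely having the distributional identity \eqref{eq:solution_EDP}. This is handled exactly as in the a priori estimate \eqref{eq:estimation_a_priori} in the proof of Proposition~\ref{prop:EDP_bien_posee} (indeed it is the same computation with $C = C_0 = 0$, since $\phi = 0$ is already bounded below by $0$ and $F_\lambda - F_0 = 0$), so I would simply invoke that argument. Everything else is routine: the preservation of zero mean and the Gr\"onwall step are immediate once the energy identity is in place.
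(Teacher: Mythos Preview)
Your proof is correct and follows essentially the same approach as the paper: first use the constant test function $\mathbf{1}$ to show the mean is conserved, then take $u(t,\cdot)-\int_{\R^d} f\,e^{-V}$ (your $w$) as test function, apply the Poincar\'e inequality, and conclude by Gr\"onwall. The paper's argument is slightly terser but relies on the same ingredients, including the citation of \cite[Lemma~1.2 p.~261]{temam-79} to justify the energy identity.
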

\begin{proof}
  Taking the constant function~$\mathbf1$ as the test function
  in~\eqref{eq:solution_EDP}, one obtains that $\int_\Rd u(t,x)\eV=\int_{\R^d} f(x) \eV$
  for any~$t\ge 0$. In
  particular,~$u(t,\cdot)-\int_{\R^d} f(x) \eV \in\LL_0^2(\eV)$. In addition,
  from~\cite[Lemma~$1.2$ p.~$261$]{temam-79}, one can take~$u(t,\cdot)-\int_{\R^d} f(x) \eV $ as the
  test function in~\eqref{eq:solution_EDP}, which yields, using the Poincar\'e inequality,
  \begin{align*}
  \frac12\frac{\rmd}{\rmd t}\int_\Rd \left|u(t,x)-\int_{\R^d} f(x) \eV \right|^2\eV
  &=-\int_\Rd|\nabla u(t,x)|^2\eV
\\ & \leq-\eta\int_\Rd \left|u(t,x)-\int_{\R^d} f(x) \eV \right|^2\eV.
  \end{align*}
  One concludes from Gr\"onwall's lemma.
\end{proof}

This Proposition shows that, under the assumption of
Corollary~\ref{cor:FP} (namely ${\rmd}\mu_0=r(0,x) \eV $ with $r(0,\cdot) \in \LL^2(\eV)$),
the density $r(t,x)$ of $X^0_t$ with respect to $\pi_0$ converges
exponentially fast to $1$ if {\bf(\hyppoinc{$\eta$})} is satisfied  for some positive
$\eta$. Actually,
the convergence of $\rmd \mu_t$ to $\eV$ holds in total variation norm for any
initial condition $\mu_0$.
\begin{cor}\label{cor:CV_L1}
Let Assumption~{\bf(\hyppoinc{$\eta$})} be satisfied  for some positive
$\eta$, and let~$(X_t^0)_{t\geq0}$ evolve according to
Equation~\eqref{eq:EDS}. Let us assume that $X_0$ is distributed according to some probability
measure~$\mu_0$. Denote by~$\mu_t$ the distribution of the random
variable~$X_t^0$, and for all $t>0$, denote by $q(t,x)$ the density of $\mu_t$ with
respect to the Lebesgue measure (which exists according to Lemma~\ref{lem:p}). Then
$$\lim_{t \to \infty} \|q(t,\cdot ) - e^{-V}\|_{\LL^1(\rmd x)} = 0.$$
\end{cor}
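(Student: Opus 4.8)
The plan is to reduce the statement to a convergence in $\LL^1(\eV)$ and then to bypass the only real difficulty---that a general initial law $\mu_0$ (possibly a Dirac mass) produces a density which need not be square integrable against $\eV$---by a truncation argument combined with Proposition~\ref{prop:tps_long}. Write $r(t,x)=e^{V(x)}q(t,x)$ for the density of $\mu_t$ with respect to $\pi_0$, which exists for every $t>0$ by Lemma~\ref{lem:densite_X_t}; since $\|q(t,\cdot)-e^{-V}\|_{\LL^1(\rmd x)}=\|r(t,\cdot)-1\|_{\LL^1(\eV)}$, it suffices to prove that $\|r(t,\cdot)-1\|_{\LL^1(\eV)}\to 0$ as $t\to\infty$.

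First I would fix $\epsilon>0$ and the time $t=1$, at which $r(1,\cdot)$ is a probability density with respect to $\pi_0$. Truncating, the functions $\min(r(1,\cdot),M)$ increase to $r(1,\cdot)$, so by monotone convergence one can choose $M$ so large that $\|r(1,\cdot)-\min(r(1,\cdot),M)\|_{\LL^1(\eV)}<\epsilon$; setting $Z_M=\int_\Rd\min(r(1,x),M)\eV\in(1-\epsilon,1]$ and $\rho_0=Z_M^{-1}\min(r(1,\cdot),M)$, one gets a probability density $\rho_0$ with respect to $\pi_0$ with $\|r(1,\cdot)-\rho_0\|_{\LL^1(\eV)}<2\epsilon$ and, crucially, $\rho_0\le M/Z_M$, hence $\rho_0\in\LL^2(\eV)$ because $\eV$ is a finite measure. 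Then I would run the dynamics~\eqref{eq:EDS} from the initial law $\nu_0=\rho_0\,\pi_0$: by Lemma~\ref{lem:densite_X_t} (the case $s=0$ of~\eqref{eq:rtx}) its density with respect to $\pi_0$ satisfies $\rho_t(y)=\E(\rho_0(Y_t^y))$, and by Corollary~\ref{cor:FP} this $\rho_t$ is the weak solution of the Fokker--Planck equation~\eqref{eq:FP} with datum $\rho_0\in\LL^2(\eV)$, so Proposition~\ref{prop:tps_long} (case $\lambda=\phi=0$, with $\int_\Rd\rho_0\eV=1$) gives $\|\rho_t-1\|_{\LL^2(\eV)}\le e^{-\eta t}\|\rho_0-1\|_{\LL^2(\eV)}$.

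It then remains to propagate the initial error $\|r(1,\cdot)-\rho_0\|_{\LL^1(\eV)}$ forward in time. Using~\eqref{eq:rtx} at the times $1\le 1+s$ one has $r(1+s,y)=\E(r(1,Y_s^y))$, while $\rho_s(y)=\E(\rho_0(Y_s^y))$, so by linearity $r(1+s,y)-\rho_s(y)=\E\big((r(1,\cdot)-\rho_0)(Y_s^y)\big)$ for a.e.\ $y$; exactly the computation used in the proof of Lemma~\ref{lem:densite_X_t} (Jensen's inequality, then the reversibility~\eqref{eq:rev} and $\int_\Rd p(s,\cdot,y)\rmd y=1$) shows the contraction $\|r(1+s,\cdot)-\rho_s\|_{\LL^1(\eV)}\le\|r(1,\cdot)-\rho_0\|_{\LL^1(\eV)}<2\epsilon$. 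Combining the triangle inequality with $\|\rho_s-1\|_{\LL^1(\eV)}\le\|\rho_s-1\|_{\LL^2(\eV)}$ (Cauchy--Schwarz, $\pi_0$ being a probability measure) yields, for every $s\ge0$,
\[
\|r(1+s,\cdot)-1\|_{\LL^1(\eV)}\le 2\epsilon+e^{-\eta s}\|\rho_0-1\|_{\LL^2(\eV)}.
\]
Letting $s\to\infty$ gives $\limsup_{t\to\infty}\|r(t,\cdot)-1\|_{\LL^1(\eV)}\le 2\epsilon$, and since $\epsilon$ is arbitrary this limsup, hence the limit, is $0$. The main obstacle is precisely the absence of $\LL^2$-integrability of $r(1,\cdot)$ for a general initial law; everything else---the truncation, the single application of Proposition~\ref{prop:tps_long}, and the $\LL^1(\eV)$-contraction (which rests entirely on the reversibility identity~\eqref{eq:rev})---is routine.
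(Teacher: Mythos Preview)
Your proof is correct and follows essentially the same strategy as the paper's: truncate the density at a fixed positive time to obtain an $\LL^2(\eV)$ initial condition, apply Proposition~\ref{prop:tps_long} to get exponential decay of the truncated density to $1$, and propagate the truncation error forward by the $\LL^1$-contraction of the semigroup (which rests on reversibility~\eqref{eq:rev}). The only cosmetic difference is that you truncate $r(1,\cdot)$ directly by $\min(\cdot,M)$, whereas the paper truncates the Lebesgue density $q(t_0,\cdot)$ to a bounded compactly supported function; your choice is arguably a touch cleaner since boundedness of $\rho_0$ immediately gives $\rho_0\in\LL^2(\eV)$ without needing compact support.
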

\begin{proof}
{F}rom Equation~\eqref{eq:densite_X_t}, for all $t>0$, one has
$$\rmd y\text{-a.e.,}\, q(t,y)= \int p(t,x,y) \mu_0(\rmd x).$$

Let us fix a positive $\epsilon$. Let us consider $t_0 >0$ (to be
fixed later on) and $q^\epsilon (t_0,x)$ a function in
$\LL^\infty(\R^d)$ which is non-negative, with compact support, such that $\int
q^\epsilon(t_0,x) \, \rmd x =1$ and 
$$\int_{\R^d} |q(t_0,x) - q^\epsilon(t_0,x)| \, \rmd x \le \epsilon.$$
To build such a function $q^\epsilon(t_0,\cdot)$, one could for
example consider for $n$ large enough $\frac{\min(q(t_0,x), n) 1_{|x| \le n}}{\int \min(q(t_0,x), n) 1_{|x| \le n}}$ which
indeed converges to $q(t,x)$ in $\LL^1(\rmd x)$ when $n \to \infty$.
Let us define
the function
$$\, q^\epsilon (t,y) = \int p(t-t_0,x,y)
q^\epsilon(t_0,x) \rmd x \qquad \forall t \ge t_0, \forall y \in \R^d.$$
For $t \ge t_0$, $q^\epsilon (t,\cdot)$ is the density
at time $t$ of the process $(X_t^{0,\epsilon})_{t \ge t_0}$ solution to~\eqref{eq:EDS}, with
$X_{t_0}^{0,\epsilon}$ distributed according to $q^\epsilon(t_0,x)
\rmd x$. Let us now set $r^\epsilon(t,x)= e^{V(x)} q^\epsilon (t,x)$, the density
of $X_t^{0,\epsilon}$ with respect to $\eV$.
Since $r^\epsilon(t_0,x) = e^{V(x)} q^\epsilon (t_0,x) \in \LL^2(\eV)$, from Corollary~\ref{cor:FP}, $(s,x) \mapsto
r^\epsilon(t_0+s,x)$ satisfies the following partial differential equation (with unknown $r$)
$$
\left\{
\begin{aligned}
  \partial_t r(s,x)
  &=\Delta r(s,x)
  -\nabla V(x)\cdot\nabla r(s,x)
  ,\quad s\ge 0, x\in\Rd,\\
  r(0,x)&=r^\epsilon(t_0,x),
  \quad x\in\Rd,
\end{aligned}
\right.
$$
in the sense of Definition~\ref{defi:solution_EDP}. In particular,
from Proposition~\ref{prop:tps_long}, since $\int r^\epsilon(t_0,x)
\eV =1$,
$$\forall t \ge t_0, \, \|r^\epsilon(t,\cdot)- 1\|_{\LL^2(\eV)}
\leq e^{-\eta (t-t_0)}\|r^\epsilon(t_0,\cdot) -1 \|_{\LL^2(\eV)}$$
which is equivalent to
$$\forall t \ge t_0, \, \|q^\epsilon(t,\cdot)- e^{-V}
\|_{\LL^2(e^{V(x)} \, \rmd x)}
\leq e^{-\eta (t-t_0)}\|q^\epsilon(t_0,\cdot) -
e^{-V}\|_{\LL^2(e^{V(x)} \, \rmd x)}.$$
By Cauchy-Schwarz inequality, we deduce that $\forall t \ge t_0$
\begin{align*}
\|q^\epsilon(t,\cdot)- e^{-V}
\|_{\LL^1(\rmd x)}&=
\int_{\R^d} \left|q^\epsilon(t,x)- e^{-V(x)} \right| e^{V(x)/2}
\times e^{-V(x)/2} \, \rmd x 
\\
&\le \|q^\epsilon(t,\cdot)- e^{-V}
\|_{\LL^2(e^{V(x)} \, \rmd x)}
\leq e^{-\eta (t-t_0)}\|q^\epsilon(t_0,\cdot) -
e^{-V}\|_{\LL^2(e^{V(x)} \, \rmd x)}.
\end{align*}

Moreover, we also have: $\forall t \ge t_0$
\begin{align*}
 \|q(t,\cdot)- q^\varepsilon(t,\cdot)\|_{\LL^1(\rmd x)}
&=
\int_{\R^d}  \left| \int_{\R^d} (q(t_0,x)-
  q^\varepsilon(t_0,x) ) p(t-t_0,x,y) \, \rmd x\right|\, \rmd y\\
& \le
\int_{\R^d} \int_{\R^d}  \left| q(t_0,x)-
  q^\varepsilon(t_0,x) \right| p(t-t_0,x,y) \, \rmd x\, \rmd y\\
&=\int_{\R^d}  \left| q(t_0,x)-
  q^\varepsilon(t_0,x) \right| \, \rmd x \le \epsilon.
\end{align*}
We thus obtain: $\forall t \ge t_0$,
\begin{align*}
\|q(t,\cdot)- e^{-V}\|_{\LL^1(\rmd x)}
&\le \|q(t,\cdot) - q^\varepsilon(t,\cdot)\|_{\LL^1(\rmd x)}
+ \|q^\varepsilon(t,\cdot)- e^{-V} \|_{\LL^1(\rmd x)} \\
& \leq \epsilon
+  e^{-\eta (t-t_0)}\|q^\epsilon(t_0,\cdot) - e^{-V}\|_{\LL^2(e^{V(x)} \, \rmd x)}
\end{align*}
and the right-hand side is smaller than $2 \epsilon$ for $t$
sufficiently large. This concludes the proof.
\end{proof}

\subsubsection{The case $\phi \neq 0$}

In this section, we are going to investigate the long-time behavior of
the function~$u$ defined by
\begin{equation}\label{eq:a_majorer}
u(t,x)=\E\left[e^{-\int_0^t\phi(Y_s^x)\rmd s}\right],
\end{equation}
for a generic function~$\phi$ where, we recall, $(Y^x_s)_{s \ge0}$
satisfies~\eqref{eq:Ytx}.
When~$\phi\geq\alpha$ for some positive constant~$\alpha$, $u$
converges to $0$ exponentially fast as $t\to\infty$. We now look for
hypotheses on $\phi$ under which this convergence is preserved in the
case $\inf \phi \le 0$. 

Notice that by ergodicity, almost
surely, 
$\lim_{t \to \infty} \frac 1 t \int_0^t \phi(Y^x_s) \, ds =  \int \phi
\eV$ and therefore, the almost sure exponential decay to zero is
ensured if $\int_\Rd \phi(x) \eV > 0$, at any rate in $(0,\int_\Rd
\phi(x) \eV)$. The exponential decay to zero in $L^1$ is more
complicated to establish. In Proposition~\ref{prop:conv_expo} below,
we prove this exponential decay under a sufficient condition which contains the assumption $\int_\Rd \phi(x) \eV > 0$.

When~$\phi$ is bounded from below and locally Lipschitz,
from Proposition~\ref{prop:feynman-kac}, the function~$u$
defined by~\eqref{eq:a_majorer} 
is solution (in the sense of Definition~\ref{defi:solution_EDP})
to the partial differential equation~\eqref{eq:EDP_phi}
with~$f = 1$ and $\lambda=0$.
As a consequence, the long-time behavior of~\eqref{eq:a_majorer} is related to the spectrum of the
operator~$\Delta-\nabla V\cdot\nabla-\phi$ which is self-adjoint in~$\LL^2(\eV)$.

One way to study this spectrum is to perform the change of
variable~$v(t,x)=e^{-\frac12V(x)}u(t,x)$, making
Equation~\eqref{eq:EDP_phi} become
\[
\partial_tv
=\Delta v
+\frac14\left(2\Delta V-|\nabla V|^2-4\phi\right)v.
\]
As a consequence, the long-time behavior of~$v$ is characterized by the spectrum
of the Schr\"odigner
operator~$\Delta+\frac14\left(2\Delta V-|\nabla V|^2-4\phi\right)$,
which can be controlled by the Cwikel-Lieb-Rozenblum bound (see for
example~\cite{cwikel-77,lieb-76,rozenbljum-72}). Indeed, for~$d\geq3$, this bound  states that the
number~$N$ of nonnegative eigenvalues of~$\Delta+W$ satisfies
\[
N\leq L_d\int_\Rd\max(W(x),0)^{d/2}\rmd x,
\]
where~$L_d$ is some constant independent of~$W$.
In particular, if there exists~$\epsilon>0$ such that
\begin{equation}\label{eq:critere_lieb_thirring}
\int_\Rd
\max\left(
\epsilon+\frac12\Delta V(x)-\frac14|\nabla V(x)|^2-\phi(x)
,0\right)^{d/2}
\rmd x
<\frac1{L_d},
\end{equation}
then the spectrum
of~$\Delta+\frac14\left(2\Delta V-|\nabla V|^2-4\phi\right)$
is included in~$(-\infty,-\epsilon)$.

There are two main concerns with this approach. First, the
constant~$L_d$ is unknown, so that the criterion is not quantitative.
Moreover, by Jensen's inequality, the
exponential convergence to~$0$ of
\[
\E\left[e^{-\delta\int_0^t\phi(X_s^0)\rmd s}\right]
\]
for~$\delta>1$
implies the exponential convergence of the function~$u(t,x)$
in~\eqref{eq:a_majorer}. However, in some
cases, the criterion~\eqref{eq:critere_lieb_thirring} may apply
to~$\delta\phi$ for some $\delta >1$ and not to~$\phi$. We are going to present another
criterion which does not present these flaws.

\begin{prop}\label{prop:conv_expo}
  Assume that~{\bf(\hyppoinc{$\eta$})} holds  for some positive
$\eta$, and that
  \begin{align}
    &-\infty<\inf\phi\leq 0,\;\;
    \int_\Rd\phi(x)\eV>0,\;\;
    \int_\Rd\phi(x)^2\eV<\infty,\label{eq:crit_conv1}\\
    \mbox{and }&-(\inf\phi)\frac{\int_\Rd\phi^2(x)\eV}{\left(\int_\Rd\phi(x)\eV\right)^2}
    <\eta.\label{eq:crit_conv2}
  \end{align}
  
  Let~$\mathbf E=\int_\Rd\phi(x)\eV$,~$\mathbf
  V=\int_\Rd\left(\phi(x)-\mathbf E\right)^2\eV$ and let~$u$ be the weak solution to
  Equation~\eqref{eq:EDP_phi} in the sense of
  Definition~\ref{defi:solution_EDP} for $\lambda=0$, with an initial condition $f \in \LL^2(\eV)$.
  The quantity $\int_\Rd u^2e^{-V}$ converges exponentially fast to~$0$ as $t\to\infty$:
  \[
  \exists C>0,~\forall t>0,~\int_\Rd u^2(t,x)\eV\leq Ce^{-\beta t},
  \]
  with a rate~$\beta$ given by
  \begin{equation}\label{eq:taux}
  \beta=\left(\eta+\inf\phi+\frac{\mathbf E^2+\mathbf V}{2\mathbf
      E}\right) -
  \sqrt{\left(\eta+\inf\phi-\frac{\mathbf E^2+\mathbf V}{2\mathbf E}\right)^2+2\frac{\eta\mathbf V}{\mathbf E}}
  >0.
  \end{equation}
\end{prop}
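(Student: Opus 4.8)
The plan is to run a coupled energy estimate at the level of the weak formulation~\eqref{eq:solution_EDP} (with $\lambda=0$, so that the $F_\lambda-F_0$ term drops out), tracking simultaneously
\[
a(t)=\int_\Rd u^2(t,x)\,\eV
\quad\text{and}\quad
b(t)=\int_\Rd u(t,x)\,\eV,\qquad B(t):=b(t)^2 ,
\]
noting $B(0)=\bigl(\int_\Rd f\,\eV\bigr)^2\le a(0)=\|f\|_{\LL^2(\eV)}^2<\infty$. First I would use $v=u(t,\cdot)$ as a test function, which is licit for a.e.\ $t$ because $u(t,\cdot)\in\HH^1(\eV)\cap\LL^2(|\phi|\eV)$ for a.e.\ $t$, the time differentiation being justified by~\cite[Lemma~1.2 p.~261]{temam-79}; this yields $\tfrac12 a'(t)=-\int_\Rd|\nabla u|^2(t,\cdot)\,\eV-\int_\Rd\phi\,u^2(t,\cdot)\,\eV$. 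Since $u(t,\cdot)-b(t)\in\LL^2_0(\eV)\cap\HH^1(\eV)$ has the same gradient as $u(t,\cdot)$, Assumption~{\bf(\hyppoinc{$\eta$})} gives $\int|\nabla u|^2\eV\ge\eta(a-B)$, while $\phi\ge\inf\phi$ and $u^2\ge0$ give $\int\phi u^2\eV\ge(\inf\phi)a$; hence
\[
a'(t)\le-2(\eta+\inf\phi)\,a(t)+2\eta\,B(t).
\]

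For the second quantity I would take the (admissible, time-independent) test function $v\equiv\mathbf1$ in~\eqref{eq:solution_EDP}, which gives $b'(t)=-\int_\Rd\phi\,u(t,\cdot)\,\eV$, hence $B'(t)=2b(t)b'(t)$. Writing $\int\phi u\,\eV=\mathbf E\,b+\int(\phi-\mathbf E)(u-b)\,\eV$, using Cauchy--Schwarz together with $\int(u-b)^2\eV=a-B$ to get $\bigl|\int(\phi-\mathbf E)(u-b)\,\eV\bigr|\le\sqrt{\mathbf V}\,\sqrt{a-B}$, and then the elementary inequality $2\sqrt{B}\,\sqrt{\mathbf V(a-B)}\le\mathbf E\,B+\tfrac{\mathbf V}{\mathbf E}(a-B)$ with the weight chosen to be exactly $\mathbf E$, I obtain
\[
B'(t)\le\frac{\mathbf V}{\mathbf E}\,a(t)-2c\,B(t),\qquad c:=\frac{\mathbf E^2+\mathbf V}{2\mathbf E}.
\]

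Thus $a$ and $B$ satisfy, for a.e.\ $t$, the two differential inequalities displayed above, and I would combine them through the linear functional $L(t)=a(t)+\gamma\,B(t)$ with $\gamma>0$ the positive root of $\tfrac{\mathbf V}{\mathbf E}\gamma^2+2(c-\eta-\inf\phi)\gamma-2\eta=0$ (which exists since $\mathbf V,\mathbf E,\eta>0$): with this $\gamma$ the two resulting constraints $\beta\le 2(\eta+\inf\phi)-\gamma\tfrac{\mathbf V}{\mathbf E}$ and $\beta\le 2c-\tfrac{2\eta}{\gamma}$ are both met with $\beta$ equal to the expression in~\eqref{eq:taux}, so $L'\le-\beta L$ and Gr\"onwall's lemma gives $\int_\Rd u^2(t,x)\eV=a(t)\le L(t)\le L(0)\,e^{-\beta t}$ with $L(0)=\|f\|_{\LL^2(\eV)}^2+\gamma\bigl(\int_\Rd f\,\eV\bigr)^2<\infty$. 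Here $\mathbf V>0$ automatically (otherwise $\phi\equiv\mathbf E>0$, contradicting $\inf\phi\le0$), so $\beta$ is well defined, and a short algebraic check shows $\beta>0\iff$ condition~\eqref{eq:crit_conv2} (both reduce to $-(\inf\phi)\,\mathbf V<(\eta+\inf\phi)\,\mathbf E^2$); hence the decay is genuinely exponential.

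I expect the main obstacles to be twofold: (i) making the two differentiation identities rigorous at the level of the weak formulation, i.e.\ showing $t\mapsto a(t)$ and $t\mapsto b(t)$ are absolutely continuous with the claimed a.e.\ derivatives, which rests on the function-space setting of Definition~\ref{defi:solution_EDP} and~\cite[Lemma~1.2 p.~261]{temam-79}; and (ii) the precise bookkeeping in the inequality for $B'$, since the whole rate~\eqref{eq:taux} hinges on landing on the coefficient $2c=\mathbf E+\mathbf V/\mathbf E$ rather than a larger constant (this is exactly what the choice of the Young weight equal to $\mathbf E$ buys). It is worth noting that the sign of $u$ (hence of $b$) plays no role --- the absolute values absorb it --- so no preliminary reduction to $f\ge0$ is needed; in particular the argument does not use the probabilistic representation of $u$.
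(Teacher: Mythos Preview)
Your proposal is correct and follows essentially the same route as the paper's proof: both derive the pair of differential inequalities for $a(t)=\int u^2e^{-V}$ and $B(t)=\bigl(\int u\,e^{-V}\bigr)^2$ by testing~\eqref{eq:solution_EDP} with $u$ and with $\mathbf1$, use Poincar\'e on $u-b$ plus $\phi\ge\inf\phi$ for the first, Cauchy--Schwarz on $\int(\phi-\mathbf E)(u-b)\,e^{-V}$ followed by Young's inequality with weight exactly $\mathbf E$ for the second, and then optimize a linear combination $a+\gamma B$. Your parameter $\gamma$ is the paper's $\delta$, your quadratic $\tfrac{\mathbf V}{\mathbf E}\gamma^2+2(c-\eta-\inf\phi)\gamma-2\eta=0$ is the paper's equation $\delta c_1(\delta)=c_2(\delta)$, and the resulting rate is the same expression~\eqref{eq:taux}; the only cosmetic difference is that the paper phrases the last step as maximizing $\min(c_1(\delta),c_2(\delta)/\delta)$ before observing that the optimum occurs where the two branches meet.
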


Note that the positivity of the rate~\eqref{eq:taux} is equivalent to
the condition~\eqref{eq:crit_conv2}.
Moreover, note that the left-hand side in condition~\eqref{eq:crit_conv2}
is homogeneous of order~$1$ in~$\phi$, unlike the
criterion~\eqref{eq:critere_lieb_thirring} obtained using
the Cwikel-Lieb-Rozenblum bound. As a consequence, if the
criterion~\eqref{eq:crit_conv2} applies to~$\delta\phi$ for some real
number~$\delta>1$, then it applies to~$\phi$, as expected.
\begin{proof}[Proof of Proposition \ref{prop:conv_expo}]
In this proof, for notational simplicity, we omit the time and space variables in the
integrals, which are all considered with respect to the Lebesgue measure
on~$\Rd$.

{From}~\cite[Lemma~$1.2$ p.~$261$]{temam-79}, one can take~$u$ as the test
function in~\eqref{eq:solution_EDP},
obtaining
\begin{align*}
  \frac12\frac{\rmd}{\rmd t}\int_\Rd u^2e^{-V}
  &=-\int_\Rd|\nabla u|^2e^{-V}-\int_\Rd \phi u^2 e^{-V}\\
  &\leq -\int_\Rd|\nabla u|^2e^{-V}-\inf\phi\int_\Rd u^2 e^{-V}.
\end{align*}
Using \eqref{eq:poincare},
one deduces that
\begin{equation}\label{eq:e1}
  \frac12\frac{\rmd}{\rmd t}\int_\Rd u^2e^{-V}
  \leq-(\eta+\inf\phi)\int_\Rd u^2e^{-V}
  +\eta\left(\int_\Rd ue^{-V}\right)^2.
\end{equation}
On the other hand, taking the constant function~$\mathbf1$ as the test
function in~\eqref{eq:solution_EDP},
\begin{align*}
\frac12\frac{\rmd}{\rmd t}\left(\int_\Rd u e^{-V}\right)^2
&=-\int_\Rd ue^{-V}\int_\Rd\phi ue^{-V}\\
&=-\int_\Rd \phi e^{-V}\left(\int_\Rd u e^{-V}\right)^2
+\int_\Rd ue^{-V}\left(\int_\Rd\phi e^{-V}\int_\Rd u e^{-V}-\int_\Rd\phi ue^{-V}\right).
\end{align*}
By Cauchy-Schwarz inequality,
\begin{align}\label{eq:e2}
\left|\int_\Rd\phi e^{-V}\int_\Rd u e^{-V}-\int_\Rd \phi ue^{-V}\right|
&=\left|\int_\Rd\left(\phi-\int_\Rd\phi e^{-V}\right) \left(u-\int_\Rd u e^{-V}\right)e^{-V}\right|\nonumber\\
&\le\mathbf V^{1/2}\left(\int_\Rd\left(u-\int_\Rd ue^{-V}\right)^2e^{-V}\right)^{1/2}.
\end{align}
Therefore, from the inequality~$2ab\leq a^2+b^2$,
\begin{align}\label{eq:e3}
\frac12\frac{\rmd}{\rmd t}\left(\int_\Rd u e^{-V}\right)^2
&\leq-\frac{\mathbf E}2\left(\int_\Rd u e^{-V}\right)^2
+\frac{\mathbf V}{2\mathbf E}
\left(\int_\Rd \left(u-\int_\Rd ue^{-V}\right)^2e^{-V}\right)\nonumber\\
&=-\frac{\mathbf E^2+\mathbf V}{2\mathbf E}\left(\int_\Rd u e^{-V}\right)^2
+\frac{\mathbf V}{2\mathbf E}\int_\Rd u^2e^{-V}.
\end{align}
By combining~\eqref{eq:e1} and~\eqref{eq:e3}, one obtains for $\delta\geq 0$,
\begin{align}\label{eq:majo_cle}
\frac12\frac{\rmd}{\rmd t}\left(  \int_\Rd u^2e^{-V} + \delta \left(\int_\Rd u
    e^{-V}\right)^2\right)
\leq-c_1(\delta)\int_\Rd u^2e^{-V}
-c_2(\delta)\left(\int_\Rd ue^{-V}\right)^2
\end{align}
with
\begin{equation}\label{eq:expr_c1_c2}
  c_1(\delta)
  =\eta
  +\inf\phi
  -\frac{\delta\mathbf V}{2\mathbf E}
  \mbox{ and }
  c_2(\delta)
  =-\eta
  +\frac\delta2\frac{\mathbf V+\mathbf E^2}{\mathbf E}.
\end{equation}
We want to find~$\delta\geq0$ such that \eqref{eq:majo_cle} ensures exponential
convergence to~$0$ of~$\int_\Rd u^2e^{-V}$ as~$t\to+\infty$. If~$\delta=0$,
one has $c_2(0)=-\eta<0$, so we need $\delta>0$. We look for~$\delta>0$
such that both~$c_1(\delta)$ and~$c_2(\delta)$ are positive.

{From}~\eqref{eq:expr_c1_c2},~$c_1(\delta)$ is positive if and only if
\[
\frac\delta{2\mathbf E}<\frac{\eta+\inf\phi}{\mathbf V}
\]
and~$c_2(\delta)$ is positive if and only if
\[
\frac\delta{2\mathbf E}>\frac\eta{\mathbf V+\mathbf E^2}.
\]
One concludes by checking that condition~\eqref{eq:crit_conv2} is
necessary and sufficient for the interval
~$\left(\frac\eta{\mathbf V+\mathbf E^2},\frac{\eta+\inf\phi}{\mathbf V}\right)$
to be nonempty.

{For} a given~$\delta>0$, Equation~\eqref{eq:majo_cle} gives a convergence rate
of~$2\min(c_1(\delta),\frac{c_2(\delta)}\delta)$.
{From} the definition of~$c_1$ and~$c_2$, one can see
that~$c_1(\delta)$ is nonincreasing and, under~\eqref{eq:crit_conv2},~$\frac{c_2(\delta)}\delta$ is
nondecreasing in~$\delta$. As a
consequence,~$\min(c_1(\delta),\frac{c_2(\delta)}\delta)$ is maximized
for~$\delta c_1(\delta)=c_2(\delta)$. This last equation is quadratic,
and one can check that its unique positive solution is
\[
\delta
=\frac{\mathbf E}{\mathbf V}
\left(
  \eta+\inf\phi-\frac{\mathbf V+\mathbf E^2}{2\mathbf E}+\sqrt{\left(\eta+\inf\phi-\frac{\mathbf V+\mathbf E^2}{2\mathbf E}\right)^2+2\frac{\eta \mathbf V}{\mathbf E}}
\right),
\]
giving the rate~\eqref{eq:taux}.

\end{proof}

One can see that Equation~\eqref{eq:crit_conv2} is necessary and sufficient
for the existence of~$\delta>0$ such that~$c_1(\delta)>0$ and~$c_2(\delta)>0$.
One can naturally wonder whether introducing more flexibility in
the inequalities used in the proof of Proposition~\ref{prop:conv_expo} could lead to a weaker
condition.
Actually, keeping track of the positive term~$\int_\Rd|\nabla u|^2e^{-V}$
in~\eqref{eq:e1}, using inequality~\eqref{eq:poincare} in~\eqref{eq:e2},
and using the inequality~$2ab\leq\gamma a^2+\frac1\gamma b^2$
in~\eqref{eq:e3} leads to the
exact same necessary and sufficient condition to ensure exponential
convergence to~$0$.

\begin{rem}
One can use the theory of large deviations to prove that the long-time
behavior of quantities of the form~\eqref{eq:a_majorer} is necessarily
exponential, with a rate given by a variational formula.

Let~$(X_t)_{t\geq0}$ evolve according to
Equation~\eqref{eq:EDS}. According to Donsker-Varadhan's lemma, the
random probability measure~$\mu_t$
defined by the formula
\[
\mu_t(A)
=\frac1t\int_0^t\mathbf 1_{X_s\in A}\rmd s,
\]
satisfies a large deviation principle with rate function
\[
I(\nu)
=\begin{cases}
\displaystyle\int_\Rd\left|\nabla\sqrt f\right|^2\eV&\mbox{ if } \exists f:\R^d
\to \R, \, \nu=f(x)\eV,\\
\infty&\mbox{ otherwise,}
\end{cases}
\]
see for example~\cite[Chapter~IV.4]{den-hollander-00}.
As a consequence, in
the long-time limit,
\[
-\frac1t\log\left(\E\left[e^{-\int_0^t\phi(X_s)\rmd s}\right]\right)
=-\frac1t\log\left(\E\left[e^{-t\left<\phi,\mu_t\right>}\right]\right)
\]
converges to the constant~$\alpha$ defined by
\[
\alpha=\inf_f
\int_\Rd\left|\nabla \sqrt f\right|^2(x)\eV
+\int_\Rd\phi(x)f(x)\eV
\]
where the infimum is taken over all probability densities with respect
to the measure~$\eV$, from Varadhan's lemma in large deviations theory.
By the change of variables~$g^2=f$,~$\alpha$
is equal to
\[
\inf_g
\frac{\int_\Rd\left|\nabla g\right|^2(x)\eV
+\int_\Rd\phi(x)g^2(x)\eV}
{\int_\Rd g^2(x)\eV},
\]
which is the bottom of the spectrum of the operator~$-\Delta+\nabla
V\cdot\nabla+\phi$ which is self-adjoint in~$\LL^2(\eV)$, already
discussed at the beginning of this section.
\end{rem}

Let us give two examples where the result from Proposition~\ref{prop:conv_expo}
applies.

\begin{example}
  A first example is given by the double-well potential in
  dimension~$1$, defined by
  \[
  V_\gamma(x)=x^4-\gamma x^2 + C_\gamma,
  \]
  where~$\gamma>0$ and $C_\gamma=\ln \left( \int_{\R} \exp(-x^4+\gamma x^2)\,
  \rmd x \right)$.
  We want to apply Proposition~\ref{prop:conv_expo} to the
  case where the function~$\phi$ is a multiple of~$\min{\rm
    Spec}\nabla^2V_\gamma(x)$ (see Section~\ref{sec:expo_cv_R} for a justification
  of this choice for $\phi$). In the present case,
  this writes~$\phi_{\gamma,\delta}(x)=\delta(12x^2-2\gamma)$, where
  $\delta$ is the positive multiplicative factor.
  
  Denote by~$\eta_{\gamma}$ the optimal Poincar\'e constant
  associated to the potential~$V_\gamma$. As~$\gamma$ goes to~$0$, the limit
  potential~$x^4$ satisfies a Poincar\'e inequality with
  constant~$\eta_0>0$, owing to its convexity. As a
  consequence, the Poincar\'e constants~$\eta_{\gamma}$ converge to a
  positive limit.
  On the other hand, as~$\gamma$ goes to~$0$, the quantity
  \[
  -(\inf\phi_{\gamma,\delta})
  \frac
  {\int_\R\phi_{\gamma,\delta}^2(x)e^{-V_\gamma(x)}\rmd x}
  {\left(\int_\R\phi_{\gamma,\delta}(x)e^{-V_\gamma(x)}\rmd x\right)^2}
  \]
  goes to~$0$, since~$\inf\phi_{\gamma,\delta}$ goes to~$0$
  while~$\frac
  {\int_\R\phi_{\gamma,\delta}^2(x)e^{-V_\gamma(x)}\rmd x}
  {\left(\int_\R\phi_{\gamma,\delta}(x)e^{-V_\gamma(x)}\rmd x\right)^2}$
  converges to some positive constant. As a consequence, for any~$\delta>0$ the
  inequality~\eqref{eq:crit_conv2} is satisfied for~$\gamma$ smaller than
  some critical value depending on $\delta$. Notice that the
  inequalities~\eqref{eq:crit_conv1} are satisfied for any $\gamma
  >0$ since, for any smooth potential $V:\R \mapsto \R$,  $\int_\R V''(x)\eV>0$ holds from a mere
integration by parts.
\end{example}

\begin{example}
  The second example is given
  by an identically vanishing potential~$V(x)=0$, with the equation
  considered on the one-dimensional torus, identified with the segment~$[0,2\pi]$ with periodic
  boundary conditions. The invariant measure is then the uniform measure
  on the torus. Consider the function~$\phi(x)=\sin(x)+\alpha$
  with~$\alpha\geq0$. In that cases, the mean value of~$\phi$ is given
  by~$\alpha$, and~$\phi(x)$ is not nonnegative for all values of $x$ as soon
  as~$\alpha<1$.
  
  In that case, the Poincar\'e constant is given
  by~$\eta=1$ and~$\inf\phi$ is given by~$\alpha-1$. As a
  consequence, Equation~\eqref{eq:crit_conv2} writes
  \[
  (1-\alpha)\frac{1+2\alpha^2}{2\alpha^2}<1
  \]
  The condition is thus satisfied if~$\alpha>\alpha_0$
  where~$\alpha_0$ is the unique real root of the equation
  \[
  \alpha^3+\frac12\alpha-\frac12=0,
  \]
  given by~$\alpha_0\simeq0.590$. As a consequence,
  for~$\alpha\in(\alpha_0,1)$, one has exponential convergence of~\eqref{eq:a_majorer} to zero
  while the function~$\phi$ is not uniformly positive.
\end{example}

\subsection{Existence and uniqueness of an invariant measure
  $\pi_\lambda$ for~\eqref{eq:EDS_lam}}\label{sec:inv_meas}

In all this section, we assume that
Assumption~{\bf(\hyppoinc{$\eta$})} holds for some positive $\eta$.
We would like to show that the stochastic differential
equation~\eqref{eq:EDS_lam} admits a unique invariant probability
measure that we denote in the following $\pi_\lambda$, and to give an
explicit formula for this measure. Of course, for $\lambda=0$, we have
$$\rmd \pi_0=\eV$$
and one result of this section is that it is the unique invariant
measure for~\eqref{eq:EDS}. We will use $\pi_0$ as a reference measure
to build functional spaces, and to construct the invariant measure
$\pi_\lambda$ by perturbative arguments, using the crucial assumption
on the boundedness of~$F_\lambda+\nabla
V=F_\lambda-F_0$ (see Assumption{\bf(\hypcadre)}-$(i)$): for $\lambda
\in [0,\lambda_0],$
$$\|F_\lambda-F_0\|_{\LL^\infty(\Rd)}\le C \lambda.$$

Let us begin with some notation.
We denote by~$\mathcal L_\lambda=F_\lambda\cdot\nabla+\Delta$ the generator of
the process~$(X_t^\lambda)_{t\geq0}$. In
particular,~$\mathcal L_0=-\nabla V\cdot\nabla+\Delta$. Also
denote
\begin{equation}\label{eq:Tlambda}
\mathcal T_\lambda
=\mathcal L_\lambda-\mathcal L_0
=(F_\lambda+\nabla V)\cdot\nabla
\end{equation}
The space $\LL_0^2(\eV)\cap\HH^1(\eV)$ endowed with the symmetric bilinear form\begin{equation}\label{eq:produit_scalaire}
(u,v)
\mapsto
\int_\Rd\nabla u(x)\cdot\nabla v(x)\eV
\end{equation} is a Hilbert space by Assumption~{\bf(\hyppoinc{$\eta$})}.
A consequence of
the Riesz theorem is that for any~$u\in\LL^2(\eV)$, there
exists a unique function~$v$ in~$\LL^2_0(\eV)\cap\HH^1(\eV)$ such that
\[
\forall w\in\LL^2_0(\eV)\cap\HH^1(\eV),~
\int_\Rd \nabla v(x)\cdot\nabla w(x)\eV=\int_\Rd u(x)w(x)\eV.
\]
This
function is denoted~$v=- \mathcal L_0^{-1}u$ since when $v$ is smooth,
$\int_\Rd \nabla v(x)\cdot\nabla w(x)\eV= - \int_\Rd \mathcal L_0v(x)w(x)\eV$. We denote
by~$\mathcal D(\mathcal L_0)$ the domain of~$\mathcal L_0$, defined by
\[
\mathcal D(\mathcal L_0)
=\left\{ v \in \LL^2_0(\eV)\cap\HH^1(\eV),\, \mathcal L_0 v  \in \LL_0^2(\eV)\right\}.
\]

For a function~$u \in \LL^2(\eV)$, from the Poincar\'e inequality, one has
\begin{align*}
\eta\|\mathcal L_0^{-1} u\|^2_{\LL^2(\eV)}
\leq\int_\Rd |\nabla (\mathcal L_0^{-1} u)(x)|^2\eV
&=-\int_\Rd (\mathcal L_0^{-1} u)(x) u(x)\eV,\\
&\leq\|\mathcal L_0^{-1} u\|_{\LL^2(\eV)}\| u\|_{\LL^2(\eV)}
\end{align*}
which implies
\begin{equation}\label{eq:spectre_L0}
\eta\|\mathcal L_0^{-1} u\|_{\LL^2(\eV)}\leq\|u\|_{\LL^2(\eV)}.
\end{equation}
In the following, we will use the orthogonal projection operator
$\Pi_0$ from~$\LL^2(\eV)$ onto~$\LL^2_0(\eV)$ defined by:
\begin{equation}\label{eq:PI0}
\forall f\in\LL^2(\eV),~
\Pi_0f
=f-\int_\Rd f(x)\eV.
\end{equation}

Let us now explain formally how we obtain an explicit formula for the invariant
measure~$\pi_\lambda$ of~\eqref{eq:EDS_lam}. For any  test function $\phi$ and since ${\mathcal
  L}_{\lambda} 1=0$, $\int_{\Rd}
{\mathcal L}_{\lambda} \Pi_0(\phi) \, \rmd \pi_\lambda =0 $. Thus, by
considering $f={\mathcal L}_0 \Pi_0(\phi)$,
for any test function $f$, $\int_{\Rd}
{\mathcal L}_{\lambda} {\mathcal
  L}_{0}^{-1} \Pi_0 f \, \rmd \pi_\lambda =0 $ which also writes
$\int_{\Rd} (I + {\mathcal T}_\lambda
{\mathcal
  L}_{0}^{-1} \Pi_0) \Pi_0 f \, \rmd \pi_\lambda =0 $ where $I$ denotes the
identity operator. This is equivalent to: for any test function $f$, $\int_{\Rd} (I + {\mathcal T}_\lambda
{\mathcal
  L}_{0}^{-1} \Pi_0) f \, \frac{\rmd \pi_\lambda}{\rmd \pi_0} \rmd
\pi_0 =\int_{\Rd} f \rmd \pi_0 $ which yields $(I + {\mathcal T}_\lambda
{\mathcal  L}_{0}^{-1} \Pi_0)^* \frac{\rmd \pi_\lambda}{\rmd \pi_0}= 1$, where $*$ denotes the dual
operator on the
Hilbert space~$\LL^2(\eV)$. As a consequence, we are naturally led to study the
operator~$\mathcal T_\lambda\mathcal L_0^{-1}\Pi_0$ defined
from~$\LL^2(\eV)$ to~$\LL^2(\eV)$. The aim of the next Lemma is to
show rigorously that we can define an invariant measure $\pi_\lambda$
of~\eqref{eq:EDS_lam} by defining its Radon-Nikodym derivative with respect to
$\pi_0$ as $(I + ({\mathcal T}_\lambda
{\mathcal  L}_{0}^{-1} \Pi_0)^*)^{-1} 1$.

We can now state the result concerning the existence of an invariant
measure for~\eqref{eq:EDS_lam}.
\begin{lem}\label{lem:pil}
Let us assume that
Assumption~{\bf(\hyppoinc{$\eta$})} holds for some positive
$\eta$. Then there exists $\lambda_1 \in (0,\lambda_0]$ such that for~$\lambda \in [0,\lambda_1]$, the dual operator $I +(\mathcal T_\lambda\mathcal L_0^{-1}\Pi_0)^*$ on the
Hilbert space~$\LL^2(\eV)$ of the operator $I +\mathcal
T_\lambda\mathcal L_0^{-1}\Pi_0$ is invertible and has a bounded inverse.

Let us then introduce, for~$\lambda \in [0,\lambda_1]$,  the function $g_\lambda \in \LL^2(\eV)$
and the associated measure $\pi_\lambda$ such that
\begin{equation}\label{eq:pi_lam}
\rmd\pi_\lambda= g_\lambda {\rmd\pi_0} \text{ where } g_\lambda=(I+(\mathcal T_\lambda\mathcal
L_0^{-1}\Pi_0)^*)^{-1}\mathbf1\,.
\end{equation}
The measure $\pi_\lambda$ is a probability measure which is invariant
for the process~$(X_t^\lambda)_{t\geq0}$ solution
to~\eqref{eq:EDS_lam}. 
\end{lem}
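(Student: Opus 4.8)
The plan is to split the statement into two parts: first the invertibility of $I+(\mathcal T_\lambda\mathcal L_0^{-1}\Pi_0)^*$ for $\lambda$ small, and then the verification that the measure $\pi_\lambda$ defined by~\eqref{eq:pi_lam} is indeed an invariant probability measure for~\eqref{eq:EDS_lam}.

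For the invertibility, I would estimate the operator norm of $\mathcal T_\lambda\mathcal L_0^{-1}\Pi_0$ on $\LL^2(\eV)$. Given $f\in\LL^2(\eV)$, write $v=-\mathcal L_0^{-1}\Pi_0 f\in\LL^2_0(\eV)\cap\HH^1(\eV)$, so that by definition $\int_\Rd|\nabla v|^2\eV=-\int_\Rd v\,\Pi_0f\,\eV\le\|v\|_{\LL^2(\eV)}\|f\|_{\LL^2(\eV)}$, and by the Poincar\'e inequality~\eqref{eq:poincare} this gives $\|\nabla v\|_{\LL^2(\eV)}\le\eta^{-1/2}\|f\|_{\LL^2(\eV)}$ (and $\|v\|_{\LL^2(\eV)}\le\eta^{-1}\|f\|_{\LL^2(\eV)}$, cf.~\eqref{eq:spectre_L0}). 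Since $\mathcal T_\lambda v=(F_\lambda+\nabla V)\cdot\nabla v=(F_\lambda-F_0)\cdot\nabla v$ and $\|F_\lambda-F_0\|_{\LL^\infty(\Rd)}\le C\lambda$ by Assumption~\textbf{(\hypcadre)}-$(i)$, we get
\[
\|\mathcal T_\lambda\mathcal L_0^{-1}\Pi_0 f\|_{\LL^2(\eV)}\le C\lambda\,\|\nabla v\|_{\LL^2(\eV)}\le \frac{C\lambda}{\sqrt\eta}\|f\|_{\LL^2(\eV)}.
\]
Hence for $\lambda_1\in(0,\lambda_0]$ small enough that $C\lambda_1/\sqrt\eta<1$, the operator norm of $\mathcal T_\lambda\mathcal L_0^{-1}\Pi_0$ is strictly less than $1$ for all $\lambda\in[0,\lambda_1]$, so $I+\mathcal T_\lambda\mathcal L_0^{-1}\Pi_0$ is invertible with bounded inverse given by a Neumann series; the same holds for its adjoint. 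This defines $g_\lambda\in\LL^2(\eV)$ and hence the signed measure $\rmd\pi_\lambda=g_\lambda\rmd\pi_0$.

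It then remains to check that $\pi_\lambda$ is a probability measure invariant for~\eqref{eq:EDS_lam}. Invariance: for a suitable class of test functions $\phi$ (say $\mathcal C_c^\infty(\Rd)$, or more conveniently functions of the form $\phi=\mathcal L_0^{-1}\Pi_0 f$ with $f$ smooth) one has $\mathcal L_\lambda\Pi_0\phi=(I+\mathcal T_\lambda\mathcal L_0^{-1}\Pi_0)(\mathcal L_0\mathcal L_0^{-1}\Pi_0 f)$-type identities; running the formal computation that precedes the Lemma in reverse, the definition $g_\lambda=(I+(\mathcal T_\lambda\mathcal L_0^{-1}\Pi_0)^*)^{-1}\mathbf 1$ is exactly equivalent to $\int_\Rd\mathcal L_\lambda\psi\,g_\lambda\,\rmd\pi_0=0$ for all $\psi$ in the relevant domain. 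One must be a little careful that this ``$\mathcal L_\lambda$-harmonic against $g_\lambda\rmd\pi_0$'' property passes to a genuine invariance statement: I would invoke the ergodicity/existence result (Lemma~\ref{lem:erggen}, referenced in the introduction) together with standard arguments identifying the stationary measure of a non-degenerate diffusion as the unique solution of the stationary Fokker--Planck equation $\mathcal L_\lambda^*(g_\lambda e^{-V})=0$, which is what the identity above encodes in weak form. That $\pi_\lambda$ is a probability measure: testing the defining relation $(I+(\mathcal T_\lambda\mathcal L_0^{-1}\Pi_0)^*)g_\lambda=\mathbf 1$ against $\mathbf 1\in\LL^2(\eV)$ gives $\int_\Rd g_\lambda\,\rmd\pi_0+\int_\Rd (\mathcal T_\lambda\mathcal L_0^{-1}\Pi_0)\mathbf 1\cdot g_\lambda\,\rmd\pi_0=1$, and since $\Pi_0\mathbf 1=0$ the second term vanishes, so $\int_\Rd\rmd\pi_\lambda=1$. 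Positivity of $g_\lambda$ is the one genuinely non-algebraic point: I would obtain it either from the perturbative bound $\|g_\lambda-\mathbf 1\|_{\LL^2(\eV)}=\|\sum_{k\ge1}(-(\mathcal T_\lambda\mathcal L_0^{-1}\Pi_0)^*)^k\mathbf 1\|\le \frac{C\lambda/\sqrt\eta}{1-C\lambda/\sqrt\eta}$ combined with the fact that an invariant measure of the non-degenerate diffusion~\eqref{eq:EDS_lam} must have a nonnegative (indeed positive, by Lemma~\ref{lem:p}) density once we know it is, up to sign and normalization, *the* stationary solution — alternatively, shrinking $\lambda_1$ further is not enough for an $\LL^\infty$ bound, so the cleanest route is to argue that $g_\lambda\eV$ solves the stationary Fokker--Planck equation in the weak sense, invoke Lemma~\ref{lem:p} (or Corollary~\ref{cor:FP}-type regularity) to get a continuous positive representative, and conclude.

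The main obstacle I anticipate is precisely this last step: bridging the purely functional-analytic identity $(I+(\mathcal T_\lambda\mathcal L_0^{-1}\Pi_0)^*)g_\lambda=\mathbf 1$ in $\LL^2(\eV)$ and the probabilistic statement that $\pi_\lambda$ is invariant for the SDE, including checking that the weak-formulation test functions $\mathcal L_0^{-1}\Pi_0 f$ are rich enough and regular enough (using the elliptic regularity already available via Propositions~\ref{prop:regularite_EDP_phi}--\ref{prop:feynman-kac}) to characterize the invariant measure, and then upgrading $g_\lambda\in\LL^2(\eV)$ to a bona fide positive probability density. The rest is the short Neumann-series argument above.
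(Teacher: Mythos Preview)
Your Neumann-series bound for Step~1 and your computation of $\int g_\lambda\,\rmd\pi_0=1$ are exactly what the paper does. The two genuine gaps are in the invariance step and in positivity.

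For invariance, invoking Lemma~\ref{lem:erggen} is circular: that lemma \emph{uses} Lemma~\ref{lem:pil} to know that an invariant probability measure exists in the first place. The paper does not go through any ``stationary Fokker--Planck has a unique solution'' argument. Instead, it shows invariance directly at the semigroup level: for $f\in\mathcal C^\infty_b$, set $u(t,x)=\E[f(Y^{\lambda,x}_t)]$ and use Propositions~\ref{prop:EDP_bien_posee}--\ref{prop:feynman-kac} to get $u\in\HH^1([0,T],\LL^2(\eV))$ and $\partial_t u=\mathcal L_\lambda u\in\LL^2(\eV)$. The algebraic identity you wrote, namely $\int\mathcal L_\lambda\psi\,g_\lambda\,\rmd\pi_0=\int\mathcal L_0\psi\,\rmd\pi_0=0$, is proved first for $\psi\in\mathcal C^\infty_c+\mbox{constants}$ (so that $\mathcal L_0\psi$ is honestly in the range and $\mathcal T_\lambda\psi=\mathcal T_\lambda\mathcal L_0^{-1}\Pi_0\mathcal L_0\psi$) and then by density for any $\psi$ with $\mathcal L_\lambda\psi\in\LL^2(\eV)$; applying it to $\psi=u(t,\cdot)$ gives $\frac{\rmd}{\rmd t}\int u\,g_\lambda\,\rmd\pi_0=0$, hence~\eqref{eq:inv_egalite}. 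So the bridge you worried about is PDE regularity of the semigroup, not uniqueness of stationary solutions.

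For positivity, neither of your two routes is what the paper does (and your route~(b) would require an elliptic regularity/strong maximum principle argument you do not have available). The paper's trick is short: once invariance is established, the identity $\int\!\!\int f(y)p^\lambda(t,x,y)\,\rmd y\,g_\lambda(x)e^{-V(x)}\rmd x=\int f(x)g_\lambda(x)e^{-V(x)}\rmd x$ holds for bounded $f$; taking $f=\mathrm{sgn}(g_\lambda)$ and using that $p^\lambda(t,x,y)>0$ a.e.\ (Lemma~\ref{lem:p}) forces $\mathrm{sgn}(g_\lambda)$ to be a.e.\ constant, and $\int g_\lambda\,\rmd\pi_0=1$ then fixes the sign.
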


\begin{proof}

\underline{\em Step 1}: Let us first study the operator $\mathcal T_\lambda
\mathcal L_0^{-1} \Pi_0$. {From} the boundedness assumption on~$\nabla
V+F_\lambda=F_\lambda-F_0$ (see Assumption{\bf(\hypcadre)}-$(i)$), the
definition of ${\mathcal L}_0^{-1}$
and~\eqref{eq:spectre_L0}, for any~$u \in \mathcal \LL^2_0(\eV)$,
\begin{align*}
\|\mathcal T_\lambda \mathcal L_0^{-1}u\|_{\LL^2(\eV)}^2
&=\int_\Rd |(F_\lambda+\nabla V)(x)\cdot\nabla (\mathcal L_0^{-1}u)(x)|^2\eV\\
&\leq C\lambda^2\int_\Rd |\nabla (\mathcal L_0^{-1}u)(x)|^2\eV\\
&=-C\lambda^2\int_\Rd (\mathcal L_0^{-1}u)(x) u(x)\eV\\
&\leq C\lambda^2\|\mathcal L_0^{-1} u\|_{\LL^2(\eV)}\|u\|_{\LL^2(\eV)}\leq C\frac{\lambda^2}\eta\|u\|^2_{\LL^2(\eV)}.
\end{align*}

As a consequence, the
operator~$\mathcal T_\lambda\mathcal L_0^{-1}$ is bounded
from~$\LL^2_0(\eV)$ to $\LL^2(\eV)$, with:
\begin{equation}\label{eq:Ol}
\|\mathcal T_\lambda\mathcal L_0^{-1}\|_{\mathcal
  L(\LL^2_0(\eV),\LL^2(\eV))} \le\sqrt{\frac{C}{\eta}} \lambda.
\end{equation}
By composition,~$\mathcal T_\lambda\mathcal L_0^{-1}\Pi_0$
is thus a bounded operator from~$\LL^2(\eV)$ to itself, with a norm of
order~$\mathcal O(\lambda)$, and so is~$(\mathcal T_\lambda\mathcal L_0^{-1}\Pi_0)^*$.
As a consequence, for~$\lambda$ small enough, the
operator~$I+(\mathcal T_\lambda\mathcal L_0^{-1}\Pi_0)^*$
is invertible from~$\LL^2(\eV)$ to itself.

\underline{\em Step 2}: Let us now introduce the function $g_\lambda
\in \LL^2(\eV)$ defined by $$g_\lambda=(I+(\mathcal T_\lambda\mathcal
L_0^{-1}\Pi_0)^*)^{-1} \mathbf 1$$
and let us prove that $\rmd \pi_\lambda= g_\lambda \rmd \pi_0$ is invariant for the stochastic
differential equation~\eqref{eq:EDS_lam}.  Let $(Y^{\lambda,x}_t)_{t \ge}$ be the solution
to~\eqref{eq:EDS_lam} with initial condition $Y^{\lambda,x}_0=x$
(see~\eqref{eq:Ytlambdax}). Using the Markov property, the aim is to prove that, for any
${\mathcal C}^\infty$ bounded test function $f:\Rd \to \R$,
\begin{equation}\label{eq:inv_egalite}
\int_{\Rd} \E(f(Y^{\lambda,x}_t)) g_\lambda(x) \eV= \int_{\Rd} f(x)
g_\lambda(x) \eV.
\end{equation}
{From} Proposition~\ref{prop:feynman-kac}, we know that
$u(t,x)=\E(f(Y^{\lambda,x}_t))$ is the solution to~\eqref{eq:EDP_phi}
(with $\phi=0$), and from Proposition~\ref{prop:EDP_bien_posee}, we have for any~$T>0$,
\[u \in
\LL^\infty\left([0,T],\HH^1(\eV)
\right)
\cap\HH^1\left([0,T],\LL^2(\eV)\right).
\] Moreover, from Proposition~\ref{prop:regularite_EDP_phi}, $u$ is a
classical solution to~\eqref{eq:EDP_phi}.
Therefore,
\begin{align*}
\frac{d}{d t} \int_{\Rd}  \E(f(Y^{\lambda,x}_t)) g_\lambda(x) \eV
&=
\frac{d}{d t} \int_{\Rd}  u(t,x) g_\lambda(x) \eV\\
&=
\int_{\Rd}  \partial_t u(t,x) g_\lambda(x) \eV\\
&=
\int_{\Rd}  {\mathcal L}_\lambda u(t,x) g_\lambda(x) \eV,
\end{align*}
and ${\mathcal L}_\lambda u = \partial_t u \in \LL^2(\eV)$.
Now, notice that for any function $\psi$ which is the sum of a ${\mathcal C}^\infty$ function with compact support and a constant,
 $$\mathcal T_\lambda \psi=\mathcal T_\lambda\mathcal
 L_0^{-1}\Pi_0\mathcal L_0 \psi$$
holds true since~$\mathcal T_\lambda$ sends constant functions to~$0$.
Therefore, for any such function $\psi$, one has
\begin{align*}
\int_{\R^d} \mathcal L_\lambda \psi  g_\lambda \rmd \pi_0=
\int_{\R^d} \left[ (\mathcal L_0+\mathcal T_\lambda)\psi \right]  g_\lambda \rmd \pi_0
&=
\int_{\R^d}  \left[ (I+\mathcal T_\lambda\mathcal L_0^{-1}\Pi_0)\mathcal
L_0\psi \right]  g_\lambda \rmd \pi_0\\
&=
\int_{\R^d}  \left[ \mathcal
L_0\psi \right]  (I+(\mathcal T_\lambda\mathcal L_0^{-1}\Pi_0)^*)  g_\lambda \rmd \pi_0\\
&=
\int_{\R^d} \mathcal
L_0\psi \, \rmd \pi_0.
\end{align*}
Since $\pi_0$ is invariant for the dynamics~\eqref{eq:EDS} with
infinitesimal generator ${\mathcal L}_0$, the right-hand side is zero.
By density, the equality $\int_{\R^d} \mathcal L_\lambda \psi
g_\lambda \rmd \pi_0=0$ holds for any function $\psi$ such that $\mathcal
L_\lambda \psi \in \LL^2(\eV)$. Therefore, $\frac{d}{d t} \int_{\Rd}
\E(f(Y^{\lambda,x}_t)) g_\lambda(x) \eV=0$ which
yields~\eqref{eq:inv_egalite} after integration in time over $[0,t]$.

\underline{\em Step 3}: Let us finally check that $\pi_\lambda$ is a probability
measure. First, one has 
\begin{align*}
   \int_{\R^d}g_\lambda \rmd \pi_0=\int_{\R^d}(I+(\mathcal T_\lambda\mathcal
L_0^{-1}\Pi_0)^*)^{-1} \mathbf 1(I+(\mathcal T_\lambda\mathcal
L_0^{-1}\Pi_0))\mathbf 1\rmd \pi_0=\int_{\R^d}\rmd \pi_0=1.
\end{align*}
Second, one can prove that $g_\lambda \ge 0$. Indeed,
from~\eqref{eq:inv_egalite} and the fact that $Y^{\lambda,x}_t$ admits
a density $p^\lambda(t,x,y)$ with respect to the Lebesgue
measure (see Lemma~\ref{lem:p}), we have
$$
\int_{\Rd} \int_{\Rd} f(y) p^\lambda(t,x,y) \, \rmd y  g_\lambda(x) \eV= \int_{\Rd} f(x)
g_\lambda(x) \eV.
$$
This equality holds for any smooth test function $f$ and, by a density
argument, one can apply it to the bounded function
$f(x)=\textrm{sgn}(g_\lambda(x))$, where $\textrm{sgn}(y)=1_{y \ge 0}
-1_{y <0}$ denotes the sign function. One thus obtains
$$
\int_{\Rd} \int_{\Rd} \big(\textrm{sgn}(g_\lambda(y)) \textrm{sgn}(g_\lambda(x)) -1\big) p^\lambda(t,x,y) \, \rmd y  |g_\lambda|(x) \eV= 0.
$$
Thus, $\big(\textrm{sgn}(g_\lambda(y)) \textrm{sgn}(g_\lambda(x))
-1\big) p^\lambda(t,x,y)  |g_\lambda|(x)=0$  $\rmd x \otimes \rmd
y$-a.e.. Since $p^\lambda(t,x,y)>0$  $\rmd x \otimes \rmd
y$-a.e. (see Lemma~\ref{lem:p}) and $\int_{\Rd} |g_\lambda|(x) \,
\rmd x >0$,
this implies that $\rmd y$-a.e.,  $\textrm{sgn}(g_\lambda(y))=1$ or  $\rmd y$-a.e $\textrm{sgn}(g_\lambda(y))=-1$. The
conclusion then follows from the fact that  $\int_{\R^d}g_\lambda \rmd \pi_0=1$.
\end{proof}

Notice that in the case $\lambda=0$, we indeed have $g_0=1$.  The next result states the uniqueness of the invariant measure for~\eqref{eq:EDS_lam}.
\begin{lem}\label{lem:erggen}
Let us assume that
Assumption~{\bf(\hyppoinc{$\eta$})} holds for some positive
$\eta$. For $\lambda\in[0,\lambda_1]$ ($\lambda_1$ being the constant
introduced in Lemma~\ref{lem:pil}), the unique invariant measure of
the stochastic differential
equation~\eqref{eq:EDS_lam} is the
probability measure $\pi_\lambda$ defined by~\eqref{eq:pi_lam}. This probability measure is equivalent to the Lebesgue measure on $\R^d$ and for any initial condition $X_0$,
\begin{equation}\label{eq:erggen}
\forall f\in\LL^1(\pi_\lambda),\;\PP\left(\lim_{t \to \infty} \frac1t\int_0^tf(X^{\lambda}_s)\rmd s
= \int_\Rd f \rmd\pi_\lambda\right)=1. 
\end{equation}
\end{lem}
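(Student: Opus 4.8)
The plan is to deduce the lemma from the already-established existence of the invariant probability measure $\pi_\lambda$ (Lemma~\ref{lem:pil}) together with the irreducibility of the dynamics, which is guaranteed by the positivity of the transition density $p^\lambda(t,x,y)$ for $t>0$ (Lemma~\ref{lem:p}). First I would check that $\pi_\lambda$, and indeed any invariant probability measure, is equivalent to the Lebesgue measure: since $\rmd\pi_\lambda=g_\lambda\,\rmd\pi_0$ with $g_\lambda\geq 0$ (Lemma~\ref{lem:pil}) and $\pi_0=\eV$ is equivalent to $\rmd x$, one has $\pi_\lambda\ll\rmd x$; conversely, writing the invariance identity $\pi_\lambda(\rmd y)=\big(\int_\Rd p^\lambda(t,x,y)\,\pi_\lambda(\rmd x)\big)\,\rmd y$ for some fixed $t>0$ and using $p^\lambda(t,x,y)>0$ for a.e. $(x,y)$ together with $\pi_\lambda\ll\rmd x$, the density of $\pi_\lambda$ is positive $\rmd x$-a.e., so $\pi_\lambda\sim\rmd x$ (in particular $g_\lambda>0$ a.e.). The same argument applies to any invariant probability measure.

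Next, for uniqueness I would use the classical fact that an irreducible Markov process admits at most one invariant probability measure. If $\mu$ and $\nu$ are two invariant probability measures, the previous step makes them mutually equivalent, so the signed measure $\rho=\mu-\nu$ is invariant and satisfies $\rho(\Rd)=0$; assuming $\rho\neq 0$, its Hahn decomposition $\Rd=P\sqcup N$ has $\mathrm{Leb}(P)>0$ and $\mathrm{Leb}(N)>0$ (because $\rho^+\ll\mathrm{Leb}$ and $\rho^-\ll\mathrm{Leb}$ are both nonzero), hence $0<\PP_x(X^\lambda_t\in P)<1$ for every $x$; inserting this in $\rho^+(\Rd)=\rho(P)=\int_\Rd\PP_x(X^\lambda_t\in P)\,\rho(\rmd x)=\int_P\PP_x(X^\lambda_t\in P)\,\rho^+(\rmd x)-\int_N\PP_x(X^\lambda_t\in P)\,\rho^-(\rmd x)$ gives $\rho(P)<\rho^+(\Rd)$, a contradiction. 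Hence $\pi_\lambda$ is the unique invariant probability measure of~\eqref{eq:EDS_lam}.

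Finally, to obtain~\eqref{eq:erggen}: because $\pi_\lambda$ is the unique invariant probability measure, the stationary Markov process with $X_0\sim\pi_\lambda$ is ergodic, so Birkhoff's ergodic theorem gives, for every $f\in\LL^1(\pi_\lambda)$, the almost sure convergence of $\frac1t\int_0^tf(X^\lambda_s)\,\rmd s$ to $\int_\Rd f\,\rmd\pi_\lambda$ when $X_0\sim\pi_\lambda$. To remove the assumption on the initial law, it suffices, after conditioning on $X_0$, to handle a deterministic start $X_0=x$. For $f$ bounded, the event $E$ that $\frac1t\int_0^tf(X^\lambda_s)\,\rmd s\to\int_\Rd f\,\rmd\pi_\lambda$ is invariant under time shifts, so $h(x)=\PP_x(E)$ is harmonic for the semigroup; since $h=1$ $\pi_\lambda$-a.e., hence $\rmd x$-a.e. by the equivalence established above, and $\int_\Rd p^\lambda(t,x,y)\,\rmd y=1$, one gets $h\equiv 1$, i.e. the ergodic theorem holds from every $x$. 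The extension to arbitrary $f\in\LL^1(\pi_\lambda)$ then follows by a truncation and monotone-convergence argument, or more directly by observing that the process, being irreducible with a finite invariant measure, is positively Harris recurrent, which gives the almost sure ergodic theorem from every starting point for all $f\in\LL^1(\pi_\lambda)$.

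The routine parts are the equivalence with the Lebesgue measure and the application of Birkhoff's theorem; the step demanding most care is the passage from a stationary initial law to an arbitrary (in particular deterministic) one for unbounded $f\in\LL^1(\pi_\lambda)$, since one cannot control $\int_0^r|f(X^\lambda_s)|\,\rmd s$ over a short interval a priori — it is cleanest there to rely on the Harris recurrence of the process rather than on an ad hoc estimate.
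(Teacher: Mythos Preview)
Your proposal is correct and follows essentially the same route as the paper: positivity of the transition density gives equivalence of any invariant measure with Lebesgue, hence uniqueness and ergodicity, and then one upgrades the a.e.-starting-point ergodic theorem to every initial condition. The paper's version of the last step is slightly more direct---it simply applies the Markov property at time $1$, using that $X^\lambda_1$ has a density, to transfer the a.e.-$x$ statement to an arbitrary $X_0$---which is morally your harmonic-function argument without the bounded/unbounded split; your explicit flag about local integrability of $|f|$ on $[0,1]$ for unbounded $f$ is a genuine subtlety that the paper's terse argument glosses over.
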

\begin{proof}
Let $\lambda\in[0,\lambda_1]$. Lemma~\ref{lem:pil} ensures that
$\pi_\lambda$ defined by~\eqref{eq:pi_lam} is an invariant probability measure for $\rmd
X_t^\lambda=F_\lambda(X_t^\lambda)\rmd t+\sqrt2\rmd W_t$. For $X_0$
distributed according to any invariant probability measure, Lemma
\ref{lem:p} ensures that this measure is equivalent to the Lebesgue
measure. As a consequence, all the invariant probability measures are
equivalent and the dynamics admits exactly one invariant probability
measure $\pi_\lambda$. Since $\pi_\lambda$ is the only invariant
probability measure, it is ergodic (see for example~\cite[Theorem 3.8
and Equation (52)]{rey-bellet-06}) and denoting by $(Y^{\lambda,x}_t)_{t\geq 0}$ the solution to \eqref{eq:EDS_lam} started from $Y_0=x\in\R^d$,
\begin{equation*}
\forall f\in\LL^1(\pi_\lambda),\;\rmd x\mbox{ a.e.},\;\PP\left(\lim_{t \to \infty} \frac1t\int_0^tf(Y^{\lambda,x}_s)\rmd s
= \int_\Rd f \rmd\pi_\lambda\right)=1. 
\end{equation*}
For any initial condition $X_0$, since the law of $X^{\lambda}_1$ is
absolutely continuous with respect to the Lebesgue measure (see Lemma~\ref{lem:p}), \eqref{eq:erggen} follows by the Markov property.
\end{proof}

\section{Tangent vector of the diffusion}\label{sect:vecteur_tangent}

In all this Section, $(X^\lambda_t)_{t \ge 0}$ denotes the process
solution to~\eqref{eq:EDS_lam}, with an initial condition $X_0$ which,
we recall, does not depend on $\lambda$.
 We establish various results on the tangent vector
$T_t$ defined by~\eqref{eq:definition_Tt}, which naturally appears in the
estimators~\eqref{eq:estim_1} and~\eqref{eq:estim_2} to evaluate $\dlam\left(\int_\Rd f\rmd\pi_\lambda\right)$.

\subsection{Definition and interpretations of the tangent vector}

If the function~$f$ is differentiable, one can
write~$\dlam\left(f(X_t^\lambda)\right)=T_t\cdot\nabla f(X_t^0)$,
where the process~$(T_t)_{t\geq0}$ is the so-called
\emph{tangent vector}, defined as
\begin{equation}\label{eq:definition_Tt}
T_t
=\dlam X_t^\lambda,
\end{equation}
and the existence of which is ensured by the following proposition.\begin{prop}\label{proptt}For any~$t\geq0$, the function~$\lambda\mapsto X_t^\lambda$ is almost surely
differentiable, and the definition of the tangent vector~\eqref{eq:definition_Tt} makes sense.
Moreover,~$(T_t)_{t\geq0}$ almost surely satisfies the following ordinary differential
equation whose coefficients depend on~$(X_t^0)_{t\geq0}$:
\begin{equation}\label{eq:edo_Tt}
\left\{
\begin{aligned}
\frac{\rmd T_t}{\rmd t}
&=\dlam F_\lambda(X_t^0)-\nabla^2V(X_t^0) T_t,\\
T_0&=0.
\end{aligned}
\right.
\end{equation}
\end{prop}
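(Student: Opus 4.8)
The plan is to prove differentiability of $\lambda\mapsto X_t^\lambda$ by a standard fixed-point / Gr\"onwall argument applied to the stochastic differential equation~\eqref{eq:EDS_lam}, exploiting that the Brownian motion and the initial condition do not depend on $\lambda$, so that formally differentiating in $\lambda$ produces a \emph{linear} ordinary differential equation in the tangent vector with coefficients driven by $(X_t^0)_{t\ge 0}$. Concretely, I would first define a candidate process $(\widetilde T_t)_{t\ge 0}$ as the solution of the linear ODE~\eqref{eq:edo_Tt}: since $x\mapsto\nabla^2 V(x)$ is locally Lipschitz (Assumption~\hypV-(i)) and $x\mapsto\dlam F_\lambda(x)$ is bounded and differentiable (Assumption~\hypcadre), along almost every continuous trajectory $t\mapsto X_t^0$ the coefficients $t\mapsto \nabla^2 V(X_t^0)$ and $t\mapsto \dlam F_\lambda(X_t^0)$ are continuous, hence locally bounded; the linear ODE therefore has a unique global solution $\widetilde T_t = -\int_0^t M(s,t)\,\dlam F_\lambda(X^0_s)\,\rmd s$ expressed via the resolvent (matrix exponential-type) propagator $M(s,t)$ associated with the time-dependent matrix $-\nabla^2 V(X^0_s)$. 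This defines $\widetilde T_t$ pathwise, and one should note $\widetilde T_t$ is adapted and a.s. continuous in $t$.

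Next I would show that $\frac{X_t^\lambda - X_t^0}{\lambda}$ converges almost surely to $\widetilde T_t$ as $\lambda\to 0^+$, uniformly on compact time intervals. Writing the difference $D_t^\lambda := X_t^\lambda - X_t^0$, the Brownian terms cancel and one gets the pathwise integral equation
\[
D_t^\lambda = \int_0^t \big(F_\lambda(X_s^\lambda) - F_0(X_s^0)\big)\,\rmd s
= \int_0^t \big(F_\lambda(X_s^\lambda) - F_\lambda(X_s^0)\big)\,\rmd s + \int_0^t \big(F_\lambda(X_s^0) - F_0(X_s^0)\big)\,\rmd s.
\]
The second integral is $O(\lambda)$ by Assumption~\hypcadre-(i) (with $\|F_\lambda-F_0\|_\infty\le C\lambda$), and dividing by $\lambda$ it converges locally uniformly to $\int_0^t \dlam F_\lambda(X_s^0)\,\rmd s$. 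For the first integral I would use the local Lipschitz continuity of $F_\lambda$ together with an a priori bound: since $\|F_\lambda - F_0\|_\infty\le C\lambda$ and $F_0=-\nabla V$ only has local Lipschitz regularity, I would localize by a stopping time $\tau_R=\inf\{t: |X_t^0|\ge R\}$, on which both $X_s^0$ and (by a Gr\"onwall estimate on $|D_s^\lambda|\le C\lambda t + L_R\int_0^s|D_u^\lambda|\,\rmd u$, giving $|D_s^\lambda|\le C\lambda t e^{L_R t}$) the process $X_s^\lambda$ stay in a fixed compact set for $\lambda$ small; hence $F_\lambda$ is uniformly Lipschitz there. Then, setting $R_t^\lambda := \frac{D_t^\lambda}{\lambda} - \widetilde T_t$ and subtracting the two integral equations, a Gr\"onwall argument on $[0,\tau_R]$ — using that $\frac{F_\lambda(x)-F_0(x)}{\lambda}\to\dlam F_\lambda(x)$ locally uniformly and that $\nabla^2 V$ is the derivative of $F_0=-\nabla V$ — shows $\sup_{t\le T\wedge\tau_R}|R_t^\lambda|\to 0$ a.s. as $\lambda\to 0^+$. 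Letting $R\to\infty$ (so $\tau_R\to\infty$ a.s.) gives a.s. convergence on all of $[0,T]$, which establishes that $\lambda\mapsto X_t^\lambda$ is differentiable at $\lambda=0$ with derivative $T_t=\widetilde T_t$ satisfying~\eqref{eq:edo_Tt}.

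The main obstacle is the low (only local) regularity of the drift $F_0=-\nabla V$ and of $F_\lambda$: one cannot directly invoke a global Lipschitz differentiability theorem for SDE flows, so the argument must be carefully localized via stopping times $\tau_R$, with uniform-in-$\lambda$ control showing that for $\lambda$ small enough the perturbed trajectory $X^\lambda$ does not escape a neighborhood of $X^0$ before time $\tau_R$. A secondary point requiring care is the one-sided nature of the limit ($\lambda\to 0^+$, since Assumption~\hypcadre\ is stated on $[0,\lambda_0]$): "differentiable" here should be read as the existence of the right derivative at $\lambda=0$, which is all that is needed for the quantity~\eqref{eq:but}, and all estimates above are uniform in $\lambda\in(0,\lambda_0']$ for suitable $\lambda_0'$.
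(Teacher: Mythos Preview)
Your proposal is correct and follows essentially the same route as the paper: define the candidate tangent vector as the unique solution of the linear ODE~\eqref{eq:edo_Tt} (well-posed by Cauchy--Lipschitz along the a.s.\ continuous trajectory $t\mapsto X_t^0$), obtain a pathwise a priori bound $|X_t^\lambda-X_t^0|=O(\lambda)$ via a localized Gr\"onwall argument, and then run Gr\"onwall on the remainder $\frac{X_t^\lambda-X_t^0}{\lambda}-\widetilde T_t$.

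Two small differences are worth pointing out. First, the paper splits the difference as $F_\lambda(X_s^\lambda)-F_0(X_s^\lambda)$ plus $F_0(X_s^\lambda)-F_0(X_s^0)$ rather than your $F_\lambda(X_s^\lambda)-F_\lambda(X_s^0)$ plus $F_\lambda(X_s^0)-F_0(X_s^0)$; this has the advantage that the Lipschitz constant entering the Gr\"onwall bound is that of $F_0=-\nabla V$ (coming from $\nabla^2 V$) and is manifestly independent of $\lambda$, whereas in your version one should check that the local Lipschitz constant of $F_\lambda$ does not blow up as $\lambda\to 0$ (or simply further split $F_\lambda=F_0+(F_\lambda-F_0)$). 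Second, instead of introducing $\tau_R=\inf\{t:|X_t^0|\ge R\}$ and sending $R\to\infty$, the paper localizes by the stopping time $\tau_\lambda=\inf\{t:|X_t^\lambda|\ge\sup_{[0,\bar t]}|X_t^0|+1\}$ and shows directly that $\tau_\lambda\ge\bar t$ for $\lambda$ small enough (with a random threshold depending on the path of $X^0$), avoiding the extra limit. Both localizations are valid; the underlying mechanism is identical.
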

\begin{proof}
 By {\bf(\hypcadre)}-$(i)$ and the continuity of $\nabla V$ and $(X^0_t)_{t\geq 0}$, $t\mapsto |\dlam F_\lambda(X_t^0)|+|\nabla^2V(X_t^0)|$ is locally bounded. Hence \eqref{eq:edo_Tt} admits a unique solution $(T^0_t)_{t\geq 0}$ by the Cauchy-Lipschitz theorem. Let us prove that for $\bar t>0$, $\lambda\mapsto X^\lambda_{\bar t}$ is differentiable at $\lambda =0$ with derivative equal to $T^0_{\bar t}$. For $\lambda \in[0,\lambda_0]$, we set $\tau_\lambda=\inf\{t\geq 0:|X^\lambda_t|\geq \sup_{t\in[0,\bar t]}|X^0_t|+1\}$ with convention $\inf\emptyset=+\infty$. Let $L^{X^0}_{\bar t}=\sup_{x\in\R^d:|x|\leq \sup_{t\in[0,\bar t]}|X^0_t|+1}|\nabla^2V(x)|$. For $t\in[0,\bar{t}]$, one has
\begin{align*}
 \sup_{s\in[0,t]}|X^\lambda_{s\wedge\tau_\lambda}-X^0_{s\wedge\tau_\lambda}|&\leq \int_0^{t\wedge\tau_\lambda}\left|F_\lambda(X^\lambda_s)+\nabla V(X^\lambda_s)\right|+\left|\nabla V(X^0_s)-\nabla V(X^\lambda_s)\right|\rmd s\\&\leq C\lambda t+L^{X^0}_{\bar t}\int_0^t|X^\lambda_{s\wedge\tau_\lambda}-X^0_{s\wedge\tau_\lambda}|\rmd s
\end{align*} 
so that $\sup_{s\in[0,t]}|X^\lambda_{s\wedge\tau_\lambda}-X^0_{s\wedge\tau_\lambda}|\leq \frac{C\big(e^{L^{X^0}_{\bar t}t}-1\big)}{L^{X^0}_{\bar t}}\lambda$. For $\lambda\leq \frac{L^{X^0}_{\bar t}}{C\big(e^{L^{X^0}_{\bar t}t}-1\big)}$, one deduces that $\tau_\lambda\geq \bar{t}$ and 
$\sup_{s\in[0,t]}|X^\lambda_{s}-X^0_{s}|\leq \frac{C \big(e^{L^{X^0}_{\bar t}t}-1\big)}{L^{X^0}_{\bar t}}\lambda$. In particular, $X^\lambda_t$ converges to $X^0_t$ uniformly for $t\in[0,\bar t]$. Now, for $t\geq 0$,
\begin{align*}
   X^\lambda_{t}-X^0_{t}=\int_0^{t}(F_\lambda(X^\lambda_s)-F_0(X^\lambda_s))\rmd s+\int_0^t\nabla^2V(\xi^\lambda_s)(X^0_{s}-X^\lambda_{s})ds,
\end{align*}
where, by a slight abuse of notations, $\nabla^2V(\xi^\lambda_s)$
stands for the matrix $(\partial_{ij}V(\xi^{\lambda,i}_s))_{1\leq
  i,j\leq d}$ and $\forall i\in\{1,\hdots,d\}$,
$\xi^{\lambda,i}_s\in[X^0_{s},X^\lambda_{s}]$. For
$s\in[0,\bar t]$ and $\lambda\in \bigg (0,\lambda_0\wedge
\frac{L^{X^0}_{\bar t}}{C(e^{L^{X^0}_{\bar t}t}-1)}\bigg]$, $|\xi^{\lambda,i}_s|\leq\sup_{t\in[0,\bar t]}|X^0_t|+1$. Hence for $t\in[0,\bar{t}]$,
\begin{align*}
 \sup_{s\in[0,t]} \left|\frac{X^\lambda_{s}-X^0_{s}}{\lambda}-T^0_s\right|\leq &\int_0^{t}\left|\frac{F_\lambda(X^\lambda_s)-F_0(X^\lambda_s)}{\lambda}-\dlam F_\lambda(X_s^0)\right|+|\nabla^2V(X^0_s)-\nabla^2V(\xi^\lambda_s)||T^0_s|\rmd s\\&+L^{X^0}_{\bar t}\int_0^t\left|\frac{X^\lambda_{s}-X^0_{s}}{\lambda}-T^0_s\right|\rmd s.
\end{align*}
By {\bf(\hypcadre)}-$(i)$-$(ii)$ and the uniform convergence of
$X^\lambda_t$ to $X^0_t$ for $t\in[0,\bar t]$, $$\lim_{\lambda \to 0}
\int_0^{\bar
  t}\left|\frac{F_\lambda(X^\lambda_s)-F_0(X^\lambda_s)}{\lambda}-\dlam
  F_\lambda(X_s^0)\right|+|\nabla^2V(X^0_s)-\nabla^2V(\xi^\lambda_s)||T^0_s|\rmd
s = 0.$$With Gr\"onwall's lemma, one concludes that $\sup_{s\in[0,\bar{t}]}\left|\frac{X^\lambda_{s}-X^0_{s}}{\lambda}-T^0_s\right|$ converges to $0$ as $\lambda\to 0$.
\end{proof}

We have the following expression of~$(T_t)_{t\geq0}$ as an integral:

\begin{prop}\label{prop:tangent}
Define the \emph{resolvent}~$(R_{X^0}(s,t))_{s,t\geq0}$ associated
with 
Equation~\eqref{eq:edo_Tt} as the
solution, with values in~$\R^{d\times d}$, to the following ordinary differential equation:
\begin{equation}\label{eq:resolvante}
\left\{
  \begin{aligned}
    \partial_tR_{X^0}(s,t)&=-\nabla^2V(X_t^0)R_{X^0}(s,t),~s,t\geq0,\\
    R_{X^0}(s,s)&=I_d,~s\geq0,
  \end{aligned}
\right.
\end{equation}
where~$I_d$ is the~$d\times d$ identity matrix.
The resolvent satisfies the following semigroup property
\begin{equation}\label{eq:semigroupe_R}
\forall r,s,t\in[0,\infty),~R_{X^0}(s,t)R_{X^0}(r,s)=R_{X^0}(r,t).
\end{equation}
One can recover the tangent vector from the resolvent through the
following formula:
\begin{equation}\label{eq:expression_Tt}
\forall t\geq0,
~T_t=\int_0^tR_{X^0}(s,t)\dlam F_\lambda(X_s^0)\rmd s.
\end{equation}
\end{prop}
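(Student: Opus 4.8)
The plan is to argue pathwise: once a realization of the continuous process $(X^0_t)_{t\geq 0}$ is fixed, the map $t\mapsto -\nabla^2V(X^0_t)$ is continuous (hence locally bounded) by Assumption~{\bf(\hypV)}-(i) and the continuity of $t\mapsto X^0_t$, so everything reduces to elementary facts about linear ordinary differential equations with continuous coefficients. First I would invoke the Cauchy--Lipschitz theorem for the matrix-valued linear ODE \eqref{eq:resolvante}: for each fixed $s\geq 0$ it has a unique global solution $t\mapsto R_{X^0}(s,t)$ defined for all $t\geq 0$ (both $t\geq s$ and $t\leq s$), and by standard continuous-dependence results $(s,t)\mapsto R_{X^0}(s,t)$ is jointly continuous while $t\mapsto R_{X^0}(s,t)$ is $\mathcal C^1$.

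For the semigroup property~\eqref{eq:semigroupe_R}, I would fix $r,s\geq 0$ and compare the two maps $t\mapsto R_{X^0}(s,t)R_{X^0}(r,s)$ and $t\mapsto R_{X^0}(r,t)$. Since $R_{X^0}(r,s)$ does not depend on $t$, the first map satisfies $\partial_t\bigl(R_{X^0}(s,t)R_{X^0}(r,s)\bigr)=-\nabla^2V(X^0_t)\,R_{X^0}(s,t)R_{X^0}(r,s)$, i.e. the same linear ODE in $t$ as the second map, and at $t=s$ both equal $R_{X^0}(r,s)$ (using $R_{X^0}(s,s)=I_d$). By uniqueness of the solution to this linear ODE, valid both forward and backward in time, the two maps coincide for all $t\geq 0$, which is exactly~\eqref{eq:semigroupe_R}. (Taking $r=t$ incidentally shows $R_{X^0}(s,t)$ is invertible with inverse $R_{X^0}(t,s)$, although this is not needed in what follows.)

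For the representation~\eqref{eq:expression_Tt}, I would set $Z_t=\int_0^tR_{X^0}(s,t)\,\dlam F_\lambda(X^0_s)\,\rmd s$ and check that $(Z_t)_{t\geq 0}$ solves~\eqref{eq:edo_Tt}. One has $Z_0=0$, and since $(s,t)\mapsto R_{X^0}(s,t)$ and $(s,t)\mapsto \partial_tR_{X^0}(s,t)=-\nabla^2V(X^0_t)R_{X^0}(s,t)$ are jointly continuous and $s\mapsto\dlam F_\lambda(X^0_s)$ is continuous, differentiation under the integral sign (Leibniz's rule) gives
\[
\frac{\rmd Z_t}{\rmd t}
=R_{X^0}(t,t)\,\dlam F_\lambda(X^0_t)+\int_0^t\partial_tR_{X^0}(s,t)\,\dlam F_\lambda(X^0_s)\,\rmd s
=\dlam F_\lambda(X^0_t)-\nabla^2V(X^0_t)\,Z_t.
\]
Hence $Z$ and the tangent vector $T$ of~\eqref{eq:definition_Tt}, which by Proposition~\ref{proptt} solves~\eqref{eq:edo_Tt}, satisfy the same linear ODE with the same initial condition $0$; by the same Cauchy--Lipschitz uniqueness used in the proof of Proposition~\ref{proptt}, $Z_t=T_t$ for all $t\geq 0$, almost surely. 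All three assertions are therefore deterministic ODE statements along the (almost surely continuous) trajectory, and the only mild point requiring care is the justification of the Leibniz differentiation in the last step, handled by the joint continuity of the resolvent and of its time derivative; I do not expect any serious obstacle.
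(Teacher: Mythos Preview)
Your proof is correct. For the semigroup property your uniqueness argument is the same as the paper's (you compare $R_{X^0}(s,t)R_{X^0}(r,s)$ with $R_{X^0}(r,t)$ directly, whereas the paper compares $R_{X^0}(s,t)$ with $R_{X^0}(r,t)R_{X^0}(r,s)^{-1}$; your version has the small advantage of not presupposing invertibility).

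For the integral representation~\eqref{eq:expression_Tt} the two arguments differ slightly in style. The paper multiplies $T_t$ by the ``integrating factor'' $R_{X^0}(t,0)=R_{X^0}(0,t)^{-1}$, computes $\partial_t(R_{X^0}(t,0)T_t)=R_{X^0}(t,0)\dlam F_\lambda(X_t^0)$, integrates, and then applies the semigroup property to recover the stated formula. You instead define the right-hand side of~\eqref{eq:expression_Tt} and verify via Leibniz's rule that it solves the same linear ODE~\eqref{eq:edo_Tt} as $T_t$, concluding by uniqueness. Both are the classical variation-of-constants argument viewed from opposite ends; the paper's route avoids the (mild) justification of differentiation under the integral sign, while yours avoids any appeal to the semigroup identity and to invertibility of the resolvent.
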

\begin{proof}
The semigroup property~\eqref{eq:semigroupe_R} is a consequence of uniqueness for
Equation~\eqref{eq:resolvante}, satisfied by the two
processes~$(R_{X^0}(s,t))_{t\geq 0}$ and~$(R_{X^0}(r,t)R_{X^0}(r,s)^{-1})_{t\geq0}$.

In view of the differential equations satisfied by~$(T_t)_{t\geq0}$
and~$(R_{X^0}(s,t))_{t\geq0}$, one has, from the
equality~$R_{X^0}(t,0)=R_{X^0}(0,t)^{-1}$,
\begin{align*}
\partial_t(R_{X^0}(t,0) T_t)
&=-R_{X^0}(t,0)\partial_t(R_{X^0}(0,t))R_{X^0}(t,0) T_t
+R_{X^0}(t,0)\partial_tT_t\\
&=R_{X^0}(t,0)\nabla^2V(X_t^0)R_{X^0}(0,t)R_{X^0}(t,0) T_t\\
&\quad+R_{X^0}(t,0)\dlam F_\lambda(X_t^0)
-R_{X^0}(t,0)\nabla^2V(X_t^0) T_t\\
&=R_{X^0}(t,0)\dlam F_\lambda(X_t^0).
\end{align*}
Integrating over~$[0,t]$, one obtains
\[
R_{X^0}(t,0) T_t=\int_0^tR_{X^0}(s,0)\dlam F_\lambda(X_s^0)\rmd s,
\]
and the result follows by using the semigroup property~\eqref{eq:semigroupe_R}.
\end{proof}

Notice that the resolvent is also the
differential of the trajectory with respect to its initial
condition. 
\begin{lem}\label{lemderci}Let $(Y_t^x)_{t \ge 0}$ solve~\eqref{eq:Ytx}. Then for any $t\geq 0$, $x\mapsto Y^x_t$ is ${\mathcal C}^1$ on $\R^d$ with Jacobian matrix $(D Y_t^x)_{i,j}=\partial_{x_j}Y_t^{i,x}$ given by $DY_t^x=R_{Y^x}(0,t)$.
\end{lem}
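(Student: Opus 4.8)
The plan is to follow the localization-and-Gr\"onwall scheme already used in the proof of Proposition~\ref{proptt}, differentiating now with respect to the starting point rather than with respect to~$\lambda$. Fix $x\in\R^d$, a direction $h\in\R^d$ and a horizon $\bar t>0$. Since $(Y^x_t)_{t\ge 0}$ and $(Y^{x+\epsilon h}_t)_{t\ge 0}$ are driven by the \emph{same} Brownian motion, the Brownian parts cancel and
\[
Y^{x+\epsilon h}_t-Y^x_t=\epsilon h-\int_0^t\big(\nabla V(Y^{x+\epsilon h}_s)-\nabla V(Y^x_s)\big)\,\rmd s.
\]
Introducing $\tau_\epsilon=\inf\{t\ge 0:|Y^{x+\epsilon h}_t|\ge \sup_{s\in[0,\bar t]}|Y^x_s|+1\}$ (the $\sup$ being finite since there is no explosion, by Assumption~{\bf(\hypV)}-(iii)) and the Lipschitz constant $L$ of $\nabla V$ on the closed ball $\{|y|\le \sup_{s\in[0,\bar t]}|Y^x_s|+1\}$, Gr\"onwall's lemma gives $\sup_{s\in[0,t]}|Y^{x+\epsilon h}_{s\wedge\tau_\epsilon}-Y^x_{s\wedge\tau_\epsilon}|\le |\epsilon||h|e^{Lt}$ for $t\in[0,\bar t]$; hence $\tau_\epsilon\ge\bar t$ once $|\epsilon|$ is small enough, and $Y^{x+\epsilon h}_s\to Y^x_s$ uniformly on $[0,\bar t]$ as $\epsilon\to 0$.

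Next I would pass to the difference quotient $Z^\epsilon_s=\epsilon^{-1}(Y^{x+\epsilon h}_s-Y^x_s)$. A componentwise mean value expansion of $\nabla V$, exactly as in the proof of Proposition~\ref{proptt}, gives
\[
Z^\epsilon_t=h-\int_0^t\nabla^2 V(\xi^\epsilon_s)\,Z^\epsilon_s\,\rmd s,
\]
where $\xi^{\epsilon,i}_s\in[Y^x_s,Y^{x+\epsilon h}_s]$ for each coordinate~$i$. The candidate limit is $\zeta_t:=R_{Y^x}(0,t)h$, which by~\eqref{eq:resolvante} solves $\zeta_t=h-\int_0^t\nabla^2 V(Y^x_s)\zeta_s\,\rmd s$. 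Subtracting the two integral equations, using that $\nabla^2 V$ is locally Lipschitz (Assumption~{\bf(\hypV)}-(i)), that $\xi^\epsilon_s$ stays in a fixed compact set and converges to $Y^x_s$ uniformly on $[0,\bar t]$, and that $\sup_{s\in[0,\bar t]}|\zeta_s|<\infty$, one more application of Gr\"onwall's lemma yields $\sup_{s\in[0,\bar t]}|Z^\epsilon_s-\zeta_s|\to 0$ as $\epsilon\to 0$. Thus $h\mapsto R_{Y^x}(0,\bar t)h$ is the directional derivative of $x\mapsto Y^x_{\bar t}$ in every direction~$h$; being linear and bounded in~$h$, it is the differential, so $DY^x_{\bar t}=R_{Y^x}(0,\bar t)$ (equivalently, $DY^x_t$ and $R_{Y^x}(0,t)$ solve the same linear ODE~\eqref{eq:resolvante} with the same data $I_d$, hence coincide by uniqueness).

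It remains to check that $x\mapsto DY^x_t$ is continuous, which gives the $\mathcal C^1$ regularity. The Gr\"onwall estimate above shows that $x'\mapsto (Y^{x'}_s)_{s\in[0,t]}$ is continuous for the uniform norm, locally uniformly in~$x'$; since the trajectories stay in a fixed compact set, $s\mapsto\nabla^2 V(Y^{x'}_s)$ then converges uniformly on $[0,t]$ to $s\mapsto\nabla^2 V(Y^x_s)$ as $x'\to x$, and continuous dependence of the solution of the linear ODE~\eqref{eq:resolvante} on its time-dependent matrix coefficient gives $R_{Y^{x'}}(0,t)\to R_{Y^x}(0,t)$. (All these pathwise estimates are continuous functionals of the a.s.\ continuous Brownian path, which lets one run the argument on a single full-measure event for all $x$ and $t$ simultaneously.) The main obstacle is the recurrent one here: $\nabla V$ and $\nabla^2 V$ are only \emph{locally} Lipschitz, so each Gr\"onwall step must be performed inside a compact set that is \emph{a priori} known to contain the perturbed trajectories for $\epsilon$ (resp.\ $x'$) small --- which is precisely the purpose of the stopping time $\tau_\epsilon$, handled as in Proposition~\ref{proptt}.
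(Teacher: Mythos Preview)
Your proof is correct and follows essentially the same idea as the paper: since the Brownian motion is shared by $Y^x$ and $Y^{x+\epsilon h}$, the problem reduces to differentiability with respect to the initial condition for an ODE with locally Lipschitz vector field, and the Jacobian solves the linearized equation~\eqref{eq:resolvante}. The paper simply invokes ``standard results on ordinary differential equations'' for this step, whereas you unpack the argument via the localization-and-Gr\"onwall scheme of Proposition~\ref{proptt}; your version is more self-contained but not a different route.
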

\begin{proof}
By standard results on ordinary differential equations,  $x\mapsto Y^x_t$ is ${\mathcal C}^1$ with Jacobian matrix $DY_t^x$ solving the equation \[
\forall t\geq0,
~DY_t^x
=I_d
-\int_0^t\nabla^2V(Y_s^x)DY_s^x\rmd s,
\]
obtained by spatial derivation of $Y_t^x
=x
-\int_0^t\nabla V(Y_s^x)\rmd s
+\sqrt2W_t.$
By uniqueness for~\eqref{eq:resolvante}, one has $DY_t^x=R_{Y^x}(0,t)$.
\end{proof}

In the following, we will need the following result about the link between
the forward resolvent and its backward counterpart.
\begin{lem}\label{lem:retournement_resolvante}
Let~$(Y_s)_{0\leq s\leq t}$ satisfy Equation~\eqref{eq:EDS} with~$Y_0$
distributed according to~$\pi_0$. {From} the
reversibility of the
dynamics~\eqref{eq:EDS}, the process~$(Z_s)_{0\leq s\leq t}$ defined
by~$Z_s=Y_{t-s}$ has the same law as $(Y_s)_{0\leq s\leq t}$, and one has
the relation
\[
R_Y(0,s)
=R_Z^T(t-s,t),
\]
where~$R_Z^T$ is the transposed matrix of the resolvent associated with~$Z$.
\end{lem}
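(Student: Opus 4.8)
The plan is to reduce the statement to an identity between resolvents and then prove that identity by a uniqueness argument for linear ODEs. The equality in law of $(Z_s)_{0\le s\le t}$ and $(Y_s)_{0\le s\le t}$ is nothing but the reversibility recalled in the remark following Lemma~\ref{lem:p} (here $Y_0$ has law $\pi_0$), so no work is needed there; I would then concentrate on proving $R_Y(0,s)=R_Z^T(t-s,t)$ for every $s\in[0,t]$. The idea is to introduce the reversed time variable $u\in[0,s]$ and to show that both $u\mapsto R_Y(u,s)$ and $u\mapsto R_Z(t-s,t-u)^T$ solve the same linear matrix differential equation, with the same value $I_d$ at $u=s$, so that they coincide; evaluating at $u=0$ then gives the claim. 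The Cauchy-Lipschitz uniqueness theorem applies because $u\mapsto\nabla^2V(Y_u)$ is continuous, hence bounded, on $[0,t]$: the paths of $Y$ are continuous and $\nabla^2V$ is continuous by Assumption~{\bf(\hypV)}-$(i)$.

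First I would establish the differential equation satisfied by $R_Y(u,s)$ in its \emph{first} argument. Using the semigroup property~\eqref{eq:semigroupe_R}, one has $R_Y(u,s)R_Y(0,u)=R_Y(0,s)$, and also $R_Y(0,u)R_Y(u,0)=R_Y(u,u)=I_d$, so that $R_Y(0,u)$ is invertible with inverse $R_Y(u,0)$. Hence $R_Y(u,s)=R_Y(0,s)R_Y(0,u)^{-1}$, and differentiating in $u$ with the help of~\eqref{eq:resolvante} together with $\partial_u[R_Y(0,u)^{-1}]=-R_Y(0,u)^{-1}\big(\partial_uR_Y(0,u)\big)R_Y(0,u)^{-1}$ yields
\[
\partial_uR_Y(u,s)=R_Y(u,s)\,\nabla^2V(Y_u),\qquad R_Y(s,s)=I_d.
\]
On the other side, by~\eqref{eq:resolvante} applied to $Z$, the map $\sigma\mapsto R_Z(t-s,\sigma)$ solves $\partial_\sigma R_Z(t-s,\sigma)=-\nabla^2V(Z_\sigma)R_Z(t-s,\sigma)$ on $[t-s,t]$ with $R_Z(t-s,t-s)=I_d$. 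Setting $M(u)=R_Z(t-s,t-u)$ for $u\in[0,s]$ and using $Z_{t-u}=Y_u$, the chain rule gives $\partial_uM(u)=\nabla^2V(Y_u)M(u)$ with $M(s)=I_d$; transposing and using the symmetry of $\nabla^2V$, the map $u\mapsto M(u)^T=R_Z(t-s,t-u)^T$ satisfies the very same Cauchy problem $\partial_uC(u)=C(u)\,\nabla^2V(Y_u)$, $C(s)=I_d$ on $[0,s]$ as $u\mapsto R_Y(u,s)$ does.

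Finally, by uniqueness for this linear ODE with continuous coefficients I would conclude that $R_Y(u,s)=R_Z(t-s,t-u)^T$ for all $u\in[0,s]$, and specialize to $u=0$ to obtain $R_Y(0,s)=R_Z(t-s,t)^T=R_Z^T(t-s,t)$, which is the claimed identity. The one step requiring care is the derivation of the first-argument equation $\partial_uR_Y(u,s)=R_Y(u,s)\nabla^2V(Y_u)$: it hinges on the invertibility of the resolvent and on the semigroup property~\eqref{eq:semigroupe_R}, both already at our disposal. Everything else---the time-reversal change of variables $\sigma=t-u$ and the appeal to Cauchy-Lipschitz---is routine.
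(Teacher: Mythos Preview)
Your proof is correct and follows essentially the same approach as the paper: both arguments use the semigroup property together with the derivative-of-the-inverse formula to show that two matrix-valued functions solve the same linear ODE with the same boundary data, and then conclude by Cauchy--Lipschitz uniqueness. The only difference is organizational: the paper works directly with $s\mapsto R_Y(0,s)$ and $s\mapsto R_Z^T(t-s,t)$ and verifies both satisfy $\partial_sR=-\nabla^2V(Y_s)R$ with $R(0)=I_d$, whereas you fix $s$, introduce an auxiliary variable $u\in[0,s]$, and match $u\mapsto R_Y(u,s)$ with $u\mapsto R_Z(t-s,t-u)^T$ via the adjoint equation $\partial_uC=C\,\nabla^2V(Y_u)$ with $C(s)=I_d$.
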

\begin{proof}
Uniqueness holds for the ordinary differential equation satisfied
by~$s \mapsto R_Y(0,s)$:
\begin{equation}\label{eq:EDO_semigroupe}
\left\{
\begin{aligned}
\frac{\rmd R}{\rmd s} (s)&=-\nabla^2V(Y_s)R(s),\\
R(0)&=I_d.
\end{aligned}
\right.
\end{equation}
One can check that $s \mapsto R_Z^T(t-s,t)$ also
solves~\eqref{eq:EDO_semigroupe}. Indeed, since, by the semigroup
property, $R_Z(t-s,t)=R_Z(t,t-s)^{-1}$, one has, for $s \in [0,t]$,
\begin{align*}
\partial_sR_Z(t-s,t)
&=-R_Z(t-s,t)\left(\partial_sR_Z(t,t-s)\right)R_Z(t-s,t)\\
&=-R_Z(t-s,t)\nabla^2V(Z_{t-s})R_Z(t,t-s)R_Z(t-s,t)\\
&=-R_Z(t-s,t)\nabla^2V(Z_{t-s})\\
&=-R_Z(t-s,t)\nabla^2V(Y_s).
\end{align*}
This concludes the proof.
\end{proof}

\subsection{Almost sure boundedness of~$R_{X^0}(0,t)$ and~$T_t$}

The tangent vector can take large values, since the second term in
the right-hand side of~\eqref{eq:edo_Tt} will provide exponential growth
for~$(T_t)_{t\geq0}$, typically when the trajectory~$(X_t^0)_{t\geq0}$
is close to a local maximum of~$V$,
or when it crosses a saddle point of~$V$.
In the sequel, we need some assumptions on~$V$
to control this behavior.

\subsubsection{Local-in-time boundedness of~$R_{X^0}(s,t)$ and~$T_t$}

Let us first introduce an assumption which will be sufficient to get
the local-in-time boundedness of~$R_{X^0}(s,t)$ and~$T_t$.
\begin{hyp}[\hypspec]
The matrix-valued function~$\nabla^2V:\Rd\to\R^{d\times d}$ is bounded
from below, in the sense that there
exists~$\alpha\in\R$ such that, for all~$x,h\in\Rd$,
\[
h\cdot\nabla^2V(x)h\geq\alpha|h|^2.
\]
Equivalently, the spectrum of~$\nabla^2V(x)$ is bounded from below
by~$\alpha$, uniformly in~$x$.
\end{hyp}

Under Assumption~{\bf(\hypspec)}, the random variables~$T_t$
and~$R_{X^0}(s,t)$ are bounded:
\begin{lem}\label{lem:R_T_Linfty}
One has
\begin{equation}\label{eq:R_T_Linfty}
\forall 0 \le s<t,~
|R_{X^0}(s,t)|
\leq e^
{-\int_s^t\min\mathrm{Spec}\left(\nabla^2V(X_u^0)\right)\rmd u},
\end{equation}
$\R^{d\times d}$ being endowed with the
matricial norm associated with the Euclidean norm on~$\Rd$.
In addition, if the Assumption~{\bf(\hypspec)} is satisfied, for any~$T>0$, the
random variables~$\sup_{0\leq s\leq t\leq T}|R_{X^0}(s,t)|$ and~$\sup_{0\leq t\leq T}|T_t|$
lie
in~$\LL^\infty(\Omega)$.
\end{lem}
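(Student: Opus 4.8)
The plan is to obtain \eqref{eq:R_T_Linfty} by a pathwise Grönwall argument, and then to upgrade it to the two $\LL^\infty(\Omega)$ bounds using the uniform lower bound on $\nabla^2V$ provided by Assumption~{\bf(\hypspec)}, the boundedness of $\dlam F_\lambda$ from Assumption~{\bf(\hypcadre)}-(i), and the integral representation \eqref{eq:expression_Tt} of $T_t$.

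For the first estimate I would fix $s\ge0$ and $h\in\R^d$ and set $v(t)=R_{X^0}(s,t)h$. Since $t\mapsto X_t^0$ is continuous, $t\mapsto\nabla^2V(X_t^0)$ is continuous, so by \eqref{eq:resolvante} the map $v$ is $\mathcal C^1$ with $v'(t)=-\nabla^2V(X_t^0)v(t)$ and $v(s)=h$. Then
\[
\frac{\rmd}{\rmd t}|v(t)|^2=-2\,v(t)\cdot\nabla^2V(X_t^0)v(t)\le-2\min\mathrm{Spec}\big(\nabla^2V(X_t^0)\big)|v(t)|^2,
\]
and Grönwall's lemma gives $|v(t)|^2\le|h|^2 e^{-2\int_s^t\min\mathrm{Spec}(\nabla^2V(X_u^0))\rmd u}$ for $t\ge s$. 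Taking the supremum over $h$ in the unit sphere of $\R^d$ yields \eqref{eq:R_T_Linfty}, since $|R_{X^0}(s,t)|=\sup_{|h|=1}|R_{X^0}(s,t)h|$.

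Now assume Assumption~{\bf(\hypspec)}, so that $\min\mathrm{Spec}(\nabla^2V(x))\ge\alpha$ for all $x\in\R^d$. For $0\le s\le t\le T$ one has $-\int_s^t\min\mathrm{Spec}(\nabla^2V(X_u^0))\rmd u\le-\alpha(t-s)\le|\alpha|T$, hence \eqref{eq:R_T_Linfty} gives the deterministic bound $\sup_{0\le s\le t\le T}|R_{X^0}(s,t)|\le e^{|\alpha|T}$, which in particular belongs to $\LL^\infty(\Omega)$. Plugging this into \eqref{eq:expression_Tt} and using $|\dlam F_\lambda|\le C$ (Assumption~{\bf(\hypcadre)}-(i)), for $0\le t\le T$ one gets $|T_t|\le\int_0^t|R_{X^0}(s,t)|\,|\dlam F_\lambda(X_s^0)|\,\rmd s\le C\,t\,e^{|\alpha|T}\le C\,T\,e^{|\alpha|T}$, so $\sup_{0\le t\le T}|T_t|$ is bounded by a deterministic constant and lies in $\LL^\infty(\Omega)$.

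There is no genuine difficulty here; the only points to be a little careful about are that \eqref{eq:R_T_Linfty} is a purely pathwise statement valid without any extra assumption (the exponent being a priori path-dependent and unbounded), that Assumption~{\bf(\hypspec)} is used solely to make that exponent uniformly bounded, and that the matrix norm of $R_{X^0}(s,t)$ is exactly the quantity controlled by the columnwise Grönwall estimate.
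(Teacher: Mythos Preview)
Your proof is correct and follows essentially the same approach as the paper: both differentiate $|R_{X^0}(s,t)h|^2$ in $t$, use the spectral lower bound to get a differential inequality, apply Gr\"onwall, and then invoke Assumption~{\bf(\hypspec)} together with the integral formula~\eqref{eq:expression_Tt} and the boundedness of $\dlam F_\lambda$ to conclude. Your version is slightly more explicit about the deterministic constants, but there is no substantive difference.
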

\begin{proof}
{For} any vector~$x$, one has
\begin{align*}
\partial_t|R_{X^0}(s,t)x|^2
&=-2(R_{X^0}(s,t)x)^T\nabla^2V(X_t^0)(R_{X^0}(s,t)x)\\
&\leq-2\min\textrm{Spec}\left(\nabla^2 V(X_t^0)\right)|R_{X^0}(s,t)x|^2.
\end{align*}
As a consequence, one has the estimation
\[
|R_{X^0}(s,t)x|^2
\leq|x|^2e^
{-2\int_s^t\min\textrm{Spec}\left(\nabla^2V(X_u^0)\right)\rmd u}
\]
so that~\eqref{eq:R_T_Linfty} holds. If the Assumption~{\bf(\hypspec)}
is satisfied, this
inequality proves that~$R(s,t)$ is in~$\LL^\infty(\Omega)$ locally
uniformly in time.
{From} the expression~\eqref{eq:expression_Tt} of~$T_t$ and the
boundedness of~$\dlam F_\lambda$, one also concludes
that~$T_t\in\LL^\infty(\Omega)$, locally uniformly in time.
\end{proof}

\subsubsection{Global-in-time boundedness of~$R_{X^0}(0,t)$}

We need some additional assumption on the convexity of the potential for~$(R_{X^0}(0,t))_{t\geq0}$
to be bounded globally in time. 
\begin{hyp}[\hypconv]
The potential~$V$ is such that
\begin{equation}\label{eq:trou_moyen}
\int_\Rd\max\left(0,-\min\mathrm{Spec}\left(\nabla^2V(x)\right)\right)\eV<\infty
\mbox{ and }
\int_\Rd\min\mathrm{Spec}\left(\nabla^2V(x)\right)\eV>0.
\end{equation}
\end{hyp}
In this assumption, the first inequality, always satisfied
under~{\bf(\hypspec)}, ensures that the
integral~$\int_\Rd\min\mathrm{Spec}\left(\nabla^2V(x)\right)\eV$
is well defined in~$(-\infty,\infty]$. We refer to
Appendix~\ref{sec:annex_conv} for a discussion of this Assumption.

\begin{lem}\label{lem:convergence_R}
Under Assumptions~{\bf(\hypspec)} and~{\bf(\hypconv)}, the
resolvent matrix~$R_{X^0}(0,t)$ almost surely
converges to~$0$ as~$t$ goes to infinity, with exponential
rate. Namely, for any~$\beta$ with
\[
0
<\beta
<\int_\Rd\min\mathrm{Spec}\left(\nabla^2V(x)\right)\eV,
\]
there exists an almost surely finite random variable~$C>0$
such that
\begin{equation}\label{eq:convergence_R}
\forall t\geq0,~|R_{X^0}(0,t)|\leq Ce^{-\beta t}.
\end{equation}
\end{lem}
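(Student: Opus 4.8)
The plan is to combine the pointwise bound~\eqref{eq:R_T_Linfty} from Lemma~\ref{lem:R_T_Linfty} with the ergodic theorem and a large-deviation-type control on the time average of $\min\mathrm{Spec}(\nabla^2 V(X_u^0))$. Write $\phi(x) = \min\mathrm{Spec}(\nabla^2 V(x))$; by Assumption~{\bf(\hypspec)}, $\phi$ is bounded below by $\alpha$, and by Assumption~{\bf(\hypconv)}, $\mathbf m := \int_\Rd \phi\, \rmd\pi_0 > 0$. From~\eqref{eq:R_T_Linfty}, $|R_{X^0}(0,t)| \le \exp\left(-\int_0^t \phi(X_u^0)\,\rmd u\right)$, so it suffices to show that almost surely, for any $\beta < \mathbf m$, there is a finite random variable $C$ with $\int_0^t \phi(X_u^0)\,\rmd u \ge \beta t - \ln C$ for all $t \ge 0$.

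First I would treat the case where $X_0$ is distributed according to $\pi_0$. Then $(X_u^0)_{u\ge0}$ is stationary and ergodic (Lemma~\ref{lem:erggen}), so by Birkhoff's theorem $\frac1t \int_0^t \phi(X_u^0)\,\rmd u \to \mathbf m$ almost surely. This gives $\int_0^t \phi(X_u^0)\,\rmd u \ge \beta t - C$ eventually, but I need it for \emph{all} $t\ge 0$ with a finite random constant; since $\phi \ge \alpha$, the integral is bounded below by $\alpha t$ on any finite interval, and combining with the eventual bound and taking $C$ large enough absorbs the finite-time deficit — here one uses that $\liminf_{t\to\infty}\left(\int_0^t\phi(X_u^0)\,\rmd u - \beta t\right) = +\infty$ a.s. because $\beta<\mathbf m$, so the infimum over $t\ge0$ of $\int_0^t\phi(X_u^0)\,\rmd u - \beta t$ is a.s. finite, and $C := \exp\left(-\inf_{t\ge0}\left(\int_0^t\phi(X_u^0)\,\rmd u - \beta t\right)\right)$ works. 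Then I would remove the assumption on the law of $X_0$: split at time $1$, writing $\int_0^t \phi(X_u^0)\,\rmd u = \int_0^1 \phi(X_u^0)\,\rmd u + \int_1^t \phi(X_u^0)\,\rmd u$; the first term is a.s. finite (it is $\ge \alpha$), and conditionally on $X_1^0$ the shifted process starts from $X_1^0$, whose law is absolutely continuous with respect to $\pi_0$ (Lemma~\ref{lem:p}), so the a.s. statement for $\pi_0$-distributed initial data transfers to a.s. for $X_1^0$-distributed initial data, hence to the original process by the Markov property.

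The main obstacle is the uniformity in $t$: ergodicity only gives an asymptotic rate, and one must argue that the random "overshoot constant" $\inf_{t\ge0}\left(\int_0^t\phi(X_u^0)\,\rmd u-\beta t\right)$ is a.s. finite rather than $-\infty$. This follows because the integrand $\phi(X_u^0) - \beta$ has a.s. positive time-average $\mathbf m - \beta > 0$, so $\int_0^t(\phi(X_u^0)-\beta)\,\rmd u \to +\infty$, and a continuous function on $[0,\infty)$ that is bounded below near $0$ (by $(\alpha-\beta)t$) and tends to $+\infty$ attains its infimum; exponentiating gives the finite random constant $C$. One small point to check is measurability/integrability of $\phi$ so that Birkhoff applies — $\phi = \min\mathrm{Spec}(\nabla^2 V)$ is continuous hence Borel, and $\phi \in \LL^1(\pi_0)$ since $\alpha \le \phi$ and $\int \max(0,-\phi)\,\rmd\pi_0 < \infty$ by~\eqref{eq:trou_moyen}, so $\int|\phi|\,\rmd\pi_0 < \infty$.
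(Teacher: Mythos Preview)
Your proof is correct and follows the same approach as the paper: combine the pointwise bound~\eqref{eq:R_T_Linfty} with the almost-sure ergodic limit for $\phi(X^0_u)=\min\mathrm{Spec}(\nabla^2V(X^0_u))$, then observe that the continuous function $t\mapsto\int_0^t\phi(X^0_u)\rmd u-\beta t$ tends to $+\infty$ and is finite at $0$, hence has a finite infimum. The paper compresses this last step into a single sentence, while you spell it out carefully.

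Two minor remarks. First, your detour through the stationary case is unnecessary: the paper invokes~\eqref{eq:erggen} directly, which already gives the ergodic limit for \emph{any} initial condition $X_0$, so no coupling or splitting at time~$1$ is needed. Second, your integrability check contains a small slip: from $\phi\ge\alpha$ and $\int\max(0,-\phi)\,\rmd\pi_0<\infty$ you cannot conclude $\int|\phi|\,\rmd\pi_0<\infty$, since neither controls $\phi^+$. What~\eqref{eq:trou_moyen} guarantees is that $\int\phi\,\rmd\pi_0$ is well defined in $(-\infty,+\infty]$; if it equals $+\infty$, apply the ergodic theorem to the truncations $\min(\phi,M)\in\LL^1(\pi_0)$ and let $M\to\infty$ to get $\frac1t\int_0^t\phi(X^0_u)\rmd u\to+\infty$, after which the argument proceeds identically. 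The paper's proof has the same implicit reading.
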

\begin{proof}
{From} Lemma~\ref{lem:erggen}, by
ergodicity (see~\eqref{eq:erggen}), one has
\[
\lim_{t\to\infty}\frac1t\int_0^t\min\textrm{Spec}\left(\nabla^2 V(X_s^0)\right)\rmd s
=\int_\Rd\min\textrm{Spec}\left(\nabla^2V(x)\right)e^{-V(x)}\rmd x
~~\mbox{ a.s.}
\]
We conclude by combining this limit with Equation~\eqref{eq:R_T_Linfty}.
\end{proof}

\begin{rem}[On the Assumption~{\bf(\hypconv)}]\label{rem:hypconv}
While Assumption~{\bf(\hypconv)} is automatically
satisfied in dimension~$1$ from a mere
integration by parts, this is not
the case in higher dimension. Indeed, if one applies the integration by parts
formula in this case, one only obtains that
\[
\int_\Rd\nabla^2V(x)\eV
=\int_\Rd\nabla V(x)\otimes\nabla V(x)e^{-V(x)}\rmd x
\]
is a positive definite matrix (because of the integrability
of~$e^{-V}$, for any~$y$ in~$\Rd$, the function~$x\mapsto\nabla V(x)\cdot y$ cannot be the zero function), so
that the minimum of its spectrum is positive. A counterexample
to Assumption~{\bf(\hypconv)} is given
by a tensor potential~$V(x)=U(x_1)+\hdots+U(x_d)$ with a
well chosen function~$U$. Indeed, in this case the left hand side of equation
\eqref{eq:trou_moyen} rewrites
\[
\int_\Rd\min_{i \in \{1, \ldots,d\}}(U''(x_i))e^{-\sum_i
U(x_i)}\rmd x_1\hdots\rmd x_d=\E\left[\min_{i \in \{1, \ldots,d\}} U''(X_i)\right],
\]
where
$X_i$ are i.i.d random variables with distribution~$e^{-U(x)}\rmd x$.
If~$U$ is chosen so that~$U''$ is bounded and has a strictly negative
minimum, then the sequence
$\left(\displaystyle\min_{i \in \{1, \ldots,d\}}
  U''(X_i)\right)_{d\geq1}$ converges almost surely as~$d$ goes to infinity
to the negative constant~$\min U''$. Then, from
the dominated convergence theorem, the quantity
$\E\left[\displaystyle\min_{i \in \{1, \ldots,d\}} U''(X_i)\right]$
converges to $\min U''$, and is thus negative when~$d$
is large enough.
\end{rem}

\begin{rem}[On the assumptions of Lemma~\ref{lem:convergence_R}]
Assumption~{\bf(\hypconv)} is not necessary for~\eqref{eq:convergence_R} to hold.
Indeed, if the matrices~$\nabla^2V(x)$ commute, the matrix~$R_{X^0}$ is
given by
\[
R_{X^0}(0,t)=e^{- \int_0^t\nabla^2V(X_s^0)\rmd s}.
\]
and the convergence of~$\frac1t\int_0^t\nabla^2V(X_s^0)\rmd s$ to the
positive definite matrix~$\int_\Rd\nabla^2V(x)\eV$ implies
that~\eqref{eq:convergence_R} holds for $\beta < \min {\rm Spec} \left(\int_\Rd\nabla^2V(x)\eV\right)$, even in the cases when~$V$
does not satisfy Assumption {\bf(\hypconv)}.
An example where the matrices~$\nabla V^2(x)$ commute is the case of a
tensor potential $V(x)=U(x_1)+\hdots+U(x_d)$. As seen before,~$U$ and~$d$ can be
chosen such that~$V$ does not satisfy Assumption~{\bf(\hypconv)}.

However, it is likely that Lemma~\ref{lem:convergence_R} does not hold
under the sole ergodicity property:
\[
\lim_{t\to\infty}\frac1t\int_0^t \left(\nabla^2 V(X_s^0)\right)\rmd s
=\int_\Rd \nabla V(x) \otimes \nabla V(x) e^{-V(x)}\rmd x
~~\mbox{ a.s.}
\]
Indeed, there exists some family of symmetric matrices~$(A_t)_{t\geq0}$ converging in the Ces\`aro sense to a
positive-definite matrix, for which the solution
to~$\frac{\rmd}{\rmd t} R_t=-A_tR_t$,~$R_0=I_d$ does not converge to~$0$ as~$t$
goes to infinity. An example of this phenomenon is given by
\[
A_t
= \Omega_t\begin{pmatrix}-1&0\\0&3\end{pmatrix}\Omega_t^T,
\mbox{ where }
\Omega_t
=\begin{pmatrix}\cos t&-\sin t\\\sin t&\cos t\end{pmatrix}
.\]
Indeed, the family~$(A_t)_{t\geq0}$ converges in the Ces\`aro sense
to~$I_d$ as~$t$ goes to infinity, but
the associated matrix~$(R_t)_{t\geq0}$ diverges. To show this last
point, consider
the matrix~$M_t=\Omega_t^TR_t$.
Since
$\partial_t\Omega_t=\Omega_t\begin{pmatrix}0&-1\\1&0\end{pmatrix}$, then
\[
\partial_tM_t
=\begin{pmatrix}1&1\\-1&-3\end{pmatrix}M_t
\]
holds.
As a consequence,
$R_t=\Omega_t\exp\left(t\begin{pmatrix}1&1\\-1&-3\end{pmatrix}\right)$. The
eigenvalues of the matrix
$\begin{pmatrix}1&1\\-1&-3\end{pmatrix}$ are
$-1-\sqrt3$ and~$-1+\sqrt3$, the latter being
positive, so that~$R_t$ diverges as~$t$ goes to infinity.
\end{rem}

\subsection{Boundedness of moments
  of~$R_{X^0}(s,t)$ and $T_t$}\label{sec:expo_cv_R}
In the sequel, we will need to control the moments of~$T_t$. {From}
Equation~\eqref{eq:expression_Tt} and the boundedness of $\dlam
F_\lambda$ (see Assumption~{\bf (\hypcadre)}-(i)), this boils down to estimating the
moments of~$R_{X^0}(s,t)$.
For this purpose, from~\eqref{eq:R_T_Linfty}, it is enough
control expectations of the
form$~\E\left[e^{-\beta \int_0^t\min\mathrm{Spec}(\nabla^2V)(Y_s^x)\rmd s}\right]$,
where~$\beta$ is a positive constant.

\subsubsection{Preliminary result when $X_0 \sim \pi_0$}
One can deduce from Proposition~\ref{prop:conv_expo} a criterion for
exponential convergence of the moments of $R_{X^0}(0,t)$ to $0$ as
$t\to\infty$. To state the result, we need to strengthen the
assumptions~{\bf(\hypspec)} and~{\bf(\hypconv)} which is the point of the following assumption. For any $\rho>0$, let
us consider:
\begin{hyp}[\hypspecbeta{$\rho$}]
  Assume that
  \begin{align*}
    &-\infty<\inf_{x \in \R^d} \min\mathrm{Spec}(\nabla^2V(x)) \leq 0,\\
  &  \int_\Rd \min\mathrm{Spec}(\nabla^2V(x)) \eV>0 \text{ and }  \int_\Rd \left(\min\mathrm{Spec}(\nabla^2V(x)) \right)^2\eV<\infty,\\
&- (\inf
    \min\mathrm{Spec}(\nabla^2V(x)))\frac{\int_\Rd
      \left(\min\mathrm{Spec}(\nabla^2V(x)) \right)^2\eV}{\left(\int_\Rd \min\mathrm{Spec}(\nabla^2V(x))\eV\right)^2}
    <\rho.
  \end{align*}
\end{hyp}
Notice that for $\eta >0$ and $\beta >0$ under assumptions
{\bf(\hyppoinc{$\eta$})} and {\bf (\hypspecbeta{$\eta
  / \beta $})} then the assumptions~\eqref{eq:crit_conv1} and
\eqref{eq:crit_conv2} of Proposition \ref{prop:conv_expo} are satisfied
with $\phi(x)=\beta \min\mathrm{Spec}(\nabla^2V(x))$.

We are now in position to state a simple consequence of Proposition~\ref{prop:conv_expo}:
\begin{prop}\label{cor:momr}
   Let $(X^0_t)_{t\geq 0}$ solve \eqref{eq:EDS} starting from $X_0$
   distributed according to $\pi_0$. Assume
   that~{\bf(\hyppoinc{$\eta$})} and~{\bf (\hypspecbeta{$\eta
  / \beta $})} hold for some $\eta >0$ and $\beta >0$.
Then there is a constant $C\in(0,+\infty)$ such that
$$\forall t\geq 0,\;\E_{\pi_0}[|R_{X^0}(0,t)|^\beta]\leq Ce^{-t/C}.$$
\end{prop}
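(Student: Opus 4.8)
The plan is to combine the almost-sure pathwise bound on the resolvent from Lemma~\ref{lem:R_T_Linfty} with the $\LL^2$ exponential decay of Proposition~\ref{prop:conv_expo}. Set $\phi(x)=\beta\min\mathrm{Spec}(\nabla^2V(x))$. This function is bounded from below, since by the first line of~{\bf(\hypspecbeta{$\eta/\beta$})} one has $\inf_x\min\mathrm{Spec}(\nabla^2V(x))>-\infty$ and $\beta>0$; it is moreover locally Lipschitz, because $A\mapsto\min\mathrm{Spec}(A)=\inf_{|v|=1}v^TAv$ is $1$-Lipschitz for the operator norm while $x\mapsto\nabla^2V(x)$ is locally Lipschitz by~{\bf(\hypV)}-(i). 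Hence Proposition~\ref{prop:feynman-kac}, applied with $f=\mathbf1\in\LL^2(\eV)$ and $\lambda=0$, ensures that $u(t,x)=\E\big[e^{-\int_0^t\phi(Y^x_s)\rmd s}\big]$, with $(Y^x_s)_{s\ge0}$ as in~\eqref{eq:Ytx}, is for $\rmd x$-a.e.\ $x$ the weak solution to~\eqref{eq:EDP_phi}.

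Next I would integrate this identity against $\pi_0$. When $X_0\sim\pi_0$, conditioning on $X_0=x$ identifies $(X^0_s)_{s\ge0}$ in law with $(Y^x_s)_{s\ge0}$, so using the pathwise estimate~\eqref{eq:R_T_Linfty} and then the Cauchy--Schwarz inequality (recall $\pi_0$ is a probability measure),
\begin{align*}
\E_{\pi_0}\big[|R_{X^0}(0,t)|^\beta\big]
&\le\E_{\pi_0}\Big[e^{-\beta\int_0^t\min\mathrm{Spec}(\nabla^2V(X^0_s))\rmd s}\Big]\\
&=\int_{\Rd}u(t,x)\,\rmd\pi_0(x)\le\|u(t,\cdot)\|_{\LL^2(\eV)}.
\end{align*}
It then remains to bound $\|u(t,\cdot)\|_{\LL^2(\eV)}$. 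As observed just before the statement, under~{\bf(\hyppoinc{$\eta$})} and~{\bf(\hypspecbeta{$\eta/\beta$})} the function $\phi=\beta\min\mathrm{Spec}(\nabla^2V)$ satisfies the hypotheses~\eqref{eq:crit_conv1}--\eqref{eq:crit_conv2} of Proposition~\ref{prop:conv_expo}; the point is that the left-hand side of~\eqref{eq:crit_conv2} is homogeneous of degree one in $\phi$, so the threshold $\eta/\beta$ appearing in~{\bf(\hypspecbeta{$\eta/\beta$})} is exactly what turns into the $\eta$ required in~\eqref{eq:crit_conv2} for $\beta\min\mathrm{Spec}(\nabla^2V)$. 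Proposition~\ref{prop:conv_expo} therefore provides $C'>0$ and a rate $\kappa>0$ with $\int_{\Rd}u^2(t,x)\eV\le C'e^{-\kappa t}$ for all $t>0$, hence $\E_{\pi_0}[|R_{X^0}(0,t)|^\beta]\le\sqrt{C'}\,e^{-\kappa t/2}$, and the announced form $Ce^{-t/C}$ follows by adjusting the constant (using also $|R_{X^0}(0,0)|=1$).

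There is no real difficulty here: the argument is essentially a bookkeeping of previously established facts. The two points deserving care are the local Lipschitz regularity of $x\mapsto\min\mathrm{Spec}(\nabla^2V(x))$, which is what allows Proposition~\ref{prop:feynman-kac} to be invoked for $\phi=\beta\min\mathrm{Spec}(\nabla^2V)$, and the fact that the symbol $\beta$ plays two unrelated roles — the exponent of $|R_{X^0}(0,t)|$ in the statement, and the name of the decay rate produced inside Proposition~\ref{prop:conv_expo} — which one must not conflate.
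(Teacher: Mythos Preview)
Your proof is correct and follows essentially the same route as the paper: reduce $|R_{X^0}(0,t)|^\beta$ to the Feynman--Kac expectation via~\eqref{eq:R_T_Linfty}, identify the latter as $u(t,x)$ through Proposition~\ref{prop:feynman-kac} (after checking that $x\mapsto\min\mathrm{Spec}(\nabla^2V(x))$ is locally Lipschitz), integrate against $\pi_0$ and bound by $\|u(t,\cdot)\|_{\LL^2(\eV)}$ via Cauchy--Schwarz, and conclude by Proposition~\ref{prop:conv_expo}. The paper's proof is the same argument, stated more tersely.
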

Here and in the following, the notation $\E_{\pi_0}$ means that the
initial condition $X_0$ of the processes $(X^\lambda_t)_{t \ge 0}$
solution to~\eqref{eq:EDS_lam} is distributed according to $\pi_0$. 
\begin{proof}[Proof of Proposition~\ref{cor:momr}]
To apply Proposition~\ref{prop:feynman-kac} to the
function~$\phi(x)=\beta \min\mathrm{Spec}(\nabla^2V(x))$, we need this
function to be locally Lipschitz. Since~$\nabla^2 V$ is
locally Lipschitz (see Assumption~{\bf(\hypV)}-$(i)$), this is a consequence of the
Lemma~\ref{lem:min_spec_lipschitz} given below.
By Proposition~\ref{prop:feynman-kac}, the
function~$u(t,x)=\E\left[
e^{-\int_0^t\beta\min\mathrm{Spec}\left(\nabla^2V(Y_s^x)\right)\rmd s}
\right]$
is the solution to Equation~\eqref{eq:EDP_phi} in the sense of
Definition~\ref{defi:solution_EDP}
for~$\phi(x)=\beta\min\mathrm{Spec}(\nabla^2V(x))$ and~$f(x)=1$.

Since conditions~\eqref{eq:crit_conv1} and~\eqref{eq:crit_conv2} hold
for this choice of~$\phi$, Equation~\eqref{eq:R_T_Linfty} and 
Proposition~\ref{prop:conv_expo} give
\[
\E_{\pi_0}\left[|R_{X^0}(0,t)|^\beta\right]\leq \int_{\R^d}u(t,x)\eV\leq \left(\int_{\R^d}u^2(t,x)\eV\right)^{1/2}
\leq Ce^{-t/C}
\]
for some positive constant $C$.\end{proof}
\begin{lem}\label{lem:min_spec_lipschitz}
The function~$A\mapsto\min\mathrm{Spec}(A)$ is a Lipschitz 
function on the space of symmetric~$d\times d$ matrices.
\end{lem}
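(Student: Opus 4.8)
The plan is to use the variational (Rayleigh quotient) characterization of the smallest eigenvalue of a symmetric matrix. Recall that for a symmetric $d\times d$ matrix $A$,
\[
\min\mathrm{Spec}(A)=\min_{h\in\Rd,\,|h|=1}h\cdot Ah,
\]
where $|\cdot|$ denotes the Euclidean norm on $\Rd$. This is the lowest-index case of the Courant--Fischer min-max theorem, and I would simply quote it (or recall the one-line argument via diagonalization in an orthonormal eigenbasis).

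Next, given two symmetric matrices $A$ and $B$, I would pick a unit vector $h_A$ realizing the minimum in the Rayleigh quotient for $A$, i.e. $h_A\cdot Ah_A=\min\mathrm{Spec}(A)$. Then, using that $h_A$ is admissible in the variational problem for $B$,
\[
\min\mathrm{Spec}(B)\leq h_A\cdot Bh_A=h_A\cdot Ah_A+h_A\cdot(B-A)h_A\leq \min\mathrm{Spec}(A)+|B-A|,
\]
where $|B-A|$ is the matricial norm associated with the Euclidean norm on $\Rd$ (so that $|h\cdot(B-A)h|\leq|B-A|$ for any unit vector $h$). Exchanging the roles of $A$ and $B$ gives the reverse inequality, hence
\[
\bigl|\min\mathrm{Spec}(A)-\min\mathrm{Spec}(B)\bigr|\leq|A-B|,
\]
i.e. $A\mapsto\min\mathrm{Spec}(A)$ is $1$-Lipschitz for the operator norm. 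Finally, since the space of symmetric $d\times d$ matrices is finite-dimensional, all norms on it are equivalent, so the function is Lipschitz with respect to any chosen norm.

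There is no real obstacle here: the only point requiring a touch of care is correctly invoking the variational characterization of the bottom of the spectrum and the elementary bound $|h\cdot(B-A)h|\leq|B-A|$ for $|h|=1$; everything else is immediate.
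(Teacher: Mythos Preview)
Your proof is correct and follows essentially the same approach as the paper: both use the Rayleigh-quotient characterization $\min\mathrm{Spec}(A)=\min_{|h|=1}h\cdot Ah$, plug a minimizing unit vector for $A$ into the quadratic form for $B$, bound the difference by the operator norm $|A-B|$, and then swap the roles of $A$ and $B$. Your additional remark about norm equivalence on finite-dimensional spaces is a harmless extra.
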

\begin{proof}
Let~$A$ be a symmetric matrix, and let~$x$ be a vector in~$\Rd$
such that~$|x|=1$ and~$\min\mathrm{Spec}(A)=x\cdot Ax$. Then, for any
symmetric matrix~$B$, one has
\[
\min\mathrm{Spec}(B)
\leq x\cdot Bx
=x\cdot (B-A)x+x\cdot Ax
\leq|A-B|+\min\mathrm{Spec}(A),
\]
$\R^{d\times d}$ being endowed with the
matricial norm associated with the Euclidean norm on~$\Rd$.
By exchanging~$A$ and~$B$ in the previous inequality, one obtains
\[
|\min\mathrm{Spec}(A)-\min\mathrm{Spec}(B)|
\leq|A-B|.
\]
\end{proof}

  \subsubsection{Uniform-in-time boundedness of moments of $T_t$}\label{sect:variance_bornee}
Numerically, the computation of~\eqref{eq:but} through the long-time
limit of a Monte Carlo
approximation of the expression
$\E[T_t\cdot\nabla f(X_t^0)]$
is only possible if~$T_t$ has a bounded variance uniformly in time.

A case where this fact is easily proved is when the function~$V$ is
$\eta$-convex, where $\eta$ is a positive constant. We recall that this means that the spectrum of~$\nabla^2V(x)$ is bounded from below by~$\eta$, independently of~$x$.
 More precisely, one has the following proposition.
\begin{prop}\label{prop:varalconv}
Assume that the $V$ is $\eta$-convex, for a positive constant $\eta$. Then, for any $\alpha
\ge 1$,
$$\sup_{t \ge 0} \E |T_t|^\alpha < \infty.$$
In particular, $T_t$  has a bounded variance uniformly in time.
\end{prop}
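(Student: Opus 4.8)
The plan is to exploit the explicit integral representation of the tangent vector together with the pointwise bound on the resolvent, both established above. Recall from Proposition~\ref{prop:tangent} that
\[
T_t = \int_0^t R_{X^0}(s,t)\,\dlam F_\lambda(X_s^0)\,\rmd s,
\]
and from Lemma~\ref{lem:R_T_Linfty} that, for $0\le s\le t$,
\[
|R_{X^0}(s,t)| \le e^{-\int_s^t \min\mathrm{Spec}(\nabla^2 V(X_u^0))\,\rmd u}.
\]
Under the $\eta$-convexity assumption, $\min\mathrm{Spec}(\nabla^2 V(x))\ge \eta$ for every $x\in\Rd$, so the right-hand side is bounded by $e^{-\eta(t-s)}$, uniformly in the trajectory $(X_u^0)_{u\ge 0}$.

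First I would combine these two facts with the boundedness of $\dlam F_\lambda$: Assumption~{\bf(\hypcadre)}-$(i)$ guarantees that $|\dlam F_\lambda|\le C$ for some finite constant $C$, whence the deterministic, trajectory-independent bound
\[
|T_t| \le \int_0^t |R_{X^0}(s,t)|\,|\dlam F_\lambda(X_s^0)|\,\rmd s \le C\int_0^t e^{-\eta(t-s)}\,\rmd s = \frac{C}{\eta}\bigl(1-e^{-\eta t}\bigr)\le \frac{C}{\eta}.
\]
Then for any $\alpha\ge 1$ one gets $\E|T_t|^\alpha\le (C/\eta)^\alpha$, which is finite and independent of $t$, so that $\sup_{t\ge 0}\E|T_t|^\alpha<\infty$. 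Taking $\alpha=2$ yields in particular the announced uniform-in-time bound on the variance of $T_t$.

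There is essentially no obstacle in this case: the $\eta$-convexity collapses the random exponent $\int_s^t\min\mathrm{Spec}(\nabla^2V(X_u^0))\,\rmd u$ into the deterministic quantity $\eta(t-s)$, and the argument reduces to a direct estimate. The genuinely delicate situation is when $V$ is not uniformly convex, so that $\min\mathrm{Spec}(\nabla^2V)$ may be negative on part of space; there one cannot bound $|R_{X^0}(s,t)|$ pathwise by a decaying exponential, and one must instead control exponential moments of the form $\E[e^{-\beta\int_0^t\min\mathrm{Spec}(\nabla^2V)(Y_s^x)\,\rmd s}]$ through the spectral and Poincar\'e analysis of Proposition~\ref{prop:conv_expo} and Proposition~\ref{cor:momr}, which is precisely the purpose of the surrounding subsection.
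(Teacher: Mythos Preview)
Your proof is correct and follows essentially the same approach as the paper: both use the integral representation of $T_t$ from Proposition~\ref{prop:tangent}, the resolvent bound from Lemma~\ref{lem:R_T_Linfty}, and the $\eta$-convexity to obtain $|R_{X^0}(s,t)|\le e^{-\eta(t-s)}$, then integrate against the bounded $\dlam F_\lambda$. You make explicit that the resulting bound on $|T_t|$ is in fact deterministic and independent of the trajectory, which is slightly more informative than the paper's formulation but entirely equivalent in substance.
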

\begin{proof}
By \eqref{eq:R_T_Linfty} and the boundedness of $\dlam F_\lambda$, 
one has
\begin{align*}
\E[|T_t|^\alpha]
=\E\left[\left|\int_0^t R_{X^0}(s,t)\dlam F_\lambda(X_s^0)\rmd s\right|^\alpha\right]\leq&\E\left[\left(C\int_0^te^{-\alpha(t-s)}\rmd s\right)^\alpha\right]<\infty,
\end{align*}
so that~$T_t$ has a finite moment of order $\alpha$.
\end{proof}

The convexity assumption on the potential can be loosened, as shown
in the next Proposition. 
\begin{prop}\label{prop:variance}
  Let $\alpha\in[1,+\infty)$. Assume that~{\bf(\hyppoinc{$\eta$})} holds
  for some positive $\eta$, that 
    the initial condition~$X_0$ is distributed according to a
    measure~$\mu_0$ having a density with respect to the
    measure~$\eV$ which is in $\LL^p(\eV)$ for some $p \in (1,
    \infty]$, and Assumption~{\bf (\hypspecbeta{$\eta (p-1)
  / (\alpha p) $})} holds (with the convention $\eta ( \infty-1)
  / (\alpha \infty)= \eta/\alpha$).
 Then,
$$\sup_{t \ge 0} \E |T_t|^\alpha < \infty.$$
and, when $\alpha\geq 2$, $T_t$ has a bounded variance uniformly in time.
\end{prop}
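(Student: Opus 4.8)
The plan is to bound $\E|T_t|^\alpha$ by reducing it to a uniform‑in‑time estimate of the form $\E\big[e^{-\alpha\int_s^t\psi(X_u^0)\,\rmd u}\big]\le Ke^{-\gamma(t-s)}$, where $\psi=\min\mathrm{Spec}(\nabla^2V)$. Note first that $\psi$ is locally Lipschitz by Lemma~\ref{lem:min_spec_lipschitz} and Assumption~{\bf(\hypV)}-(i), and bounded from below by the first condition in Assumption~{\bf(\hypspecbeta{$\eta(p-1)/(\alpha p)$})}. From the expression~\eqref{eq:expression_Tt} of $T_t$, the boundedness of $\dlam F_\lambda$ (Assumption~{\bf(\hypcadre)}-(i)) and the estimate~\eqref{eq:R_T_Linfty}, one gets $|T_t|\le C\int_0^t e^{-\int_s^t\psi(X_u^0)\,\rmd u}\,\rmd s$ almost surely, so by Minkowski's integral inequality in $\LL^\alpha(\Omega)$,
\[
\|T_t\|_{\LL^\alpha(\Omega)}\le C\int_0^t\Big(\E\big[e^{-\alpha\int_s^t\psi(X_u^0)\,\rmd u}\big]\Big)^{1/\alpha}\,\rmd s .
\]
If the claimed exponential bound holds, the right‑hand side is at most $CK^{1/\alpha}\int_0^\infty e^{-\gamma\sigma/\alpha}\,\rmd\sigma<\infty$, uniformly in $t$, which proves $\sup_{t\ge0}\E|T_t|^\alpha<\infty$.

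To prove the exponential bound I would use the Markov property at time $s$. Writing $w(r,x)=\E\big[e^{-\alpha\int_0^r\psi(Y_u^x)\,\rmd u}\big]$ with $(Y^x_u)_{u\ge0}$ solving~\eqref{eq:Ytx}, and using that $X^0_s$ has law $r(s,\cdot)\eV$ (Lemma~\ref{lem:densite_X_t}, the case $s=0$ being the hypothesis $\mu_0=r_0\eV$), one gets $\E\big[e^{-\alpha\int_s^t\psi(X_u^0)\,\rmd u}\big]=\int_\Rd w(t-s,x)\,r(s,x)\eV$. Hölder's inequality with conjugate exponents $p'=p/(p-1)$ and $p$, together with $\|r(s,\cdot)\|_{\LL^p(\eV)}\le\|r_0\|_{\LL^p(\eV)}$ (this is~\eqref{eq:decroit_L2} when $p<\infty$, and the maximum principle~\eqref{eq:max_princ} when $p=\infty$), gives
\[
\E\big[e^{-\alpha\int_s^t\psi(X_u^0)\,\rmd u}\big]\le\|r_0\|_{\LL^p(\eV)}\,\|w(t-s,\cdot)\|_{\LL^{p'}(\eV)},
\]
so it remains to show that $\|w(r,\cdot)\|_{\LL^{p'}(\eV)}$ decays exponentially in $r$.

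This last point is the crux, and it is not obvious because when $p<2$ one has $p'>2$, where no direct $\LL^{p'}$ contraction is available; the idea is to route the decay through $\LL^1$. By the conditional Jensen inequality applied to $z\mapsto z^{1/p'}$, one has pointwise $w(r,x)^{p'}\le\E\big[e^{-\alpha p'\int_0^r\psi(Y_u^x)\,\rmd u}\big]=:\tilde u(r,x)$, and by Proposition~\ref{prop:feynman-kac} (with $f=1$ and $\phi=\alpha p'\psi$, which is locally Lipschitz and bounded below) $\tilde u$ is the weak solution of~\eqref{eq:EDP_phi} for $\lambda=0$ and this $\phi$. Now Assumptions~{\bf(\hyppoinc{$\eta$})} and {\bf(\hypspecbeta{$\eta(p-1)/(\alpha p)$})} are precisely tailored so that conditions~\eqref{eq:crit_conv1}--\eqref{eq:crit_conv2} hold for $\phi=\alpha p'\psi$: the left‑hand side of~\eqref{eq:crit_conv2} is homogeneous of degree one in $\phi$, hence the factor $\alpha p'$ cancels and the condition reads $-(\inf\psi)\int_\Rd\psi^2\eV\big/\big(\int_\Rd\psi\eV\big)^2<\eta/(\alpha p')=\eta(p-1)/(\alpha p)$. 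Proposition~\ref{prop:conv_expo} then yields $\int_\Rd\tilde u(r,x)^2\eV\le\bar Ce^{-\bar\beta r}$; by Cauchy--Schwarz against the probability measure $\eV$, $\int_\Rd\tilde u(r,x)\eV\le\sqrt{\bar C}\,e^{-\bar\beta r/2}$, and therefore $\|w(r,\cdot)\|_{\LL^{p'}(\eV)}\le\bar C^{1/(2p')}e^{-\bar\beta r/(2p')}$, which is the desired estimate with $\gamma=\bar\beta/(2p')>0$.

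Putting these steps together proves $\sup_{t\ge0}\E|T_t|^\alpha<\infty$, and when $\alpha\ge2$ the uniform bound on the variance follows at once from Jensen's inequality, since $\E|T_t|^2\le(\E|T_t|^\alpha)^{2/\alpha}$. The main obstacle is the third step — the $\LL^{p'}$-decay of $w$ for $p'>2$ — which is handled by the Jensen rescaling reducing it to Proposition~\ref{prop:conv_expo} applied to $\alpha p'\psi$, and this is exactly where the scaling $\eta(p-1)/(\alpha p)$ in the hypothesis is used; some routine care is also needed with the a.e.\ identifications between Feynman--Kac expectations and weak solutions, all of which are supplied by Proposition~\ref{prop:feynman-kac}.
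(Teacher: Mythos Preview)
Your proof is correct and follows essentially the same route as the paper: Minkowski's inequality reduces to bounding $\E[|R_{X^0}(s,t)|^\alpha]$, a change of measure via the Markov property together with H\"older's inequality and the monotonicity of $\|r(s,\cdot)\|_{\LL^p(\eV)}$ transfers the estimate to the stationary process, and Proposition~\ref{prop:conv_expo} applied with $\phi=\alpha p'\min\mathrm{Spec}(\nabla^2V)$ yields the exponential decay. The only cosmetic difference is the order of operations: the paper first applies H\"older to $|R_Y(s,t)|^\alpha\cdot\frac{\rmd\mu_s}{\rmd\pi_0}(Y_s)$ and then bounds $|R_Y(s,t)|^{\alpha p'}$ via~\eqref{eq:R_T_Linfty} (packaged as Proposition~\ref{cor:momr}), whereas you bound the resolvent by the exponential first and then apply H\"older and your Jensen step $w^{p'}\le\tilde u$; the resulting inequalities are identical. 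A small wording quibble: what you call ``conditional Jensen'' is just the ordinary Jensen inequality for the concave map $z\mapsto z^{1/p'}$ applied to the expectation defining $\tilde u$.
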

\begin{proof}
By \eqref{eq:expression_Tt} and {\bf(\hypcadre)}-$(i)$,
$$\E^{1/\alpha}[|T_t|^\alpha]\leq \int_0^t\E^{1/\alpha}[|R_{X^0}(s,t)\dlam F_\lambda(X_s^0)|^\alpha]\rmd s\leq C\int_0^t\E^{1/\alpha}[|R_{X^0}(s,t)|^\alpha]\rmd s$$

Let $\mu_s$ denote the law of $X^0_s$ for $s\geq 0$ and $(Y_t)_{t\geq
  0}$ be a solution to \eqref{eq:EDS} with $Y_0$ distributed according
to $\pi_0$. We notice that the Markov property gives: for $0\leq s\leq t$,
 $$
  \E[|R_{X^0}(s,t)|^\alpha]
  =\E\left[|R_{Y}(s,t)|^\alpha\frac{\rmd \mu_s}{\eV}(Y_s)\right].
 $$
Using H\"older inequality with $q=p/(p-1)$ ($q=1$ if $p=\infty$),
Lemma~\ref{lem:densite_X_t}  and Proposition~\ref{cor:momr}, one
  deduces that for $t\geq s \ge 0$,
  \begin{align*}
    \E[|R_{X^0}(s,t)|^\alpha]
    &\leq \E \left[|R_Y(s,t)|^{\alpha q}\right]^{\frac1q}
   \left\|\frac{\rmd
          \mu_s}{\eV}\right\|_{\LL^p(\eV)}\leq C e^{-\frac{t-s}{C}}.
  \end{align*}
This concludes the proof.

\end{proof}

We are now in position to give sufficient conditions for the
finiteness of the variance of the two estimators~\eqref{eq:estim_1}
and~\eqref{eq:estim_2}.
\begin{cor}
Let $f: \Rd \to \R$ be a ${\mathcal C}^1$ function such that $\nabla
f$ is bounded.
Let us assume that either $V$ is $\eta$-convex (for a positive
constant $\eta$), or that there exists $\eta >0$ and $p\in
(1,\infty]$ such that~{\bf(\hyppoinc{$\eta$})} holds,~$X_0$ is distributed according to a
    measure~$\mu_0$ having a density in $\LL^p(\eV)$ with respect to~$\eV$ and Assumption~{\bf (\hypspecbeta{$\eta (p-1)
  / (2 p) $})} holds. Then, 
$$\sup_{t \ge 0} {\rm Var}(T_t \cdot \nabla f (X^0_t) ) < \infty \text{ and
} \sup_{t \ge 0} {\rm Var}\left(\frac 1 t \int_0^t T_s \cdot \nabla f (X^0_s) \,
\rmd s\right) < \infty.$$
\end{cor}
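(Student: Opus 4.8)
The plan is to reduce both variances to a uniform-in-time bound on $\E[|T_t|^2]$, which is exactly what Propositions~\ref{prop:varalconv} and~\ref{prop:variance} deliver. First I would observe that, since $\nabla f$ is bounded, one has the pointwise bound $|T_s\cdot\nabla f(X^0_s)|\le\|\nabla f\|_{\LL^\infty(\Rd)}\,|T_s|$, so that for every $t\ge 0$,
\[
\mathrm{Var}\big(T_t\cdot\nabla f(X^0_t)\big)\le\E\big[(T_t\cdot\nabla f(X^0_t))^2\big]\le\|\nabla f\|_{\LL^\infty(\Rd)}^2\,\E\big[|T_t|^2\big].
\]
Under either set of hypotheses in the statement, the right-hand side is bounded uniformly in $t$: if $V$ is $\eta$-convex one applies Proposition~\ref{prop:varalconv} with $\alpha=2$; otherwise one applies Proposition~\ref{prop:variance} with $\alpha=2$, after noting that the assumption $\mathbf{(\hypspecbeta{\eta(p-1)/(2p)})}$ appearing in the statement is precisely $\mathbf{(\hypspecbeta{\eta(p-1)/(\alpha p)})}$ for $\alpha=2$ (with the convention $\eta(\infty-1)/(2\infty)=\eta/2$ when $p=\infty$), and that $\mathbf{(\hyppoinc{\eta})}$ together with the $\LL^p(\eV)$-integrability of the density of $\mu_0$ are assumed. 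This already yields $\sup_{t\ge 0}\mathrm{Var}\big(T_t\cdot\nabla f(X^0_t)\big)<\infty$.

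For the time-averaged estimator, I would combine the Cauchy--Schwarz inequality in the $\rmd s$-variable with Fubini's theorem. For each $\omega$, the map $s\mapsto T_s(\omega)\cdot\nabla f(X^0_s(\omega))$ is continuous, the trajectory $s\mapsto X^0_s$ being continuous and $s\mapsto T_s$ being continuous by Proposition~\ref{proptt}, so the time integral is well defined pathwise, and Cauchy--Schwarz gives
\[
\left(\frac1t\int_0^t T_s\cdot\nabla f(X^0_s)\,\rmd s\right)^2\le\frac1t\int_0^t\big(T_s\cdot\nabla f(X^0_s)\big)^2\,\rmd s.
\]
Taking expectations, using Fubini's theorem (the integrand being nonnegative and jointly measurable) and then the previous pointwise bound, one obtains
\[
\mathrm{Var}\left(\frac1t\int_0^t T_s\cdot\nabla f(X^0_s)\,\rmd s\right)\le\frac1t\int_0^t\E\big[(T_s\cdot\nabla f(X^0_s))^2\big]\,\rmd s\le\|\nabla f\|_{\LL^\infty(\Rd)}^2\,\sup_{s\ge 0}\E\big[|T_s|^2\big],
\]
which is finite and independent of $t$, again by Proposition~\ref{prop:varalconv} or~\ref{prop:variance}.

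There is essentially no genuine obstacle left at this stage: the whole difficulty has already been absorbed into the uniform-in-time control of $\E[|T_t|^2]$, which itself rests on the exponential decay of $\E_{\pi_0}[|R_{X^0}(0,t)|^\beta]$ from Proposition~\ref{cor:momr} and on the change of measure estimate of Lemma~\ref{lem:densite_X_t}. The only minor points deserving care are the verification that the exponents in the $\mathbf{Spec}$-type assumption line up with the choice $\alpha=2$, and the routine justification of the pathwise continuity and joint measurability of $(s,\omega)\mapsto T_s(\omega)\cdot\nabla f(X^0_s(\omega))$ needed to apply Fubini's theorem to the time average.
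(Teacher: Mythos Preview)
Your proposal is correct and follows essentially the same approach as the paper, which simply states that the results are ``simple consequences of the boundedness of $\nabla f$ and Proposition~\ref{prop:variance} for $\alpha=2$.'' You have merely spelled out the details that the paper leaves implicit, including the use of Proposition~\ref{prop:varalconv} in the $\eta$-convex case and the Jensen/Cauchy--Schwarz step for the time average.
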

\begin{proof}
These results are simple consequences of the boundedness of $\nabla f$
and Proposition \ref{prop:variance} for $\alpha=2$.
\end{proof}
{From} the Central Limit Theorem for trajectorial averages (see for
example~\cite[Section 2.1.3, Theorem 6.3.20]{duflo-97}), it is expected that the variance of $\frac 1 t \int_0^t T_s \cdot \nabla f (X^0_s) \,
\rmd s$ actually scales like $1/t$ in the limit $t \to
\infty$. This requires for example to prove the existence of a
solution to the Poisson problem associated with the Markov process
$(X^0_s,T_s)_{s \ge 0}$, which does not seem to be ensured under our
set of assumptions. We leave the study of this issue to a future work.

\begin{rem}
Under the additional assumption~{\bf (\hypdelta)} given in Appendix~\ref{sec:hypdelta}, it is possible to extend the
previous results to more general initial conditions.  Assume that the initial condition~$X_0$ is distributed according to a
    measure~$\mu_0$ such that the measure~$e^{\frac12V} \rmd \mu_0$ can be written as
  \begin{equation}\label{eq:mu0V}
  e^{\frac12V(x)} \rmd \mu_0 =f(x)\rmd x+\rmd \nu,
  \end{equation}
  where~$f$ is some function in~$\LL^p(\rmd x)$ with $p \in [1,2]$ and~$\nu$ is some finite
  measure on~$\Rd$. From~\eqref{eq:majonorml2}, for any~$t>0$,~$\mu_t$ is absolutely continuous with respect
  to~$e^{- V(x)}\rmd x$ with $\frac{{\rmd}\mu_t}{\eV} \in \LL^2(\eV)$.
Now, by the semi-group property satisfied
 by~$R_{X^0}$, \eqref{eq:R_T_Linfty} and the fact  that $- \min{ \rm
   Spec}(\nabla^2 V(x)) \le  C < \infty$, one has for~$\varepsilon>0$,
  \[
  |R_{X^0}(s,t)|
  \leq|R_{X^0}(s\vee\varepsilon,t)R_{X^0}(s,s\vee\varepsilon)|
  \leq e^{C(\varepsilon-s)^+}|R_{X^0}(s\vee\varepsilon,t)|.
  \]
  For $\alpha>0$, using a similar change of measure as in the previous proof, the fact
  that $\frac{{\rmd}\mu_\epsilon}{\eV} \in \LL^2(\eV)$ and Proposition~\ref{cor:momr}, one deduces that under Assumptions~{\bf(\hyppoinc{$\eta$})} and~{\bf (\hypspecbeta{$\eta / (2\alpha)$})}, for $t\geq s\vee \varepsilon$,
  \begin{align*}
    \E[|R_{X^0}(s,t)|^\alpha]
    &\leq e^{C(\varepsilon-s)^+}\E[|R_Y(s\vee\varepsilon,t)|^{2\alpha}]^{\frac12}
    \E\left[\left(\frac{\rmd \mu_\varepsilon}{\eV}(Y_\varepsilon)\right)^2\right]^{\frac12}\\
    &\leq Ce^{C(\varepsilon-s)^+}e^{-\frac{t-s\vee\varepsilon}{C}}\leq Ce^{C\varepsilon}e^{-\frac{t-s}{C}}.
  \end{align*}
This estimation remains valid for $0\leq s\leq t\leq\varepsilon$ up to increasing $C$, since then, by \eqref{eq:R_T_Linfty} and the fact  that $- \min{ \rm
   Spec}(\nabla^2 V(x)) \le  C < \infty$, $|R_{X^0}(s,t)|\leq
 e^{C\varepsilon}$. In conclusion, for $\alpha\geq 1$,  under Assumptions~{\bf
   (\hypdelta)},~{\bf(\hyppoinc{$\eta$})} and~{\bf (\hypspecbeta{$\eta / (2\alpha) $})}, $\sup_{t\geq 0}\E[|T_t|^\alpha]<\infty$  if $\mu_0$ satisfies~\eqref{eq:mu0V}.
\end{rem}

\section{The Green-Kubo formulae}\label{sect:GK}

A first way to compute~the derivative~\eqref{eq:but} is to use the
Green-Kubo formula (see for example~\cite{hairer-majda-10} for a
mathematical approach and~\cite{chandler-87,evans-morriss-08} for physical motivations). This formula gives
an expression of~\eqref{eq:but} in terms of the
time autocorrelations of~$(X_t^0)_{t\geq0}$, where~$(X_t^0)_{t\geq0}$ satisfies~\eqref{eq:EDS}
  with an initial condition~$X_0$ being distributed according to the 
  equilibrium measure~$\pi_0$.

\subsection{Finite time Green-Kubo formula}
We start with the Green-Kubo formula in finite time, which will not be
used in the sequel of the paper, but motivates the infinite horizon Green-Kubo formula.
\begin{theo}\label{theo:GK_tps_fini}
Let~$f \in\LL^1(\eV)$ be a Lipschitz function and let $\nabla f$ be its
gradient in the sense of distributions which can be identified with its
almost everywhere gradient. Suppose that the initial condition $X_0$ is distributed according to the 
equilibrium measure~$\pi_0$ and that Assumption {\bf(\hypspec)}~is
satisfied. Then, for any $t\geq 0$, for any $\lambda\in[0,\lambda_0]$, $f(X_t^\lambda)
$ is integrable and $\lambda \mapsto  \E_{\pi_0}[f(X_t^\lambda)]$ is
differentiable at $0$ with derivative
\begin{equation}\label{eq:GK_tps_fini}
\dlam\E_{\pi_0}[f(X_t^\lambda)]
=\int_0^t\E_{\pi_0}\left[\nabla f(X_0)\cdot R_{X^0}^T(0,s)\dlam F_\lambda(X_s^0)\right]\rmd s.
\end{equation}
\end{theo}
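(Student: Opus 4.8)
The plan is to express $\E_{\pi_0}[f(X_t^\lambda)]$ using the Feynman–Kac / Girsanov machinery already set up, and differentiate under the expectation. First I would note that $f(X_t^\lambda)$ is integrable: since $X_0\sim\pi_0$ and the law of $X_t^\lambda$ has a density with respect to $\pi_0$ that lies in $\LL^2(\eV)$ (by Corollary~\ref{cor:FP} and Lemma~\ref{lem:densite_X_t}, using that the density of $\pi_0$ with respect to itself is $1$ and is preserved/bounded by~\eqref{eq:decroit_L2}, combined with Girsanov bounds on the change of drift), the linear growth of a Lipschitz $f$ and the integrability $\int|\nabla V|^2\eV<\infty$ from {\bf(\hypV)}-(ii) together with Gaussian-type bounds on moments guarantee integrability. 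The cleaner route is to smooth $f$: approximate $f$ by $\mathcal C^\infty$ Lipschitz functions $f_n$ with $\sup_n\|\nabla f_n\|_{\LL^\infty}\le\|\nabla f\|_{\LL^\infty}$ and $f_n\to f$ locally uniformly, prove~\eqref{eq:GK_tps_fini} for each $f_n$, and pass to the limit using dominated convergence (the domination on the right-hand side comes from $\|\nabla f_n\|_\infty$, the boundedness of $\dlam F_\lambda$, and the $\LL^\infty(\Omega)$ bound on $\sup_{0\le s\le t}|R_{X^0}(0,s)|$ from Lemma~\ref{lem:R_T_Linfty} under {\bf(\hypspec)}).

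For smooth Lipschitz $f$, the core step is to justify $\dlam\E_{\pi_0}[f(X_t^\lambda)] = \E_{\pi_0}[\dlam(f(X_t^\lambda))] = \E_{\pi_0}[T_t\cdot\nabla f(X_t^0)]$. Differentiability of $\lambda\mapsto X_t^\lambda$ with derivative $T_t$ is Proposition~\ref{proptt}, and the interchange of $\dlam$ with $\E_{\pi_0}$ is justified by the uniform-in-$\lambda$ (on $[0,\lambda_0]$, up to shrinking) Lipschitz-type bound on the difference quotients $\frac{X_t^\lambda-X_t^0}{\lambda}$ obtained inside the proof of Proposition~\ref{proptt} (a Grönwall estimate with a random but integrable constant depending on $\sup_{s\le t}|X_s^0|$), together with the boundedness of $\nabla f$. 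Then I substitute the integral representation~\eqref{eq:expression_Tt}, $T_t=\int_0^t R_{X^0}(s,t)\dlam F_\lambda(X_s^0)\,\rmd s$, to get
\begin{equation*}
\dlam\E_{\pi_0}[f(X_t^\lambda)] = \int_0^t \E_{\pi_0}\!\left[\nabla f(X_t^0)\cdot R_{X^0}(s,t)\dlam F_\lambda(X_s^0)\right]\rmd s,
\end{equation*}
where the Fubini interchange is licit because the integrand is bounded in $\LL^\infty(\Omega)$ uniformly in $s\in[0,t]$ by Lemma~\ref{lem:R_T_Linfty}.

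The final step is the time-reversal identity that turns this into~\eqref{eq:GK_tps_fini}. Since $X_0\sim\pi_0$, the process $(X_s^0)_{0\le s\le t}$ is reversible (Remark after Lemma~\ref{lem:p}): $(X_s^0)_{s\in[0,t]}\stackrel{d}{=}(X_{t-s}^0)_{s\in[0,t]}$. Writing $Z_s:=X_{t-s}^0$, Lemma~\ref{lem:retournement_resolvante} gives $R_Z(0,t-s)=R_{X^0}^T(s,t)$, hence for fixed $s$,
\begin{equation*}
\E_{\pi_0}\!\left[\nabla f(X_t^0)\cdot R_{X^0}(s,t)\dlam F_\lambda(X_s^0)\right] = \E_{\pi_0}\!\left[\nabla f(Z_0)\cdot R_Z^T(0,t-s)\dlam F_\lambda(Z_{t-s})\right] = \E_{\pi_0}\!\left[\nabla f(X_0^0)\cdot R_{X^0}^T(0,t-s)\dlam F_\lambda(X_{t-s}^0)\right],
\end{equation*}
and the change of variable $s\mapsto t-s$ in the outer integral yields exactly~\eqref{eq:GK_tps_fini}. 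I expect the main obstacle to be the rigorous justification of differentiating under the expectation sign uniformly in $\lambda$ near $0$ — i.e.\ controlling the difference quotients $(X_t^\lambda-X_t^0)/\lambda$ in a dominated way when the initial law is $\pi_0$ rather than a fixed point; this is handled by the localization/Grönwall argument of Proposition~\ref{proptt} combined with the fact that $\sup_{s\le t}|X_s^0|$ has finite moments of all orders under $\pi_0$ (again via {\bf(\hypV)} and Gaussian comparison through Girsanov), so the random Lipschitz constant $L^{X^0}_{\bar t}$ and the resulting bounds are integrable.
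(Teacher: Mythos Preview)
Your overall architecture---differentiate under the expectation to get $\E_{\pi_0}[\nabla f(X_t^0)\cdot T_t]$, insert the integral representation~\eqref{eq:expression_Tt}, then apply time reversal via Lemma~\ref{lem:retournement_resolvante}---matches the paper exactly, and your time-reversal computation is correct. The smoothing of $f$ is unnecessary (Rademacher plus the fact that $X_t^0$ has a density suffices), but harmless.

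The genuine gap is precisely where you flag it: the domination needed to pass $\dlam$ through $\E_{\pi_0}$. Your proposed fix does not work under the stated hypotheses. The Gr\"onwall bound extracted from Proposition~\ref{proptt} has two defects here: (i) it is only valid for $\lambda$ below the \emph{random} threshold $L^{X^0}_t/\big(C(e^{L^{X^0}_t t}-1)\big)$, so it does not dominate the difference quotients uniformly on any fixed interval $(0,\lambda_0]$; and (ii) even if it did, the dominating random variable behaves like $e^{L^{X^0}_t t}$ with $L^{X^0}_t=\sup_{|x|\le \sup_{s\le t}|X^0_s|+1}|\nabla^2V(x)|$. Under the paper's assumptions $\nabla^2V$ is only locally Lipschitz and may grow arbitrarily fast, so finite moments of $\sup_{s\le t}|X^0_s|$ (themselves not established here) would not imply integrability of this exponential.

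The paper bypasses all of this by using Assumption~{\bf(\hypspec)} directly on the \emph{difference} of the two flows, not on a linearization. Writing
\[
|X^\lambda_t-X^0_t|^2=2\int_0^t(F_\lambda-F_0)(X^\lambda_s)\cdot(X^\lambda_s-X^0_s)\,\rmd s+2\int_0^t(\nabla V(X^0_s)-\nabla V(X^\lambda_s))\cdot(X^\lambda_s-X^0_s)\,\rmd s,
\]
the bound $|F_\lambda-F_0|\le C\lambda$ together with the global monotonicity $(\nabla V(x)-\nabla V(y))\cdot(x-y)\ge\alpha|x-y|^2$ (from~{\bf(\hypspec)}) and Young's inequality yield the \emph{deterministic} estimate
\[
\frac{|X^\lambda_t-X^0_t|^2}{\lambda^2}\le C\,\frac{e^{(1-2\alpha)t}-1}{1-2\alpha},\qquad\lambda\in(0,\lambda_0].
\]
This single inequality gives both the integrability of $f(X^\lambda_t)$ (since $|f(X^\lambda_t)-f(X^0_t)|$ is bounded by a deterministic constant and $f(X^0_t)\in\LL^1$ because $X^0_t\sim\pi_0$) and the dominated convergence needed to differentiate under $\E_{\pi_0}$. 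This is the missing idea in your argument; once you have it, the rest of your proof goes through verbatim.
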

\begin{proof}
Since $X^0_t$ is distributed according to $\eV$, Proposition \ref{proptt} and the chain rule ensure that $\lambda\mapsto f(X^\lambda_t)$ is a.s. differentiable at $\lambda=0$ with derivative $\nabla f (X^0_t).T_t$.\\
To justify the interchange between the derivation and the expectation,
we need some integrability property. For $\lambda\in (0,\lambda_0]$ and $t\geq 0$, one has, using  {\bf(\hypspec)} and {\bf(\hypcadre)}-$(i)$ for the inequality:
\begin{align*}
   |X^\lambda _t-X^0_t|^2&=2\int_0^t(F_\lambda(X^\lambda_s)+\nabla V(X^\lambda_s)).(X^\lambda_s-X^0_s)\rmd s+2\int_0^t(\nabla V(X^0_s)-\nabla V(X^\lambda_s)).(X^\lambda_s-X^0_s)\rmd s\\
&\leq C\lambda^2t+\left(1-2\alpha\right)\int_0^t|X^\lambda _s-X^0_s|^2\rmd s.
\end{align*}
As a consequence,
\begin{equation}
   \forall \lambda\in (0,\lambda_0],\;\frac{|X^\lambda _t-X^0_t|^2}{\lambda^2}\leq C\frac{e^{(1-2\alpha)t}-1}{1-2\alpha}\label{majodif}
\end{equation} with the convention that the last ratio is equal to $t$
if $1-2\alpha=0$. With the Lipschitz continuity of~$f$, one deduces that the random variable $\frac{f(X^\lambda_t)-f(X^0_t)}{\lambda}$ is bounded by a deterministic constant not depending on $\lambda$. The integrability of $f(X^\lambda_t)$ then follows from the integrability of $f(X^0_t)$ where $X^0_t$ is distributed according to $\eV$ and $f\in \LL^1(\eV)$. 

Moreover, by Lebesgue's theorem, $\lambda \mapsto  \E_{\pi_0}[f(X_t^\lambda)]$ is differentiable at $0$ with derivative
\begin{equation}\label{eq:green_kubo_preuve}
\dlam\E_{\pi_0}[f(X_t^\lambda)]
=\E_{\pi_0}[\nabla f(X_t^0)\cdot T_t]
=\int_0^t\E_{\pi_0}\left[\nabla f(X_t^0)\cdot R_{X^0}(s,t)\dlam F_\lambda(X_s^0)\right]\rmd s,
\end{equation}
where we used~\eqref{eq:expression_Tt} for the second equality.
All terms in Equation~\eqref{eq:green_kubo_preuve} are well defined
thanks to Lemma~\ref{lem:R_T_Linfty}.
Let us now rewrite the right-hand side of~\eqref{eq:green_kubo_preuve}. By introducing the process $(Y_s)_{0 \le s \le t}=(X^0_{t-s})_{0\leq s\leq t}$ (which
has the same law as $(X^0_s)_{0 \le s \le t}$), using a change of
variable $s\to t-s$ and Lemma~\ref{lem:retournement_resolvante}, we get
\begin{align*}
\int_0^t\E_{\pi_0}\left[\nabla f(X_t^0)\cdot R_{X^0}(s,t)\dlam F_\lambda(X_s^0)\right]\rmd s
&=\int_0^t\E_{\pi_0}\left[\nabla f(X_{t}^0)\cdot R_{X^0}(t-s,t)\dlam F_\lambda(X^0_{t-s})\right]\rmd s\\
&=\int_0^t\E_{\pi_0}\left[\nabla f(Y_{0})\cdot R_{Y}^T(0,s)\dlam F_\lambda(Y_{s})\right]\rmd s\\
&=\int_0^t\E_{\pi_0}\left[\nabla f(X_0)\cdot R_{X^0}^T(0,s)\dlam
  F_\lambda(X_s^0)\right] \rmd s.
\end{align*}
This completes the proof of~\eqref{eq:GK_tps_fini}.
\end{proof}
\begin{rem}The conclusion of Theorem \ref{theo:GK_tps_fini} still holds if $f \in\LL^1(\eV)$ is a ${\mathcal C}^1$ function such that $\nabla f$ is uniformly continuous on $\R^d$ and $\nabla f\in\LL^1(\eV)$.
\end{rem}

It is possible to give another expression of the right-hand side
in~\eqref{eq:GK_tps_fini}.
\begin{prop}\label{prop:GK_formule_magique}
Let~$f \in\LL^2(\eV)$ be a Lipschitz function. Assume {\bf(\hypspec)}and consider the process~$(X_t^0)_{t\geq0}$ satisfying~\eqref{eq:EDS}
with an initial condition~$X_0$ being distributed according to the 
equilibrium measure~$\pi_0$.
 For almost every~$s \ge 0$ 
\begin{equation}\label{eq:GK_formule_magique}
\E_{\pi_0}\left[\nabla f(X_0)\cdot R_{X^0}^T(0,s)\dlam F_\lambda(X_s^0)\right]
=\E_{\pi_0}\left[
f(X_0)\left(\nabla V\cdot\dlam F_\lambda-\nabla\cdot \dlam F_\lambda\right)(X_s^0)
\right].
\end{equation}
\end{prop}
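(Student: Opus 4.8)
The plan is to prove the identity by establishing, for each fixed $s$, the equality of two bilinear forms acting on test functions, and then to relate these forms to the resolvent via an integration by parts in the path space. The key observation is that both sides of~\eqref{eq:GK_formule_magique} can be written as $\E_{\pi_0}[g(X^0_s)]$ for suitable $g$ once we integrate against a dense family of $f$'s, so it suffices to identify the corresponding functions of $X^0_s$ after conditioning.

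First I would reduce to smooth, compactly supported $f$. The left-hand side is continuous in $f$ for the $\HH^1(\eV)$-type topology thanks to Lemma~\ref{lem:R_T_Linfty} (which gives $|R_{X^0}(0,s)|\in\LL^\infty(\Omega)$ under~{\bf(\hypspec)}), and the right-hand side is continuous in $f$ for the $\LL^2(\eV)$ topology since $\nabla V\cdot\dlam F_\lambda-\nabla\cdot\dlam F_\lambda\in\LL^2(\pi_0)$ by Assumptions~{\bf(\hypV)}-(ii) and~{\bf(\hypcadre)}-(ii). So it is enough to treat $f\in\mathcal C^\infty_0$. For such $f$, using reversibility (the remark after Lemma~\ref{lem:p}) I would rewrite the left-hand side, via the time-reversal $Y_u=X^0_{s-u}$ and Lemma~\ref{lem:retournement_resolvante}, back to the ``forward'' form $\E_{\pi_0}\bigl[\nabla f(X^0_s)\cdot R_{X^0}(0,s)\dlam F_\lambda(X_0)\bigr]$, or alternatively keep it as is and recognize $\nabla f(X_0)\cdot R^T_{X^0}(0,s)\dlam F_\lambda(X^0_s)$ as the pairing of $\nabla f(X_0)$ with $\E[\dlam X^{\lambda}_s\mid X_0=\cdot]$-type quantities. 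The cleanest route is: $R^T_{X^0}(0,s)\,\dlam F_\lambda(X^0_s)$ is, for the reversed process started at $X^0_s$ and ending at $X^0_0$, the quantity $\int$ of the resolvent contracted with $\dlam F_\lambda$, i.e. a tangent-vector-like object; but more directly, $\nabla_x\, \E[\psi(Y^x_s)] = \E[R_{Y^x}^T(0,s)\nabla\psi(Y^x_s)]$ by Lemma~\ref{lemderci}, so that
\begin{equation*}
\E_{\pi_0}\bigl[\nabla f(X_0)\cdot R^T_{X^0}(0,s)\dlam F_\lambda(X^0_s)\bigr]
=\E_{\pi_0}\bigl[\nabla f(X_0)\cdot \E\bigl[R^T_{Y^{X_0}}(0,s)\dlam F_\lambda(Y^{X_0}_s)\mid X_0\bigr]\bigr].
\end{equation*}
Writing $h(x)=\E[\dlam F_\lambda(Y^x_s)]$ is not quite what appears, but $\nabla_x h$ where $h(x)=\E[w(Y^x_s)]$ produces exactly $\E[R^T_{Y^x}(0,s)\nabla w(Y^x_s)]$; the term $\dlam F_\lambda(X^0_s)$ is the evaluation of a fixed vector field at $Y^x_s$, so I would introduce the scalar $u_i(s,x)=\E[(\dlam F_\lambda)_i(Y^x_s)]$ and use $\E[R^T_{Y^x}(0,s)\dlam F_\lambda(Y^x_s)]=\sum_i\nabla u_i(s,x)$ componentwise. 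Then the left-hand side equals $\sum_i\int_\Rd \partial_i f(x)\,\partial_? \ldots$; after an integration by parts against $\eV$ this becomes $-\int_\Rd f(x)\,\nabla\cdot\bigl(e^{-V}\E[R^T_{Y^x}(0,s)\dlam F_\lambda(Y^x_s)]\bigr)e^{V}\,\eV$, and the divergence of $e^{-V}$ times a gradient produces the generator $\mathcal L_0$ acting on $u_i$, i.e. $\partial_s u_i$. Summing, the left side is $-\int_\Rd f\,\partial_s\bigl(\E[(\nabla\cdot\dlam F_\lambda)(Y^x_s)]\bigr)$-type expression; more precisely I expect the bookkeeping to yield
\begin{equation*}
\mathrm{LHS}=\frac{\rmd}{\rmd s}\,\E_{\pi_0}\bigl[f(X^0_s)\,\mathbf 1\bigr]\cdot 0 + \E_{\pi_0}\bigl[f(X_0)\,(\nabla V\cdot\dlam F_\lambda-\nabla\cdot\dlam F_\lambda)(X^0_s)\bigr],
\end{equation*}
which is the claim. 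The honest way to see the last step is to integrate the finite-time Green–Kubo formula~\eqref{eq:GK_tps_fini} in $t$ and compare with a direct computation of $\frac{\rmd}{\rmd t}\E_{\pi_0}[f(X^0_t)\Phi(X^0_t)]$ for an auxiliary $\Phi$, but the slickest argument is the following reversibility identity: for $f,g$ smooth, $\E_{\pi_0}[g(X_0)(\mathcal L_0 f)(X^0_s)]=\E_{\pi_0}[(\mathcal L_0 g)(X_0)f(X^0_s)]$, and applying it with the drift perturbation $\mathcal T_\lambda=(F_\lambda+\nabla V)\cdot\nabla$ replaced by its ``dual'' $-\nabla\cdot(\dlam F_\lambda\,\cdot) + \nabla V\cdot\dlam F_\lambda$ acting on $e^{-V}$.

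I would therefore structure the final proof as: (1) reduce to $f\in\mathcal C^\infty_0$ by the density/continuity argument above; (2) use Lemma~\ref{lemderci} to write $\E_{\pi_0}[\nabla f(X_0)\cdot R^T_{X^0}(0,s)\dlam F_\lambda(X^0_s)] = \E_{\pi_0}[\nabla f(X_0)\cdot \nabla_x v(s,x)|_{x=X_0}]$ where $v(s,x)=\E[\langle\text{something}\rangle]$; (3) integrate by parts against $\eV$ to move the gradient off $f$, producing $-\int f\,e^{V}\nabla\cdot(e^{-V}\nabla_x v)$; (4) recognize $e^{V}\nabla\cdot(e^{-V}\nabla_x v) = \mathcal L_0 v$ and use the Kolmogorov/Feynman–Kac equation (Proposition~\ref{prop:feynman-kac} with $\phi=0$) to replace $\mathcal L_0 v = \partial_s v$; (5) compute $\partial_s v$ and the boundary term at $s=0$ explicitly and recognize the right-hand side of~\eqref{eq:GK_formule_magique}, using the integration-by-parts identity $\int \nabla g\cdot\dlam F_\lambda\,\eV = \int g(\nabla V\cdot\dlam F_\lambda - \nabla\cdot\dlam F_\lambda)\eV$, which holds because $\nabla\cdot\dlam F_\lambda\in\LL^2(\pi_0)$ and $\dlam F_\lambda$ is bounded.

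The main obstacle I anticipate is the justification of the integration by parts in step (3)–(4): $V$ is only $\mathcal C^2$ with locally Lipschitz Hessian and need not be convex, so there is no decay of boundary terms at infinity for free, and one must argue that $x\mapsto \E[R^T_{Y^x}(0,s)\dlam F_\lambda(Y^x_s)]$ has enough regularity and integrability against $\eV$ for the weak formulation of Definition~\ref{defi:solution_EDP} to apply directly — this is exactly where I would invoke Proposition~\ref{prop:feynman-kac} and the weak form~\eqref{eq:solution_EDP} with test function $f$, avoiding pointwise integration by parts entirely. A secondary technical point is that $\nabla f(X_0)\cdot R^T_{X^0}(0,s)\dlam F_\lambda(X^0_s)$ must be shown to be integrable uniformly enough to differentiate/integrate in $s$, which follows from Lemma~\ref{lem:R_T_Linfty} under~{\bf(\hypspec)}. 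I do not expect the ``magic'' divergence identity at the end to be hard once the structure is set up, but getting the weak formulation to bite on the reversed dynamics (so that the adjoint of $\mathcal T_\lambda$ genuinely produces $\nabla V\cdot\dlam F_\lambda - \nabla\cdot\dlam F_\lambda$) is the crux.
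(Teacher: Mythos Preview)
Your main line of argument has a genuine gap at step~(2). You want to write
\[
\E\bigl[R_{Y^x}^T(0,s)\,\dlam F_\lambda(Y^x_s)\bigr]=\nabla_x v(s,x)
\]
for some scalar $v$. But Lemma~\ref{lemderci} gives $\nabla_x\E[w(Y^x_s)]=\E[R_{Y^x}^T(0,s)\nabla w(Y^x_s)]$: the resolvent transpose lands on the \emph{gradient} of a scalar. The vector field $\dlam F_\lambda$ need not be a gradient, and your componentwise claim $\E[R^T_{Y^x}(0,s)\dlam F_\lambda(Y^x_s)]=\sum_i\nabla u_i$ is false: the $j$-th component of the left side is $\sum_i\E[(R_{Y^x})_{ij}(\dlam F_\lambda)_i(Y^x_s)]$, whereas $\partial_{x_j}u_i=\sum_k\E[(R_{Y^x})_{kj}\partial_k(\dlam F_\lambda)_i(Y^x_s)]$. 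These differ by a derivative on $\dlam F_\lambda$ and cannot be matched in general. Everything downstream of this (the appearance of $\mathcal L_0 v=\partial_s v$, the ``boundary term at $s=0$'') is therefore unsupported, and in fact unnecessary: the identity~\eqref{eq:GK_formule_magique} holds for each fixed $s$ with no time differentiation at all.

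You actually wrote down the correct route and then walked past it. The ``forward form'' $\E_{\pi_0}[\nabla f(X^0_s)\cdot R_{X^0}(0,s)\dlam F_\lambda(X_0)]$ obtained by reversibility and Lemma~\ref{lem:retournement_resolvante} is exactly the right object: conditioning on $X_0=x$ now places the resolvent against the \emph{gradient} $\nabla f$, so Lemma~\ref{lemderci} gives $\E[R_{Y^x}^T(0,s)\nabla f(Y^x_s)]=\nabla_x u(s,x)$ with $u(s,x)=\E[f(Y^x_s)]$. The left-hand side becomes $\int_{\Rd}\nabla u(s,x)\cdot\dlam F_\lambda(x)\,\eV$, and one spatial integration by parts (your final identity, which is Lemma~\ref{lem:IPP_eV} in the paper) yields $\int_{\Rd}u(s,x)(\nabla V\cdot\dlam F_\lambda-\nabla\cdot\dlam F_\lambda)(x)\,\eV$. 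Reversibility once more turns this into the right-hand side of~\eqref{eq:GK_formule_magique}. This is precisely the paper's proof: it conditions on $X^0_s$, uses reversibility to identify $\E_{\pi_0}[R_{X^0}(0,s)\nabla f(X_0)\mid X^0_s]=\nabla u(s,X^0_s)$, and then applies Lemma~\ref{lem:IPP_eV}. The paper does not reduce to $f\in\mathcal C^\infty_0$; it works directly with $f\in\LL^2(\eV)$ Lipschitz, justifying the differentiation of $\E[f(Y^x_s)]$ via dominated convergence (Lemmas~\ref{lem:p}, \ref{lem:R_T_Linfty}) and noting that $\nabla u(s,\cdot)$ agrees $\rmd s$-a.e.\ with the distributional gradient since $u\in\LL^2([0,T],\HH^1(\eV))$---this is the source of the ``almost every $s$'' in the statement, which your outline does not account for.
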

\begin{proof}
Since $f\in\LL^2(\eV)$, by Proposition~\ref{prop:EDP_bien_posee}, the partial differential equation
\[
\left\{
\begin{aligned}
\partial_tu(t,x)
&=\Delta u(t,x)
-\nabla V(x)\cdot \nabla u(t,x),
\,t>0,~x\in\R,\\
u(0,x)
&=f(x),
\,x\in\R.
\end{aligned}
\right.
\]
admits a unique solution $u$ in the sense of Definition \ref{defi:solution_EDP}.
Moreover, according to
Proposition~\ref{prop:feynman-kac},
\[
\forall s\geq 0,\;\rmd x\mbox{ a.e. },\;u(s,x)=\E[f(Y_s^x)]
,
\]
where~$(Y_t^x)_{t\geq0}$ solves~\eqref{eq:Ytx}. When $s>0$, from Lemmas~\ref{lem:p}, \ref{lemderci} and \ref{lem:R_T_Linfty} and Assumption {\bf(\hypspec)}, one can apply the dominated convergence theorem
to differentiate $\E[f(Y_s^x)]$ with respect
to~$x$, obtaining $\nabla_x\E[f(Y_s^x)]=\E\left[R_{Y^x}^T(0,s)\nabla f(Y_s^x)\right]$.  Since $u\in\bigcap_{T>0}\LL^2\left([0,T],\HH^1(\eV)\right)$, $\rmd s$ a.e., $u(s,.)$ admits a distributional gradient denoted by $\nabla u(s,.)$ and 
$$\rmd s\mbox{ a.e.},\;\rmd x\mbox{ a.e.},\;\nabla u(s,x)=\E\left[R_{Y^x}^T(0,s)\nabla f(Y_s^x)\right].$$
When $X_0$ is distributed according to $\pi_0$, from reversibility of the dynamics~\eqref{eq:EDS} and Lemma~\ref{lem:retournement_resolvante}, the random vectors $(X_0,X^0_s,R_{X^0}^T(0,s))$ and $(X^0_s,X_0,R_{X^0}(0,s))$ have the same law. Hence
\begin{equation}\label{eq:feynman-kac2}
\rmd s\mbox{ a.e.},\;\mbox{a.s.},\;\E_{\pi_0}\left[f(X_0)|X_s^0\right]
=u(s,X_s^0)
~\mbox{ and }~
\E_{\pi_0}\left[R_{X^0}(0,s)\nabla f(X_0)|X_s^0\right]
=\nabla u(s,X_s^0).
\end{equation}
For $s$ such that Equation~\eqref{eq:feynman-kac2} holds,
one deduces that
\begin{align*}
\E_{\pi_0}\left[\nabla f(X_0)\cdot R_{X^0}^T(0,s)\dlam F_\lambda(X_s^0)\right]
&=\E_{\pi_0}\left[\E_{\pi_0}\left[R_{X^0}(0,s)\nabla f(X_0)|X_s^0\right]\cdot\dlam F_\lambda(X_s^0)\right]\\
&=\E_{\pi_0}\left[\nabla u(s,X_s^0)\cdot\dlam F_\lambda(X_s^0)\right]\\
&=\int_\Rd\nabla u(s,x)\cdot\dlam F_\lambda(x)\eV\\
&=\int_\Rd u(s,x)\left(\nabla V(x)\cdot\dlam F_\lambda(x)-\nabla\cdot\dlam F_\lambda(x)\right)\eV\\
&=\E_{\pi_0}\left[f(X_0)\left(\nabla V(X_s^0)\cdot\dlam F_\lambda(X_s^0)-\nabla\cdot\dlam F_\lambda(X_s^0)\right)\right],
\end{align*}
where we used Lemma~\ref{lem:IPP_eV} below with $v(.)=u(s,.)$ which is
in~$\HH^1(\eV)$ for the last but one equality.
\end{proof}

\begin{lem}\label{lem:IPP_eV}
Let~$v$ be a function in~$\HH^1(\eV)$. Then
\[
\int_\Rd\nabla v(x)\cdot\dlam F_\lambda(x)\eV
=\int_\Rd v(x)\left(\nabla V(x)\cdot\dlam F_\lambda(x)-\nabla\cdot\dlam F_\lambda(x)\right)\eV.
\]
\end{lem}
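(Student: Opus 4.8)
The plan is to establish an integration by parts formula on $\R^d$ with respect to the weighted measure $\eV$. First I would treat the case where $v$ is smooth with compact support: then the identity is the classical divergence theorem applied to the vector field $v(x)e^{-V(x)}\dlam F_\lambda(x)$, since
\[
\nabla\cdot\left(v\,e^{-V}\dlam F_\lambda\right)
= e^{-V}\nabla v\cdot\dlam F_\lambda
- e^{-V}v\,\nabla V\cdot\dlam F_\lambda
+ e^{-V}v\,\nabla\cdot\dlam F_\lambda,
\]
and the left-hand side integrates to zero over $\R^d$. Here I use that $\dlam F_\lambda$ is differentiable (Assumption \textbf{(\hypcadre)}-(ii)) so $\nabla\cdot\dlam F_\lambda$ is well defined.

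Next I would pass to general $v\in\HH^1(\eV)$ by a density and truncation argument. The subtlety is that $\mathcal C^\infty_0(\R^d)$ need not be dense in $\HH^1(\eV)$ for an arbitrary potential $V$, so rather than approximating $v$ directly I would introduce smooth cutoff functions $\chi_n$ with $\chi_n=1$ on $B(0,n)$, $\chi_n=0$ outside $B(0,2n)$, and $|\nabla\chi_n|\le C/n$. For fixed $n$, a standard mollification of $\chi_n v$ gives functions in $\mathcal C^\infty_0$ converging to $\chi_n v$ in $\HH^1(\eV)$ (mollification is legitimate here because $\chi_n v$ is compactly supported, so the weight $e^{-V}$ is bounded above and below on its support), and the smooth-case identity passes to the limit using that $\dlam F_\lambda$ and $\nabla\cdot\dlam F_\lambda$ are controlled: $\dlam F_\lambda$ is bounded by $C$ (Assumption \textbf{(\hypcadre)}-(i)), $\nabla V\in\LL^2(\eV)$ (Assumption \textbf{(\hypV)}-(ii)), and $\nabla\cdot\dlam F_\lambda\in\LL^2(\eV)$ (Assumption \textbf{(\hypcadre)}-(ii)). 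Thus
\[
\int_{\R^d}\nabla(\chi_n v)\cdot\dlam F_\lambda\,\eV
=\int_{\R^d}\chi_n v\left(\nabla V\cdot\dlam F_\lambda-\nabla\cdot\dlam F_\lambda\right)\eV.
\]

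Finally I would let $n\to\infty$. On the right-hand side, $\chi_n v\to v$ pointwise and $|\chi_n v|\le|v|$, while $v\left(\nabla V\cdot\dlam F_\lambda-\nabla\cdot\dlam F_\lambda\right)$ is in $\LL^1(\eV)$ by Cauchy--Schwarz (using $\dlam F_\lambda$ bounded, $v\in\LL^2(\eV)$, $\nabla V\in\LL^2(\eV)$, $\nabla\cdot\dlam F_\lambda\in\LL^2(\eV)$), so dominated convergence applies. On the left-hand side, $\nabla(\chi_n v)=\chi_n\nabla v+v\nabla\chi_n$; the first term converges to $\nabla v\cdot\dlam F_\lambda$ in $\LL^1(\eV)$ by dominated convergence (since $\nabla v\in\LL^2(\eV)$ and $\dlam F_\lambda$ is bounded), and the second term is bounded in absolute value by $\frac{C}{n}\|\dlam F_\lambda\|_\infty\int_{n\le|x|\le 2n}|v|\,\eV\le \frac{C}{n}\|v\|_{\LL^1(\eV)}\to 0$. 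This yields the claimed identity. The only real obstacle is the density issue just mentioned — one must not assume $\mathcal C^\infty_0$ is dense in $\HH^1(\eV)$ — but the cutoff-then-mollify scheme sidesteps it cleanly, and everything else is routine.
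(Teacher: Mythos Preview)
Your proof is correct and follows essentially the same route as the paper: introduce a smooth cutoff $\chi_n$, establish the identity for the truncated function, and pass to the limit $n\to\infty$ via dominated convergence using exactly the integrability facts you cite ($\dlam F_\lambda\in\LL^\infty$, $\nabla V\in\LL^2(\eV)$, $\nabla\cdot\dlam F_\lambda\in\LL^2(\eV)$). The only difference is cosmetic: the paper multiplies $\nabla v$ by $\chi_n$ and invokes integration by parts directly (picking up the extra term $-\int v\,\nabla\chi_n\cdot\dlam F_\lambda\,\eV$), whereas you replace $v$ by $\chi_n v$ and add an explicit mollification step to justify that intermediate identity; your version is slightly more detailed but the substance is identical.
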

\begin{proof}
Let~$\chi_n(x)=\chi(x/n)$ where~$\chi$ is a smooth,~$[0,1]$-valued, cutoff function such that~$\chi(x)=1$
for~$|x|<1$ and~$\chi(x)=0$ for~$|x|>2$. By integration by parts, one gets
\begin{align*}
\int_\Rd\chi_n(x)\nabla v(x)\cdot\dlam F_\lambda(x)\eV
&=\int_\Rd\chi_n(x)v(x)\left(\nabla V(x)\cdot\dlam F_\lambda(x)-\nabla\cdot\dlam F_\lambda(x)\right)\eV\\
&\quad-\int_\Rd v(x)\nabla\chi_n(x)\cdot\dlam F_\lambda(x)\eV.
\end{align*}
The result then follows from Lebesgue's theorem by taking the
limit~$n\to\infty$, using the fact that $\nabla V \in \LL^2(\eV)$,
$\dlam F_\lambda \in \LL^\infty$ and
$\nabla\cdot\dlam F_\lambda \in \LL^2(\eV)$  from Assumptions~{\bf(\hypV)}-$(ii)$ and~{\bf(\hypcadre)}-$(i)$-$(ii)$.
\end{proof}

By combining~\eqref{eq:GK_tps_fini} and~\eqref{eq:GK_formule_magique},
one gets: for any~$t \ge 0$,
$$\dlam\E_{\pi_0}[f(X_t^\lambda)]
=\int_0^t \E_{\pi_0}\left[
f(X_0)\left(\nabla V\cdot\dlam F_\lambda-\nabla\cdot \dlam
  F_\lambda\right)(X_s^0) \right] \rmd s,$$
where, we recall, the process~$(X_t^0)_{t\geq0}$ satisfies~\eqref{eq:EDS}
with an initial condition~$X_0$ being distributed according to the 
equilibrium measure~$\pi_0$.
Taking formally the limit~$t\to\infty$, one
obtains the classical Green-Kubo formula discussed in the next section.

\subsection{Infinite time Green-Kubo formula}

\begin{theo}\label{theo:GK}
  Consider the process~$(X_t^0)_{t\geq0}$ satisfying~\eqref{eq:EDS}
  with an initial condition~$X_0$ being distributed according to the 
  equilibrium measure~$\pi_0$. 
  Assume that Assumption {\bf(\hyppoinc{$\eta$})}~holds  for some positive
$\eta$.
  Then, for any~$f\in\LL^2(\eV)$, $\lambda\mapsto \int_\Rd f\rmd\pi_\lambda$ is differentiable at $\lambda=0$ with derivative
  \begin{equation}\label{eq:GK}
    \dlam\int_\Rd f\rmd\pi_\lambda
    =\int_0^\infty\E_{\pi_0}\left[
      f(X_0)\left(\nabla V\cdot\dlam F_\lambda-\nabla\cdot \dlam F_\lambda\right)(X_s^0)
    \right]\rmd s.
  \end{equation}
\end{theo}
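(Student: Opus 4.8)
The plan is to reduce the statement to the perturbative expansion of the density $g_\lambda=\rmd\pi_\lambda/\rmd\pi_0$ provided by Lemma~\ref{lem:pil}, and then to recognise the resulting expression as the time integral in~\eqref{eq:GK} using the resolvent $\mathcal{L}_0^{-1}$ together with Lemma~\ref{lem:IPP_eV}. As a preliminary I set $\psi:=\nabla V\cdot\dlam F_\lambda-\nabla\cdot\dlam F_\lambda$. By Assumptions~{\bf(\hypV)}-(ii) and~{\bf(\hypcadre)}-(i)-(ii) one has $\nabla V\in\LL^2(\eV)$, $\dlam F_\lambda\in\LL^\infty(\Rd)$ and $\nabla\cdot\dlam F_\lambda\in\LL^2(\eV)$, hence $\psi\in\LL^2(\eV)$; and Lemma~\ref{lem:IPP_eV} applied with the constant test function $v=\mathbf{1}\in\HH^1(\eV)$ gives $\int_\Rd\psi\,\eV=0$. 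I would then introduce $u_\psi$, the weak solution of~\eqref{eq:EDP_phi} with $\lambda=0$, $\phi=0$ and $u_\psi(0,\cdot)=\psi$ (Proposition~\ref{prop:EDP_bien_posee}). By Proposition~\ref{prop:feynman-kac}, $u_\psi(s,x)=\E[\psi(Y_s^x)]$ for a.e.\ $x$, so conditioning on $X_0$ yields $\int_\Rd f\,u_\psi(s,\cdot)\,\eV=\E_{\pi_0}[f(X_0)\psi(X_s^0)]$; and since $\int_\Rd u_\psi(s,\cdot)\,\eV=\int_\Rd\psi\,\eV=0$ for all $s$ (constant test function), Proposition~\ref{prop:tps_long} gives $\|u_\psi(s,\cdot)\|_{\LL^2(\eV)}\le e^{-\eta s}\|\psi\|_{\LL^2(\eV)}$. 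Hence $|\E_{\pi_0}[f(X_0)\psi(X_s^0)]|\le e^{-\eta s}\|f\|_{\LL^2(\eV)}\|\psi\|_{\LL^2(\eV)}$, so the integral in~\eqref{eq:GK} converges absolutely.

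\emph{Step 1 (differentiating $\lambda\mapsto\int_\Rd f\,\rmd\pi_\lambda$).} For $\lambda\in[0,\lambda_1]$ one has $\int_\Rd f\,\rmd\pi_\lambda=\langle f,g_\lambda\rangle_{\LL^2(\eV)}$ with $g_\lambda=(I+B_\lambda)^{-1}\mathbf{1}$ and $B_\lambda:=(\mathcal{T}_\lambda\mathcal{L}_0^{-1}\Pi_0)^*$, and $\|B_\lambda\|_{\LL^2(\eV)}\le\sqrt{C/\eta}\,\lambda$ by~\eqref{eq:Ol}. For $\lambda$ small the Neumann series gives $g_\lambda=\mathbf{1}-B_\lambda\mathbf{1}+R_\lambda$ with $\|R_\lambda\|_{\LL^2(\eV)}=O(\lambda^2)$, so $\frac{1}{\lambda}(g_\lambda-\mathbf{1})=-\frac{1}{\lambda}B_\lambda\mathbf{1}+O(\lambda)$ in $\LL^2(\eV)$. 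For fixed $h\in\LL^2(\eV)$, $\langle\frac{1}{\lambda}B_\lambda\mathbf{1},h\rangle=\langle\mathbf{1},\frac{1}{\lambda}\mathcal{T}_\lambda\mathcal{L}_0^{-1}\Pi_0h\rangle=\int_\Rd\frac{F_\lambda-F_0}{\lambda}\cdot\nabla(\mathcal{L}_0^{-1}\Pi_0h)\,\eV$; since $\frac{F_\lambda-F_0}{\lambda}\to\dlam F_\lambda$ locally uniformly with $\bigl|\frac{F_\lambda-F_0}{\lambda}\bigr|\le C$ (Assumption~{\bf(\hypcadre)}-(i)) and $\nabla(\mathcal{L}_0^{-1}\Pi_0h)\in\LL^2(\eV)$ (by~\eqref{eq:spectre_L0}), dominated convergence gives $\langle\frac{1}{\lambda}B_\lambda\mathbf{1},h\rangle\to\int_\Rd\dlam F_\lambda\cdot\nabla(\mathcal{L}_0^{-1}\Pi_0h)\,\eV$. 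Taking $h=f$ and using the $O(\lambda)$ remainder, I conclude that $\lambda\mapsto\int_\Rd f\,\rmd\pi_\lambda$ is differentiable at $0$ with $\dlam\int_\Rd f\,\rmd\pi_\lambda=-\int_\Rd\dlam F_\lambda\cdot\nabla(\mathcal{L}_0^{-1}\Pi_0f)\,\eV=-\int_\Rd(\mathcal{L}_0^{-1}\Pi_0f)\,\psi\,\eV$, the last equality being Lemma~\ref{lem:IPP_eV} with $v=\mathcal{L}_0^{-1}\Pi_0f\in\HH^1(\eV)$.

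\emph{Step 2 (identification with the Green--Kubo integral).} Let $v:=\mathcal{L}_0^{-1}\Pi_0f\in\LL_0^2(\eV)\cap\HH^1(\eV)$, which by the definition of $\mathcal{L}_0^{-1}$ satisfies $\int_\Rd\nabla v\cdot\nabla w\,\eV=-\int_\Rd f\,w\,\eV$ for every $w\in\LL_0^2(\eV)\cap\HH^1(\eV)$. For each $s$, $u_\psi(s,\cdot)\in\LL_0^2(\eV)\cap\HH^1(\eV)$, so testing the weak formulation~\eqref{eq:solution_EDP} of $u_\psi$ (with $\lambda=\phi=0$) against $v$ shows that $s\mapsto\int_\Rd v\,u_\psi(s,\cdot)\,\eV$ is absolutely continuous with $\frac{\rmd}{\rmd s}\int_\Rd v\,u_\psi(s,\cdot)\,\eV=-\int_\Rd\nabla v\cdot\nabla u_\psi(s,\cdot)\,\eV=\int_\Rd f\,u_\psi(s,\cdot)\,\eV=\E_{\pi_0}[f(X_0)\psi(X_s^0)]$ for a.e.\ $s$. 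Integrating over $[0,T]$ and letting $T\to\infty$, using $\int_\Rd v\,u_\psi(T,\cdot)\,\eV\to0$ by the exponential decay from the preliminary step, yields $-\int_\Rd v\,\psi\,\eV=\int_0^\infty\E_{\pi_0}[f(X_0)\psi(X_s^0)]\,\rmd s$. Combined with Step~1, $\dlam\int_\Rd f\,\rmd\pi_\lambda=-\int_\Rd v\,\psi\,\eV=\int_0^\infty\E_{\pi_0}[f(X_0)\psi(X_s^0)]\,\rmd s$, which is precisely~\eqref{eq:GK}.

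\emph{Main obstacle.} I expect the delicate part to be Step~1, namely controlling the $\lambda$-dependence of $\mathcal{T}_\lambda\mathcal{L}_0^{-1}\Pi_0$ finely enough to differentiate $g_\lambda=(I+(\mathcal{T}_\lambda\mathcal{L}_0^{-1}\Pi_0)^*)^{-1}\mathbf{1}$: one must check that $\frac{1}{\lambda}(\mathcal{T}_\lambda-\mathcal{T}_0)\mathcal{L}_0^{-1}\Pi_0$ has the expected weak limit (here the local-uniform convergence of $(F_\lambda-F_0)/\lambda$ together with the bound $|F_\lambda-F_0|\le C\lambda$ is exactly what is needed) and that the Neumann remainder is $o(\lambda)$. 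The remaining steps are bookkeeping with the weak formulation and the Poincaré-driven exponential decay, with only the mild care that~\eqref{eq:solution_EDP} gives a.e.-in-time identities, which suffices once one notes that the relevant functions of $s$ are absolutely continuous.
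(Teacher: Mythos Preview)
Your argument is correct and follows essentially the same route as the paper: Step~1 reproduces the paper's proof of~\eqref{eq:GK_revisite} (Neumann expansion of $g_\lambda$ followed by dominated convergence on $\frac{F_\lambda-F_0}{\lambda}$), and Step~2 is a mild repackaging of the paper's reduction of~\eqref{eq:GK} to~\eqref{eq:GK_revisite} --- where the paper invokes the Laplace formula of Lemma~\ref{lem:transformee_laplace_L0} together with the self-adjointness of $P_t$, you instead test the weak formulation of $u_\psi$ directly against $v=\mathcal{L}_0^{-1}\Pi_0 f$, which is the same computation carried out inline.
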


Let us recall some results and notation from Section~\ref{sec:inv_meas}. The generator of
the process~$(X_t^0)_{t\geq0}$ is $\mathcal L_0=-\nabla V\cdot\nabla+\Delta$. The generator of
the process~$(X_t^\lambda)_{t\geq0}$ is $\mathcal
L_\lambda=F_\lambda\cdot\nabla+\Delta=\mathcal L_0+\mathcal T_\lambda$ where $\mathcal T_\lambda
=(F_\lambda+\nabla V)\cdot\nabla$. The domain of the operator
$\mathcal L_0$ is
\[
\mathcal D(\mathcal L_0)
=\left\{ v \in \LL^2_0(\eV)\cap\HH^1(\eV),\, \mathcal L_0 v  \in \LL_0^2(\eV)\right\}.
\]
For any $f\in\LL^2(\eV)$, there
exists a unique function~$g=- \mathcal L_0^{-1}f$ in~$\LL^2_0(\eV)\cap\HH^1(\eV)$ such that
\[
\forall v\in\LL^2_0(\eV)\cap\HH^1(\eV),~
\int_\Rd \nabla g(x)\cdot\nabla v(x)\eV=\int_\Rd f(x)v(x)\eV.
\]

Let us start with a lemma which is a consequence of the results of
Section~\ref{sec:longtime} on the long-time behaviour of $\E(f(Y_t^x))$.
\begin{lem}\label{lem:transformee_laplace_L0}
Let us assume that Assumption {\bf(\hyppoinc{$\eta$})}~holds  for some positive
$\eta$. Let us introduce 
the semigroup~$P_t$ associated to the Markovian
evolution~\eqref{eq:EDS}: for any $f \in \LL^2_0(\eV)$,
$$P_t f(x) = \E(f(Y_t^x))$$
where $(Y_t^x)_{t \ge 0}$ satisfies~\eqref{eq:Ytx}. We then have the following Laplace
inversion formula for the operator~$\mathcal L_0^{-1}$: for any~$f$ in~$\LL^2_0(\eV)$,
\begin{equation}\label{eq:poisson}
-\mathcal L_0^{-1}f
=\int_0^\infty P_tf \, \rmd t.
\end{equation}
\end{lem}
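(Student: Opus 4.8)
The plan is to set $g:=\int_0^\infty P_tf\,\rmd t$ (an integral which will first be seen to converge in $\LL^2(\eV)$) and to check that $g$ coincides with $-\mathcal L_0^{-1}f$ by verifying the variational characterization of the latter recalled just above. Since $f\in\LL^2_0(\eV)$, Proposition~\ref{prop:feynman-kac}, applied with $\phi=0$ and $\lambda=0$ (both allowed), identifies $u(t,\cdot):=P_tf$ with the weak solution of~\eqref{eq:EDP_phi} in the sense of Definition~\ref{defi:solution_EDP}; taking $\mathbf 1$ as a test function in~\eqref{eq:solution_EDP} gives $\int_\Rd P_tf\,\eV=\int_\Rd f\,\eV=0$, so $P_tf\in\LL^2_0(\eV)$, and Proposition~\ref{prop:tps_long} yields $\|P_tf\|_{\LL^2(\eV)}\le e^{-\eta t}\|f\|_{\LL^2(\eV)}$. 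Hence $t\mapsto P_tf$ is integrable as an $\LL^2(\eV)$-valued map, $g$ is well defined in $\LL^2(\eV)$, and $\int_\Rd g\,\eV=0$ by Fubini, i.e. $g\in\LL^2_0(\eV)$.

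The next step is to show $g\in\HH^1(\eV)$ together with $\nabla g=\int_0^\infty\nabla u(t,\cdot)\,\rmd t$ in $\LL^2(\eV)$. Taking $u(t,\cdot)$ as a test function in~\eqref{eq:solution_EDP} with $\phi=0$, $\lambda=0$ (licit as in the proof of Proposition~\ref{prop:tps_long}) gives the energy identity $\tfrac12\|u(b,\cdot)\|_{\LL^2(\eV)}^2+\int_a^b\|\nabla u(s,\cdot)\|_{\LL^2(\eV)}^2\,\rmd s=\tfrac12\|u(a,\cdot)\|_{\LL^2(\eV)}^2$ for $0\le a\le b$. Combined with the exponential decay above, this gives $\int_n^{n+1}\|\nabla u(s,\cdot)\|_{\LL^2(\eV)}^2\,\rmd s\le\tfrac12 e^{-2\eta n}\|f\|_{\LL^2(\eV)}^2$, and summing over $n\ge0$ the Cauchy--Schwarz estimates $\int_n^{n+1}\|\nabla u(s,\cdot)\|_{\LL^2(\eV)}\,\rmd s\le\big(\int_n^{n+1}\|\nabla u(s,\cdot)\|_{\LL^2(\eV)}^2\,\rmd s\big)^{1/2}$ shows $\int_0^\infty\|\nabla u(s,\cdot)\|_{\LL^2(\eV)}\,\rmd s<\infty$. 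Then $g_T:=\int_0^T u(t,\cdot)\,\rmd t$ lies in $\HH^1(\eV)$ with $\nabla g_T=\int_0^T\nabla u(t,\cdot)\,\rmd t$ (Fubini against smooth compactly supported test functions, using that $e^{-V}$ is bounded below on compact sets), while $g_T\to g$ and $\nabla g_T\to\int_0^\infty\nabla u(t,\cdot)\,\rmd t$ in $\LL^2(\eV)$; closedness of the distributional gradient then gives $g\in\HH^1(\eV)$ with $\nabla g=\int_0^\infty\nabla u(t,\cdot)\,\rmd t$.

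Finally, integrating~\eqref{eq:solution_EDP} (with $\phi=0$, $\lambda=0$) in time over $[0,T]$ against an arbitrary $v\in\LL^2_0(\eV)\cap\HH^1(\eV)$ and using $u(0,\cdot)=f$ gives $\int_\Rd u(T,x)v(x)\eV-\int_\Rd f(x)v(x)\eV=-\int_\Rd\nabla g_T\cdot\nabla v\,\eV$ (Fubini, justified by $\int_0^T\|\nabla u(t,\cdot)\|_{\LL^2(\eV)}\,\rmd t<\infty$). Letting $T\to\infty$, the first term vanishes by the exponential decay and Cauchy--Schwarz, and $\nabla g_T\to\nabla g$ in $\LL^2(\eV)$, so $\int_\Rd\nabla g(x)\cdot\nabla v(x)\eV=\int_\Rd f(x)v(x)\eV$ for every $v\in\LL^2_0(\eV)\cap\HH^1(\eV)$. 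Together with $g\in\LL^2_0(\eV)\cap\HH^1(\eV)$, this is exactly the characterization of $-\mathcal L_0^{-1}f$, which proves~\eqref{eq:poisson}. The only genuine difficulty is the second step — upgrading the $\LL^2$-in-time energy bound to an $\LL^1$-in-time bound on $\|\nabla u(t,\cdot)\|_{\LL^2(\eV)}$ and thereby justifying the exchange of $\nabla$ with $\int_0^\infty\rmd t$ — and this is precisely where the Poincar\'e inequality {\bf(\hyppoinc{$\eta$})}, through Proposition~\ref{prop:tps_long}, enters.
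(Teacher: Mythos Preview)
Your proof is correct and follows the same strategy as the paper: use Proposition~\ref{prop:tps_long} for exponential decay in $\LL^2(\eV)$, the energy identity from the weak formulation to control $\nabla P_tf$ in time, and then pass to the limit in the integrated weak formulation against a test function. The paper's own proof is a bit terser: it only records the $\LL^2$-in-time bound $\int_0^\infty\|\nabla P_tf\|_{\LL^2(\eV)}^2\,\rmd t<\infty$ and jumps directly to $\int_0^\infty P_tf\,\rmd t\in\HH^1(\eV)$, whereas you take the extra care of upgrading this to an $\LL^1$-in-time bound via the blocking argument on $[n,n+1]$, which is exactly what is needed to interchange $\nabla$ with $\int_0^\infty\rmd t$ as a Bochner integral.
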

\begin{proof}
{F}rom Proposition~\ref{prop:feynman-kac}, we know that $u(t,x)=P_t f (x) = \E [
f(Y_t^x) ]$ is well defined in~$\LL^2_0(\eV)$ and satisfies the
partial differential equation~\eqref{eq:EDP_phi} in the sense of Definition~\ref{defi:solution_EDP}. From
Proposition~\ref{prop:tps_long}, (since $\int_{\R^d} f(x) \eV =0$) 
\begin{equation}\label{eq:expo_decay}
\forall t \ge 0, \, \left\|P_t f  \right\|_{\LL^2(\eV)}
\leq e^{-\eta t} \left\|f  \right\|_{\LL^2(\eV)}.
\end{equation}
This shows that $\int_0^\infty P_t f \, \rmd t$  is well
defined in~$\LL^2_0(\eV)$. Moreover, an adaptation with $\lambda=\phi=0$ (and
thus $C_0=C=0$) of the first energy
estimate in the proof of Proposition~\ref{prop:EDP_bien_posee} shows
that
\begin{equation}\label{eq:PsfH1}
\int_0^\infty \int_\Rd |\nabla P_t f(x)|^2 \eV \, \rmd t< \infty.
\end{equation}
Therefore, $\int_0^\infty P_t f \, \rmd t \in \HH^1(\eV)$.

{From} Definition~\ref{defi:solution_EDP}, for any test function $v \in
\LL^2(\eV)\cap\HH^1(\eV)$,
$$\int_\Rd f(x) v(x) \eV= \int_\Rd P_tf(x) v(x) \eV + \int_0^t
\int_\Rd \nabla P_s f (x) \cdot \nabla v(x) \eV \, \rmd s.$$
{From} Equation~\eqref{eq:expo_decay}, $\lim_{t \to \infty}  \int_\Rd
P_t f(x) v(x) \eV=0$ and thus, from~\eqref{eq:PsfH1},  for any test function $v \in
\LL^2(\eV)\cap\HH^1(\eV)$,
$$\int_\Rd f(x) v(x) \eV=
\int_\Rd \nabla \left( \int_0^\infty P_t f (x) \rmd t \right) \cdot \nabla v(x) \eV.$$
This concludes the proof.
\end{proof}

We recall that  $\Pi_0$ the orthogonal projection
from~$\LL^2(\eV)$ onto~$\LL^2_0(\eV)$ (see~\eqref{eq:PI0}).
We can now give a different expression for the right-hand side
of~\eqref{eq:GK}.
{From} Lemma~\ref{lem:IPP_eV} applied to the constant $1$, one has
$\int_\Rd(\nabla V\cdot\dlam F_\lambda-\nabla\cdot\dlam F_\lambda)(x)\eV=0$.
Then, using successively this equality, the self-adjointness of~$P_t$
in~$\LL^2(\eV)$ (which is a direct consequence of~\eqref{eq:rev}),
Equation~\eqref{eq:poisson} and Lemma \ref{lem:IPP_eV}, one has,
\begin{align*}
&\int_0^\infty \E_{\pi_0}\left[f(X_0)(\nabla V\cdot\dlam
  F_\lambda-\nabla\cdot\dlam F_\lambda)(X^0_t)\right]\, \rmd t\\
&=\int_0^\infty\int_\Rd\E\left[f(x)(\nabla V\cdot\dlam
  F_\lambda-\nabla\cdot\dlam F_\lambda)(Y_t^x)\right]\eV \,\rmd t\\
&=\int_0^\infty\int_\Rd\E\left[\Pi_0f(x)(\nabla V\cdot\dlam
  F_\lambda-\nabla\cdot\dlam F_\lambda)(Y_t^x)\right]\eV \, \rmd t\\
&=\int_0^\infty\int_\Rd\Pi_0f(x)P_t(\nabla V\cdot\dlam
F_\lambda-\nabla\cdot\dlam F_\lambda)(x)\eV \, \rmd t\\
&=\int_0^\infty\int_\Rd(P_t\Pi_0f(x))(\nabla V\cdot\dlam
F_\lambda-\nabla\cdot\dlam F_\lambda)(x)\eV\, \rmd t\\
&=-\int_\Rd(\mathcal L_0^{-1}\Pi_0f)(x)(\nabla V\cdot\dlam F_\lambda-\nabla\cdot\dlam F_\lambda)(x)\eV\\
&=-\int_\Rd \Tl [(\mathcal L_0^{-1}\Pi_0f)](x)\eV,
\end{align*}
where~$\Tl$ stands for the
operator~$\dlam F_\lambda\cdot\nabla$ (consistently with the
definition~\eqref{eq:Tlambda} of $\mathcal T_\lambda$). As a consequence, proving
Equation~\eqref{eq:GK}
boils down to proving: for any~$f\in\LL^2(\eV)$,
\begin{equation}\label{eq:GK_revisite}
\dlam\int_\Rd\Pi_0f(x)\rmd\pi_\lambda(x)
=-\int_\Rd  \Tl [(\mathcal L_0^{-1}\Pi_0f)](x)\eV.
\end{equation}

We are now in position to complete the proof of  Theorem~\ref{theo:GK}.
\begin{proof}[Proof of Theorem~\ref{theo:GK}]

Recall that for~$\lambda$ small enough, the
operator~$I+(\mathcal T_\lambda\mathcal L_0^{-1}\Pi_0)^*$
is invertible from~$\LL^2(\eV)$ to itself with bounded inverse (see Lemma~\ref{lem:pil}). For such a small $\lambda$, one has the equality
\begin{equation}\label{eq:(I+T)^-1}
(I+(\mathcal T_\lambda\mathcal L_0^{-1}\Pi_0)^*)^{-1}
=I
-(\mathcal T_\lambda\mathcal L_0^{-1}\Pi_0)^*
+\mathcal R_\lambda,
\end{equation}
where (by~\eqref{eq:Ol}) the
remainder~$\mathcal R_\lambda
=(I+(\mathcal T_\lambda\mathcal L_0^{-1}\Pi_0)^*)^{-1}
((\mathcal T_\lambda\mathcal L_0^{-1}\Pi_0)^*)^2$
has a norm from ~$\LL^2(\eV)$ to itself of order~$\mathcal O(\lambda^2)$.
Thus, by the analytical formula for $\pi_\lambda$ obtained in Lemma~\ref{lem:pil},
\begin{align}
\int_\Rd f(x)\rmd\pi_\lambda(x)
-\int_\Rd f(x)\rmd\pi_0(x)
&=-\int_\Rd [ (\mathcal T_\lambda\mathcal L_0^{-1}\Pi_0)^*\mathbf1](x)f(x)\eV
+\int_\Rd [\mathcal R_\lambda \mathbf1](x)f(x)\rmd\pi_0\notag\\
&=-\int_\Rd [(\mathcal T_\lambda\mathcal L_0^{-1}\Pi_0)f](x)\eV
+\mathcal O(\lambda^2).\label{difespll0}
\end{align}
Since, according to {\bf(\hypcadre)}-$(i)$, $\frac{F_\lambda+\nabla V}{\lambda}$ is bounded by $C$ for $\lambda\in(0,\lambda_0]$, one has, by Lebesgue's theorem,
\[
\lim_{\lambda \to 0} \frac1\lambda\int_\Rd\mathcal T_\lambda\mathcal L_0^{-1}\Pi_0f(x)\eV
=\int_\Rd(\dlam\mathcal T_\lambda)\mathcal L_0^{-1}\Pi_0f(x)\eV.
\]
Dividing \eqref{difespll0} by $\lambda$ and taking the limit $\lambda\to 0$, one concludes that $\lambda\mapsto \int_\Rd f\rmd\pi_\lambda$ is differentiable at $\lambda=0$ and \eqref{eq:GK_revisite}
holds. 
\end{proof}

Combining the previous result with~\eqref{eq:GK_formule_magique}, we obtain the following corollary.
\begin{cor}\label{cor:GK}
Let~$f \in\LL^2(\eV)$ be a Lipschitz function.  Also assume that
the Assumptions~{\bf(\hyppoinc{$\eta$})} (for some positive
$\eta$)~and {\bf(\hypspec)}~are
satisfied. Then, one has
\[
\dlam\int_\Rd f\rmd\pi_\lambda
=\int_0^\infty\E_{\pi_0}\left[\nabla f(X_0)\cdot R_{X^0}^T(0,s)\dlam F_\lambda(X_s^0)\right]\rmd s.
\]
\end{cor}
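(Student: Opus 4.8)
The plan is simply to combine the infinite-time Green-Kubo formula of Theorem~\ref{theo:GK} with the alternative expression for the integrand supplied by Proposition~\ref{prop:GK_formule_magique}. The hypotheses match up: $f\in\LL^2(\eV)$ together with {\bf(\hyppoinc{$\eta$})} is exactly what Theorem~\ref{theo:GK} needs, while $f\in\LL^2(\eV)$ Lipschitz together with {\bf(\hypspec)} is exactly what Proposition~\ref{prop:GK_formule_magique} needs, so under the assumptions of the Corollary both results are available.

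First I would apply Theorem~\ref{theo:GK} to write
\[
\dlam\int_\Rd f\rmd\pi_\lambda
=\int_0^\infty\E_{\pi_0}\left[f(X_0)\left(\nabla V\cdot\dlam F_\lambda-\nabla\cdot \dlam F_\lambda\right)(X_s^0)\right]\rmd s,
\]
which in particular guarantees that the integral on the right-hand side is absolutely convergent. Next, by Proposition~\ref{prop:GK_formule_magique}, for Lebesgue-almost every $s\ge 0$,
\[
\E_{\pi_0}\left[f(X_0)\left(\nabla V\cdot\dlam F_\lambda-\nabla\cdot \dlam F_\lambda\right)(X_s^0)\right]
=\E_{\pi_0}\left[\nabla f(X_0)\cdot R_{X^0}^T(0,s)\dlam F_\lambda(X_s^0)\right].
\]
Since the two integrands agree for almost every $s$, they have the same integral over $[0,\infty)$; in particular the function $s\mapsto \E_{\pi_0}\left[\nabla f(X_0)\cdot R_{X^0}^T(0,s)\dlam F_\lambda(X_s^0)\right]$ is integrable and its integral equals $\dlam\int_\Rd f\rmd\pi_\lambda$, which is the claimed identity.

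There is essentially no obstacle: the content has all been established in the preceding Theorem and Proposition, and the only step to spell out is that substituting an almost-everywhere-equal integrand inside a convergent Lebesgue integral does not change its value. If one wishes, one can additionally note that the integrability of $s\mapsto\E_{\pi_0}\left[\nabla f(X_0)\cdot R_{X^0}^T(0,s)\dlam F_\lambda(X_s^0)\right]$ can be seen directly: using reversibility of~\eqref{eq:EDS} and Lemma~\ref{lem:retournement_resolvante} one rewrites it as $\E_{\pi_0}\left[\nabla f(X_s^0)\cdot R_{X^0}(0,s)\dlam F_\lambda(X_0)\right]$ and controls $|R_{X^0}(0,s)|$ via Lemma~\ref{lem:R_T_Linfty}; but this is not required for the argument.
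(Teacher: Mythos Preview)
Your proof is correct and matches the paper's approach exactly: the corollary is obtained by combining Theorem~\ref{theo:GK} with Proposition~\ref{prop:GK_formule_magique}, and your handling of the almost-everywhere substitution is the only detail to spell out. One small caveat on your optional closing remark: under {\bf(\hypspec)} alone, Lemma~\ref{lem:R_T_Linfty} only gives $|R_{X^0}(0,s)|\le e^{-\alpha s}$ with $\alpha$ possibly non-positive, so it does not by itself yield integrability over $[0,\infty)$---but as you say, this side route is not needed for the argument.
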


\section{Long-time convergence of the estimators~\eqref{eq:estim_1}
  and~\eqref{eq:estim_2}}\label{sec:main_result}

\subsection{Statement of the main result}
Let us study the long-time behavior of the two estimators~\eqref{eq:estim_1} and~\eqref{eq:estim_2}.

\begin{theo}\label{theo:interversion}
 Let~$f:\R^d\to\R$ be a ${\mathcal C}^1$ function such that $\nabla f$ is bounded.\begin{itemize}
\item Assume the existence of $\eta>0$ such that either $V$ is $\eta$-convex or Assumptions {\bf(\hyppoinc{$\eta$})} and {\bf
    (\hypspecbeta{$\eta$})} hold. Then $\lambda\mapsto\frac{1}{t}\int_0^t f(X^\lambda_s)\rmd s$ is differentiable at $\lambda=0$ with derivative $\frac{1}{t}\int_0^t \nabla f(X^0_s).T_s \rmd s$ and \begin{equation}\label{eq:cv_estim2}
\lim_{t\to\infty}\dlam\left(\frac1t\int_0^tf(X_s^\lambda)\rmd s\right)
=\dlam\left(\int_\Rd f\rmd\pi_\lambda\right)
\mbox{ a.s.}.
\end{equation}
\item Assume either that $V$ is $\eta$-convex for a positive constant $\eta>0$ and $\E|X_0|<+\infty$, or that there exist $\eta>0$ and $p\in (1, \infty]$ such that ~{\bf(\hyppoinc{$\eta$})} holds, $X_0$ is distributed
  according to a measure having a density in $\LL^p(\eV)$ with respect to $\eV$ and  {\bf
    (\hypspecbeta{$\rho$})} holds for some $\rho<\eta(p-1)/p$ (with the convention $\eta(\infty-1)/\infty=\eta$). Then $\forall \lambda\in[0,\lambda_0]$, $\forall t\geq 0$, $\E|f(X^\lambda_t)|<+\infty$, $\lambda\mapsto\E[f(X^\lambda_t)]$ is differentiable at $\lambda=0$ with derivative $\dlam\E\left[f(X_t^\lambda)\right]=\E[\nabla f(X^0_t).T_t]$ and \begin{equation}\label{eq:cv_estim1}
\lim_{t\to\infty}\dlam\E\left[f(X_t^\lambda)\right]
=\dlam\left(\int_\Rd f\rmd\pi_\lambda\right).\end{equation}
\end{itemize}

\end{theo}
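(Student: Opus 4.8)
The plan is to prove the two bullets in parallel, in each case first establishing the finite-time differentiability statement (interchange of $\dlam$ with the time average, respectively with the expectation), then passing to the limit $t\to\infty$ using the Green-Kubo formula of Theorem~\ref{theo:GK} together with an identification of the long-time limit of the estimator. The key structural fact, alluded to in the introduction, is that the long-time limit in law of the couple $(X^0_t,T_t)$ — or more precisely of $(X^0_t,R_{X^0}(0,t),\ldots)$ — can be computed by a time-reversal argument (the announced Lemma~\ref{lem:conv_loi}), and that its moments against $\nabla f$ reproduce exactly the integrand $\E_{\pi_0}[f(X_0)(\nabla V\cdot\dlam F_\lambda-\nabla\cdot\dlam F_\lambda)(X_s^0)]$ appearing in~\eqref{eq:GK}, integrated over $s\in[0,\infty)$.

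For the first bullet, I would proceed as follows. Finite-time differentiability of $\lambda\mapsto\frac1t\int_0^tf(X_s^\lambda)\rmd s$ at $\lambda=0$, with derivative $\frac1t\int_0^t\nabla f(X^0_s)\cdot T_s\,\rmd s$, follows from Proposition~\ref{proptt} (a.s. differentiability of $\lambda\mapsto X_s^\lambda$ with derivative $T_s$), the chain rule and the boundedness of $\nabla f$, since on any finite interval $\sup_{s\le \bar t}|X^\lambda_s-X^0_s|\le C_{\bar t}\lambda$ so the difference quotients are dominated and we may pass the derivative inside the (finite) integral. It then remains to show $\lim_{t\to\infty}\frac1t\int_0^t\nabla f(X^0_s)\cdot T_s\,\rmd s=\dlam\int_\Rd f\rmd\pi_\lambda$ almost surely. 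Using~\eqref{eq:expression_Tt}, $\frac1t\int_0^t\nabla f(X^0_s)\cdot T_s\,\rmd s=\frac1t\int_0^t\int_0^s\nabla f(X^0_s)\cdot R_{X^0}(r,s)\dlam F_\lambda(X^0_r)\,\rmd r\,\rmd s$, and the idea is to view this as an additive functional of the Markov process $(X^0_s,T_s)$ and apply an ergodic theorem. Under the convexity hypothesis Proposition~\ref{prop:varalconv} gives $\sup_t\E|T_t|^\alpha<\infty$, and under {\bf(\hyppoinc{$\eta$})}+{\bf(\hypspecbeta{$\eta$})} Proposition~\ref{prop:variance} gives the same (starting, say, from $\pi_0$, and then for a general initial condition via the Markov property as in Lemma~\ref{lem:erggen}); so the couple $(X^0_t,T_t)$ has a stationary regime, and the Birkhoff theorem for this couple identifies the limit as the stationary average of $\nabla f(X^0_0)\cdot T_0$ under that regime, which by the time-reversal computation equals $\int_0^\infty\E_{\pi_0}[\nabla f(X_0)\cdot R_{X^0}^T(0,s)\dlam F_\lambda(X_s^0)]\rmd s=\dlam\int_\Rd f\rmd\pi_\lambda$ by Corollary~\ref{cor:GK} (or Theorem~\ref{theo:GK} combined with Proposition~\ref{prop:GK_formule_magique}).

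For the second bullet, integrability and finite-time differentiability of $\lambda\mapsto\E[f(X^\lambda_t)]$ at $\lambda=0$, with derivative $\E[\nabla f(X^0_t)\cdot T_t]$, follow by the same dominated-convergence argument as in the proof of Theorem~\ref{theo:GK_tps_fini}: the difference quotient $\frac{f(X^\lambda_t)-f(X^0_t)}{\lambda}$ is bounded, under the convexity hypothesis, by a deterministic constant (using~\eqref{majodif} and the boundedness of $\nabla f$), while under the Poincaré hypothesis one dominates $|T_t|$ using~\eqref{eq:expression_Tt} and~\eqref{eq:R_T_Linfty} together with the uniform moment bound of Proposition~\ref{prop:variance} valid for the prescribed range of $\rho$; in either case $\E|X_0|<\infty$ (resp. the $\LL^p(\eV)$ density assumption) gives $\E|f(X^\lambda_t)|<\infty$ since $f$ is Lipschitz. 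For the long-time limit, $\E[\nabla f(X^0_t)\cdot T_t]=\int_0^t\E[\nabla f(X^0_t)\cdot R_{X^0}(s,t)\dlam F_\lambda(X^0_s)]\rmd s$, and I would split the $s$-integral at a fixed large $s_0$: on $[0,s_0]$ the reversal/convergence-in-law Lemma~\ref{lem:conv_loi} lets the integrand converge as $t\to\infty$ to $\E_{\pi_0}[f(X_0)(\nabla V\cdot\dlam F_\lambda-\nabla\cdot\dlam F_\lambda)(X_{t-s}^0)]$-type terms — more precisely, after the change of variable $s\mapsto t-s$, to $\int_0^{s_0}\E_{\pi_0}[\nabla f(X_0)\cdot R_{X^0}^T(0,s)\dlam F_\lambda(X^0_s)]\rmd s$; the tail $\int_{s_0}^t$ is controlled uniformly in $t$ by the exponential decay of $\E^{1/\alpha}[|R_{X^0}(s,t)|^\alpha]\le Ce^{-(t-s)/C}$ from Proposition~\ref{prop:variance} (applied with $\alpha=1$). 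Letting first $t\to\infty$ and then $s_0\to\infty$, and invoking Corollary~\ref{cor:GK}, yields~\eqref{eq:cv_estim1}.

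The main obstacle is the identification of the long-time limit — in the first bullet, justifying the ergodic theorem for the \emph{non-elliptic, non-compact} degenerate diffusion $(X^0_t,T_t)$ (whose invariant measure is not explicit, only its existence via the uniform moment bounds and the time-reversal description), and checking that the Birkhoff limit is indeed the stationary average with the time-reversed stationary initial data rather than some other quantity; in the second bullet, the careful handling of the double limit $t\to\infty$, $s_0\to\infty$ with a $t$-uniform tail estimate. Both rest on the convergence-in-law Lemma~\ref{lem:conv_loi} and on the uniform-in-time moment bounds of Section~\ref{sec:expo_cv_R}, which is exactly why the hypotheses are stated in the form {\bf(\hypspecbeta{$\rho$})} with $\rho$ matching the relevant exponent $\alpha$ (here $\alpha=1$ for convergence of expectations, and $\alpha=2$ if one additionally wants the variance control, though the statement as written only asks for the convergence).
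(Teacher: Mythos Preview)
Your overall architecture is right and matches the paper: finite-time differentiability via Proposition~\ref{proptt} and the deterministic bound~\eqref{majodif}, identification of the long-time limit via the time-reversal Lemma~\ref{lem:conv_loi}, and recognition of that limit as $\dlam\int f\rmd\pi_\lambda$ through Corollary~\ref{cor:GK}. Two points deserve sharpening.

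\textbf{First bullet: the ergodic step.} You correctly flag as the main obstacle the Birkhoff theorem for the degenerate couple $(X^0_t,T_t)$ with an arbitrary initial condition, but your proposed fix (``via the Markov property as in Lemma~\ref{lem:erggen}'') does not quite work: even though the law of $X^0_1$ is absolutely continuous with respect to $\pi_0$, the law of $(X^0_1,T_1)$ need not be absolutely continuous with respect to the ergodic measure $\mathcal V$, since the $T$-component carries no noise. The paper's device (Lemma~\ref{lem:ergconv}) is to leave $X^0_1$ alone but \emph{replace} $T_1$ by a random $\tilde T^0_1$ drawn from the conditional law $\mathcal V_{T|X=X^0_1}$; then $(X^0_1,\tilde T^0_1)$ is absolutely continuous with respect to $\mathcal V$, Birkhoff applies to $(X^0_t,\tilde T^0_t)_{t\ge1}$, and the error $T_t-\tilde T^0_t=R_{X^0}(1,t)(T_1-\tilde T^0_1)$ decays a.s.\ by Lemma~\ref{lem:convergence_R}. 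The integrability $\int|\tau|\rmd\mathcal V<\infty$ needed for Birkhoff is checked directly from Proposition~\ref{cor:momr} (with $\beta=1$) under $\pi_0$, not from Proposition~\ref{prop:variance}, which requires an $\LL^p$ density on the initial law that the first bullet does not assume.

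\textbf{Second bullet: the limit.} Your $s_0$-splitting and double limit would work, but the paper takes a shorter route: once Lemma~\ref{lem:conv_loi} gives convergence in law of $(X^0_t,T_t)$, one passes directly to the limit in $\E[\nabla f(X^0_t)\cdot T_t]$ by establishing uniform integrability of this family. That is where the strict inequality $\rho<\eta(p-1)/p$ enters: it allows one to pick $\alpha=\eta(p-1)/(\rho p)>1$ in Proposition~\ref{prop:variance}, so that $\sup_t\E|T_t|^\alpha<\infty$ and the family is uniformly integrable. Your remark that ``$\alpha=1$ suffices for convergence of expectations'' is not correct here; $\alpha=1$ gives only boundedness in $\LL^1$, not uniform integrability, and this is exactly why the hypothesis is $\rho<\eta(p-1)/p$ rather than $\rho\le\eta(p-1)/p$. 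Also, for finite-time differentiability in the Poincar\'e case you do not need to dominate $|T_t|$: the deterministic bound~\eqref{majodif} (valid under {\bf(\hypspec)}, which is implied by {\bf(\hypspecbeta{$\rho$})}) already bounds the difference quotient by a constant depending only on $t$, exactly as in the convex case.
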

\begin{rem}\label{rem:mainres}
  When $\pi_0$ is assumed to satisfy a logarithmic Sobolev
  inequality with constant $\eta$, which is stronger than the
  Poincar\'e inequality {\bf(\hyppoinc{$\eta$})}, then the second statement still holds as soon as $X_0$ is distributed
  according to a measure having a density in $\LL^p(\eV)$ with respect to $\eV$ for some $p>1$ and  {\bf
    (\hypspecbeta{$\rho$})} holds for some $\rho < \eta$, because of the hypercontractivity property of the semi-group associated with~\eqref{eq:EDS} ensured by the Gross theorem.
\end{rem}

\subsection{Long-time behaviour of $(X^0_t,T_t)_{t\geq 0}$}

To prove Theorem~\ref{theo:interversion}, one first needs to know the
long-time limit of the trajectory and its tangent vector. We more generally consider $(X^0_t,T^0_t)_{t \ge0}$ solving
\begin{equation}\label{edsjointe}
\left\{
   \begin{aligned}
      \rmd X_t^0&=-\nabla V(X_t^0)\rmd t+\sqrt2\rmd W_t\\
  \rmd T^0_t
&=\left(\dlam F_\lambda(X_t^0)-\nabla^2V(X_t^0) T^0_t\right)\rmd t 
\end{aligned}
\right.
\end{equation}
with $(X^0_0,T^0_0)$
any initial condition independent from the Brownian motion $(W_t)_{t\geq 0}$. To write
conveniently the long-time limit of  $(X^0_t,T^0_t)$, we
will run time backward and use Lemma~\ref{lem:retournement_resolvante}
about the link between
the forward resolvent and its backward counterpart. 
\begin{lem}\label{lem:conv_loi}

Under
Assumptions {\bf(\hyppoinc{$\eta$})} (for some positive
$\eta$), {\bf(\hypspec)} and~{\bf(\hypconv)}, the
process~$(X_t^0,T^0_t)_{t\geq0}$ converges in
law as~$t$ goes to infinity to the couple
\[
\left(Y_0,\int_0^\infty R_Y^T(0,t)\dlam F_\lambda(Y_t)\rmd t\right),
\]
where~$(Y_t)_{t\geq0}$ follows the dynamics~\eqref{eq:EDS} with
with~$Y_0$ distributed according to~$\eV$. Moreover, the law ${\cal
  V}$ of this couple is invariant by the dynamics \eqref{edsjointe}
and ergodic for this dynamics: for any test function $\varphi: \Rd\times\Rd
 \to \R$ in $\LL^1({\cal V})$, for ${\cal V}$-a.e. deterministic
 initial condition $(X^0_0,T^0_0)$,
$$\lim_{t \to \infty} \frac 1 t \int_0^t \varphi(X^0_s,T^0_s) \, \rmd s = \int_{\Rd \times
  \Rd} \varphi(x,\tau) \, \rmd {\cal V}(x,\tau)\;a.s..$$
\end{lem}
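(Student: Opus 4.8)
The plan is to identify the limit via a time-reversal argument, exactly as announced in the introduction and in the spirit of \cite{fontbona-jourdain-14}. Fix $t>0$ and start the $X$-component at equilibrium, so write $(X^0_s)_{0\le s\le t}$ with $X^0_0\sim\eV$; by reversibility (the Remark after Lemma~\ref{lem:p}) the reversed process $Y_s:=X^0_{t-s}$, $0\le s\le t$, has the same law as $(X^0_s)_{0\le s\le t}$, and in particular $Y_0\sim\eV$. Using the integral representation~\eqref{eq:expression_Tt}, $T^0_t=\int_0^t R_{X^0}(s,t)\dlam F_\lambda(X^0_s)\,\rmd s$, and the change of variable $s\mapsto t-s$ together with Lemma~\ref{lem:retournement_resolvante} ($R_{X^0}(s,t)=R_Y^T(t-s,t)$ after relabelling, i.e. $R_{X^0}(t-u,t)=R_Y^T(0,u)$ when $Y_\cdot=X^0_{t-\cdot}$), one gets the key identity in law
\[
(X^0_t,T^0_t)\ \overset{d}{=}\ \Bigl(Y_0,\ \int_0^t R_Y^T(0,u)\dlam F_\lambda(Y_u)\,\rmd u\Bigr),
\]
where on the right $(Y_u)_{u\ge0}$ is the stationary solution of~\eqref{eq:EDS}. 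As $t\to\infty$ the right-hand side converges almost surely to $\bigl(Y_0,\int_0^\infty R_Y^T(0,u)\dlam F_\lambda(Y_u)\,\rmd u\bigr)$: indeed $\dlam F_\lambda$ is bounded (Assumption~{\bf(\hypcadre)}-$(i)$) and, by Lemma~\ref{lem:convergence_R} applied under {\bf(\hypspec)}+{\bf(\hypconv)}, $|R_Y(0,u)|=|R_Y^T(0,u)|\le Ce^{-\beta u}$ a.s.\ for some random a.s.-finite $C$ and some $\beta>0$, so the integrand is a.s.\ integrable on $[0,\infty)$ and the tail goes to $0$. Almost sure convergence implies convergence in law, hence $(X^0_t,T^0_t)$ converges in law to the couple displayed in the statement; the starting point being $\eV$ is irrelevant for the \emph{law of the limit}, but for the convergence statement with a general initial condition one first disintegrates over $X^0_0$ and uses Corollary~\ref{cor:CV_L1} (the time-$\varepsilon$ law is absolutely continuous with density converging in $\LL^1$ to $e^{-V}$), running the above argument on $[\varepsilon,t]$ and letting $\varepsilon$ be small; the exponential decay of $R$ makes the contribution of $[0,\varepsilon]$ negligible uniformly in $t$.

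Next I would check that the limiting law $\mathcal V$ is invariant for the joint dynamics~\eqref{edsjointe}. Since $\mathcal V$ is obtained as the long-time limit in law of the Markov process $(X^0_s,T^0_s)_{s\ge0}$ (whichever admissible initial law we start from), and since this process has the Feller property coming from continuity in the initial condition (Proposition~\ref{proptt}, Lemma~\ref{lemderci} for the $X$-flow, and the linear ODE~\eqref{eq:edo_Tt} with coefficients continuous in $X^0$ for the $T$-component), the standard Krylov--Bogolyubov argument shows that any weak subsequential limit of the time-averaged laws is invariant; here we even have convergence of the one-time marginals, so $\mathcal V$ itself is invariant. Alternatively, one checks invariance directly: if $(X^0_0,T^0_0)\sim\mathcal V$ then, using the identity in law above with the additional randomness built on $(Y_u)_{u\ge0}$, the pair $(X^0_{t+h},T^0_{t+h})$ has, conditionally, the same structure, and letting $t\to\infty$ gives $(X^0_h,T^0_h)\sim\mathcal V$ for every $h$.

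For ergodicity of $\mathcal V$ under~\eqref{edsjointe}, the cleanest route is to exploit that the $X^0$-marginal dynamics is ergodic with unique invariant measure $\pi_0$ (Lemma~\ref{lem:erggen}), and that, conditionally on the whole path $(X^0_s)_{s\ge0}$, the tangent component $T^0_t$ is a \emph{deterministic} functional of that path (it solves the linear ODE~\eqref{eq:edo_Tt}), with the ``pullback'' representation $T^0_t\to\int_0^\infty R_Y^T(0,u)\dlam F_\lambda(Y_u)\,\rmd u$ forgetting the initial condition $T^0_0$ exponentially fast by Lemma~\ref{lem:convergence_R}. Concretely: any $\mathcal V$-invariant event is, up to $\mathcal V$-null sets, measurable with respect to the tail $\sigma$-field of $(X^0_s)_{s\ge0}$ (because two solutions of~\eqref{edsjointe} driven by the same Brownian motion but with different $T^0_0$ satisfy $|T^0_t-\tilde T^0_t|\le |R_{X^0}(0,t)|\,|T^0_0-\tilde T^0_0|\to0$), and the tail $\sigma$-field of the ergodic diffusion $(X^0_s)$ is trivial; hence any invariant set has $\mathcal V$-measure $0$ or $1$. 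The stated pointwise ergodic theorem for $\frac1t\int_0^t\varphi(X^0_s,T^0_s)\,\rmd s$, for $\varphi\in\LL^1(\mathcal V)$ and $\mathcal V$-a.e.\ deterministic initial condition, then follows from Birkhoff's ergodic theorem applied to the stationary, ergodic, Feller Markov process with invariant law $\mathcal V$ (see, e.g., \cite[Theorem 3.8 and Eq. (52)]{rey-bellet-06}, the same reference used for Lemma~\ref{lem:erggen}), together with the absolute continuity of the time-$1$ law with respect to $\mathcal V$ coming from Lemma~\ref{lem:p} in the $X$-direction, which upgrades ``for $\mathcal V$-a.e.'' to ``for $\mathcal V$-a.e.'' initial condition as in the proof of Lemma~\ref{lem:erggen}.

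The main obstacle, and where care is needed, is the ergodicity (rather than mere invariance) of $\mathcal V$: one must rule out that the extra coordinate $T$ carries a nontrivial invariant structure. The argument above — degeneracy of the $T$-ODE in its initial condition via the exponential contraction of $R_{X^0}(0,t)$ (Lemma~\ref{lem:convergence_R}), reducing any $\mathcal V$-invariant set to a tail event of the base diffusion, whose tail $\sigma$-field is trivial by ergodicity of $\pi_0$ — is the crux; the time-reversal identity and the $t\to\infty$ passage are then routine given the exponential bound on $|R_Y(0,t)|$ and the boundedness of $\dlam F_\lambda$.
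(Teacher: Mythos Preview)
Your core strategy---the stationary time-reversal identity
\[
(X^0_t,T^0_t)\ \overset{d}{=}\ \Bigl(Y_0,\ \int_0^t R_Y^T(0,u)\,\dlam F_\lambda(Y_u)\,\rmd u\Bigr)
\]
for $X^0_0\sim\pi_0$, $T^0_0=0$, followed by a.s.\ convergence of the right-hand side via Lemma~\ref{lem:convergence_R}---is exactly the paper's. Two points need tightening, though.

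\emph{General initial conditions.} Your ``disintegrate over $X^0_0$ and use Corollary~\ref{cor:CV_L1}'' is not a proof: at time $\varepsilon$ the law of $X^0_\varepsilon$ has a density but is not $\pi_0$, so you cannot directly run the reversal argument on $[\varepsilon,t]$. The paper makes this precise by building, at a fixed time $t_0>0$, a maximal coupling $\chi^{t_0}_{t_0}\sim\pi_0$ with $\PP(\chi^{t_0}_{t_0}\neq X^0_{t_0})=\tfrac12\|q_{t_0}-e^{-V}\|_{\LL^1}$, then driving $\chi^{t_0}$ and $X^0$ with the same Brownian motion for $t\ge t_0$. One then bounds $|\E\phi(X^0_t,T^0_t)-\E\phi(Y_0,\int_0^\infty\cdots)|$ by three pieces: the tail $\int_{t-t_0}^\infty$, the contribution of $R_{X^0}(0,t_0)T^0_0+\int_0^{t_0}\cdots$ contracted by $R_{X^0}(t_0,t)$, and the coupling failure probability. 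Let $t\to\infty$ (first two vanish), then $t_0\to\infty$ (Corollary~\ref{cor:CV_L1} kills the third).

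\emph{Ergodicity.} Your tail-triviality detour is unnecessary, and your final step contains a genuine error: the claim that the time-$1$ law of $(X^0_1,T^0_1)$ is absolutely continuous with respect to $\mathcal V$ ``from Lemma~\ref{lem:p} in the $X$-direction'' is false in general, because~\eqref{edsjointe} has no noise in the $T$-component and $T^0_1$ is a deterministic functional of the \emph{path} $(X^0_s)_{0\le s\le 1}$ and $T^0_0$, not of $X^0_1$ alone; the law of $(X^0_1,T^0_1)$ from a Dirac is supported on a lower-dimensional set and need not be $\ll\mathcal V$. The paper's route is much shorter: the convergence in law you have just established holds for \emph{every} initial law, so any invariant probability must equal $\mathcal V$; uniqueness of the invariant measure then gives ergodicity (same reference \cite{rey-bellet-06} used in Lemma~\ref{lem:erggen}), and Birkhoff yields the stated a.s.\ time-average limit for $\mathcal V$-a.e.\ initial point. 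The upgrade to arbitrary initial conditions that you allude to is not part of this lemma; it is carried out in the next one by resampling $\tilde T^0_1$ from the conditional law $\mathcal V(\rmd\tau\,|\,x)$ given $X^0_1$, not by an absolute-continuity argument.
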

\begin{proof}
The integral~$\int_0^\infty R_Y^T(0,t)\dlam F_\lambda(Y_t)\rmd t$ is
almost surely well defined, from Lemma~\ref{lem:convergence_R} and the
boundedness of~$\dlam F_\lambda$.
To prove Lemma~\ref{lem:conv_loi}, we are going to use a time reversal argument.

For $t_0 >0$, we construct a coupling of the trajectory~$(X^0_t)_{t\geq t_0}$ with
another process~$(\chi_t^{t_0})_{t\geq t_0}$
following the dynamics~\eqref{eq:EDS}, but being at
equilibrium.  Denote by
$q_{t_0}$ the density of the distribution of~$X_{t_0}^0$ (which exists
by Lemma~\ref{lem:p}), and define
$\rho_{t_0}=\frac{q_{t_0}\wedge e^{-V}}{q_{t_0}}$.
Let~$U$ and~$\zeta_{t_0}$ be mutually independent random variables which are
independent of~$X_0$ and of the Brownian motion~$(W_t)_{t\geq0}$
driving~$(X^0_t)_{t\geq0}$, such
that~$U$ is uniformly distributed over~$[0,1]$ and, when $q_{t_0}\neq
e^{-V}$, $\zeta_{t_0}$ is distributed according
to~$C(e^{-V(x)}-q_{t_0}(x))^+\rmd x$,~$C$ being a normalization constant
($\zeta_{t_0}$ does not need to be defined when $q_{t_0}=e^{-V}$). We define the position
of the process~$(\chi^{t_0}_t)_{t\geq{t_0}}$ at time~${t_0}$ by
$\chi_{t_0}^{t_0}=X_{t_0}^0\mathbf1_{U\leq\rho_{t_0}(X_{t_0}^0)}+\zeta_{t_0}\mathbf1_{U>\rho_{t_0}(X_{t_0}^0)}$,
which
is distributed according to~$\pi_0$. One has~$\PP(\chi_{t_0}^{t_0}\neq
X_{t_0}^0)=\frac12\|q_{t_0}(x)-e^{-V(x)}\|_{\LL^1(\rmd x)}$. For~$t>{t_0}$,
let~$(\chi_t^{t_0})_{t\geq{t_0}}$ evolve according to the dynamics
\eqref{eq:EDS} with Brownian motion~$(W_t)_{t\geq0}$. Notice that
 $(\chi_{t+{t_0}}^{t_0})_{t \ge 0}$ has the same law as the process at
 equilibrium $(Y_t)_{t \ge
   0}$ introduced in the statement of Lemma~\ref{lem:conv_loi}.  Moreover, $(\chi_t^{t_0})_{t \ge t_0}$ is
 such that $$\PP(\forall t\geq{t_0}, \chi_t^{t_0}=X_t^0)=1-\frac12\|q_{t_0}(x)-e^{-V(x)}\|_{\LL^1(\rmd x)}.$$

{From} an easy adaptation of Proposition~\ref{prop:tangent}, one has on the one hand
\[
(X_t^0,T^0_t)=\left(X_t^0,R_{X^0}(0,t)T_0^0+\int_0^tR_{X^0}(s,t)\dlam F_\lambda(X_s^0)\rmd s\right).
\]
On the other hand, for~$0 < {t_0}\leq t$, we have
the equalities,
by using successively the time translation~$s\to s-{t_0}$, the change of
variable~$u=t-{t_0}-s$,
Lemma~\ref{lem:retournement_resolvante} (using the notation, for $u
\in [0, t-{t_0}]$, $Z_u = Y_{t-{t_0}-u}$)  and the time
reversibility of the dynamics~\eqref{eq:EDS}:
\begin{align*}
\left(
\chi_t^{t_0},
\int_{t_0}^tR_{\chi^{t_0}}(s,t)\dlam F_\lambda(\chi_s^{t_0})\rmd s
\right)
&\stackrel{\mathcal D}=\left(
Y_{t-{t_0}},
\int_0^{t-{t_0}}R_Y(s,t-{t_0})\dlam F_\lambda(Y_s)\rmd s
\right)\nonumber\\
&=\left(
Y_{t-{t_0}},
\int_0^{t-{t_0}}R_Y(t-{t_0}-u,t-{t_0})\dlam F_\lambda(Y_{t-{t_0}-u})\rmd u
\right)\nonumber\\
&=
\left(Z_0,\int_0^{t-{t_0}}R_Z^T(0,u)\dlam F_\lambda(Z_u)\rmd u\right)\nonumber\\
&\stackrel{\mathcal D}=
\left(Y_0,\int_0^{t-{t_0}}R_Y^T(0,s)\dlam F_\lambda(Y_s)\rmd s\right),
\end{align*}
where $\stackrel{\mathcal D}=$ stands for the equality in distribution.
As a consequence, for any bounded Lipschitz 
function~$\phi:\R^d\times\R^d\to\R$ (with Lipschitz constant ${\rm Lip}(\phi)$), for $t \ge {t_0} > 0$
\begin{align}
&\left|\E\left[\phi\left(Y_0,\int_0^{\infty}
R_Y^T(0,s)\dlam F_\lambda(Y_s)\rmd s\right)-\phi(X_t^0,T^0_t)\right]\right|\nonumber\\
&\leq \left|\E\left[\phi\left(Y_0,\int_0^{\infty}
R_Y^T(0,s)\dlam F_\lambda(Y_s)\rmd s\right)-\phi\left(Y_0,\int_0^{t-{t_0}}
R_Y^T(0,s)\dlam F_\lambda(Y_s)\rmd s\right)\right]\right|\nonumber\\
&+\bigg|\E\bigg[\phi\left(\chi_t^{t_0},\int_{t_0}^tR_{\chi^{t_0}}(s,t)\dlam F_\lambda(\chi_s^{t_0})\rmd s\right)\nonumber\\
&\phantom{+\bigg|\E\bigg[}-\phi\left(\chi_t^{t_0},R_{X^0}(0,t)T_0^0+\int_0^{t_0} R_{X^0}(s,t)\dlam F_\lambda(X_s^0)\rmd s
+\int_{t_0}^tR_{\chi^{t_0}}(s,t)\dlam F_\lambda(\chi_s^{t_0})\rmd s\right)\bigg]\bigg|\nonumber\\&+\left|\E\left[\phi\left(\chi_t^{t_0},R_{X^0}(0,t)T_0^0+\int_0^{t_0} R_{X^0}(s,t)\dlam F_\lambda(X_s^0)\rmd s
+\int_{t_0}^tR_{\chi^{t_0}}(s,t)\dlam F_\lambda(\chi_s^{t_0})\rmd
                                                                                                   s\right)-\phi(X_t^0,T^0_t)\right]\right|\nonumber
  \\
&\leq \left|\E\left[\phi\left(Y_0,\int_0^{\infty}
R_Y^T(0,s)\dlam F_\lambda(Y_s)\rmd s\right)-\phi\left(Y_0,\int_0^{t-{t_0}}
R_Y^T(0,s)\dlam F_\lambda(Y_s)\rmd s\right)\right]\right|\nonumber\\
&+\E\left[
\left(2\|\phi\|_{\LL^\infty(\Omega)}\right)\wedge\left( {\rm Lip}(\phi)
\left|R_{X^0}({t_0},t)\left(R_{X^0}(0,{t_0})T_0^0+\int_0^{t_0} R_{X^0}(s,{t_0})\dlam F_\lambda(X_s^0)\rmd s\right)\right|\right)
\right]\nonumber\\
&+2\|\phi\|_{\infty}\PP(X_{t_0}^0\neq \chi_{t_0}^{t_0}).
 \label{eq:retourn_temps}
\end{align}
Notice that we used the 
semi-group property of $R_{X^0}$ to obtain the last but one inequality.
The first term in the right-hand side converges to $0$ as $t\to\infty$ by Lebesgue's theorem. A direct adaptation of Lemma~\ref{lem:convergence_R} shows that~$R_{X^0}({t_0},t)$
goes to $0$ as~$t$ goes to infinity, yielding from Lebesgue's theorem
that the second term in the right-hand side of~\eqref{eq:retourn_temps}
goes to $0$ as~$t$ goes to infinity. The third term in the right-hand
side of~\eqref{eq:retourn_temps} can be rewritten as
$2\|\phi\|_{\infty}\PP(X_{t_0}^0\neq \chi_{t_0}^{t_0})
=\|\phi\|_{\LL^\infty(\Omega)}\|e^{-V(x)}-p_{t_0}(x)\|_{\LL^1(\rmd x)}$
and thus goes to $0$ as~${t_0}$ goes to infinity, by Corollary~\ref{cor:CV_L1}.
Letting $t\to\infty$ and then
$t_0\to\infty$ in~\eqref{eq:retourn_temps}, we conclude that 
the couple~$(X_t^0,T^0_t)$
converges in law to $\left(Y_0,\int_0^{\infty}
R_Y^T(0,s)\dlam F_\lambda(Y_s)\rmd s\right)$ with law ${\cal V}$.

To check that ${\cal V}$ is invariant by the dynamics \eqref{edsjointe}, we denote by $({\cal P}_{s})_{s\geq 0}$ the Markov semi-group associated with this dynamics. One has $\E[{\cal P}_s\varphi(X^0_t,T^0_t)]=\E[\varphi(X^0_{t+s},T^0_{t+s})]$ where the right-hand side converges to $\int_{\R^d\times\R^d}\varphi(x,\tau)\rmd {\cal V}(x,\tau)$ as $t\to\infty$ and ${\cal P}_s\varphi(x,\tau)=\E[\varphi(Y^x_s,T^{x,\tau}_s)]$ with
\begin{equation*}
\left\{   \begin{aligned}
      Y^x_t&=x-\int_0^t\nabla V(Y^x_s)\rmd s+\sqrt2W_t \, ,\\
  T^{x,\tau}_t 
&=\tau+\int_0^t\left(\dlam F_\lambda(Y^x_s)-\nabla^2V(Y^x_s)
  T^{x,\tau}_s\right)\rmd s \, .\end{aligned}
\right.
\end{equation*}
The continuity of $x\mapsto (Y^x_t)_{t\geq 0}$ for the topology of
local uniform convergence on ${\mathcal C}({\mathbb R}_+,\R^d)$
together with the continuity of $\nabla^2V$ implies the continuity of $x\mapsto
(R_{Y^x}(s,t))_{s,t\geq 0}$ for the topology of local uniform
convergence on ${\mathcal C}({\mathbb R}_+\times{\mathbb R}_+,
\R^{d\times d})$. With the continuity of $\dlam F_\lambda$, one
deduces the continuity of $(x,\tau)\mapsto
T^{x,\tau}_s=R_{Y^x}(0,s)\tau+\int_0^sR_{Y^x}(r,s)\dlam
F_\lambda(Y^x_r)\rmd r$. Hence, by Lebesgue's theorem, ${\cal
  P}_s\varphi(x,\tau)$ is continuous and bounded and $\E[{\cal
  P}_s\varphi(X^0_t,T^0_t)]$ converges to $\int_{\R^d\times\R^d}{\cal
  P}_s\varphi(x,\tau)\rmd {\cal V}(x,\tau)$ as $t\to\infty$. Therefore
$\int_{\R^d\times\R^d}{\cal P}_s\varphi(x,\tau)\rmd {\cal
  V}(x,\tau)=\int_{\R^d\times\R^d}\varphi(x,\tau)\rmd {\cal
  V}(x,\tau)$ and the probability measure ${\cal V}$ is invariant.

Since ${\cal V}$ is the unique invariant probability measure for the
SDE \eqref{edsjointe}, this measure is ergodic (see for example~\cite[Theorem 3.8
and Equation (52)]{rey-bellet-06}).
\end{proof}
 Let us deduce from the previous results the limit of
 $\frac{1}{t}\int_0^t\varphi(X^0_s) \cdot T^0_s\rmd s$ where $\varphi:\R^d\to\R^d$ is measurable and bounded.
\begin{lem}\label{lem:ergconv}Assume the existence of $\eta>0$ such
  that either $V$ is $\eta$-convex or Assumptions {\bf(\hyppoinc{$\eta$})} and {\bf
    (\hypspecbeta{$\eta$})} hold. Then
  $\int_{\R^d\times\R^d}|\tau|\rmd {\cal V}(x,\tau)<\infty$ (where the
  probability distribution~${\cal V}$ has been introduced in Lemma~\ref{lem:conv_loi}) and for any function $\varphi:\R^d\to\R^d$ measurable and bounded, $\frac{1}{t}\int_0^t\varphi(X^0_s)\cdot T^0_s\rmd s$ converges a.s. to $\int_{\R^d\times\R^d}\varphi(x)\cdot\tau\rmd {\cal V}(x,\tau)$ as $t\to\infty$ whatever the choice of the initial condition $(X^0_0,T^0_0)$ independent of the Brownian motion $(W_t)_{t\geq 0}$.
\end{lem}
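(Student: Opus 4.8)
The plan is to split the task into two parts: first establishing the integrability statement $\int_{\R^d\times\R^d}|\tau|\,\rmd{\cal V}(x,\tau)<\infty$, and then upgrading the $\LL^1({\cal V})$ ergodic theorem of Lemma~\ref{lem:conv_loi} so that it applies to the unbounded observable $(x,\tau)\mapsto\varphi(x)\cdot\tau$ and, crucially, for an \emph{arbitrary} initial condition $(X^0_0,T^0_0)$ rather than only for ${\cal V}$-a.e.\ one. For the integrability: by the explicit description of ${\cal V}$ in Lemma~\ref{lem:conv_loi}, $\int|\tau|\,\rmd{\cal V}(x,\tau)=\E\big[\big|\int_0^\infty R_Y^T(0,s)\dlam F_\lambda(Y_s)\,\rmd s\big|\big]$, where $(Y_s)_{s\geq 0}$ solves~\eqref{eq:EDS} at equilibrium. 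Using the boundedness of $\dlam F_\lambda$ (Assumption~{\bf(\hypcadre)}-(i)) this is bounded by $C\int_0^\infty\E_{\pi_0}[|R_{Y}(0,s)|]\,\rmd s$ (note $|R^T|=|R|$). If $V$ is $\eta$-convex, Lemma~\ref{lem:R_T_Linfty}/\eqref{eq:R_T_Linfty} gives $|R_Y(0,s)|\leq e^{-\eta s}$ pathwise, so the integral is finite. Under {\bf(\hyppoinc{$\eta$})} and {\bf(\hypspecbeta{$\eta$})} — which is {\bf(\hypspecbeta{$\eta/\beta$})} with $\beta=1$ — Proposition~\ref{cor:momr} gives $\E_{\pi_0}[|R_{Y}(0,s)|]\leq Ce^{-s/C}$, which is again integrable. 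Hence $\int|\tau|\,\rmd{\cal V}<\infty$ in both cases.

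For the ergodic convergence, I would first prove it for $(X^0_0,T^0_0)$ distributed according to ${\cal V}$ itself. Since ${\cal V}$ is invariant and ergodic for the dynamics~\eqref{edsjointe} (Lemma~\ref{lem:conv_loi}), the Birkhoff ergodic theorem applied to the $\LL^1({\cal V})$ function $(x,\tau)\mapsto\varphi(x)\cdot\tau$ (which is integrable by the previous paragraph and the boundedness of $\varphi$) gives the desired a.s.\ convergence to $\int\varphi(x)\cdot\tau\,\rmd{\cal V}(x,\tau)$ when $(X^0_0,T^0_0)\sim{\cal V}$, hence, by disintegration, for ${\cal V}$-a.e.\ deterministic initial condition. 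To remove this restriction I would use the linearity of the tangent equation in the initial condition together with the decay of the resolvent: writing $R(s,t)$ for $R_{X^0}(s,t)$ and comparing a trajectory started at $(X^0_0,T^0_0)$ with one started at $(X^0_0,\tau_0')$ driven by the \emph{same} Brownian motion, the two tangent vectors at time $s$ differ exactly by $R(0,s)(T^0_0-\tau_0')$, and $X^0_s$ is the same for both. Lemma~\ref{lem:convergence_R} (resp.\ $\eta$-convexity) gives $|R(0,s)|\leq Ce^{-\beta s}$ a.s., so
\[
\frac1t\int_0^t\big|\varphi(X^0_s)\cdot R(0,s)(T^0_0-\tau_0')\big|\,\rmd s
\leq \frac{\|\varphi\|_\infty|T^0_0-\tau_0'|}{t}\int_0^t Ce^{-\beta s}\,\rmd s\xrightarrow[t\to\infty]{}0
\]
almost surely. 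Thus the Cesàro limit does not depend on $T^0_0$; varying the law of $X^0_0$ is handled by the Markov property exactly as at the end of the proof of Lemma~\ref{lem:erggen} (the law of $X^0_1$ is absolutely continuous with respect to Lebesgue measure, and the ${\cal V}$-a.e.\ statement combined with equivalence of $\pi_0$ to Lebesgue measure covers a full-measure set of positions). Combining, the convergence holds for every initial condition $(X^0_0,T^0_0)$ independent of $(W_t)_{t\geq0}$.

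The main obstacle is the passage from ``${\cal V}$-a.e.\ initial condition'' to ``every initial condition'': one must exploit that the only thing differing between two coupled trajectories started at the same position is the homogeneous part $R(0,s)T^0_0$ of the tangent vector, and that this is killed in Cesàro average by the exponential decay of $R(0,s)$ — which is precisely what {\bf(\hypconv)} (via Lemma~\ref{lem:convergence_R}) or $\eta$-convexity (via~\eqref{eq:R_T_Linfty}) buys us. One should also check the mild measurability point that $\varphi(X^0_s)\cdot T^0_s$ is integrable on $[0,t]$ for each fixed $t$, which follows from Lemma~\ref{lem:R_T_Linfty} (local-in-time boundedness of $T_s$, using {\bf(\hypspec)}, which is implied by either hypothesis) and boundedness of $\varphi$.
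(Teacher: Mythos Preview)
Your proposal is correct and follows essentially the same strategy as the paper: first bound $\int|\tau|\,\rmd{\cal V}$ by $C\int_0^\infty\E_{\pi_0}[|R_Y(0,s)|]\,\rmd s$ and control this either via~\eqref{eq:R_T_Linfty} in the $\eta$-convex case or via Proposition~\ref{cor:momr} under {\bf(\hypspecbeta{$\eta$})}; then use ergodicity of ${\cal V}$ for the Birkhoff step; then pass to arbitrary initial conditions by combining the Markov property at time $1$ (so that $X^0_1$ has a density with respect to $\pi_0$) with the fact that the difference of two tangent trajectories with common position process is $R_{X^0}(0,\cdot)$ applied to the difference of initial tangents, which is killed in Ces\`aro by Lemma~\ref{lem:convergence_R}. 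The only organisational difference is that the paper makes the last step concrete by \emph{constructing} an auxiliary tangent vector $\tilde T^0_1$ whose conditional law given $X^0_1$ is ${\cal V}_{T|X=X^0_1}$ (so that $(X^0_1,\tilde T^0_1)$ is absolutely continuous with respect to ${\cal V}$) and then comparing $T^0_t$ with $\tilde T^0_t$ for $t\ge 1$, whereas you argue more abstractly that the limit does not depend on the tangent initial condition and then invoke the Markov property; both routes rely on the same resolvent decay and are equivalent. Two small points worth making explicit in your write-up: (i) that {\bf(\hypspecbeta{$\eta$})} implies both {\bf(\hypspec)} and {\bf(\hypconv)}, so that Lemma~\ref{lem:convergence_R} indeed applies; and (ii) the Fubini step turning ``${\cal V}$-a.e.\ $(x,\tau)$'' into ``$\pi_0$-a.e.\ $x$, for all $\tau$'' via your $\tau$-independence observation.
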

\begin{proof}Notice that if $V$ is $\eta$-convex,
  Assumptions~{\bf(\hypspec)},~{\bf(\hypconv)} and~{\bf(\hyppoinc{$\eta$})} hold.  In addition, Assumption {\bf
    (\hypspecbeta{$\eta$})} implies Assumptions~{\bf(\hypspec)}
  and~{\bf(\hypconv)}. Therefore, the conclusion of Lemma \ref{lem:conv_loi} holds under the two classes 
of hypotheses considered.  Since $\dlam F_\lambda$ is bounded, one has
$$\int_{\R^d\times\R^d}|\tau|\rmd {\cal V}(x,\tau)\leq \int_0^\infty \E\left[|R_Y^T(0,t)||\dlam F_\lambda(Y_t)|\right]\rmd t\leq C\int_0^\infty \E\left[|R_Y(0,t)|\right]\rmd t ,$$
where the right-hand side is finite by \eqref{eq:R_T_Linfty} when $V$
is $\eta$-convex and by Proposition \ref{cor:momr} otherwise.

In case the law of $(X_0^0,T^0_0)$ is absolutely continuous with
respect to ${\cal V}$, the result of Lemma~\ref{lem:ergconv} is then a
direct consequence of the ergodic property of the process
$(X^0_t,T^0_t)_{t \ge 0}$ stated in Lemma~\ref{lem:conv_loi}. 

To extend this result to more general initial conditions, we proceed as
follows. By Lemma~\ref{lem:densite_X_t}, the law of $X^0_1$ is absolutely
continuous with respect to $\pi_0$ which is the marginal law of the
$d$ first coordinates for the ergodic measure ${\cal V}$. Let
$d{\cal V}_{T|X=x}(\tau)$ denote a regular conditional probability
distribution of the $d$ last coordinates given the $d$ first ones
under ${\cal V}$ and $\tilde{T}^0_1$ be a random vector independent of
$(W_t-W_1)_{t\geq 1}$ with conditional law given $X^0_1$ equal to
$d{\cal V}_{T|X=X^0_1}(\tau)$. Let for $t\geq 1$,
$\tilde{T}^0_t=R_{X^0}(1,t)\tilde{T}^0_1+\int_1^tR_{X^0}(s,t)\dlam
F_\lambda(X_s^0)\rmd s$. Then $\rmd \tilde{T}^0_t=\left(\dlam
  F_\lambda(X_t^0)-\nabla^2V(X_t^0) \tilde{T}^0_t\right)\rmd t$ so
that $(X^0_t,\tilde{T}^0_t)_{t\geq 1}$ solves \eqref{edsjointe}
starting from $(X^0_1,\tilde{T}^0_1)$ the law of which is absolutely
continuous with respect to the measure ${\cal V}$ ergodic for this
stochastic differential equation (see Lemma~\ref{lem:conv_loi}). As a consequence $\frac{1}{t}\int_1^t\varphi(X^0_s)\cdot\tilde{T}^0_s\rmd s$ converges a.s. to $\int_{\R^d\times\R^d}\varphi(x)\cdot\tau\rmd {\cal V}(x,\tau)$ as $t\to\infty$.
Now, by an adaptation of Proposition
\ref{prop:tangent}, one can check that for $t \ge 1$,
$\tilde{T}^0_t-T^0_t=R_{X^0}(1,t)(\tilde{T}^0_1-T^0_1)=R_{X^0}(0,t)R_{X^0}(1,0)(\tilde{T}^0_1-T^0_1)$
so that
\begin{align*}
\frac{1}{t}\int_0^t\varphi(X^0_s)\cdot T^0_s\rmd
s-\frac{1}{t}\int_1^t\varphi(X^0_s)\cdot\tilde{T}^0_s\rmd
s&=\frac{1}{t}\int_0^1\varphi(X^0_s)\cdot T^0_s\rmd
s\\
&+\frac{1}{t}\int_1^t\varphi(X^0_s)\cdot R_{X^0}(0,s)\rmd
sR_{X^0}(1,0)(T^0_1-\tilde{T}^0_1).
\end{align*}
The proof is completed by noticing that this quantity converges
a.s. to $0$ (for the second term in the right-hand side, this is a
consequence of the boundedness of $\varphi$ and of the almost sure estimate~\eqref{eq:convergence_R}).
\end{proof}

\subsection{Proof of Theorem~\ref{theo:interversion}}

We are now in position to prove Theorem~\ref{theo:interversion}.
\begin{proof}[Proof of Theorem~\ref{theo:interversion}]

Let us start by a preliminary result concerning the integrability of
the random variable $\int_0^\infty \left|\nabla f(X_0) \right|
  \left|  R_{X^0}^T(0,t)\dlam F_\lambda(X_t^0) \right| \rmd t$. 
By the boundedness of $\nabla f$ and $\dlam F_\lambda$
\begin{align*}
   \E_{\pi_0}\left[ \int_0^\infty \left|\nabla f(X_0) \right|
  \left|  R_{X^0}^T(0,t)\dlam F_\lambda(X_t^0) \right| \rmd t\right]
   \leq C\int_0^{+\infty}\E_{\pi_0}\left[|R_{X^0}(0,t)|\right] \rmd t,
\end{align*}
where, we recall, the subscript $\pi_0$ in $\E_{\pi_0}$ indicates that $X_0$ is
distributed according to $\pi_0$. If $V$ is $\eta$-convex,
using~\eqref{eq:R_T_Linfty}, almost surely,
$$|R_{X^0}(0,t)| \le e^{-\eta t}.$$
If  Assumptions~{\bf(\hyppoinc{$\eta$})} and {\bf
  (\hypspecbeta{$\eta$})}  hold for some positive $\eta$ (notice that {\bf(\hypspecbeta{$\rho$})} for $\rho<\frac{\eta(p-1)}{p}$ with $p\in(1,+\infty]$ implies {\bf(\hypspecbeta{$\eta$})}), then, by Proposition~\ref{cor:momr} ,
\[
\E_{\pi_0}\left[|R_{X^0}(0,t)|\right]
\leq Ce^{-t/C}
\]
for some positive constant $C$.
Hence, in all cases,
\begin{equation}\label{eq:justif_fubini}
 \E_{\pi_0}\left[ \int_0^\infty \left|\nabla f(X_0) \right|
  \left|  R_{X^0}^T(0,t)\dlam F_\lambda(X_t^0) \right| \rmd t\right]<+\infty.
\end{equation}

Let us now prove the first statement of
Theorem~\ref{theo:interversion}. By \eqref{majodif}, the boundedness
of $\nabla f$ and Proposition \ref{proptt}, Lebesgue's theorem implies
that $\lambda\mapsto\frac{1}{t}\int_0^t f(X^\lambda_s)\rmd s$ is
differentiable at $\lambda=0$ with derivative $\frac{1}{t}\int_0^t
\nabla f(X^0_s)\cdot T_s \rmd s$. By Lemma \ref{lem:ergconv},
$\frac{1}{t}\int_0^t \nabla f(X^0_s)\cdot T_s \rmd s$ converges
a.s. to $\int_{\R^d\times\R^d}\nabla f(x)\cdot \tau\rmd {\cal V}(x,\tau)=\E_{\pi_0}\left[\nabla f(X_0)\cdot\int_0^\infty R_{X^0}^T(0,s)\dlam F_\lambda(X_s^0)\rmd s\right]$.
The proof of~\eqref{eq:cv_estim2} is then completed by the following
computations:
\begin{align}
\E_{\pi_0}\left[\nabla f(X_0)\cdot\int_0^\infty R_{X^0}^T(0,s)\dlam F_\lambda(X_s^0)\rmd s\right]
=&\int_0^\infty\E_{\pi_0}\left[\nabla f(X_0)\cdot R_{X^0}^T(0,s)\dlam F_\lambda(X_s^0)\right]\rmd s\nonumber\\
=&\dlam\int_\Rd f(x)\rmd\pi_\lambda(x)\label{eq:fub_GK}
\end{align}
where we used Fubini's theorem and~\eqref{eq:justif_fubini} for the
first equality and Corollary~\ref{cor:GK} for the second one.

Let us finally deal with the second statement  of Theorem~\ref{theo:interversion}. By~\eqref{majodif}, the boundedness of $\nabla f$ and Proposition~\ref{proptt},
it is enough to check that $\E|f(X^0_t)|<+\infty$ to deduce that
$\forall \lambda\in[0,\lambda_0]$, $\E|f(X^\lambda_t)|<+\infty$ and
$\lambda\mapsto\E[f(X^\lambda_t)]$ is differentiable at $\lambda=0$
with derivative $\dlam\E\left[f(X_t^\lambda)\right]=\E[\nabla
f(X^0_t)\cdot T_t]$. When $\eV$ satisfies a Poincar\'e inequality,
then according to \cite{bobkovledoux} and the references therein,
since $f$ is a
Lipschitz function, there exists a positive $\varepsilon$ such that $\int_{\R^d}e^{\varepsilon
  |f|(x)}\eV<+\infty$. Therefore, when the law
$\mu_0$ of $X_0$ has a density with respect to $\eV$ in $\LL^p(\eV)$, by Lemma \ref{lem:densite_X_t}, 
$$\sup_{t\geq 0}\E|f(X^0_t)|\leq \|f\|_{\LL^{\frac{p}{p-1}}(\eV)}\left\|\frac{\rmd
          \mu_0}{\eV}\right\|_{\LL^p(\eV)}<+\infty.$$

When $V$ is $\eta$-convex, then computing $|Y^x_t|^2$ by It\=o's
formula, remarking that $$-2\nabla V(Y^x_t)\cdot Y^x_t=-2(\nabla
V(Y^x_t)-\nabla V(0))\cdot Y^x_t-2\nabla V(0)\cdot Y^x_t\leq
-\eta|Y^x_t|^2+\frac{|\nabla V(0)|^2}{\eta},$$ applying a localization
procedure to get rid of the expectation of the stochastic integral,
one obtains $\forall t\geq 0$, $\E[|Y^x_t|^2]\leq e^{-\eta
  t}|x|^2+\frac{1-e^{-\eta t}}{\eta}\left( \frac{|\nabla
    V(0)|^2}{\eta} + 2d\right)$. Hence, when the initial random
variable $X_0$ with law $\mu_0$ is integrable, 
\begin{align*}
\E|X^0_t|&\leq \int_{\R^d}\E|Y^x_t|\rmd\mu_0(x)\leq
           \int_{\R^d}\sqrt{\E[|Y^x_t|^2]}\rmd\mu_0(x)\\
&\leq\E\left[\sqrt{e^{-\eta t}|X_0|^2+\frac{1-e^{-\eta t}}{\eta}\left( \frac{|\nabla
    V(0)|^2}{\eta} + 2d\right) }\right] <+\infty,
\end{align*}
and $\E|f(X^0_t)|<+\infty$ by the Lipschitz continuity of $f$.

Notice that if $V$ is $\eta$-convex,
  Assumptions~{\bf(\hypspec)},~{\bf(\hypconv)}
  and~{\bf(\hyppoinc{$\eta$})} hold. Moreover, Assumption {\bf
    (\hypspecbeta{$\rho$})} implies Assumptions~{\bf(\hypspec)}
  and~{\bf(\hypconv)}. Therefore, the conclusion of Lemma \ref{lem:conv_loi} holds under the two classes 
of hypotheses considered. The function $(x,\tau) \mapsto \nabla f(x)
\cdot \tau$ is continuous and the family $(\nabla f(X_t^0)\cdot
T_t)_{t \ge 0}$ is uniformly integrable by Proposition \ref{prop:varalconv} when $V$ is $\eta$-convex and since  $\sup_{t \ge 0} \E
\left( \left|\nabla f(X_t^0)\cdot
T_t\right|^{\frac{\eta(p-1)}{\rho p}} \right) < \infty $, by Proposition~\ref{prop:variance}, in the second framework. Therefore  the convergence in distribution in
Lemma~\ref{lem:conv_loi} yields
$$\lim_{t \to \infty} \E\left( \nabla f(X_t^0)\cdot T_t\right) = \E_{\pi_0}\left[\nabla f(X_0)\cdot\int_0^\infty R_{X^0}^T(0,s)\dlam F_\lambda(X_s^0)\rmd s\right],$$
where the right-hand side is equal to $\dlam\int_\Rd f(x)\rmd\pi_\lambda(x)$ according to~\eqref{eq:fub_GK}.
\end{proof}

\section{Numerical illustrations}\label{sect:numeric}

In this section, we illustrate through various numerical experiments
the theoretical results obtained above. In Section~\ref{sec:1d}, we
study numerically on a one-dimensional toy model the integrability of
the tangent vector $T_t$ and the sharpness of the integrability exponent
obtained in Proposition~\ref{prop:variance}. In Section~\ref{sec:WA},
we illustrate the interest of the estimator~\eqref{eq:estim_2} on a more realistic
test case proposed in~\cite{warren-allen-12}. Finally, we investigate
in Section~\ref{sec:roland} on the one-dimensional toy model a
variance reduction method for the estimator~\eqref{eq:estim_2}.

\subsection{A one-dimensional toy model}\label{sec:1d}

In this section, we would like to study on a simple test case the integrability of the
tangent vector~$T_t$, and to compare the theoretical bounds obtained
in Proposition~\ref{prop:variance}, with a numerical estimation of the
integrability exponent. Let us consider the potential
\[
\forall x \in \R, \, V_\lambda(x)
=x^4-\frac c2x^2+\lambda x,
\]
where~$c$ is some fixed constant, and $\lambda \in \R$ is the parameter.
For $\lambda=0$,~$V_0$ has curvature~$-c$ at the origin, and for~$c>0$,~$V_0$ is a double-well
potential, with wells located at~$\pm{\sqrt c}/2$ and separated by a
barrier with height~$c^2/16$. In particular, as~$c$ gets larger, the
dynamics~\eqref{eq:EDS} of $(X^0_t)_{t \ge 0}$ becomes more and more
metastable.

Let us start with some explicit computation on $T_t$. When~$\lambda\geq0$, the perturbative force pushes the
system to the left. Therefore, one expects the tangent vector~$T_t$
to be negative in the mean.
In fact, one can prove that~$T_t$ is in that case
almost surely negative for~$t>0$. Indeed,~$(T_t)_{t\geq0}$ satisfies the equation
\[
\left\{
\begin{aligned}
  \partial_t T_t
  &= -1
  +(c-12(X_t^0)^2)T_t,\\
  T_0&=0,
\end{aligned}
\right.
\]
which can be solved explicitly, since in dimension 1, the
resolvent~$R_{X^0}(s,t)$ is given by the
exponential~$R_{X^0}(s,t)=\exp\left(c(t-s)-12\int_s^t(X_u^0)^2\rmd u\right)$.
Equation~\eqref{eq:expression_Tt} then becomes
\[
T_t
=-\int_0^t\exp\left(c(t-s)-12\int_s^t(X_u^0)^2\rmd u\right)\rmd s
<0.
\]

Concerning the upper bound on the integrability exponent obtained in Proposition~\ref{prop:variance}, if the initial condition~$X_0$
has a bounded density, then the tangent vector is bounded in~$\LL^\alpha$
uniformly in time for all~$\alpha$ strictly smaller
than~$\eta/\rho$. Here, $\eta$ is
the Poincar\'e constant of the potential~$V_0$ and~$\rho$ is the
quantity
\begin{equation}\label{eq:definition_rho}
\rho
=-(\inf
\min\mathrm{Spec}(\nabla^2V(x)))\frac{\int_\Rd
  \left(\min\mathrm{Spec}(\nabla^2V(x)) \right)^2\eV}{\left(\int_\Rd
    \min\mathrm{Spec}(\nabla^2V(x))\eV\right)^2}
=c\frac
{\int_\R(12x^2-c)^2\eV}
{\left(\int_\R(12x^2-c)\eV\right)^2}
\end{equation}
appearing in Assumption {\bf (\hypspecbeta{$\eta/\alpha$})}. The real
number $\rho$ can easily be approximated by one-dimensional numerical
integration. Concerning the Poincar\'e constant $\eta$ of $V_0$, let us first
notice that the potential~$V_0$ can be written as the sum of a convex
potential and a bounded perturbation and thus satisfies a Poincar\'e
inequality thanks to the Holley-Stroock
perturbation lemma
(see~\cite[Theorem 3.4.1]{ABC-00}).
The corresponding Poincar\'e constant can be computed numerically,
since it is
the second
eigenvalue of the operator~$L=\partial_x^2-V_0'(x)\partial_x=e^{V_0}\partial_x(e^{-V_0}\partial_x)$, whose
first eigenvalue and eigenvector are $0$ and the constant function
$\mathbf 1$. The numerical method we use to approximate $\eta$ is the
following. First, notice that the spectrum of the operator~$L$ is
identical to the one of~$\tilde
L=e^{-V_0/2} L e^{V_0/2} = e^{V_0/2}  \partial_x(e^{-V_0}\partial_x(e^{V_0/2}\times\cdot))$ which
is self-adjoint in the space~$\LL^2(\rmd x)$. The
operator~$\tilde L$ is then discretized using a regular mesh with constant space step~$\delta x$ by the infinite tridiagonal
matrix~$(M_{i,j})_{i,j\in\mathbb Z}$ defined by
\[
M_{i,i}=
-\frac1{\delta x^2}
\left(
  e^{V(i\delta x)-V((i+1/2)\delta x)}
  +e^{V(i\delta x)-V((i-1/2)\delta x)}
\right)\]
and
\[
M_{i,i+1}
=M_{i+1,i}
=\frac1{\delta x^2}
e^{\frac12V(i\delta x)
+\frac12V((i+1)\delta x)
-V((i+1/2)\delta x)},
\]
(with~$M_{i,j}=0$ whenever~$|i-j|>1$). We consider the restriction to a
finite set of indices~$(M_{i,j})_{-N\leq i,j\leq N}$, which is
equivalent to imposing homogeneous Dirichlet boundary conditions at
$x=-N \delta x$ and $x=N \delta x$. These artificial boundary conditions
are justified (in the limit $N \to \infty$)
by the fact that the eigenvectors of~$\tilde L$ go to~$0$ at infinity.
Since the matrix $(M_{i,j})_{-N\leq i,j\leq N}$ is a nonpositive symmetric matrix, one can
successively compute its first eigenvalues by the inverse power method,
using at each step a projection on the
orthogonal of the eigenvector which have already been computed. We
checked that the numerical approximation obtained for the second
eigenvalue is converged when $\delta x \to 0$ and $N \to \infty$.
The graphs of numerical approximations of both~$\rho$ and the Poincar\'e
constant are plotted on Figure~\ref{fig:poincare_rho}.
In particular, for a curvature constant~$c$ located left to the
intersection of the two
curves (approximately~$c\leq0.86$), Proposition~\ref{prop:variance}
ensures that~$T_t$ is bounded in~$\LL^1$, uniformly in time. Also, for
curvature constants such that~$\rho$ is less than half the Poincar\'e constant
(corresponding approximately to~$c\leq0.50$), $T_t$ is bounded in~$\LL^2$,
and thus has a bounded variance uniformly in time.
On
Figure~\ref{fig:integrabilite_theorique}, we plot
the critical exponent~$\eta/\rho$ such that, according to
Proposition~\ref{prop:variance},~$T_t$ is
in~$\LL^\alpha$ for~$\alpha<\eta/\rho$.

\begin{figure}
\centerline{\epsfig{file=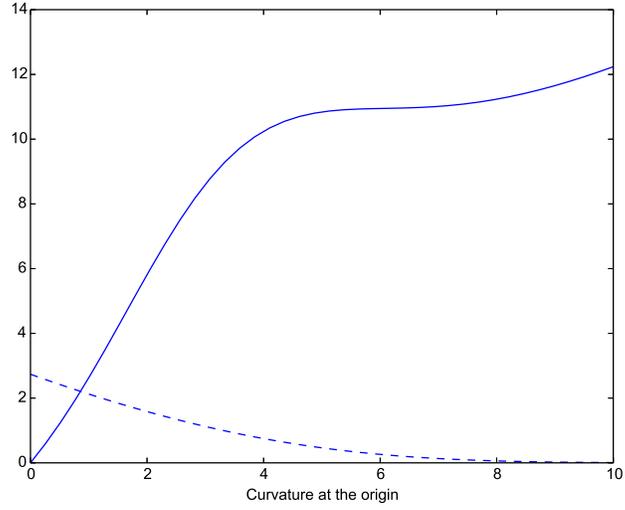,width=100mm,angle=0}}
\caption{\small Poincar\'e constant of the measure~$e^{-V_0}$ (dashed line), and
  parameter~$\rho$ defined in~\eqref{eq:definition_rho} (solid line), as a
  function of the curvature~$c$.}
\label{fig:poincare_rho}
\end{figure}

\begin{figure}
\centerline{\epsfig{file=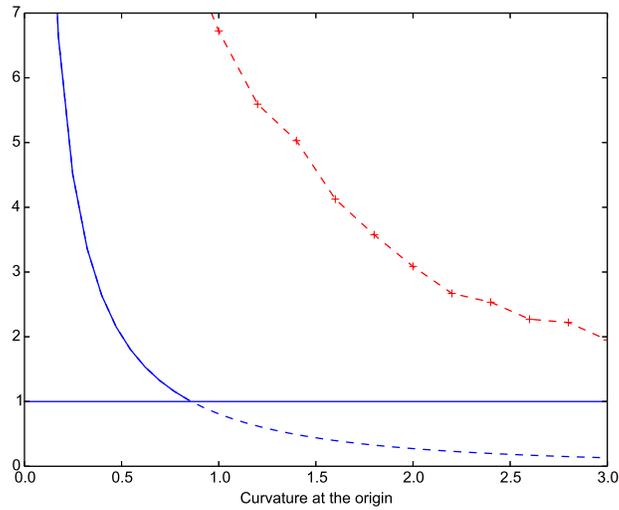,width=100mm,angle=0}}
\caption{\small Blue: theoretical lower bound of integrability for~$T_t$, according
  to Proposition~\ref{prop:variance} as a function of the curvature $c$ (In fact,
  Proposition~\ref{prop:variance} yields boundedness in~$\LL^\alpha$
  only for~$\alpha\geq1$, corresponding to~$c\lesssim0.86$) ; Red :
  Numerical estimation of the integrability exponent $\alpha$ (for $t=40$).}
\label{fig:integrabilite_theorique}
\end{figure}

Let us now explain how we estimate numerically the integrability exponent
$\alpha$ such that~$T_t$ actually is in~$\LL^\alpha$. This is done by
computing the tail of the empirical cumulative distribution function
of~$T_t$. We simulate~$10^6$ independent realizations of the
process~$(T_t,X_t)$, starting from~$\sqrt c/2$ (that is, at the bottom of
the right well), up to the time~$t=40$, at which
the systems seems to be at equilibrium.
On Figure~\ref{fig:integrabilite_empirique}, we plot in logarithmic scale
the tail of the empirical cumulative distribution function of
those~$N=10^6$ independent realizations~$(T_t^i)_{i=1,\hdots, N}$, namely
\[
[0,\infty)\ni x
\mapsto\frac1N\sum_{i=1}^N\mathbf1_{|T_t^i|\geq x}
=\frac1N\sum_{i=1}^N\mathbf1_{T_t^i\leq -x}
\]
with
curvature~$c$ being respectively 2, 3, 4 and 5, from
bottom to top. Linear regression in those four cases gives the
following slopes:
\[
\begin{array}{|c|c|c|c|c|}
  \hline
  c&2&3&4&5\\
  \hline
  {\rm slope}& -3.09 & -1.95  & -1.29 & -1.12 \\
  \hline
\end{array}
\]
We have checked that the results are the same for $t=40$ and for $t=80$.
Note that an integrable random variable corresponds roughly to a slope
less than~$-1$, and a square integrable variable corresponds to a slope
less than~$-2$. We also plot on Figure~\ref{fig:integrabilite_theorique} the
empirical integrability exponent for different curvatures between~$0$ and~$3$.
We observe that the results are in accordance with
Proposition~\ref{prop:variance}: the theoretical lower bound is indeed
smaller than the effective integrability exponent.

 For a curvature larger than 3, the tangent vector
$T_t$ at time $t=40$ does not seem to be of finite variance. This
raises the question of appropriate variance reduction technique to be
used in order to use the estimators~\eqref{eq:estim_1}
or~\eqref{eq:estim_2}. We will investigate in Section~\ref{sec:roland}
a first idea that could be used in this one-dimensional
situation. Further studies related to this problem will be the subject
of future works.

\begin{figure}
\centerline{\epsfig{file=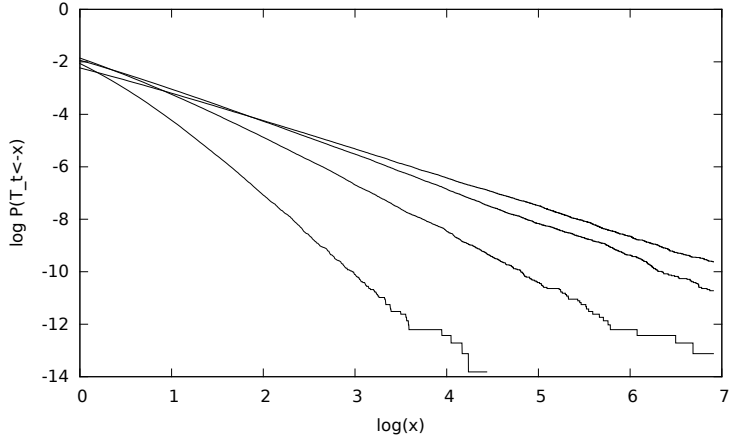,width=100mm,angle=0}}
\caption{\small Logarithmic plot of the empirical cumulative
  distribution function of $10^6$ independent realizations of~$T_t$ (for $t=40$), with parameter (from bottom to top)
  $c=2$, $c=3$, $c=4$, $c=5$.}
\label{fig:integrabilite_empirique}
\end{figure}

\subsection{A many particle system}\label{sec:WA}

In this section, we consider a more complex problem introduced
in~\cite{warren-allen-12}, and motivated by experimental studies of
colloidal particles in optical traps. Let us consider $X^\lambda_t=(Y^1_t,\cdots,Y^N_t)$ where $(Y^i_t)_{i=1\,\hdots,N}\in(\R^2)^N$ are the positions of~$N$ two-dimensional particles evolving according to
\begin{equation}\label{eq:warren-allen}
\rmd Y_t^i
= -\kappa Y_t^i\rmd t 
+ \sum_{j=1}^N\nabla U(Y_t^i-Y_t^j)\rmd t 
+ \lambda e_1 \rmd t
+ \rmd W_t^i,\;1\leq i\leq N
\end{equation}
with~$\kappa>0$,~$\lambda\in\R$,~$e_1$ the normed vector directed
along the first coordinate, and $U(x)=\Gamma e^{-|x|}/|x|$.
The particles undergo a quadratic confining potential near the
origin with strength~$\kappa$, a repulsive
interaction given by~$U$, a shear in the $x$-direction with
strength~$\lambda$ and a thermal noise.

We study the case of~$N=10$ particles with repulsion range~$\Gamma=25$
and attraction intensity~$\kappa=10$ corresponding to the
parameters studied in~\cite{warren-allen-12}. For those parameters, at
equilibrium, particles are gathered around the origin. At~$\lambda=0$, no
particular direction appears in the dynamics, and the equilibrium measure
is invariant with respect to rotations around the origin.

One wants to study the effect of shearing on the symmetry of the
invariant measure. This symmetry can be measured by the
empirical covariance~$\Phi$ of the particle system, defined by
\[
\Phi(X^\lambda_t)=\frac1N\sum_{i=1}^N(Y_t^{i,1}-\bar Y_t^1)(Y_t^{i,2}-\bar Y_t^2),
\]
where~$Y_t^i=(Y_t^{i,1},Y_t^{i,2})$ and
for~$k\in\{1,2\}$,~$\bar Y_t^k=\frac1N\sum_{i=1}^NY_t^{i,k}$.
One is interested in computing the derivative~$\dlam \int_{(\R^2)^N}\Phi\rmd\pi_\lambda$.

On Figure~\ref{fig:warren_allen}, we plot the confidence interval
obtained for the
expectation~$\E[\dlam\Phi(X_t^\lambda)]$, with~$N=10^5$ independent simulations, as a
function of the time~$t$. The dynamics~\eqref{eq:warren-allen} has been simulated
using an explicit Euler-Maruyama scheme with time step~$\delta t=10^{-5}$, and
the expectation has been calculated through the Monte Carlo approximation
\[
\E[\dlam\Phi(X_t^\lambda)]
=\E[T_t\cdot\nabla\Phi(X_t^0)]
\simeq\frac1N\sum_{i=1}^NT_t^i\cdot\nabla\Phi(X_t^{0,i})
\]
where the~$(X_t^{0,i},T_t^i)_{1\leq i\leq N}$ are independent
simulations of the Euler-Maruyama discretization of the
dynamics~\eqref{eq:EDS_extended} ruling the evolution of~$(X_t^0,T_t)$. As
in~\cite{warren-allen-12}, we observe that the correlation function
$\E[\dlam\Phi(X_t^\lambda)]$ increases as a function of time, before
reaching a plateau. We have checked that similar results are obtained
using a finite differenciation instead of the simulation of the couple
$(X^0_t,T_t)$.

\begin{figure}
\centerline{\epsfig{file=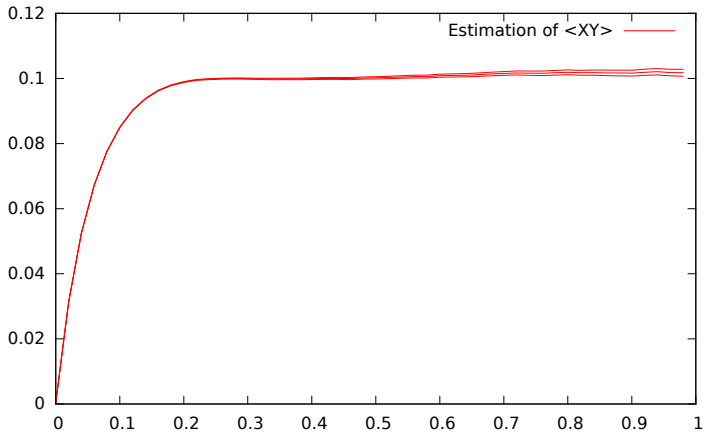,width=100mm,angle=0}}
\caption{\small Sensitivity of the covariance with respect to the shear.}
\label{fig:warren_allen}
\end{figure}

\subsection{Particle merging}\label{sec:roland}

As mentioned in Section~\ref{sec:1d}, in some situations, the variance
of the tangent vector may become very large (or even infinite) which
means that the estimators~\eqref{eq:estim_1} and~\eqref{eq:estim_2}
become ineffective. Therefore, it is desirable to introduce variance
reduction mechanisms. We explore in this section a first idea in the
simple one-dimensional test case of Section~\ref{sec:1d}. Extensions
and further variance reduction techniques will be the subject of
forthcoming works.

A first simple idea to reduce the variance is to replace the tangent
vector $T_s$  in the estimator of $\E[\nabla
f(X_s^0)\cdot T_s]$ by its conditional
expectation given $X^0_s$. This corresponds in practice to replacing the
tangent vector of particles which are at the same position at
a given time $s$ by the
average of their tangent vectors. Then, the particles evolve again
following the dynamics~\eqref{eq:EDS_extended}.
We refer to this procedure as ``particle
merging''. 
In practice, with this naive procedure the probability to observe two
particles at the same position is zero, in dimension larger than one. A
first  simple practical way to implement this technique is to introduce
small subsets of the configuration space, and to merge particles which
are in the same subset,  which of course reduces the variance but
introduce some bias.
The merging can be performed in a much efficient way and in larger
dimensions,  by correlating the particles, see e.g. \cite{kalos-pederiva-00}.
This will be the scope of future work.
Before studying
the interest of particle merging on the simple case of
Section~\ref{sec:1d}, let us first state the theoretical result which
justifies the use of this approach.

\begin{lem}
Assume {\bf(\hypspec)}. For $s\ge 0$, let~$(\tilde T_t)_{t\geq s}$ be the solution to
\[
\left\{
\begin{aligned}
\frac{\rmd \tilde T_t}{\rmd t}&=\dlam
F_\lambda(X_t^0)-\nabla^2(X_t^0)\tilde T_t \text{ , for all }~t\geq s,  \\
\tilde T_s&=\E[T_s|X_s^0].
\end{aligned}
\right.
\]
Then $\forall t\geq s,\;\tilde T_t=\E[T_t|X^0_s,(W_r-W_s)_{r\in[s,t]}]$. Assume moreover that~$f:\R^d\to\R^d$ is a Lipschitz function belonging to $\LL^p(\eV)$ for some $p\in[1,\infty]$ and that the initial condition $X_0$ to \eqref{eq:EDS_lam} admits a density with respect to $\pi_0$ belonging to $\LL^{\frac{p}{p-1}}(\eV)$ (where, by convention, $\frac{p}{p-1}=\infty$ if $p=1$). Then, for each $t\geq 0$, $f(X_t^\lambda)$ is integrable for $\lambda\in[0,\lambda_0]$, $\lambda\mapsto \E[f(X_t^\lambda)]$ is differentiable at $\lambda=0$ and $$\dlam\left(\E[f(X_t^\lambda)]\right)=\E[\nabla f(X_t^0)\cdot T_t]=\E[\nabla f(X_t^0)\cdot \tilde T_t],\mbox{ for each }t\ge s.$$
\end{lem}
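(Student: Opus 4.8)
The plan is to prove the statement in three stages: first the identification $\tilde T_t = \E[T_t \mid X^0_s, (W_r - W_s)_{r \in [s,t]}]$, then the integrability of $f(X^\lambda_t)$ and differentiability of $\lambda \mapsto \E[f(X^\lambda_t)]$, and finally the equality of the two expectations $\E[\nabla f(X^0_t)\cdot T_t] = \E[\nabla f(X^0_t)\cdot\tilde T_t]$. The first stage is the conceptual heart; the last two are soft once it is in place.

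\emph{Step 1: the conditional expectation identity.} Fix $t \geq s$. Conditionally on $X^0_s$ and the increments $(W_r - W_s)_{r \in [s,t]}$, the path $(X^0_r)_{r \in [s,t]}$ is deterministic (it solves the SDE~\eqref{eq:EDS} with this data, and pathwise uniqueness holds), hence so is the resolvent $(R_{X^0}(u,r))_{s \le u,r \le t}$. By the flow/semigroup representation of the tangent vector (an immediate adaptation of Proposition~\ref{prop:tangent} with initial time $s$ instead of $0$), one has, almost surely,
\begin{equation*}
T_t = R_{X^0}(s,t)\, T_s + \int_s^t R_{X^0}(u,t)\,\dlam F_\lambda(X^0_u)\,\rmd u,
\end{equation*}
and similarly $\tilde T_t = R_{X^0}(s,t)\,\tilde T_s + \int_s^t R_{X^0}(u,t)\,\dlam F_\lambda(X^0_u)\,\rmd u$, with $\tilde T_s = \E[T_s \mid X^0_s]$. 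Conditioning the first display on $\mathcal G := \sigma(X^0_s, (W_r-W_s)_{r\in[s,t]})$, the integral term and $R_{X^0}(s,t)$ are $\mathcal G$-measurable and bounded (using Lemma~\ref{lem:R_T_Linfty} and Assumption~{\bf(\hypspec)}, which give local-in-time $\LL^\infty$ bounds on $R_{X^0}(s,t)$ over $[s,t]$, and the boundedness of $\dlam F_\lambda$), so
\begin{equation*}
\E[T_t \mid \mathcal G] = R_{X^0}(s,t)\,\E[T_s\mid\mathcal G] + \int_s^t R_{X^0}(u,t)\,\dlam F_\lambda(X^0_u)\,\rmd u.
\end{equation*}
It remains to check $\E[T_s \mid \mathcal G] = \E[T_s \mid X^0_s] = \tilde T_s$. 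This is where one must be a little careful: $T_s$ is a functional of $(W_r)_{r\le s}$ and $X_0$, while $\mathcal G$ adds the \emph{future} increments $(W_r - W_s)_{r\in[s,t]}$; by independence of Brownian increments and of $X_0$ from the Brownian motion, these future increments are independent of the pair $(T_s, X^0_s)$, hence $\E[T_s \mid \mathcal G] = \E[T_s \mid X^0_s]$. Plugging this in gives $\E[T_t\mid\mathcal G]=\tilde T_t$, which is the claimed identity.

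\emph{Step 2: integrability and differentiability.} This is essentially a citation of the machinery already developed. The bound~\eqref{majodif} (valid under {\bf(\hypspec)}, established in the proof of Theorem~\ref{theo:GK_tps_fini}) shows $\frac{f(X^\lambda_t)-f(X^0_t)}{\lambda}$ is bounded by a deterministic constant uniformly in $\lambda\in(0,\lambda_0]$, using the Lipschitz continuity of $f$. Combined with the integrability of $f(X^0_t)$ — which follows from $f \in \LL^p(\eV)$, the fact (Lemma~\ref{lem:densite_X_t}) that $X^0_t$ has density $r(t,\cdot)$ w.r.t. $\pi_0$ with $\|r(t,\cdot)\|_{\LL^{p/(p-1)}(\eV)} \le \|r(0,\cdot)\|_{\LL^{p/(p-1)}(\eV)} < \infty$ by hypothesis, and Hölder — one gets $f(X^\lambda_t) \in \LL^1$ for all $\lambda\in[0,\lambda_0]$. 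Then Lebesgue's dominated convergence theorem, together with the a.s. differentiability of $\lambda\mapsto X^\lambda_t$ and the chain rule (Proposition~\ref{proptt}), yields differentiability of $\lambda\mapsto\E[f(X^\lambda_t)]$ at $0$ with $\dlam\E[f(X^\lambda_t)] = \E[\nabla f(X^0_t)\cdot T_t]$; here $\nabla f$ is bounded since $f$ is Lipschitz, so the random variable $\nabla f(X^0_t)\cdot T_t$ is integrable by Lemma~\ref{lem:R_T_Linfty}.

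\emph{Step 3: the two estimators agree.} For $t\ge s$, apply the tower property with $\mathcal G = \sigma(X^0_s,(W_r-W_s)_{r\in[s,t]})$ and use that $X^0_t$ is $\mathcal G$-measurable:
\begin{equation*}
\E[\nabla f(X^0_t)\cdot\tilde T_t] = \E\big[\nabla f(X^0_t)\cdot\E[T_t\mid\mathcal G]\big] = \E\big[\E[\nabla f(X^0_t)\cdot T_t\mid\mathcal G]\big] = \E[\nabla f(X^0_t)\cdot T_t],
\end{equation*}
where Step 1 gives the first equality, $\mathcal G$-measurability of $\nabla f(X^0_t)$ gives the second, and both $\nabla f(X^0_t)\cdot T_t$ and $\nabla f(X^0_t)\cdot\tilde T_t$ are integrable (the latter since $|\tilde T_t|\le \E[|T_t|\mid\mathcal G]$-type control, or directly from the resolvent representation). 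Combining with Step 2 finishes the proof.

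\emph{Main obstacle.} The only genuinely delicate point is the measurability/independence argument in Step 1 — namely that enlarging the conditioning $\sigma$-algebra from $\sigma(X^0_s)$ to $\mathcal G$ does not change $\E[T_s\mid\cdot]$, which rests on the independence of future Brownian increments from $(T_s,X^0_s)$, and that conditionally on $\mathcal G$ the resolvent on $[s,t]$ is deterministic and bounded. Everything else is either a direct invocation of Proposition~\ref{prop:tangent}, Lemma~\ref{lem:R_T_Linfty}, Lemma~\ref{lem:densite_X_t}, and the estimate~\eqref{majodif}, or the tower property.
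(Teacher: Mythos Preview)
Your proof is correct and follows essentially the same route as the paper: the resolvent/flow representation $T_t=R_{X^0}(s,t)T_s+\int_s^t R_{X^0}(u,t)\dlam F_\lambda(X^0_u)\,\rmd u$, the $\mathcal G$-measurability of the path and resolvent on $[s,t]$, the independence argument reducing $\E[T_s\mid\mathcal G]$ to $\E[T_s\mid X^0_s]$, the density-propagation plus H\"older step for integrability, and the tower property for the final equality are exactly the ingredients the paper uses, in the same order.
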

This Lemma shows that if, at a given time $s$, the particles at
position $X^0_s$ replace their current tangent vectors by an average
of these tangent vectors, and then follow the
dynamics~\eqref{eq:EDS_extended} for $t \ge s$, the
estimator~\eqref{eq:estim_2} is still consistent.
\begin{proof}
By Lemma \ref{lem:R_T_Linfty}, Assumption {\bf(\hypspec)} ensures that $T_t$ is integrable for each $t\geq 0$. In view of the equality~\eqref{eq:expression_Tt} and using the
semigroup property \eqref{eq:semigroupe_R} of~$R_{X^0}$, one gets that for $t\geq s\geq 0$, 
$$T_t=R_{X^0}(s,t)T_s+\int_s^tR_{X^0}(r,t)\dlam
F_\lambda(X_r^0)dr.$$
Since $(X^0_r)_{r\in[s,t]}$ and therefore $(R_{X^0}(r,t))_{r\in[s,t]}$ are measurable with respect to the sigma-field generated by $X^0_s$ and $(W_r-W_s)_{r\in[s,t]}$, one deduces that
$$\E[T_t|X^0_s,(W_r-W_s)_{r\in[s,t]}]=R_{X^0}(s,t)\E[T_s|X^0_s,(W_r-W_s)_{r\in[s,t]}]+\int_s^tR_{X^0}(r,t)\dlam
F_\lambda(X_r^0)dr.$$
The independence of  $(X^0_s,T_s)$ and $(W_r-W_s)_{r\in[s,t]}$ implies that $\E[T_s|X^0_s,(W_r-W_s)_{r\in[s,t]}]=\E[T_s|X^0_s]$. Since, by an adaptation of Proposition \ref{prop:tangent}, $$\tilde{T}_t=R_{X^0}(s,t)\E[T_s|X^0_s]+\int_s^tR_{X^0}(r,t)\dlam
F_\lambda(X_r^0)dr,$$ one concludes that $\tilde{T}_t=\E[T_t|X^0_s,(W_r-W_s)_{r\in[s,t]}]$. 

If the initial condition $X_0$ to \eqref{eq:EDS_lam} admits a density with respect to $\pi_0$ belonging to $\LL^{\frac{p}{p-1}}(\eV)$, then so does $X^0_t$ for each $t\geq 0$ by Lemma \ref{lem:densite_X_t}. When~$f:\R^d\to\R^d$ is a Lipschitz function belonging to $\LL^p(\eV)$, the integrability of $\E[f(X_t^\lambda)]$, the differentiability of $\lambda\mapsto \E[f(X_t^\lambda)]$ at $\lambda=0$ and the equality $\dlam\left(\E[f(X_t^\lambda)]\right)=\E[\nabla f(X_t^0)\cdot T_t]$ are deduced from an adaptation of the beginning of the proof of Theorem \ref{theo:GK_tps_fini}.
Now, for $t \ge s$, \begin{align*}
   \E[\nabla f(X_t^0)\cdot T_t]&=\E[\E[\nabla f(X_t^0)\cdot T_t|X^0_s,(W_r-W_s)_{r\in[s,t]}]]\\&=\E[\nabla f(X_t^0)\cdot\E[T_t|X^0_s,(W_r-W_s)_{r\in[s,t]}]]=\E[\nabla f(X_t^0)\cdot\tilde{T}_t].
\end{align*}

\end{proof}

To test the interest of this approach, we consider again the setting
of Section~\ref{sec:1d} with $c=2.9$ (which corresponds to case where
the variance of tangent vector $T_t$, at $t=40$, is very large,
see Figure~\ref{fig:integrabilite_theorique}). The merging procedure is done
as follows: a uniform mesh with step size $0.04$ is introduced, and,
every ten timesteps, the tangent vectors of particles
which are in the same bin are replaced by an average of these tangent
vectors. On Figure~\ref{fig:int_conf}, we observe that this procedure
divides approximately the variance by 4, while introducing
a bias which is sufficiently small so that the confidence interval of
the simulation with merging is included in the confidence interval of
the simulation without merging. Figure~\ref{fig:variance_merging} then
gives more quantitative estimates of the variances of these two
simulations (with and without merging), as a function of time. We have
observed numerically that large values of $T_t$ become very unlikely
with the merging procedure: using $10^3$ independant realizations of
$10^3$ interacting particles over the time interval $(0,10)$, we did
not observe any realization of $T_t$ with absolute value larger than
$3$ (compare with what is reported on Figure~\ref{fig:integrabilite_empirique}).

\begin{figure}
\centerline{\epsfig{file=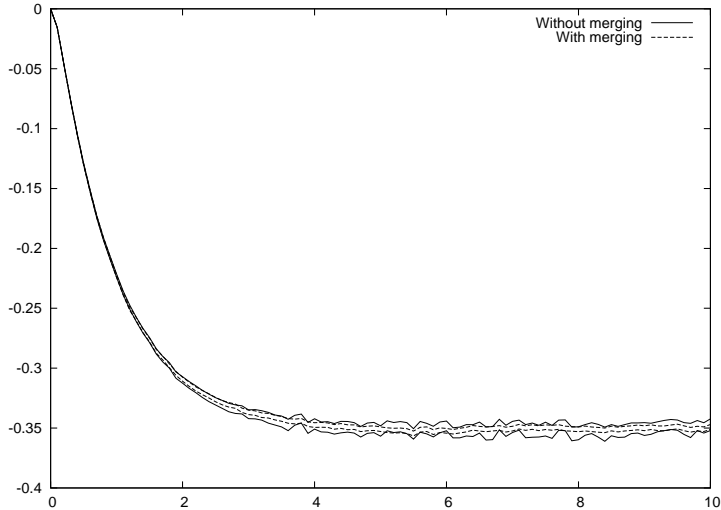,width=100mm,angle=0}}
\caption{\small Empirical average and 95\%-confidence interval for    the Monte Carlo estimator of $\partial_\lambda
\E[f(X_t^\lambda)]$ as a function of time, $f$ being a smooth
approximation of the
the indicator
function of $\R_+$: $f(x) = \frac12 + \frac1\pi \arctan(10x)$. The estimator is built with $10^6$
realizations. Green: with merging  ($10^3$ independent relizations of
$10^3$ interacting particles); Red:
without merging ($10^6$ independent relizations).}
\label{fig:int_conf}
\end{figure}

\begin{figure}
\centerline{\epsfig{file=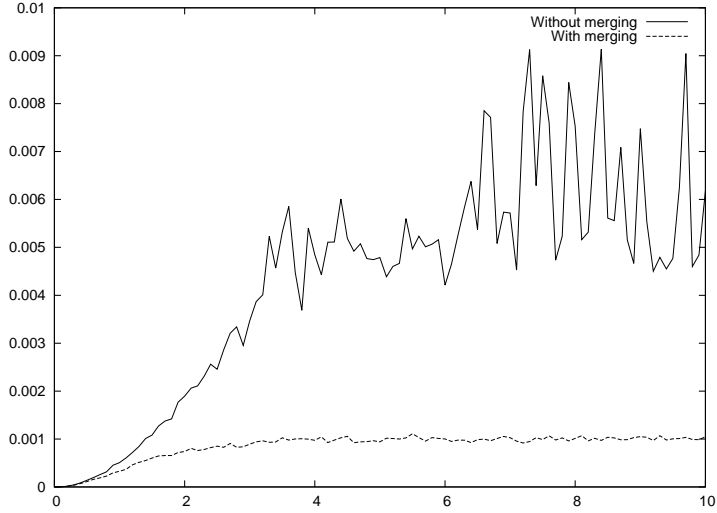,width=100mm,angle=0}}
\caption{\small Empirical variance for the Monte Carlo estimator of $\partial_\lambda
\E[f(X_t^\lambda)]$ as a function of time, $f$ being the indicator
function of $\R_+$. Green: with merging ; Red:
without merging. }
\label{fig:variance_merging}
\end{figure}

\appendix

\section{Alternative bounds on the density of $X^0_t$}\label{sec:hypdelta}

In this section, we would like to present a few results which can be
obtained under the assumption
\begin{hyp}[\hypdelta]
The function~$V$ is of
class~$\mathcal {\mathcal C}^2$ and satisfies
\[
C_V=\sup_{x\in\Rd}(2\Delta V(x)-|\nabla V(x)|^2)<+\infty.
\]
\end{hyp}
Note that simple assumptions on the quantity~$a_V(x)=2\Delta V(x)-|\nabla V(x)|^2$
can give strong results on the equilibrium measure~$\eV$. For
instance, if~$a_V(x)$ goes to~$-\infty$ at infinity, then the
equilibrium measures satisfies a Poincar\'e inequality (see for example
the appendix in~\cite{villani-09}).

\subsection{Bounds on the density of $X^0_t$}

\begin{prop}
 Consider the setting and the notation of Lemma~\ref{lem:densite_X_t} and let~Assumption~{\bf (\hypdelta)} hold. Assume that the measure~$e^{\frac12V} \rmd \mu_0$ can be written as
  \[
  e^{\frac12V(x)} \rmd \mu_0 =f(x)\rmd x+\rmd \nu,
  \]
  where~$f$ is some function in~$\LL^p(\rmd x)$ with $p \in [1,2]$ and~$\nu$ is some finite
  measure on~$\Rd$.
  Then, for any~$t>0$,~$\mu_t$ is absolutely continuous with respect
  to~$e^{-\frac12V(x)}\rmd x$ with
  \begin{equation}\label{eq:majonorml2}
    \left\|\frac{\rmd \mu_t}{e^{-\frac{1}{2}V(x)} \rmd x}\right\|_{\LL^2(\rmd x)}
    \leq Ce^{Ct}\left(\frac{C_p}{ t^{(1/p-1/2)d/2}}+\frac{\nu(\Rd)}{t^{d/4}}\right).
  \end{equation}
\end{prop}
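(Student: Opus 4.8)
The plan is to read off everything from the explicit formula~\eqref{eq:pn} for the transition density obtained in Lemma~\ref{lem:p}. Recall that for $t>0$,
\[
p(t,x,y)=e^{-\frac12V(y)}e^{\frac12V(x)}g(x,y)\frac{e^{-\frac{|x-y|^2}{4t}}}{(4\pi t)^{d/2}},\qquad
g(x,y)=\E\left[e^{\frac14\int_0^t(2\Delta V-|\nabla V|^2)(B_s^{x,y})\rmd s}\right].
\]
Under Assumption~{\bf(\hypdelta)} the integrand in the exponential defining $g$ is bounded by $C_V/4$, hence $g(x,y)\leq e^{C_Vt/4}$ pointwise. By conditioning on $X_0$ (as in~\eqref{eq:densite_X_t}), the density of $\mu_t$ with respect to the Lebesgue measure is $q(t,y)=\int_\Rd p(t,x,y)\,\mu_0(\rmd x)$, so that, writing $G_t(z)=\frac{e^{-|z|^2/(4t)}}{(4\pi t)^{d/2}}$ for the density of the centred Gaussian law with covariance $2tI_d$,
\[
r(t,y):=\frac{\rmd\mu_t}{e^{-\frac12V(x)}\rmd x}(y)=e^{\frac12V(y)}q(t,y)\leq e^{C_Vt/4}\int_\Rd G_t(y-x)\,e^{\frac12V(x)}\mu_0(\rmd x)=e^{C_Vt/4}\big(G_t*(e^{\frac12V}\mu_0)\big)(y).
\]
In particular $\mu_t$ is absolutely continuous with respect to $e^{-\frac12V(x)}\rmd x$.

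Next I would decompose $e^{\frac12V(x)}\mu_0(\rmd x)=f(x)\rmd x+\rmd\nu$ and use the triangle inequality in $\LL^2(\rmd x)$, getting
\[
\|r(t,\cdot)\|_{\LL^2(\rmd x)}\leq e^{C_Vt/4}\left(\|G_t*f\|_{\LL^2(\rmd x)}+\|G_t*\nu\|_{\LL^2(\rmd x)}\right).
\]
For the first term, Young's convolution inequality gives $\|G_t*f\|_{\LL^2}\leq\|G_t\|_{\LL^r}\|f\|_{\LL^p}$ with $\frac1r=\frac32-\frac1p$, which lies in $[\frac12,1]$ when $p\in[1,2]$, so $r\in[1,2]$ is admissible; an elementary Gaussian computation yields $\|G_t\|_{\LL^r}=r^{-d/(2r)}(4\pi t)^{-(1-1/r)d/2}=C_r\,t^{-(1/p-1/2)d/2}$, using $1-\frac1r=\frac1p-\frac12$. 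For the second term, Minkowski's integral inequality gives $\|G_t*\nu\|_{\LL^2}\leq\int_\Rd\|G_t(\cdot-x)\|_{\LL^2}\,\nu(\rmd x)=\|G_t\|_{\LL^2}\,\nu(\Rd)=C\,t^{-d/4}\nu(\Rd)$. Combining these and absorbing $e^{C_Vt/4}$ into $Ce^{Ct}$ gives~\eqref{eq:majonorml2} with $C_p=C\|f\|_{\LL^p(\rmd x)}$.

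This is essentially a computation rather than a deep argument, so there is no real obstacle; the points that need a little care are (i) checking that the Young/Hölder exponent $r$ stays in the admissible range $[1,\infty)$ uniformly for $p\in[1,2]$, together with the bookkeeping of the precise power of $t$ in $\|G_t\|_{\LL^r}$, and (ii) justifying Minkowski's integral inequality for the convolution of $G_t$ with the finite measure $\nu$, which is routine since $(x,y)\mapsto G_t(y-x)$ is jointly measurable and nonnegative. No input is needed beyond the kernel formula~\eqref{eq:pn} and the uniform upper bound on $2\Delta V-|\nabla V|^2$ provided by Assumption~{\bf(\hypdelta)}.
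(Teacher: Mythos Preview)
Your proposal is correct and follows essentially the same route as the paper: starting from the Girsanov-based kernel formula, bounding the Feynman--Kac factor by $e^{C_Vt/4}$ under Assumption~{\bf(\hypdelta)}, and then controlling the convolution $G_t*(e^{\frac12V}\mu_0)$ via Young's inequality on the $\LL^p$ part. The only cosmetic difference is in the treatment of the measure part: the paper bounds $\|G_t*\nu\|_{\LL^2}^2$ by $\|G_t*\nu\|_{\LL^1}\|G_t*\nu\|_{\LL^\infty}\leq \nu(\Rd)^2/(4\pi t)^{d/2}$, whereas you use Minkowski's integral inequality---both give the same $t^{-d/4}\nu(\Rd)$ bound.
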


\begin{proof}
Let $\psi:\R^d\to\R$ be a bounded measurable function and recall the formula
$$  \E[\psi(X_t^0)]
=\E\left[
    \psi(X_0+\sqrt2W_t)
    e^{
      -\frac12V(X_0+\sqrt2W_t)
    }
    e^{ \frac12V(X_0)}
    e^{
        \frac14\int_0^t\left(2\Delta V-|\nabla V|^2\right)(X_0+\sqrt2W_s)\rmd s
      }
  \right]
$$
obtained by the Girsanov theorem, see Equation~\eqref{eq:girsanov}.

If~$C$ is an upper bound for~$\frac14(2\Delta V-|\nabla V|^2)$, and
if one assumes~$\psi\geq0$, one obtains
\begin{align*}
\int_\Rd\psi\rmd\mu_t
  &\leq
  e^{Ct}\E\left[
    \psi(X_0+\sqrt2W_t)
    e^{
      -\frac12V(X_0+\sqrt2W_t)
    }
    e^{ \frac12V(X_0)}
  \right]\\
&=\frac{e^{Ct}}{(4\pi t)^{d/2}}\int_\Rd
\psi(y)
e^{-\frac12V(y)}
\left(\int_\Rd e^{\frac12V(x)}
e^{-\frac{|y-x|^2}{4t}}\rmd\mu_0(x)\right)
\rmd y.
\end{align*}
so that~$\mu_t\ll e^{-\frac12V(x)}\rmd x$ with a Radon-Nikodym
derivative~$\frac{\rmd \mu_t}{e^{-\frac12V(x)}\rmd x}$ satisfying
\begin{align*}
\frac{\rmd \mu_t}{e^{-\frac12V(x)}\rmd x}
&\leq e^{Ct}\left(e^{\frac12V}\mu_0\right)*\gamma_t=e^{Ct}\left(f*\gamma_t+\nu*\gamma_t\right),
\end{align*}
where~$*$ stands for the convolution product, and~$\gamma_t(x)=\frac{1}{(4 \pi
  t)^{d/2}} e^{-\frac{|x|^2}{4t}}$ denotes the centered Gaussian density with covariance matrix~$2tI_d$. One concludes
that~\eqref{eq:majonorml2} holds by:
\begin{itemize}
\item the Young inequality
$\|f * \gamma_t\|_{\LL^2(\rmd x)} \le \|f\|_{\LL^p(\rmd x)} \|\gamma_t\|_{\LL^q(\rmd x)}$
where $1/p+1/q =3/2$ ($p,q \in [1, \infty]$) and the heat kernel
estimate $\|\gamma_t\|_{\LL^q(\rmd x)} \le C_q
t^{-\left(1-\frac{1}{q}\right) \frac{d}{2}}$;
\item the estimate $\|\nu*\gamma_t\|^2_{\LL^2(\rmd x)}
\leq\|\nu*\gamma_t\|_{\LL^1(\rmd x)}\|\nu*\gamma_t\|_{\LL^\infty(\rmd x)}
\leq\frac{(\nu(\Rd))^2}{(4\pi t)^{d/2}}$.
\end{itemize}

\end{proof}

\subsection{An additional result}

Assumption~{\bf(\hypdelta)} can also be useful to prove the second point
in Assumption~{\bf(\hypV)}-$(ii)$ on the potential $V$.
\begin{lem}\label{lem:nablaV_L2}
Under Assumption~{\bf(\hypdelta)}, the function~$\nabla V$ is
in~$\LL^2(\eV)$:
\[
\int_\Rd|\nabla V|^2(x)\eV
\leq C_V.
\]
\end{lem}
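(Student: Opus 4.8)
The plan is to combine an integration-by-parts identity with the pointwise bound defining $C_V$. Since $V$ is $\mathcal C^2$ under Assumption~{\bf(\hypdelta)}, the vector field $\nabla V\,e^{-V}$ is $\mathcal C^1$ and satisfies $\nabla\cdot(\nabla V\,e^{-V}) = (\Delta V - |\nabla V|^2)e^{-V}$, so that
\[
(2\Delta V(x) - |\nabla V(x)|^2)e^{-V(x)} = 2\,\nabla\cdot(\nabla V\,e^{-V})(x) + |\nabla V(x)|^2 e^{-V(x)}.
\]
Formally, integrating over $\Rd$ kills the divergence term and leaves $\int_\Rd(2\Delta V - |\nabla V|^2)\eV = \int_\Rd|\nabla V|^2\eV$; since the integrand on the left is pointwise $\le C_V$, this gives $\int_\Rd|\nabla V|^2\eV \le C_V\int_\Rd\eV = C_V$, using $\int_\Rd\eV=1$ from~{\bf(\hypV)}-$(ii)$. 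The whole difficulty is that a priori we know nothing about the global integrability of $|\nabla V|^2 e^{-V}$ or of $|\nabla V|\,e^{-V}$ (only that they are continuous, hence locally integrable, because $V\in\mathcal C^2$); so both the integration by parts and the passage to the limit must go through a cutoff, and it is this integrability bootstrap that is the real content of the proof.

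First I would fix a cutoff $\chi_n(x) = \psi(x/n)^2$ with $\psi$ smooth, $[0,1]$-valued, equal to $1$ on the unit ball and supported in the ball of radius $2$ (taking the square guarantees that $\nabla\chi_n = \tfrac2n\,\psi(\cdot/n)\,\nabla\psi(\cdot/n)$ is controlled by $\chi_n^{1/2}$). Multiplying the displayed identity by $\chi_n$ and integrating by parts — legitimate since $\chi_n$ has compact support and all functions involved are continuous — gives
\[
\int_\Rd\chi_n|\nabla V|^2\eV = \int_\Rd\chi_n(2\Delta V - |\nabla V|^2)\eV + 2\int_\Rd\nabla\chi_n\cdot\nabla V\,\eV .
\]
Bounding the first term on the right by $C_V\int_\Rd\chi_n\eV\le C_V$ and the second by Young's inequality, $|2\,\nabla\chi_n\cdot\nabla V| \le \tfrac4n|\psi(\cdot/n)|\,|\nabla\psi(\cdot/n)|\,|\nabla V| \le \tfrac12\chi_n|\nabla V|^2 + \tfrac{8}{n^2}|\nabla\psi(\cdot/n)|^2$, together with the fact that $\nabla\psi(\cdot/n)$ is supported in $\{n\le|x|\le2n\}$, I get
\[
\int_\Rd\chi_n|\nabla V|^2\eV \le 2C_V + \frac{16\,\|\nabla\psi\|_{\LL^\infty(\Rd)}^2}{n^2}.
\]
Letting $n\to\infty$ and applying Fatou's lemma then yields $\int_\Rd|\nabla V|^2\eV \le 2C_V < \infty$, i.e. $\nabla V\in\LL^2(\eV)$; by Cauchy--Schwarz and $e^{-V}\in\LL^1(\rmd x)$ this further gives $\nabla V\,e^{-V}\in\LL^1(\rmd x)$.

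With finiteness secured, I would return to the displayed equality and pass to the limit to recover the sharp constant. The left-hand side tends to $\int_\Rd|\nabla V|^2\eV$ by dominated convergence (the integrand is now in $\LL^1$). The cross term is bounded in absolute value by $\tfrac{\|\nabla\psi\|_{\LL^\infty(\Rd)}}{n}\int_{n\le|x|\le2n}|\nabla V|\,e^{-V}\,\rmd x$, which tends to $0$ because $\nabla V\,e^{-V}\in\LL^1(\rmd x)$. Hence $\int_\Rd\chi_n(2\Delta V - |\nabla V|^2)\eV \to \int_\Rd|\nabla V|^2\eV$, while for every $n$ this quantity is $\le C_V\int_\Rd\chi_n\eV \le C_V$; letting $n\to\infty$ gives $\int_\Rd|\nabla V|^2\eV\le C_V$, which is the claim. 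The only genuinely delicate step is the a-priori finiteness obtained in the previous paragraph: without some such bound one cannot justify discarding the cutoff terms, and no global integrability of $\nabla V$ against $\eV$ is assumed at the outset.
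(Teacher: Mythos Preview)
Your proof is correct and follows the same cutoff-plus-integration-by-parts strategy as the paper. The one notable difference is in how the cross term $\int_\Rd\nabla\chi_n\cdot\nabla V\,\eV$ is dispatched: you use a squared cutoff and Young's inequality to absorb half of it into the left-hand side, which yields $2C_V$ first and forces a second pass to recover the sharp constant; the paper instead observes that $\nabla\chi_n\cdot\nabla V\,e^{-V}=-\nabla\chi_n\cdot\nabla e^{-V}$ and integrates by parts once more to get $\int_\Rd\Delta\chi_n\,\eV$, which is $O(n^{-2})$ and vanishes in the limit, giving $\int_\Rd|\nabla V|^2\eV\le C_V$ directly by Fatou in a single step. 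Your route is slightly longer but perfectly sound; the paper's second integration by parts is a small shortcut worth remembering.
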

\begin{proof}
Let~$\chi_n(x)=\chi(x/n)$ where~$\chi$ is a smooth,~$[0,1]$-valued, cutoff function such that~$\chi(x)=1$
for~$|x|<1$ and~$\chi(x)=0$ for~$|x|>2$.
\begin{align*}
\int_\Rd|\nabla V|^2(x)\chi_n(x)\eV
&=-\int_\Rd\left(\chi_n(x)\nabla V(x)\right)\cdot\nabla\eV\\
&=\int_\Rd\nabla\chi_n(x)\cdot\nabla V(x)\eV
+\int_\Rd\chi_n(x)\Delta V(x)\eV\\
&\leq\int_\Rd\Delta\chi_n(x)\eV
+\frac12\int_\Rd\chi_n(x)|\nabla V|^2(x)\eV+\frac12C_V.
\end{align*}
As a consequence,
\[
\int_\Rd|\nabla V|^2(x)\chi_n(x)\eV
\leq2\int_\Rd\Delta\chi_n(x)\eV+C_V
\]
and the result follows from taking~$n\to\infty$ by Fatou's lemma for the
left-hand side and Lebesgue's theorem for the right-hand side.
\end{proof}

\section{About the Assumption~{\bf(\hypconv)}}\label{sec:annex_conv}

In this section, we show that Assumption~{\bf(\hypconv)} is a natural
one, since it appears as a sufficient condition in another related problem.

We recall that~$(Y^x_t)_{t\geq 0}$ is defined in~\eqref{eq:Ytx} as
the solution to
\begin{equation}\label{eq:Ytx_prime}
\forall t\geq0,~
Y^x_t
=x-\int_0^t\nabla V(Y^x_s)\rmd s
+\sqrt2W_t.
\end{equation}
Since~$(R_{Y^x}(0,t))_{t\geq0}$ is the differential of the trajectory~$(Y^x_t)_{t\geq0}$
with respect to~$x$, we expect that a condition yielding long-time decay
for~$R_{Y^x}$ will imply that trajectories with same noise and close initial conditions will
eventually converge toward each other.
More precisely, we are interested in the joint long-time behavior
of the so-called duplicated dynamics~$(Y_t^x,Y_t^y)_{t\geq0}$, where~$x$
and~$y$ are two different initial
conditions. Note here that the two processes~$(Y^x_t)_{t\geq0}$
and~$(Y^y_t)_{t\geq0}$ are driven by the same Brownian motion.

In~\cite{lemaire-pages-panloup-13}, the same problem is considered for a
diffusion whose diffusion matrix may not be constant. In that case,
an example is provided, where the process~$Y_t^x-Y_t^y$ does not
converge to~$0$.

A similar problem is considered in~\cite{burdzy-chen-jones-06}: the
process is a Brownian motion reflected on the boundary of a
domain~$\Omega$. Such a dynamics can be formally seen as a singular case
of the problem we consider,
with~$V=\infty\times\mathbf1_{\Omega^c}$. Equation~\eqref{eq:Ytx_prime}
then has to be written with a local time on the boundary in place
of~$\nabla V$. In that case, the difference~$Y_t^x-Y_t^y$
will converge to~$0$ if the domain~$\Omega$ is smooth enough and has at
most one hole. However, it is conjectured that the same result holds for
much more general domains.

We will use the fact that $V$ is such that the dynamics~\eqref{eq:Ytx_prime} is ergodic with
respect to the invariant measure~$\eV$.

\subsection{The one-dimensional case}
In the one-dimensional case, this question is especially
simple, because of the order structure on the state space. In
particular (see~\cite{lemaire-pages-panloup-13}), it can be checked
that if for any~$x\in\R$, $Y_t^x$ converge weakly to~$\pi_0$
as~$t\to\infty$, then  the only invariant distribution of the duplicated
dynamics is the image of~$\eV$ by~$x\mapsto(x,x)$. Actually, under
additional assumption, one can show that $Y^x_t-Y^y_t$ converges in mean to 
$0$ in the long-time limit.

\begin{prop}
Assume that the dimension is~$d=1$. If for any~$x\in\R$ the time marginals of
the process~$(Y_t^x)_{t\geq0}$ converge weakly to~$\pi_0$ as~$t\to\infty$ and  the random
variables~$(Y_t^x)_{t\geq0}$ are uniformly integrable, then, for any~$x,y\in\R$, the
process~$(Y_t^x-Y_t^y)_{t\geq0}$ converges to~$0$ in~$\LL^1(\Omega)$.
\end{prop}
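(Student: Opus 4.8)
The plan is to exploit the total order on $\R$: in dimension one the dynamics~\eqref{eq:Ytx_prime} preserves the order of the initial conditions, so that $Y_t^y-Y_t^x$ keeps a constant sign, and the $\LL^1(\Omega)$ estimate reduces to a statement on first moments, for which the assumed weak convergence and uniform integrability are exactly what is needed.

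First I would assume without loss of generality that $x\leq y$ and establish the comparison property $\PP(\forall t\geq 0,~Y_t^x\leq Y_t^y)=1$. Since $\nabla^2V$ is locally Lipschitz, so is $\nabla V$, and the two processes solve~\eqref{eq:Ytx_prime} driven by the \emph{same} Brownian motion; hence $D_t:=Y_t^y-Y_t^x$ is absolutely continuous with no martingale part, $\rmd D_t=-\big(\nabla V(Y_t^y)-\nabla V(Y_t^x)\big)\rmd t$ and $D_0=y-x\geq 0$. The chain rule then gives $D_t^-=\int_0^t\mathbf 1_{D_s<0}\big(\nabla V(Y_s^y)-\nabla V(Y_s^x)\big)\rmd s$. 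Introducing the stopping times $\tau_M=\inf\{t:|Y_t^x|\vee|Y_t^y|\geq M\}$ and the Lipschitz constant $L_M$ of $\nabla V$ on $[-M,M]$, one checks that on $\{D_s<0\}$ one has $\nabla V(Y_s^y)-\nabla V(Y_s^x)\leq L_M\,D_s^-$ for $s\leq\tau_M$, so $D_{t\wedge\tau_M}^-\leq L_M\int_0^t D_{s\wedge\tau_M}^-\,\rmd s$ and Grönwall's lemma yields $D_{t\wedge\tau_M}^-=0$. Letting $M\to\infty$ (recall that $(Y_t^x)_{t\geq0}$ is non-exploding, so $\tau_M\to\infty$ a.s.) gives $D_t\geq 0$ for all $t\geq 0$, almost surely.

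Consequently $|Y_t^x-Y_t^y|=Y_t^y-Y_t^x$ almost surely, hence $\E\big[|Y_t^x-Y_t^y|\big]=\E[Y_t^y]-\E[Y_t^x]$. By assumption, for $z\in\{x,y\}$ the family $(Y_t^z)_{t\geq 0}$ is uniformly integrable and the law of $Y_t^z$ converges weakly to $\pi_0$; standard arguments then give $\E[Y_t^z]\to\int_\R u\,\pi_0(\rmd u)$ as $t\to\infty$, a finite constant independent of $z$. Therefore $\E\big[|Y_t^x-Y_t^y|\big]\to 0$, which is the announced $\LL^1(\Omega)$ convergence; the case of general $x,y\in\R$ follows since one of $x\leq y$ or $y\leq x$ holds.

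The only delicate point is the rigorous proof of the comparison property $Y_t^x\leq Y_t^y$ under the sole assumption that $\nabla V$ is locally Lipschitz (with possible explosion a priori), which is handled by the localization argument sketched above; once the sign of $Y_t^y-Y_t^x$ is identified, the conclusion is an immediate consequence of the assumed weak convergence and uniform integrability.
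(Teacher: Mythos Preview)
Your proof is correct and follows essentially the same route as the paper: use the one-dimensional comparison principle to reduce $\E|Y_t^x-Y_t^y|$ to a difference of first moments, and then invoke weak convergence plus uniform integrability to conclude. The only difference is that you spell out the comparison argument via localization and Gr\"onwall, whereas the paper simply cites ``a comparison theorem''.
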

According to Corollary \ref{cor:CV_L1}, the long-time convergence of the marginals holds for
instance if the potential~$V$ satisfies a Poincar\'e
inequality (see Assumption~{\bf(\hyppoinc{$\eta$})}).
\begin{proof}
First, from the uniform integrability
of~$(X_t^x)_{t\geq0}$ and the weak convergence of the time marginals,
both~$\E[Y_t^x]$ and~$\E[Y_t^y]$ converge to~$\int_\R x\eV$
as~$t\to\infty$. 

Now assume, without loss of generality that~$x\leq y$. Then, from a
comparison theorem,~$Y_t^x\leq Y_t^y$ holds for
all positive times, and one obtains
\[
\E[|Y_t^x-Y_t^y|]
=\E[Y_t^x-Y_t^y]
=\E[Y_t^x]-\E[Y_t^y]
\to0.
\]
\end{proof}

\subsection{A general criterion}
\begin{prop}
  The following facts hold true:
\begin{enumerate}
\item
  Assume that
  \begin{equation}\label{eq:minoconv}
    \forall x,y\in\Rd,
    \;(x-y)\cdot(\nabla V(x)-\nabla V(y))\geq \frac{v(x)+v(y)}{2}|x-y|^2
  \end{equation}
  with~$v:\Rd\to\R$ such that~$\int_\Rd\max(0,-v(x))\eV<\infty$ and~$\int_\Rd v(x)e^{-V(x)}\rmd x>0$. Then for
  all~$x,y\in\Rd$,~$|Y^x_t-Y^y_t|$ converges {\it a.s.} to~$0$, exponentially
  fast at any rate between~$0$ and~$\int_\Rd v(x)\eV$ as~$t\to\infty$.
\item
  The exponential convergence to~$0$ still holds if~$V$ is convex and there
  exist~$x_0\in\Rd$ and~$\epsilon>0$ such that the
  inequality~$\inf_{x\in B(x_0,\epsilon)}\min{\rm Spec}(\nabla^2V(x))>0$
  holds.
\item
  If~$V$ is
  convex,
  then the only invariant measure of the
  duplicated dynamics is the image of~$\eV$
  by~$x\mapsto (x,x)$.
\end{enumerate}
\end{prop}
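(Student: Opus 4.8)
The plan is to treat the three items by the same basic device: introduce $D_t = Y^x_t - Y^y_t$, which solves the \emph{random ODE} (the noise cancels since both components are driven by the same Brownian motion)
\[
\frac{\rmd D_t}{\rmd t} = -\big(\nabla V(Y^x_t) - \nabla V(Y^y_t)\big), \qquad D_0 = x-y,
\]
and estimate $|D_t|^2$. Differentiating, $\frac{\rmd}{\rmd t}|D_t|^2 = -2 (Y^x_t-Y^y_t)\cdot(\nabla V(Y^x_t)-\nabla V(Y^y_t))$. For item~(1), the monotonicity assumption~\eqref{eq:minoconv} gives
\[
\frac{\rmd}{\rmd t}|D_t|^2 \leq -\big(v(Y^x_t)+v(Y^y_t)\big)\,|D_t|^2,
\]
so by Gr\"onwall $|D_t|^2 \leq |x-y|^2 \exp\big(-\int_0^t (v(Y^x_s)+v(Y^y_s))\,\rmd s\big)$. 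Now I would invoke ergodicity of~\eqref{eq:Ytx} with respect to $\eV$ (Lemma~\ref{lem:erggen}, valid here since $V$ satisfies a Poincar\'e inequality under the stated hypotheses, or more directly the strong law for additive functionals): almost surely $\frac1t\int_0^t v(Y^x_s)\,\rmd s \to \int_\Rd v\,\eV > 0$, and likewise for $Y^y$. The integrability assumption $\int_\Rd \max(0,-v)\,\eV < \infty$ is exactly what is needed for $\int_\Rd v\,\eV$ to be well defined in $(0,+\infty]$ and for the ergodic theorem to apply to $v$. Hence $\frac1t\int_0^t (v(Y^x_s)+v(Y^y_s))\,\rmd s \to 2\int_\Rd v\,\eV$ a.s., giving the claimed a.s.\ exponential decay at any rate in $(0,\int_\Rd v\,\eV)$. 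This is the same mechanism as in the proof of Lemma~\ref{lem:convergence_R}, and indeed~\eqref{eq:minoconv} with $v = \min\mathrm{Spec}(\nabla^2 V)$ reduces to Assumption~\textbf{(\hypconv)}, which explains the remark that \textbf{(\hypconv)} is ``natural''.

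For item~(2): when $V$ is convex, $(x-y)\cdot(\nabla V(x)-\nabla V(y)) \geq 0$ always, so $|D_t|$ is nonincreasing; the point is to upgrade this to exponential decay using local strict convexity on the ball $B(x_0,\epsilon)$. I would argue that along the trajectory, whenever \emph{both} $Y^x_s$ and $Y^y_s$ lie in $B(x_0,\epsilon)$, one gains a factor: convexity of $V$ implies $(x-y)\cdot(\nabla V(x)-\nabla V(y)) \geq c\,|x-y|^2$ for some $c>0$ depending on $\inf_{B(x_0,\epsilon)}\min\mathrm{Spec}(\nabla^2 V)$, at least when the segment $[x,y]$ stays in the ball; a cleaner route is to use the standard fact that for a convex function, $(x-y)\cdot(\nabla V(x)-\nabla V(y))$ controls a term like $c\min(|x-y|^2, \epsilon |x-y|)$ when $x \in B(x_0,\epsilon/2)$. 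Then, since $\eV$ gives positive mass to $B(x_0,\epsilon/2)$, ergodicity guarantees that the fraction of time the pair spends with both components near $x_0$ is a.s.\ bounded below by a positive constant in the long run; combining this with the monotonicity on the rest of the time interval yields $|D_t| \to 0$ exponentially. One should be a little careful here because the rate of decay depends on $|D_s|$ itself (quadratic vs.\ linear regime), so I would first use monotonicity to note $|D_t|$ is bounded, then run the argument in the ``linear'' regime $(x-y)\cdot(\nabla V(x)-\nabla V(y)) \geq c'\,|x-y|$ valid on the ball, giving decay $\frac{\rmd}{\rmd t}|D_t| \leq -c'' \mathbf 1_{Y^x_t,Y^y_t \in B}$, hence $|D_t| \leq |x-y| - c''\int_0^t \mathbf 1_{\cdots}\,\rmd s \to -\infty$ unless $D_t$ hits $0$ in finite time — actually giving finite-time merging, from which exponential decay (trivially) follows. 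The precise bookkeeping of which inequality is used in which regime is the step I expect to be most delicate.

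For item~(3): let $\mu$ be an invariant measure of the duplicated dynamics $(Y^x_t,Y^y_t)$ on $\Rd\times\Rd$. By convexity, $t\mapsto |Y^x_t - Y^y_t|$ is nonincreasing along every trajectory, so under $\mu$ the law of $|Y^x_t-Y^y_t|$ is both stationary and stochastically nonincreasing, forcing $|Y^x_t-Y^y_t|$ to be a.s.\ constant in $t$, say equal to a random variable $\ell \geq 0$ determined by the initial pair. If $\ell > 0$ on a set of positive $\mu$-measure, then on that event $\frac{\rmd}{\rmd t}|D_t|^2 = 0$ for all $t$, i.e.\ $(Y^x_t - Y^y_t)\cdot(\nabla V(Y^x_t) - \nabla V(Y^y_t)) = 0$ identically; convexity of $V$ then forces $\nabla V$ to be affine along the relevant directions, i.e.\ $V$ is locally linear on segments joining the two trajectories, contradicting the integrability of $\eV$ (a function whose exponential is integrable cannot be affine along a line, as already used in Remark~\ref{rem:hypconv}). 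Hence $\ell = 0$ $\mu$-a.s., meaning $\mu$ is supported on the diagonal $\{(x,x)\}$; pushing forward to the first marginal, which must be invariant for the single dynamics~\eqref{eq:Ytx} and hence equal to $\pi_0 = \eV$, we conclude $\mu$ is the image of $\eV$ under $x\mapsto(x,x)$. The subtlety worth flagging is justifying the ``stationary and a.s.\ nonincreasing implies a.s.\ constant'' step rigorously — this follows since $\E_\mu[|Y^x_t-Y^y_t|]$ is constant in $t$ while $|Y^x_t-Y^y_t|$ is pathwise nonincreasing, so the nonincreasing function has constant expectation and is therefore a.s.\ constant for each fixed pair of times, and then for all times by continuity.
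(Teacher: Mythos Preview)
Your argument for item~(1) is correct and matches the paper's proof. The issues are in items~(2) and~(3).

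\medskip

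\textbf{Item (2).} You correctly identify the key inequality: for $z\in B(x_0,\epsilon/2)$ and arbitrary $w$, convexity plus strict convexity on $B(x_0,\epsilon)$ yield
\[
(z-w)\cdot(\nabla V(z)-\nabla V(w))\;\geq\; c\,\min\big(|z-w|^2,\tfrac{\epsilon}{2}|z-w|\big),
\]
which the paper also uses. But you then drift into requiring \emph{both} $Y^x_t$ and $Y^y_t$ to lie in the ball, which is a problem: you have no a~priori ergodic theorem for the pair $(Y^x,Y^y)$, only for each component separately. Your ``finite-time merging'' argument is also wrong: if both components are in $B(x_0,\epsilon/2)$ then $|D_t|<\epsilon$, so you are in the quadratic regime $(z-w)\cdot(\nabla V(z)-\nabla V(w))\geq c|z-w|^2$, which gives $\tfrac{\rmd}{\rmd t}|D_t|\leq -\tfrac{c}{2}|D_t|$, not $\leq -c''$; there is no finite-time hitting of zero. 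The paper's fix is clean: use the one-component condition $Y^x_t\in B(x_0,\epsilon/2)$ and the monotonicity $|D_t|\leq |D_0|=|x-y|$ to write
\[
\min\big(|D_t|^2,\tfrac{\epsilon}{2}|D_t|\big)\;=\;\min\!\Big(1,\tfrac{\epsilon}{2|D_t|}\Big)|D_t|^2\;\geq\;\min\!\Big(1,\tfrac{\epsilon}{2|x-y|}\Big)|D_t|^2,
\]
reducing to a Gr\"onwall inequality with rate proportional to $\mathbf 1_{B(x_0,\epsilon/2)}(Y^x_t)$. Ergodicity of the single process $Y^x$ then finishes the job exactly as in item~(1).

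\medskip

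\textbf{Item (3).} Your outline misses a step and the contradiction you invoke does not quite work. From $(Y^x_t-Y^y_t)\cdot(\nabla V(Y^x_t)-\nabla V(Y^y_t))=0$ and convexity you only get that $V$ is affine on each segment $[Y^x_t,Y^y_t]$; these segments have fixed length $\ell$ but can a~priori rotate with $t$, and ``$V$ affine on a moving family of short segments'' does not by itself contradict integrability of $e^{-V}$. The paper inserts the missing lemma: if $V$ is convex and $(x-y)\cdot(\nabla V(x)-\nabla V(y))=0$ with $x\neq y$, then in fact $\nabla V(x)=\nabla V(y)$ (proved by a second-order perturbation around the midpoint). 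This gives $\nabla V(Y^x_t)=\nabla V(Y^y_t)$ for $\rmd t$-a.e.\ $t$, hence $D_t=Y^x_t-Y^y_t$ is a \emph{constant vector} $z$, not merely of constant norm. Now the contradiction is sharp: for fixed $z\neq 0$, integrability of $e^{-V}$ forces $x\mapsto z\cdot(\nabla V(x)-\nabla V(x-z))$ to be strictly positive on some open set, and ergodicity of $Y^x$ makes the trajectory visit that set, contradicting $\nabla V(Y^x_t)=\nabla V(Y^x_t-z)$.
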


Let us start with a few remarks:
\begin{rem}
  \begin{itemize}
  \item
    The first point can be applied to the so-called Mexican hat
    potential~$V(x)=\beta(|x|^4-\gamma |x|^2)$, with~$\beta>0$
    and~$\gamma>0$, in dimension $d\geq 2$.
    For this potential, one has
    \begin{align*}
      (x-y)\cdot(\nabla V(x)-\nabla V(y))
      &=2\beta|x-y|^2(|x|^2+|y|^2-\gamma)+2\beta(|x|^2-|y^2|)^2\\
      &\geq\frac{v(x)+v(y)}{2}|x-y|^2,
    \end{align*}
    for~$v(x)=\beta(4|x|^2-2\gamma)$.
    In addition, one has~$\int_\Rd v(x)\eV>0$ since
    \begin{align*}
      \frac{\int_{\R^d}|x|^2e^{\beta(\gamma
          |x|^2-|x|^4)}\rmd x}{\int_{\R^d}e^{\beta(\gamma
          |x|^2-|x|^4)}\rmd x}
      &=\frac{\int_0^{+\infty}r^{\frac{d}{2}}e^{\beta(\gamma
          r-r^2)}dr}{\int_0^{+\infty}r^{\frac{d}{2}-1}e^{\beta(\gamma
          r-r^2)}dr}\\
      &=\frac{\gamma}{2}+\frac{\int_0^{+\infty}r^{\frac{d}{2}-1}(r-\frac{\gamma}{2})e^{\beta(\gamma r-r^2)}dr}{\int_0^{+\infty}r^{\frac{d}{2}-1}e^{\beta(\gamma r-r^2)}dr}\\
      &=\frac{\gamma}{2}+\frac{1_{\{d=2\}}}{2\beta
        \int_0^{+\infty}e^{\beta(\gamma
          r-r^2)}dr}+1_{\{d>2\}}\frac{(d-2)\int_0^{+\infty}r^{\frac{d}{2}-2}e^{\beta(\gamma
          r-r^2)}dr}{4\beta\int_0^{+\infty}r^{\frac{d}{2}-1}e^{\beta(\gamma
          r-r^2)}dr}\\
      &>\frac{\gamma}{2}.
    \end{align*}
  \item
    Letting~$y\to x$ in~\eqref{eq:minoconv}, one obtains
    that~$\forall x\in\Rd$,~$v(x)\leq \min{\rm Spec}(\nabla^2V(x))$.
    When~$x\mapsto\min{\rm Spec}(\nabla^2V(x))$ is concave,
    \begin{align*}
      (x-y) \cdot (\nabla V(x)-\nabla V(y))
      &=\int_0^1(x-y)\cdot\nabla^2V(\theta x+(1-\theta)y)(x-y)\rmd\theta\\
      &\geq |x-y|^2\int_0^1\min{\rm Spec}(\nabla^2V(\theta x+(1-\theta)y))\rmd\theta\\
      &\geq |x-y|^2\int_0^1\theta\min{\rm Spec}(\nabla^2V(x))+(1-\theta)\min{\rm Spec}(\nabla^2V(y))\rmd\theta\\
      &\geq \frac12\left(\min{\rm Spec}(\nabla^2V(x))+\min{\rm Spec}(\nabla^2V(y))\right)|x-y|^2
    \end{align*}
    and one may choose~$v(x)=\min{\rm Spec}(\nabla^2V(x))$ in~\eqref{eq:minoconv}.
  \item
    When~$V=\bar{V}+\hat{V}$ with~$\bar{V}$ such
    that~$x\mapsto\min{\rm Spec}(\nabla^2\bar{V}(x))$ is concave
    and~$\hat{V}$ such that~$x\mapsto \nabla\hat{V}(x)$ is Lipschitz
with constant~$\delta$ and constant outside some Borel
    subset~$A$ of~$\Rd$, then one may
    choose~$v(x)=\min{\rm Spec}(\nabla^2\bar{V}(x))-2\delta 1_A(x)$ in~\eqref{eq:minoconv}.
  \end{itemize}
\end{rem}
\begin{proof}
  \begin{enumerate}
  \item   
    One has
    \begin{align}\label{eq:decroissance}
      \rmd|Y^x_t-Y^y_t|^2
      &=-2(Y^x_t-Y^y_t)\cdot(\nabla V(Y^x_t)-\nabla V(Y^y_t))\rmd t\\
      &\leq -(v(Y^x_t)+v(Y^y_t))|Y^x_t-Y^y_t|^2\rmd t,\nonumber
    \end{align}
    under~\eqref{eq:minoconv}.
    Hence
    \[
    |Y^x_t-Y^y_t|^2
    \leq |x-y|^2e^{-\int_0^t(v(Y^x_s)+v(Y^y_s))\rmd s}.
    \]
    Since, by  \eqref{eq:erggen},~$\frac1t\int_0^t(v(Y^x_s)+v(Y^y_s))\rmd s$ converges
    {\it a.s.}
    to~$2\int_\Rd v(x)\eV>0$,
    one easily deduces the first assertion.
  \item
    When~$V$ is convex, then~$t\mapsto |Y^x_t-Y^y_t|$ is
    nonincreasing by~\eqref{eq:decroissance}. Now, for~$z\in B(x_0,\frac\epsilon2)$ and~$w\in\Rd$,
    one has
    \[
    (z-w)\cdot(\nabla V(z)-\nabla V(w))
    \geq|z-w|\inf_{B(x_0,\epsilon)}\min{\rm Spec}(\nabla^2V(\cdot))
    \left(
      \frac\epsilon21_{B(x_0,\epsilon)^c}(w)+|z-w|1_{B(x_0,\epsilon)}(w)
    \right).
    \]
    As a consequence,
    \[
    (Y^x_t-Y^y_t)\cdot(\nabla V(Y^x_t)-\nabla V(Y^y_t))
    \geq1_{B(x_0,\frac{\epsilon}{2})}(Y^x_t)
    \inf_{B(x_0,\epsilon)}\hspace{-3pt}\min{\rm Spec}(\nabla^2V(\cdot))
    \left(\frac\epsilon{2|x-y|}\wedge1\right)|Y^x_t-Y^y_t|^2.
    \]
    One concludes by arguments similar to the ones used for the first assertion.
  \item
    Let~$V$ be convex and differentiable and let~$x\neq y$ be such
    that~$(x-y)\cdot(\nabla V(x)-\nabla V(y))=0$.  Then~$V$ is affine on the
    segment~$[x,y]$
    and~$V(\frac{x+y}{2})=\frac{V(x)+V(y)}{2}$. For~$z\in\Rd\setminus\{0\}$
    and~$\epsilon\in \R$, 
    \begin{align*}
      \frac{V(x)+V(y)}{2}
      =V\left(\frac{x+y}2\right)
      &\leq \frac{V(x+\epsilon z)+V(y-\epsilon z)}{2}\\
      &=\frac{V(x)+V(y)}{2}+\frac{\epsilon z}{2}\cdot(\nabla V(x)-\nabla V(y))+o(\epsilon)
    \end{align*}
    as~$|\epsilon|\to 0$. As a
    consequence~$z\cdot(\nabla V(x)-\nabla V(y))=0$ and~$\nabla V(x)=\nabla V(y)$.
    
    Let~$(X_t)_{t\geq0}$ and~$(Y_t)_{t\geq0}$ be two solutions to the
    stochastic differential equation~\eqref{eq:EDS}, such
    that~$(X_0,Y_0)$ is distributed according to some invariant probability
    measure of the duplicated dynamics. Since~$|X_t-Y_t|^2$ is
    {\it a.s.} non-increasing with~$t$ and constant in distribution,
    {\it a.s.}~$t\mapsto |X_t-Y_t|^2$ is constant and therefore~$\rmd t$-{\it a.e.}~$(X_t-Y_t)\cdot(\nabla V(X_t)-\nabla V(Y_t))=0$ which
    implies~$\nabla V(X_t)=\nabla V(Y_t)$. One deduces that {\it
      a.s.},~$t\mapsto X_t-Y_t$ is constant.\\
    Now, since~$x\mapsto e^{-V(x)}$ is integrable, then~$V$ cannot be affine
    in some direction and for any~$z\in\Rd\setminus\{0\}$,~$x\mapsto
    z\cdot(\nabla V(x)-\nabla V(x-z))$ is not constant equal to zero. By
    continuity of~$\nabla V$, one deduces the existence of~$y\in\Rd$
    and~$\epsilon>0$ such that~$\forall x\in B(y,\epsilon)$,~$z\cdot(\nabla
    V(x)-\nabla V(x-z))>0$. With the ergodicity of~$(X_t)_{t\geq 0}$ and the fact
    that~$\rmd t$-{\it a.e.}~$(X_0-Y_0)\cdot(\nabla V(X_t)-\nabla
    V(X_t-X_0+Y_0))=0$, one concludes that {\it a.s.}~$X_0=Y_0$.
\end{enumerate}
\end{proof}

\section*{Acknowledgements}
This work is supported by the European Research
Council under the European Union's Seventh Framework Programme
(FP/2007-2013) / ERC Grant Agreement number 614492 and by the French National Research Agency under the grant ANR-12-BS01-0019 (STAB).
The authors would
like to thank fruitful discussions with G. Stoltz on nonequilibrium
methods and Green-Kubo formulae.

\bibliographystyle{plain}
\bibliography{./computation_sensitivities.bib}

\end{document}